\newtheorem{theorem}{Theorem}[section]
\newtheorem{corollary}[theorem]{Corollary}
\newtheorem{proposition}[theorem]{Proposition}
\newtheorem{lemma}[theorem]{Lemma}
\theoremstyle{definition}    
\newtheorem{definition}[theorem]{Definition}
\theoremstyle{remark}
\newtheorem{remark}[theorem]{Remark}
\newtheorem{remarks}[theorem]{Remarks}
\newtheorem{example}[theorem]{Example}
\newtheorem{examples}[theorem]{Examples}
\newcommand\A{\mathcal{A}}
\newcommand{\Cour}[1]      {[\![#1]\!]}
\newcommand\G{\mathcal{G}}
\renewcommand{\L}{\mathcal{L}}
\newcommand{\T}{\mathbb{T}}
\newcommand{\ca}{\mathcal}
\newcommand{\U}{\on{U}}
\newcommand{\E}{\ca{E}}
\newcommand{\R}{\mathbb{R}}
\newcommand{\C}{\mathbb{C}}
\newcommand{\Z}{\mathbb{Z}}
\newcommand\pt{\on{pt}}
\newcommand{\dd}{\mathfrak{d}}
\newcommand{\n}{\mathfrak{n}}
\newcommand\lie[1]{\mathfrak{#1}}
\renewcommand{\k}{\lie{k}}
\newcommand{\h}{\lie{h}}
\newcommand{\g}{\lie{g}}
\renewcommand{\a}{\mathsf{a}}
\newcommand{\on}{\operatorname}
\newcommand{\Aut}{ \on{Aut} } 
\newcommand{\Ad}{ \on{Ad} }
\newcommand{\ad}{\on{ad}}
\renewcommand{\ker}{ \on{ker}}
\newcommand{\Mult}{  \on{Mult}}
\newcommand{\da}{\dasharrow}
\newcommand{\sz}{\mathsf{s}}
\newcommand{\tz}{\mathsf{t}}
\newcommand\qu{/\kern-.7ex/} 
\newcommand{\hra}{\hookrightarrow}
\newcommand{\ra}{\rightarrow}
\renewcommand{\d}{{\mbox{d}}}
\newcommand{\ol}{\overline}
\newcommand\sig{\sigma}
\newcommand\eps{\epsilon}
\newcommand\Om{\Omega}
\newcommand\om{\omega}
\newcommand{\f}{\frac}
\newcommand{\p}{\partial}
\renewcommand{\l}{\langle}
\renewcommand{\r}{\rangle}
\newcommand\hh{{\f{1}{2}}}
\newcommand{\eeq}{\end{eqnarray*}}
\newcommand{\beq}{\begin{eqnarray*}}
\newcommand{\pr}{\on{pr}}
\newcommand{\wt}{\widetilde}
\newcommand{\mf}{\mathfrak}
\newcommand{\rra}{\rightrightarrows}
\newcommand{\GL}{\on{GL}}
\renewcommand{\subset}{\subseteq}
\begin{document}
\sloppy
\title{Poisson Geometry from a Dirac perspective}
\author{Eckhard Meinrenken}
\maketitle
\begin{abstract}
We present proofs of classical results in Poisson geometry using techniques from Dirac geometry. This article is based on mini-courses at the Poisson summer school in Geneva, June 2016, and at the workshop \emph{Quantum Groups and Gravity} at the University of Waterloo, April 2016. 
\end{abstract}
\medskip\medskip

{\bf Keywords:} Poisson manifolds, Dirac structures, Dirac Lie groups, Lie groupoids \bigskip

{\bf Mathematics Subject Classification:} 53D17, 22A22

\tableofcontents
These notes are based on a 4-hour mini-course for the summer school \emph{Poisson 2016}  in Geneva, June 27-July 2, 2016, as well as a 2-hour mini-course for the workshop \emph{Quantum Groups and Gravity} at the University of Waterloo, April 25--29, 2016.   Accordingly, the first four sections will follow the lectures in Geneva, while the last two sections are loosely based on the lectures in Waterloo. 

A general theme of these lectures was to present some of the classical results in Poisson geometry within the larger framework of \emph{Dirac geometry}. That is, besides the familiar viewpoints of Poisson structures as brackets $\{\cdot,\cdot\}\colon C^\infty(M)^2\to C^\infty(M)$ on the algebra of functions,  or as bivector fields $\pi\in\mf{X}^2(M)$ on the manifold, we will also consider  
Poisson structures as Dirac structures, that is, maximal isotropic subbundles $E\subset TM\oplus T^*M$ satisfying a certain integrability condition.  One advantage gained by the Dirac-geometric approach is more flexibility -- for example, Dirac structures can be pulled back under smooth maps satisfying a transversality condition, whereas Poisson structures can be pulled back only in very special cases.  Putting this flexibility to use, we will present the recent Dirac-geometric proofs of the  \emph{Weinstein splitting theorem} (after Bursztyn-Lima-M), the \emph{Karasev-Weinstein symplectic realization theorem} (after Crainic-M\u{a}rcu\c{t} and 
Frejlich-M\u{a}rcu\c{t}), and the \emph{Drinfeld theorems} for the classification of Poisson Lie groups and Poisson homogeneous spaces (after Liu-Weinstein-Xu, with some additional ideas from Patrick Robinson and myself). 

Following the nature of the mini-courses, this article is an essentially self-contained introduction to the foundations of Poisson geometry, but at a brisk pace and leaving out a number of important topics. Among other things, we will not discuss Schouten brackets, Poisson homology and cohomology, Poisson submanifolds and coisotropic submanifolds,  and stability theorems. We refer to 
the textbooks \cite{ca:ge}, \cite{duf:po}, \cite{lau:po},  \cite{mac:gen}, \cite{va:po} for details on these topics. 
\vskip.2in 

\noindent{\it Notation and conventions.} We will be working in the $C^\infty$ category of smooth manifolds, although most of our constructions work equally well in complex holomorphic category. For a manifold $M$, we denote by $\mf{X}(M)$ the Lie algebra of smooth vector fields on $M$, and by $\Omega(M)$ the algebra of differential forms. For a smooth vector bundle $E\to M$, we denote by $\Gamma(E)$ the $C^\infty(M)$-module of smooth sections. The Lie algebra of a Lie group is defined using \emph{left-invariant} vector fields; the same convention is used for Lie groupoids. See Appendix A for a justification, and further sign conventions. 
\vskip.4in

\noindent{\bf Acknowledgements}. I would like to thank the organizers of the Poisson 2016 summer school, Anton Alekseev and Maria Podkopaeva (Geneva), for providing an excellent environment, and for encouragement to write this notes. I also thank Pedro Frejlich, Ioan M\u{a}rcu\c{t},  Mykola Matviichuk, Alan Weinstein, Ping Xu, as well as the referees for helpful comments. 
\vskip.4in

\section{Poisson manifolds}\label{sec:poissonmanifolds}
\subsection{Basic definitions}\label{subsec:poisson}
Poisson structures on manifolds can be described in several equivalent ways. The quickest definition is in terms of a bracket operation on smooth functions. 

\begin{definition}\cite{lic:var}
A \emph{Poisson structure} on a manifold $M$ is a bilinear map 
\[ \{\cdot,\cdot\}\colon C^\infty(M)\times C^\infty(M)\to C^\infty(M)\]
with the properties 
\begin{eqnarray}
\label{eq:1}
\{f,g\}&=&-\{g,f\}\\
\label{eq:3}\{f,gh\}&=&\{f,g\}h+g\{f,h\}\\
\label{eq:2}\{f,\{g,h\}\}&=&\{\{f,g\},h\}+\{g,\{f,h\}\}
\end{eqnarray}
for all $f,g,h\in C^\infty(M)$. A map $\Phi\colon N\to M$ between Poisson manifolds is a \emph{Poisson map} if the pull-back map $\Phi^*\colon C^\infty(M)\to C^\infty(N)$ 
preserves brackets. 
\end{definition}

Condition \eqref{eq:3} states that for all $f\in C^\infty(M)$, the operator $\{f,\cdot\}$ is a derivation of the algebra of smooth functions $C^\infty(M)$, that is, a vector field. One calls 
\[ X_f=\{f,\cdot\}\]
the \emph{Hamiltonian vector field} determined by the \emph{Hamiltonian} $f$. In various physics interpretations, the flow of $X_f$ describes the dynamics of a system with Hamiltonian $f$.   

Suppose $\{\cdot,\cdot\}$ is a bilinear form on $C^\infty(M)$ satisfying \eqref{eq:1} and \eqref{eq:3}. Then $\{\cdot,\cdot\}$ is a derivation in both arguments.  A little less obviously, the Jacobiator 
$\on{Jac}(\cdot,\cdot,\cdot)$ for the bracket, given by 
\begin{equation}\label{eq:jac}
\on{Jac}(f,g,h)=\{f,\{g,h\}\}+ \{g,\{h,f\}\}+ \{h,\{f,g\}\}
\end{equation} 
is also a derivation in each argument (exercise). It follows the values of 
$\{f,g\}$ and of $\on{Jac}(f,g,h)$ at any given point $m\in M$ depend only on the exterior differentials of $f,g,h$ at $m$.  Letting 
\[ \mf{X}^\bullet(M)=\Gamma(\wedge^\bullet TM)\] 
be the $\Z$-graded algebra (under pointwise wedge product) of multi-vector fields, we 
obtain a 2-vector field and a 3-vector field
\begin{eqnarray*}
\pi\in \mf{X}^2(M),&& \pi(\d f,\d g)=\{f,g\}\\
\Upsilon_\pi\in \mf{X}^3(M),&& \Upsilon_\pi (\d f,\d g,\d h)=
\on{Jac}(f,g,h). \end{eqnarray*}
The Jacobi identity \eqref{eq:2} is thus equivalent to $\Upsilon_\pi=0$. In particular, to check if $\{\cdot,\cdot\}$ satisfies the Jacobi identity, it is enough to check on functions whose differentials span $T^*M$ everywhere.
\begin{remark}
In terms of the \emph{Schouten bracket} of multi-vector fields, the 3-vector field 
$\Upsilon_\pi$ associated to a bivector field $\pi$ is given by $\Upsilon_\pi=-\hh [\pi,\pi]$. Thus, $\pi$ defines a Poisson structure if and only if 
$[\pi,\pi]=0$. In these notes, we we will not use the Schouten bracket formalism.
\end{remark}

\subsection{Examples}

\begin{example}\label{ex:1}
The standard Poisson bracket on `phase space' $\R^{2n}$, with coordinates $q^1,\ldots,q^n$ and $p_1,\ldots,p_n$, is given by 
\[ \{f,g\}=\sum_{i=1}^n \f{\p f}{\p q^i}\f{\p g}{\p p_i}-\f{\p f}{\p p_i}\f{\p g}{\p q^i}.\]
To verify \eqref{eq:2}, it suffices to check on \emph{linear} functions; but 
in that case  all three terms in the definition of $\on{Jac}$ are zero. (E.g., 
$\{f,\{g,h\}\}=0$ since $\{g,h\}$ is constant.) The differential equations defined by the Hamiltonian vector field $X_f=\{f,\cdot\}$ are 
%
%
\emph{Hamilton's equations}

\[ \dot{q}^i=\f{\p f}{\p p_i},\ \ \dot{p}_i=-\f{\p f}{\p q^i}
\]
from classical mechanics. (Our sign conventions (cf.~ Appendix \ref{app:signs}) are such that a vector field $X=\sum_j a^j(x) \f{\p}{\p x^i}$ corresponds to the ODE $\f{d x^j}{d t}=-a^j\big(x(t)\big)$.) The Poisson bivector field is
\begin{equation}\label{eq:constant}
 \pi=\sum_{i=1}^n  \f{\p}{\p q^i}\wedge \f{\p}{\p p_i}.
\end{equation}
More generally, any symplectic manifold $(M,\omega)$ becomes a Poisson manifold, 
in such a way that the Hamiltonian vector fields $X_f=\{f,\cdot\}$ satisfy
$ \om(X_f,\cdot)=-\d f$. In local symplectic coordinates $q^1,\ldots,q^n,p_1,\ldots,p_n$, with $\omega=\sum_i \d q^i\wedge \d p_i$, the Poisson structure is given by the formula \eqref{eq:constant} above. Note that with our sign conventions, the maps $\pi^\sharp\colon T^*M\to TM,\ \mu\mapsto \pi(\mu,\cdot)$ and $\omega^\flat\colon TM\to T^*M,\ v\mapsto \omega(v,\cdot)$ are related by 
\[ \pi^\sharp=-(\omega^\flat)^{-1}.\]
\end{example}

\begin{example}
If $\dim M=2$, then \emph{any} bivector field $\pi\in\mf{X}^2(M)$ is  Poisson: The vanishing of 
$\Upsilon_\pi$ follows because on a 2-dimensional manifold, every 3-vector field is zero. 
\end{example}

\begin{example}\label{ex:liepoisson}
For any Lie algebra $\g$, the dual space $\g^*$ has a Poisson structure known as the \emph{Lie-Poisson} structure or \emph{Kirillov-Poisson} structure. For all $\mu\in \g^*$, identify $T_\mu^*\g^*=\g$. Then 
\[ \{f,g\}(\mu)=\l \mu,\ [\d_\mu f,\ \d_\mu g]\r.\]
The corresponding bivector field is 
\[ \pi=\hh \sum_{ijk} c_{ij}^k\,\, \mu_k \, \f{\p}{\p \mu_i}\wedge \f{\p}{\p \mu_j},\]
where $\mu_i$ are the coordinates on $\g^*$, relative to a basis $\eps_1,\eps_2,\ldots,\epsilon_n$ of $\g$, and $c_{ij}^k$ the corresponding structure constants, defined by $[\eps_i,\eps_j]=\sum_k c_{ij}^k \eps_k$. 
To verify the Jacobi identity for $\{\cdot,\cdot\}$, it suffices to check on \emph{linear} functions, since the differentials of linear functions span $T^*\g^*$ everywhere. 
For every $\xi\in\g$, let $\phi_\xi(\mu)= \l\mu,\xi\r$.
Then $\{\phi_\xi,\phi_\zeta\}=\phi_{[\xi,\zeta]}$ for all $\xi,\zeta\in\g$. Hence, the Jacobi identity for $\{\cdot,\cdot\}$ on linear functions 
is just the Jacobi identity for the Lie bracket $[\cdot,\cdot]$ of $\g$. 

Conversely, given a Poisson structure on a finite-dimensional vector space $V$, where $\{\cdot,\cdot\}$ is \emph{linear} in the sense that the coefficients of the Poisson tensor are linear, one obtains a Lie algebra structure on $\g:=V^*$, with $\{\cdot,\cdot\}$ as the corresponding Lie-Poisson structure. This gives a 1-1 correspondence 
\begin{equation}\label{eq:correspondence1}
\Big\{\begin{tabular}{c} Vector spaces with \\
linear Poisson structures\end{tabular}\Big\}
\stackrel{1-1}{\longleftrightarrow} 
\Big\{\mbox{ Lie algebras}\Big\}.
\end{equation}
\end{example}
\vskip.2in 

\subsection{Lie algebroids as Poisson manifolds}
The correspondence \eqref{eq:correspondence1} extends to vector bundles, with Lie algebras replaced by Lie \emph{algebroids}.  
\begin{definition}
A \emph{Lie algebroid} $(E,\a,[\cdot,\cdot])$ over $M$ is a vector bundle $E\to M$, together with a bundle map $\a\colon E\to TM$ called the \emph{anchor} and a 
Lie bracket $[\cdot,\cdot]$ on its space $\Gamma(E)$ of sections, such that for all $\sigma,\tau\in \Gamma(E)$ and $f\in C^\infty(M)$, 
\begin{equation}\label{eq:leibnitz}
 [\sigma,f\tau]=f[\sigma,\tau]+\big(\a(\sigma)(f)\big)\ \tau.
 \end{equation}
\end{definition}
Some examples: 
\begin{itemize}
\item $E=TM$ is a Lie algebroid, with anchor the identity map. More generally, the tangent bundle to a regular foliation is a Lie algebroid, with anchor the inclusion. 
\item A Lie algebroid over $M=\pt$ is the same as a finite-dimensional Lie algebra $\g$. 
\item Given a $\g$-action on $M$, the trivial bundle $E=M\times\g$ has a Lie algebroid structure, with anchor given by the action map, and with the Lie bracket on sections extending the Lie bracket of $\g$ (regarded as constant sections of $M\times \g$).
\item For a principal $G$-bundle $P\to M$, the bundle $E=TP/G$ is a Lie algebroid, 
known as the \emph{Atiyah algebroid}. Its sections are identified with the $G$-invariant vector fields on $M$. 
\item Let $N\subset M$ be a codimension $1$ submanifold. Then there is 
a Lie algebroid $E$ whose sections are the vector fields on $M$ tangent 
to $N$, and another Lie algebroid $E'$ whose sections are the vector fields on $M$ that vanish along $N$. These Lie algebroids enter the Melrose \emph{b-calculus} \cite{mel:ati}; see also \cite[Section 17]{ca:ge}.
\end{itemize}

Given a vector bundle $V\to M$, let $\kappa_t\colon V\to V$ be scalar multiplication 
by $t\in\R$. For $t\neq 0$ this is a diffeomorphism. A multi-vector field 
$u\in \mf{X}^k(V)$ will be called (fiberwise) \emph{linear} if it is homogeneous of degree 
$1-k$, that is, 
\[  \kappa_t^* u=t^{1-k}\ u\]
for $t\neq 0$. In particular, a bivector field is linear if it is homogeneous of degree $-1$. 
 The following theorem gives a 1-1 correspondence 
\vskip.05in
\begin{equation}\label{eq:correspondence2}
\Big\{\begin{tabular}{c} Vector bundles with \\
linear Poisson structures\end{tabular}\Big\}
\stackrel{1-1}{\longleftrightarrow} 
\Big\{\mbox{ Lie algebroids}\Big\}
\end{equation}
\vskip.05in

For any section $\sigma\in \Gamma(E)$, let $\phi_\sigma\in C^\infty(E^*)$ be the corresponding linear function on the dual bundle $E^*$.

\begin{theorem}\label{th:la}
For any Lie algebroid $E$ over $M$, the total space of the dual bundle $E^*$ has a unique Poisson bracket such that for all sections $\sigma,\tau\in \Gamma(E)$, 
\begin{equation}\label{eq:liealgebroid}
\{\phi_\sigma,\phi_\tau\}=\phi_{[\sigma,\tau]}.\end{equation}
The Poisson structure is linear; conversely, every fiberwise linear Poisson structure on a vector bundle $V\to M$ arises in this way from a unique Lie algebroid structure on the dual bundle $V^*$.
\end{theorem}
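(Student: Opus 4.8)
The plan is to build the bracket on $E^*$ by exploiting the fact, established in the preceding discussion, that a skew-symmetric bilinear operation on $C^\infty(E^*)$ which is a derivation in each argument is determined by its values on functions whose differentials span $T^*E^*$ everywhere, and that the Jacobi identity need only be checked on such functions. The natural generating set consists of two types of functions: the linear functions $\phi_\sigma$ for $\sigma\in\Gamma(E)$, and the pullbacks $q^*f$ of functions $f\in C^\infty(M)$ under the projection $q\colon E^*\to M$. At any point of $E^*$, the differentials of these two families together span the cotangent space, so prescribing the bracket on them (consistently with the Leibniz rule) pins it down uniquely. Concretely I would set $\{\phi_\sigma,\phi_\tau\}=\phi_{[\sigma,\tau]}$, $\{\phi_\sigma,q^*f\}=q^*(\a(\sigma)f)$, and $\{q^*f,q^*g\}=0$, then check these are compatible with the module relation $\phi_{h\sigma}=(q^*h)\phi_\sigma$: this compatibility is \emph{exactly} the Leibniz identity \eqref{eq:leibnitz} for the Lie algebroid, together with $\a$ being a bundle map, which is where the algebroid axioms get used.

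\emph{Existence.} First I would verify that the three formulas above extend to a well-defined skew-symmetric biderivation $\{\cdot,\cdot\}$ on all of $C^\infty(E^*)$. The cleanest way is to note that functions of the form $\sum_i (q^*h_i)\phi_{\sigma_i} + q^*f$ are dense (in an appropriate sense — they exhaust jets at each point), extend the bracket to them by forcing biderivation, and check consistency on overlaps using \eqref{eq:leibnitz}; alternatively one writes everything in a local frame $\eps_1,\dots,\eps_r$ of $E$ with fiber coordinates $\mu_1,\dots,\mu_r$ on $E^*$ and base coordinates $x^j$, obtaining
\[
\pi = \sum_{i<j}\Big(\sum_k c_{ij}^k(x)\,\mu_k\Big)\,\f{\p}{\p\mu_i}\wedge\f{\p}{\p\mu_j}
 + \sum_{i,j} a_i^j(x)\,\f{\p}{\p\mu_i}\wedge\f{\p}{\p x^j},
\]
where $c_{ij}^k$ are the structure functions of $[\cdot,\cdot]$ and $a_i^j$ the components of $\a$; one then checks this transforms correctly under change of frame. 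Either way, the resulting $\pi$ is manifestly fiberwise linear, since each coefficient is either linear in the $\mu$'s (the $c_{ij}^k\mu_k$ terms, paired with two fiber derivatives) or independent of the $\mu$'s (the $a_i^j$ terms, paired with one fiber and one base derivative), giving $\kappa_t^*\pi = t^{-1}\pi$.

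\emph{Jacobi identity.} By the reduction principle, it suffices to show $\on{Jac}$ vanishes on all triples drawn from $\{\phi_\sigma\}\cup\{q^*f\}$. There are four cases by the number of $q^*f$'s among the three arguments. The all-linear case $\on{Jac}(\phi_\sigma,\phi_\tau,\phi_\upsilon)=\phi_{[[\sigma,\tau],\upsilon]+\cdots}$ vanishes by the Jacobi identity in $\Gamma(E)$. The case with one $q^*f$ reduces, after unwinding, to the statement that $\sigma\mapsto\a(\sigma)$ is a Lie algebra homomorphism onto vector fields — i.e. $\a([\sigma,\tau])=[\a(\sigma),\a(\tau)]$ — which, as is well known and can be derived from \eqref{eq:leibnitz} (apply \eqref{eq:leibnitz} to $[\sigma,[\tau,f\upsilon]]$ and use Jacobi in $\Gamma(E)$), is a consequence of the algebroid axioms. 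The remaining two cases (two or three $q^*f$'s) vanish trivially since $\{q^*f,q^*g\}=0$ and $q$ is a Poisson map to $M$ with the zero bracket. I expect this case analysis — specifically extracting $\a([\sigma,\tau])=[\a(\sigma),\a(\tau)]$ from the Leibniz rule — to be the one genuinely substantive step; everything else is bookkeeping.

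\emph{Converse.} Given a fiberwise linear Poisson structure $\pi$ on $V\to M$, set $E:=V^*$. Homogeneity of degree $-1$ forces $\{\phi_\sigma,\phi_\tau\}$ to again be a linear function, hence of the form $\phi_{[\sigma,\tau]}$ for a unique bracket $[\cdot,\cdot]$ on $\Gamma(E)=\Gamma(V^*)$; skew-symmetry and Jacobi for $[\cdot,\cdot]$ follow from the corresponding properties of $\{\cdot,\cdot\}$. Similarly $\{\phi_\sigma,q^*f\}$ is homogeneous of degree $0$, hence a pullback $q^*(\a(\sigma)f)$, and $f\mapsto\a(\sigma)f$ is a derivation of $C^\infty(M)$ depending $C^\infty(M)$-linearly on $\sigma$ (from Leibniz on the $V$ side), defining the anchor $\a\colon E\to TM$; finally $\{q^*f,q^*g\}$ has degree $+1$ yet is a pullback from $M$, so it must vanish. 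The Leibniz identity \eqref{eq:leibnitz} for $(E,\a,[\cdot,\cdot])$ then drops out of the derivation property of $\{\phi_\sigma,\cdot\}$ applied to $\phi_{f\tau}=(q^*f)\phi_\tau$. Uniqueness in both directions is immediate since the bracket, resp. the algebroid data, is determined by its action on the generating functions.
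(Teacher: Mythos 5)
Your proof is correct, and its overall architecture (pin the bracket down on a generating family of functions, exhibit it by a local frame formula, check Jacobi only on generators, run the converse via homogeneity) matches the paper's. The one genuine difference is in the Jacobi step. The paper works only with the linear functions $\phi_\sigma$: their differentials span $T^*E^*$ off the zero section, so the Jacobiator $\Upsilon_\pi$ (a tensor) is determined there by $\on{Jac}(\phi_\sigma,\phi_\tau,\phi_\upsilon)=\phi_{\on{Jac}(\sigma,\tau,\upsilon)}=0$ and vanishes on the zero section by continuity; with this choice the anchor never enters the Jacobi check. You instead enlarge the generating set by the pullbacks $q^*f$, so that the differentials span everywhere including the zero section; the price is the four-case analysis, and the payoff is that the single substantive case isolates exactly the identity $\a([\sigma,\tau])=[\a(\sigma),\a(\tau)]$, which you correctly flag as a consequence of \eqref{eq:leibnitz} together with the Jacobi identity in $\Gamma(E)$ rather than an independent axiom. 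Your route thus avoids the density/continuity argument across the zero section at the cost of proving the anchor-morphism lemma; the paper's route does the opposite. One small point of care: the uniqueness asserted in the theorem is relative to \eqref{eq:liealgebroid} alone, so you should not simply \emph{prescribe} $\{\phi_\sigma,q^*f\}$ and $\{q^*f,q^*g\}$ but note that they are forced --- e.g.\ from $\phi_{f\sigma}=(q^*f)\phi_\sigma$ and the Leibniz rule one gets $\{\phi_\tau,q^*f\}\,\phi_\sigma=q^*(\a(\tau)f)\,\phi_\sigma$ and $\phi_\sigma\,\{q^*f,q^*g\}=0$ for all $\sigma$, which determines both off the zero section and hence everywhere. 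This is a two-line fix, not a gap.
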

\begin{proof}
Let $E\to M$ be a Lie algebroid. The differentials of the linear functions span the cotangent spaces to $E^*$ everywhere, except along the zero section $M\subset E^*$. 
Hence, there can be at most one  bivector field $\pi\in \mf{X}^2(E^*)$ such that 
the corresponding bracket $\{\cdot,\cdot\}$ satisfies \eqref{eq:liealgebroid}. To show its existence, 
we may argue with local bundle trivializations $E|_U=U\times \R^n$ over open subsets $U\subset M$. 
Let $\epsilon_1,\ldots,\epsilon_n$ be the corresponding basis of sections, define $c_{ij}^k\in C^\infty(U)$ by $[\epsilon_i,\epsilon_j]=\sum_k c_{ij}^k \epsilon_k$, and let 
$\a_i=\a(\epsilon_i)\in\mf{X}(U)$. 
Letting $y_i$ be the coordinates on 
$(\R^n)^*$ corresponding to the basis, one finds that
\begin{equation}\label{eq:local}
 \pi=\hh \sum_{ijk} c_{ij}^k\, y_k \f{\p}{\p y_i}\wedge \f{\p}{\p y_j}+\sum_i 
 \f{\p}{\p y_i}\wedge \a_i\end{equation}
is the unique bivector field on $E^*|_U=U\times (\R^n)^*$ satisfying \eqref{eq:liealgebroid}. (Evaluate the two sides on $\sigma=\sum_i f^i \eps_i$ and $\tau=\sum_i g^i \eps_i$.)  This proves the existence of $\pi\in \mf{X}^2(E^*)$. The Jacobi identity for $\{\cdot,\cdot\}$ holds true since it is satisfied on linear functions, by the Jacobi identity for $\Gamma(E)$. 

Conversely, suppose $p\colon V\to M$ is a vector bundle with a linear Poisson structure $\pi$. Let $E=V^*$ be the dual bundle. We define the Lie bracket on sections and the anchor $\a\colon E\to TM$ by 
\[ \phi_{[\sigma,\tau]}:=\{\phi_\sigma,\phi_\tau\},\ \ \ p^*\big(\a(\sigma)(f)\big):=
\{\phi_\sigma,\ p^*f\}\]
for $f\in C^\infty(M)$ and $\sigma,\tau\in \Gamma(E)$. This is well-defined: for instance, 
since $\phi_\sigma$ and $p^* f$ have homogeneity $1$ and $0$ respectively, their Poisson bracket is homogeneous of degree $1+0-1=0$. Also, it is straightforward to check that $\a(\sigma)$ is a vector field, and that the map $\sigma\mapsto \a(\sigma)$ is $C^\infty(M)$-linear. The Jacobi identity for the bracket $[\cdot,\cdot]$ follows from 
that of the Poisson bracket, while the Leibnitz rule \eqref{eq:leibnitz}
for the anchor $\a$ follows from the derivation property of the Poisson bracket, as follows:
\begin{align*}
\phi_{[\sigma,f\tau]}&=\{\phi_\sigma,\phi_{f\tau}\}\\
&=\{\phi_\sigma,(p^*f)\, \phi_\tau\}\\ 
&= p^*(\a(\sigma)f)\,\phi_\tau+(p^* f)\, \phi_{[\sigma,\tau]}.& & \qedhere
\end{align*}
\end{proof}

\subsection{Basic properties of Poisson structures}
Let $\pi\in \mf{X}^2(M)$ be a bivector field, with corresponding bundle map 
$\pi^\sharp\colon T^*M\to TM$. For $f\in C^\infty(M)$, let $X_f=\pi^\sharp(\d f)$ be the corresponding Hamiltonian vector field. Let $\{\cdot,\cdot\}$ be the bracket defined 
by $\{f,g\}=\pi(\d f,\d g)$. 
\begin{proposition} We have the equivalences, 
\begin{eqnarray*}
\mbox{$\{\cdot,\cdot\}$ is a Poisson bracket}
&\Leftrightarrow & [X_f,X_g]=X_{\{f,g\}}\ \ \mbox{ for all }f,g\\
&\Leftrightarrow & \L_{X_f}\pi=0\ \  \mbox{ for all } f\\
&\Leftrightarrow & \L_{X_f}\circ \pi^\sharp=\pi^\sharp \circ \L_{X_f} \ \ \mbox{ for all } f 
\end{eqnarray*} 
\end{proposition}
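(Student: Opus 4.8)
The plan is to establish a cycle of implications. The key identity underlying everything is a formula relating the Jacobiator $\Upsilon_\pi$ to Lie derivatives of $\pi$ along Hamiltonian vector fields. Concretely, for $f,g,h\in C^\infty(M)$ one computes
\[
(\L_{X_f}\pi)(\d g,\d h)=X_f\big(\pi(\d g,\d h)\big)-\pi(\L_{X_f}\d g,\d h)-\pi(\d g,\L_{X_f}\d h)
=\{f,\{g,h\}\}-\{\{f,g\},h\}-\{g,\{f,h\}\},
\]
using $\L_{X_f}\d g=\d(X_f g)=\d\{f,g\}$ and similarly for $h$. By the definition \eqref{eq:jac} of the Jacobiator, the right-hand side is exactly $\on{Jac}(f,g,h)=\Upsilon_\pi(\d f,\d g,\d h)$. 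So $\L_{X_f}\pi=0$ for all $f$ is equivalent to $\Upsilon_\pi(\d f,\d g,\d h)=0$ for all $f,g,h$; since differentials of functions span $T^*M$ pointwise, this is equivalent to $\Upsilon_\pi=0$, i.e. to the Jacobi identity \eqref{eq:2}. This dispatches the equivalence ``$\{\cdot,\cdot\}$ is Poisson $\Leftrightarrow$ $\L_{X_f}\pi=0$ for all $f$'' in one stroke; the computation above is the main technical content, and the only mildly delicate point is the sign/index bookkeeping in the Lie-derivative Leibniz rule for a bivector paired with two one-forms.

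Next I would handle ``$[X_f,X_g]=X_{\{f,g\}}$''. On functions, $X_{\{f,g\}}h=\{\{f,g\},h\}$, while $[X_f,X_g]h=X_f(X_g h)-X_g(X_f h)=\{f,\{g,h\}\}-\{g,\{f,h\}\}$. Hence $[X_f,X_g]-X_{\{f,g\}}$, applied to $h$, is precisely $\on{Jac}(f,g,h)$ (up to the standard rearrangement of \eqref{eq:jac} using antisymmetry). So $[X_f,X_g]=X_{\{f,g\}}$ for all $f,g$ holds iff $\on{Jac}$ vanishes on all triples, iff the Jacobi identity holds. This gives the first equivalence directly, without routing through the Lie-derivative statement, though of course one could also derive it from it.

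Finally, for the fourth condition, I would show $\L_{X_f}\circ\pi^\sharp=\pi^\sharp\circ\L_{X_f}$ is equivalent to $\L_{X_f}\pi=0$. For any vector field $X$ and any one-form $\mu$, the Leibniz rule for $\L_X$ applied to the contraction $\pi^\sharp(\mu)=\iota_\mu\pi$ gives
\[
\L_X\big(\pi^\sharp(\mu)\big)=(\L_X\pi)^\sharp(\mu)+\pi^\sharp(\L_X\mu),
\]
so $(\L_X\pi)^\sharp(\mu)=\L_X(\pi^\sharp\mu)-\pi^\sharp(\L_X\mu)$, i.e. $(\L_X\pi)^\sharp=\L_X\circ\pi^\sharp-\pi^\sharp\circ\L_X$ as operators on $\Omega^1(M)$. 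Taking $X=X_f$: the commutator $[\L_{X_f},\pi^\sharp]$ vanishes iff $(\L_{X_f}\pi)^\sharp=0$ iff $\L_{X_f}\pi=0$ (since $\pi\mapsto\pi^\sharp$ is injective). Assembling the three equivalences just proven completes the proof. I expect no serious obstacle here; the only thing to be careful about is that all four statements are quantified ``for all $f$'' (and ``for all $f,g$''), and that the reduction to the Jacobiator uses the spanning of $T^*M$ by exact one-forms, exactly as in the discussion preceding the proposition.
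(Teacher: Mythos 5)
Your proposal is correct and follows the paper's proof exactly: the paper records the same two alternative expressions for the Jacobiator, $\on{Jac}(f,g,h)=\L_{[X_f,X_g]}h-\L_{X_{\{f,g\}}}h=(\L_{X_f}\pi)(\d g,\d h)$, leaving their verification as exercises, and dismisses the last equivalence as clear via the operator identity $(\L_{X_f}\pi)^\sharp=[\L_{X_f},\pi^\sharp]$. You have simply written out those computations in full, and they check out.
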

\begin{proof}
The third equivalence is clear; the others follow from alternative expressions for the Jacobiator \eqref{eq:jac} which we leave as exercises: 
\begin{align*}\on{Jac}(f,g,h) &= \L_{[X_f,X_g]}h-\L_{X_{\{f,g\}}}h \\
&= (\L_{X_f}\pi)(\d g,\d h).
\qedhere
\end{align*}
\end{proof}

\subsection{Symplectic foliation}
Let $(M,\pi)$ be a Poisson manifold. For any $m\in M$, the skew-symmetric bilinear form $\pi_m$ on $T^*_mM$ has kernel $\ker(\pi_m^\sharp)$; hence it descends to a non-degenerate bilinear form on the quotient space 
\[ T^*_mM/\ker(\pi_m^\sharp)\cong 
\big(\on{ann}(\ker(\pi_m^\sharp))\big)^*=
\on{ran}(\pi_m^\sharp)^*.\]
That is, the subspace \[ \mathsf{S}_m=\on{ran}(\pi_m^\sharp)\subseteq T_mM\] inherits a symplectic structure, or equivalently a non-degenerate Poisson tensor $\pi_{\mathsf{S}_m}\in \wedge^2 \mathsf{S}_m$ such that the inclusion map $\mathsf{S}_m\hra T_mM$ takes $\pi_{\mathsf{S}_m}$ to $\pi_m$.

The subset $\on{ran}(\pi^\sharp)\subset TM$ is usually a \emph{singular distribution}, since the dimensions of these subspaces need not be constant. Nevertheless, 
one can define its leaves: 
\begin{definition}
An connected immersed submanifold $j\colon S\to M$ is called a \emph{symplectic leaf} of the Poisson manifold $(M,\pi)$ if for all $s\in S$, 
\[ (T_s j)(T_sS)=\on{ran}(\pi^\sharp_{j(s)}).\]
\end{definition}
For a symplectic leaf, we obtain a bivector field $\pi_S\in\mf{X}^2(S)$ such that
$\pi_S\sim_j \pi$. The corresponding bracket is characterized by 
$\{j^*f,j^*g\}_S=j^*\{f,g\}$; since $j$ is an immersion it follows that $\pi_S$ is a Poisson bracket.  Since $\on{ran}(\pi_S^\sharp)=TS$ by construction, it defines a symplectic 2-form $\omega_S\in \Om^2(S)$ such that 
$\omega_S^\flat=-(\pi_S^\sharp)^{-1}$. In Section \ref{sec:weinsteinsplitting} we 
will prove the following fundamental  result:
 \vskip.1in
 \begin{theorem} \cite{wei:loc} \label{th:leaves}
 Every point $m$ of a Poisson manifold $M$ is contained in a unique maximal symplectic leaf $S$. 
 \end{theorem}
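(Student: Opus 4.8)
The plan is to follow the route announced in the statement: deduce the result from the Weinstein splitting theorem of Section~\ref{sec:weinsteinsplitting}, by first producing a local symplectic leaf through every point and then gluing these local pieces into a global immersed submanifold. Throughout I will use three facts that are already at hand: the singular distribution $\on{ran}(\pi^\sharp)\subseteq TM$ is spanned at each point by the Hamiltonian vector fields (any $v=\pi_m^\sharp(\mu)$ equals $X_f(m)$ for any $f$ with $\d_m f=\mu$); these fields are closed under the Lie bracket, $[X_f,X_g]=X_{\{f,g\}}$; and, since $\L_{X_f}\pi=0$, every Hamiltonian flow is a Poisson diffeomorphism, so that $\on{ran}(\pi^\sharp)$ is carried to itself by all Hamiltonian flows.

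The argument is an induction on $\dim M$. If $\on{rank}(\pi_m)=0$, every $X_f$ vanishes at $m$, so $m$ lies on no nonconstant Hamiltonian trajectory, and $\{m\}$ is (trivially) the maximal symplectic leaf through $m$. If $\on{rank}(\pi_m)=2k>0$, the splitting theorem gives a Poisson diffeomorphism from a neighbourhood $U$ of $m$ onto a product $S_0\times N_0$, with $(S_0,\pi_{S_0})$ symplectic of dimension $2k$, with $\pi_{N_0}$ vanishing at the point $n_0$ corresponding to $m$, and with $m\mapsto(s_0,n_0)$. Since $k>0$ we have $\dim N_0<\dim M$, so by the inductive hypothesis $(N_0,\pi_{N_0})$ is partitioned into symplectic leaves. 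Because $\on{ran}(\pi^\sharp)_{(s,n)}=T_sS_0\oplus\on{ran}(\pi_{N_0}^\sharp)_n$ under this identification, the connected integral submanifolds of $\on{ran}(\pi^\sharp)$ inside $U$ are exactly the products $S_0\times L$ with $L$ a symplectic leaf of $N_0$; these form a (singular) foliation of $U$, and the plaque through $m$ is the symplectic submanifold $S_0\times\{n_0\}$, since $\on{rank}(\pi_{N_0})$ vanishes at $n_0$. Moreover, combining Hamiltonians of the form $\pr_{S_0}^*h$ with Hamiltonians pulled back from $N_0$ (which Poisson-commute with the former), one sees that the Hamiltonian flows of $\pi$ act transitively on each plaque $S_0\times L$, using the corresponding transitivity on $L$ supplied by the inductive hypothesis.

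Now globalize. Let $S$ be the set of points reachable from $m$ by a finite concatenation of Hamiltonian flow segments; this is an equivalence relation on $M$, and by the local description $S$ is a union of plaques. Give $S$ the topology generated by the plaques, finer than the subspace topology. The plaques fit together consistently on overlaps because two connected integral submanifolds of $\on{ran}(\pi^\sharp)$ through a common point agree near that point -- applying the splitting theorem at such a point reduces this to the analogous local uniqueness in the lower-dimensional factor, already known by induction. One then checks in the standard way that $S$ becomes a connected immersed submanifold, necessarily second countable, with $T_pS=\on{ran}(\pi_p^\sharp)$ for all $p\in S$; by the definition above and the remarks preceding the theorem, $S$ is then a symplectic leaf, carrying a symplectic form $\omega_S$. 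It is the maximal one through $m$: any symplectic leaf $S'\ni m$ is connected with $T_pS'=\on{ran}(\pi_p^\sharp)$, hence each of its points is reachable from $m$ along a path tangent to $\on{ran}(\pi^\sharp)$, and such a path is confined to plaques, so $S'\subseteq S$; uniqueness of the maximal leaf follows, closing the induction.

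I expect the main obstacle to be this last, globalization step: assembling the compatible family of local singular foliations into a single well-defined immersed submanifold, and in particular verifying that its leaf topology is Hausdorff and second countable and that overlapping plaques genuinely match. This is the singular analogue of the gluing in the Frobenius theorem; the most efficient way to organize it is as an instance of the Stefan--Sussmann theorem applied to the Lie-bracket-closed, flow-invariant family $\{X_f\}$ of Hamiltonian vector fields -- and invoking that theorem directly in fact proves the result without the splitting theorem at all. A smaller point requiring care is that the splitting theorem only produces a genuine drop in dimension away from the zeros of $\pi$, which is precisely why the rank-zero case is peeled off at the start.
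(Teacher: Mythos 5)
Your proof is correct and takes essentially the same route as the paper: it deduces the theorem from the Weinstein splitting theorem via the local product model $N\times \mathsf{S}$ and an induction on the dimension of the transversal, which is exactly the argument Section~\ref{sec:weinsteinsplitting} sketches in one paragraph. The globalization by flow-outs of Hamiltonian vector fields that you spell out is the part the paper leaves implicit (and notes, in its Stefan--Sussmann form, as the approach of Weinstein's original paper).
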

\vskip.1in
Thus, $M$ has a decomposition into maximal symplectic leaves. For regular Poisson structures (i.e., such that $\pi^\sharp$ has constant rank), the integrability of $\on{ran}(\pi^\sharp)$ follows from Frobenius' theorem, since  $[X_f,X_g]=X_{\{f,g\}}$. In the general case, one can still obtain the leaf through a given point $m$ as the `flow-out' of $m$ under all Hamiltonian vector fields, and this is the argument used in \cite{wei:loc}. In Section \ref{sec:weinsteinsplitting}, we will use a different approach, and deduce Theorem \ref{th:leaves} as a corollary to the \emph{Weinstein splitting theorem} for Poisson structures. 
\begin{example}
For $M=\g^*$ the dual of a Lie algebra $\g$, the symplectic leaves are the orbits of coadjoint action $G$ on $\g^*$. Here $G$ is any connected Lie group integrating $\g$.  
\end{example}
\begin{example}
For a Poisson structure $\pi$ on a 2-dimensional manifold $M$, let $Z\subset M$ be its set of zeros, i.e. points $m\in M$ where $\pi_m=0$. Then the 2-dimensional symplectic leaves of $\pi$ are the connected components of $M-Z$, while the $0$-dimensional leaves are the points of $Z$. 
\end{example}

\subsection{Notes}
While Poisson brackets  have long been used as a formalism in physics, going back to the original work of Poisson, the study of Poisson structures on manifolds was formally launched only in 1979 through the work of Lichnerowicz \cite{lic:var}. Many of the basic results about Poisson manifolds were proved in Weinstein's classical paper \cite{wei:loc}. The Poisson structure on the dual of a Lie algebra already appears in Lie's work; it was rediscovered by Berezin \cite{ber:rem}, and played in important role in Kirillov-Kostant-Souriau's \emph{orbit method} in representation theory.  Lie algebroids were introduced by Pradines \cite{pra:th} as the infinitesimal counterparts to Ehresmann's notion of Lie groupoids; see Appendix \ref{app:groupoids}.
For more information on the history of Poisson geometry, see the book \cite{kos:his} by Kosmann-Schwarzbach.

 \section{Dirac manifolds}\label{sec:diracmanifolds}
\subsection{Dirac structures} 
It turns out to be extremely useful to view Poisson structures $\pi$ in terms of the 
graph $\on{Gr}(\pi)$ of the bundle map $\pi^\sharp\colon T^*M\to TM,\ \mu\mapsto \pi(\mu,\cdot)$. 
Let 
\begin{equation}\label{eq:double}
 \T M=TM\oplus T^*M
 \end{equation}
be the direct sum of the tangent and cotangent bundles. Elements of this bundle will be written $x=v+\mu$, with $v\in T_mM$ and $\mu\in T^*_mM$, and similarly 
sections as $\sig=X+\alpha$, where $X$ is a vector field and $\alpha$ a 1-form. We will denote by 
\begin{equation}\label{eq:anchor}
 \a\colon \T M\to TM\end{equation} 
the projection to the first summand; thus 
$\a(v+\mu)=v$. 
For any subbundle $E\subset \T M$, we denote by $E^\perp$ its orthogonal with respect to the symmetric bilinear form 
(of split signature), 
\begin{equation}\label{eq:bilinearform}
\l v_1+\mu_1,\ v_2+\mu_2\r=\l \mu_1,v_2\r+\l\mu_2,v_1\r;
\end{equation}
here $v_1,v_2\in TM$ and $\mu_1,\mu_2\in T^*M$ (all with the same base point in $M$). 
The subbundle $E$ is called maximal isotropic, or \emph{Lagrangian}  if $E=E^\perp$. 
(The terminology is borrowed from symplectic geometry, where it is used for maximal isotropic subspaces for non-degenerate \emph{skew-symmetric} forms.)
Given a bivector field $\pi \in \mf{X}^2(M)$, its \emph{graph}
\[ E=\on{Gr}(\pi)\subset \T M,\]
given as the set of all  $\pi^\sharp(\mu)+\mu$ for $\mu\in T^*M$, is Lagrangian; 
in fact the Lagrangian subbundles $E\subset \T M$ with $E\cap TM=0$ are exactly the graphs of bivector fields. 

To formulate the integrability condition for $\pi$, we need the 
\emph{Courant bracket} (also know as the \emph{Dorfman bracket}) 
\begin{equation}\label{eq:courantbracket}
 \Cour{\sigma_1,\sigma_2}=[X_1,X_2]+\L_{X_1}\alpha_2-\iota_{X_2}\d\alpha_1\end{equation}
on sections $\sigma_i=X_i+\alpha_i\in\Gamma(\T M)$.  It is straightforward to check that this bracket has the following properties,  for all sections: 
\begin{align}
\a(\sigma_1)\l \sigma_2,\sigma_3\r
&=\l \Cour{\sigma_1,\sigma_2},\sigma_3\r
+\l \sigma_2,\ \Cour{\sigma_1,\sigma_3}\r,\label{eq:i}\\
  \Cour{\sigma_1,\Cour{\sigma_2,\sigma_3}}
&=\Cour{\Cour{\sigma_1,\sigma_2},\sigma_3}+
\Cour{\sigma_2,\Cour{\sigma_1,\sigma_3}},\label{eq:ii}\\
\Cour{\sigma,\tau}+\Cour{\tau,\sigma}&=\d\, \l\sigma,\tau\r.\label{eq:iii}
\end{align}
%
Note that the Courant bracket is not skew-symmetric. However, it restricts to a skew-symmetric bracket on sections of Lagrangian subbundles since the right hand side of
\eqref{eq:iii} is zero on such sections. One also has the Leibnitz identity
\begin{equation}\label{eq:leibnitzc}
\Cour{\sigma,f\tau}=f\Cour{\sigma,\tau}+\ca{L}_{\a(\sigma)}(f)\ \tau
\end{equation}
for $\sigma,\tau\in \Gamma(\T M)$ and $f\in C^\infty(M)$. 
\begin{definition}\cite{cou:di,couwein:beyond}
A \emph{Dirac structure} on $M$ is a Lagrangian subbundle $E\subset \T M$ whose  space of sections is closed under the Courant bracket.
\end{definition}
\begin{proposition}\label{prop:diraclie}
Any Dirac structure $E\subset \T M$ acquires the structure of a Lie algebroid, with the Lie bracket on sections given by 
the Courant bracket on $\Gamma(E)\subset \Gamma(\T M)$, and with the anchor obtained by restriction of the anchor $\a\colon \T M\to TM$. 
 \end{proposition}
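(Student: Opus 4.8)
The plan is to verify the Lie algebroid axioms for $(E, \a|_E, \Cour{\cdot,\cdot}|_{\Gamma(E)})$ directly, using the three properties \eqref{eq:i}--\eqref{eq:iii} of the Courant bracket together with the Leibnitz identity \eqref{eq:leibnitzc}, all of which hold on $\Gamma(\T M)$ and hence a fortiori on $\Gamma(E)$. The only genuine content is to check that everything stays inside $\Gamma(E)$, and this is precisely what the definition of a Dirac structure grants us: $\Gamma(E)$ is closed under $\Cour{\cdot,\cdot}$, so the bracket restricts to a well-defined bilinear operation $\Gamma(E)\times\Gamma(E)\to\Gamma(E)$, and the anchor $\a\colon\T M\to TM$ restricts to a bundle map $\a|_E\colon E\to TM$.

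First I would observe that the restricted Courant bracket is skew-symmetric on $\Gamma(E)$: by \eqref{eq:iii} we have $\Cour{\sigma,\tau}+\Cour{\tau,\sigma}=\d\l\sigma,\tau\r$, and since $E$ is Lagrangian, $\l\sigma,\tau\r=0$ for all $\sigma,\tau\in\Gamma(E)$, so the right-hand side vanishes. Next, the Jacobi identity: equation \eqref{eq:ii} is the Leibnitz (derivation) form of the Jacobi identity, $\Cour{\sigma_1,\Cour{\sigma_2,\sigma_3}}=\Cour{\Cour{\sigma_1,\sigma_2},\sigma_3}+\Cour{\sigma_2,\Cour{\sigma_1,\sigma_3}}$; combined with the skew-symmetry just established on $\Gamma(E)$, this is equivalent to the usual cyclic Jacobi identity for a Lie bracket. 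So $(\Gamma(E),\Cour{\cdot,\cdot})$ is a Lie algebra. Finally, the Leibnitz rule relating bracket and anchor is exactly \eqref{eq:leibnitzc}, $\Cour{\sigma,f\tau}=f\Cour{\sigma,\tau}+(\ca{L}_{\a(\sigma)}f)\,\tau$, which is the identity \eqref{eq:leibnitz} required of a Lie algebroid with anchor $\a|_E$; here one uses that $\a(\sigma)$ depends only on $\sigma$ and that $f\tau\in\Gamma(E)$ whenever $\tau\in\Gamma(E)$, since $E$ is a subbundle and $\Gamma(E)$ is a $C^\infty(M)$-module.

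There is no real obstacle here — the proposition is essentially a bookkeeping consequence of the structural identities \eqref{eq:i}--\eqref{eq:iii} and \eqref{eq:leibnitzc}, which were stated (and asserted to be "straightforward to check") for the ambient Courant algebroid $\T M$. The one point deserving a word of care is the passage from the non-skew Dorfman bracket to an honest skew-symmetric Lie bracket: this works only because $E$ is Lagrangian, and it is worth noting that property \eqref{eq:i} is not needed for the Lie algebroid structure itself — it reflects the extra compatibility with the pairing that makes $\T M$ a Courant algebroid, and restricts to the statement that $\a(\sigma)\l\sigma_2,\sigma_3\r = 0$ automatically on $\Gamma(E)$. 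I would close by remarking that $\a|_E$ is automatically a Lie algebra homomorphism $\Gamma(E)\to\mf{X}(M)$, i.e. $\a(\Cour{\sigma,\tau})=[\a(\sigma),\a(\tau)]$, which follows from the definition \eqref{eq:courantbracket} of the Courant bracket (the tangent component of $\Cour{X_1+\alpha_1,X_2+\alpha_2}$ is $[X_1,X_2]$) — this is part of being a Lie algebroid in some conventions, and in any case is immediate.
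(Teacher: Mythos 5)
Your argument is correct and is essentially identical to the paper's proof: skew-symmetry on $\Gamma(E)$ from \eqref{eq:iii} together with $E$ being Lagrangian, the Jacobi identity from \eqref{eq:ii}, and the Leibnitz rule from \eqref{eq:leibnitzc}. The additional remarks (that \eqref{eq:i} is not needed, and that the anchor is bracket-preserving) are accurate but not required.
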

\begin{proof}
By \eqref{eq:iii}, the Courant bracket is skew-symmetric on sections of $E$, and \eqref{eq:ii} gives the Jacobi identity.
The Leibnitz identity follows from that for the Courant bracket, Equation \eqref{eq:leibnitzc}.
\end{proof}

The integrability of a Lagrangian subbundle $E\subset \T M$ is equivalent to the vanishing of the expression 
\begin{equation}\label{eq:couranttensor}
 \Upsilon_E(\sigma_1,\sigma_2,\sigma_3)=\l \sigma_1,\ \Cour{\sigma_2,\sigma_3}\r\end{equation}
for all $\sigma_1,\sigma_2,\sigma_3\in\Gamma(E)$. Indeed, given $\sigma_2,\sigma_3\in\Gamma(E)$, the vanishing for all $\sigma_1\in \Gamma(E)$ means precisely that 
$\Cour{\sigma_2,\sigma_3}$ takes values in $E^\perp=E$. Using the properties \eqref{eq:i} and \eqref{eq:iii} of the Courant bracket, 
one sees that $\Upsilon_E$ is skew-symmetric in its entries. Since $\Upsilon_E$ is clearly  tensorial in its first entry, it follows that 
it is tensorial in all three entries: that is
\[ \Upsilon_E\in \Gamma(\wedge^3 E^*).\]
In particular, to calculate $\Upsilon_E$ it suffices to determine its values on any collection of sections that span $E$ everywhere.

\subsection{Poisson structures as Dirac structures}
\begin{proposition}
A bivector field $\pi\in\mf{X}^2(M)$ is Poisson if and only if its graph 
$\on{Gr}(\pi)$  is a Dirac structure. 
\end{proposition}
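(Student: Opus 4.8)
The plan is to exploit the tensoriality observation made just above: since $\Upsilon_E \in \Gamma(\wedge^3 E^*)$, it suffices to evaluate the Courant-integrability tensor $\Upsilon_{\on{Gr}(\pi)}$ on a collection of sections that span $\on{Gr}(\pi)$ everywhere. The natural choice is to use sections of the form $\sigma_f = X_f + \d f = \pi^\sharp(\d f) + \d f$ for $f \in C^\infty(M)$; since exact $1$-forms span $T^*M$ at every point and $\on{Gr}(\pi)$ is exactly the image of $\mu \mapsto \pi^\sharp(\mu) + \mu$, these sections span $\on{Gr}(\pi)$ everywhere, which is all the tensoriality argument requires.

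First I would compute the Courant bracket $\Cour{\sigma_f, \sigma_g}$ using the defining formula \eqref{eq:courantbracket}: with $X_1 = X_f$, $\alpha_1 = \d f$, $X_2 = X_g$, $\alpha_2 = \d g$, the term $\iota_{X_g}\d\alpha_1 = \iota_{X_g}\d\d f = 0$ vanishes, so $\Cour{\sigma_f,\sigma_g} = [X_f,X_g] + \L_{X_f}\d g = [X_f,X_g] + \d(\L_{X_f}g) = [X_f,X_g] + \d\{f,g\}$. Next I would pair this with a third section $\sigma_h = X_h + \d h$ using the bilinear form \eqref{eq:bilinearform}: $\l \sigma_h, \Cour{\sigma_f,\sigma_g}\r = \l \d h, [X_f,X_g]\r + \l \d\{f,g\}, X_h\r = (\L_{[X_f,X_g]}h) + \L_{X_h}\{f,g\}$. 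Using the formula $\L_{X_h}\{f,g\} = \{h,\{f,g\}\}$ and the identity $\Cour{\tau,\sigma}+\Cour{\sigma,\tau} = \d\l\sigma,\tau\r$ (so that sign bookkeeping is controlled), this expression should reorganize — after cycling — into $\pm\on{Jac}(f,g,h)$, i.e. into $\mp\Upsilon_\pi(\d f,\d g,\d h)$. I would take care to match this against the first of the alternative Jacobiator formulas recorded in the proof of the proposition on basic properties, namely $\on{Jac}(f,g,h) = \L_{[X_f,X_g]}h - \L_{X_{\{f,g\}}}h$, which is essentially the same combination that drops out here.

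Having established the pointwise identity $\Upsilon_{\on{Gr}(\pi)}(\sigma_f,\sigma_g,\sigma_h) = \pm \Upsilon_\pi(\d f, \d g, \d h)$ on spanning sections, the conclusion is immediate in both directions: $\on{Gr}(\pi)$ is Dirac $\iff \Upsilon_{\on{Gr}(\pi)} = 0 \iff \Upsilon_\pi = 0 \iff$ the Jacobi identity \eqref{eq:2} holds $\iff \pi$ is Poisson, where the second equivalence uses that the $\d f$ span $T^*M$ and the third is the discussion following \eqref{eq:jac}.

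The main obstacle I anticipate is purely one of sign and convention bookkeeping: the Courant bracket is not skew-symmetric, the bilinear form has a particular normalization, and the paper's sign convention relating vector fields to ODEs (and $\pi^\sharp = -(\omega^\flat)^{-1}$) means one must be careful that the final sign in $\Upsilon_{\on{Gr}(\pi)} = c\,\Upsilon_\pi$ comes out with a nonzero constant $c$ — the precise value of $c$ is irrelevant since we only need the vanishing, so even if the sign tracking is delicate, the logical structure is robust. A secondary subtlety is confirming that $\Upsilon_{\on{Gr}(\pi)}$ really is tensorial and hence determined by its values on the $\sigma_f$, but this is exactly the content of the remark preceding the proposition, so it may simply be cited.
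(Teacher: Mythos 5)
Your proposal is correct and follows essentially the same route as the paper: evaluate the tensor $\Upsilon_{\on{Gr}(\pi)}$ on the spanning sections $\sigma_f=X_f+\d f$, compute $\Cour{\sigma_f,\sigma_g}=[X_f,X_g]+\d\{f,g\}$, and identify the resulting pairing with the Jacobiator. (In fact your expression $\L_{[X_f,X_g]}h+\L_{X_h}\{f,g\}$ equals $\on{Jac}(f,g,h)$ exactly, with no residual sign ambiguity, by the identity $\on{Jac}(f,g,h)=\L_{[X_f,X_g]}h-\L_{X_{\{f,g\}}}h$ that you cite.)
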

\begin{proof}
We will show that $\Upsilon_{\on{Gr}(\pi)}$ 
coincides with $\Upsilon_\pi$ (cf. Section \ref{subsec:poisson}), under the isomorphism $\on{Gr}(\pi)\cong T^*M$ given by projection along $TM$.  It suffices to consider sections of the form $X_f+\d f$ for $f\in C^\infty(M)$, where $X_f=\pi^\sharp(\d f)$. 
Thus let $\sigma_i=X_{f_i}+\d f_i$. We have 
\[ \Cour{\sigma_2,\sigma_3}=[X_{f_2},X_{f_3}]+\d\, \ca{L}_{X_{f_2}}(f_3),\]
hence 
\[ \l\sigma_1,\Cour{\sigma_2,\sigma_3}\r=\ca{L}_{[X_{f_2},X_{f_3}]}(f_1)+
\ca{L}_{X_{f_1}}\ca{L}_{X_{f_2}}(f_3)=\on{Jac}(f_1,f_2,f_3).\]
The result follows. 
\end{proof}
Combining with Proposition \ref{prop:diraclie}, it follows that the graph of any Poisson structure $\pi$ is a Lie algebroid. Using the 
bundle isomorphism $\on{Gr}(\pi)\cong T^*M$ (forgetting the vector field part) we  obtain:
\begin{corollary}
For any Poisson manifold $(M,\pi)$, the cotangent bundle $T^*M$ is a Lie algebroid, with bracket on 
$\Gamma(T^*M)=\Om^1(M)$ given by 
\begin{equation}\label{eq:1formbracket}
[\alpha,\beta]=\L_{\pi^\sharp(\alpha)}\beta-\iota_{\pi^\sharp(\beta)}\d\alpha\end{equation}
and with anchor $\pi^\sharp$. 
\end{corollary}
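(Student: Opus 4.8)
The plan is to deduce this corollary directly from the two results that immediately precede it. By the preceding proposition, a bivector field $\pi$ is Poisson exactly when $\on{Gr}(\pi)\subset\T M$ is a Dirac structure; by Proposition \ref{prop:diraclie}, any Dirac structure is a Lie algebroid, with bracket the restriction of the Courant bracket and anchor the restriction of $\a$. So the only real content is to transport this Lie algebroid structure along the bundle isomorphism $\pi^\sharp\oplus\on{id}\colon T^*M\to\on{Gr}(\pi)$, $\alpha\mapsto \pi^\sharp(\alpha)+\alpha$, and to check that the transported bracket and anchor are given by the stated formulas \eqref{eq:1formbracket} and $\pi^\sharp$.

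First I would note that the isomorphism $T^*M\xrightarrow{\sim}\on{Gr}(\pi)$ is an isomorphism of vector bundles over $M$, inducing an isomorphism of $C^\infty(M)$-modules $\Om^1(M)\xrightarrow{\sim}\Gamma(\on{Gr}(\pi))$, $\alpha\mapsto \pi^\sharp(\alpha)+\alpha$. Transporting the Lie algebroid structure of $\on{Gr}(\pi)$ through this isomorphism therefore automatically yields a Lie algebroid structure on $T^*M$; all Lie algebroid axioms (skew-symmetry, Jacobi, Leibnitz \eqref{eq:leibnitz}) hold by transport of structure, so nothing needs to be reproved there. Next I would identify the transported anchor: it sends $\alpha$ to $\a\big(\pi^\sharp(\alpha)+\alpha\big)=\pi^\sharp(\alpha)$, which is the claimed anchor. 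Finally I would identify the transported bracket: by definition it is the unique bracket $[\cdot,\cdot]$ on $\Om^1(M)$ such that $\pi^\sharp([\alpha,\beta])+[\alpha,\beta]=\Cour{\pi^\sharp(\alpha)+\alpha,\ \pi^\sharp(\beta)+\beta}$; equivalently, $[\alpha,\beta]$ is the $T^*M$-component of the Courant bracket of the two sections. Expanding the Courant bracket \eqref{eq:courantbracket} with $X_i=\pi^\sharp$ applied to the $i$-th form and $\alpha_i$ the $i$-th form gives a vector-field part $[\pi^\sharp(\alpha),\pi^\sharp(\beta)]$ and a $1$-form part $\L_{\pi^\sharp(\alpha)}\beta-\iota_{\pi^\sharp(\beta)}\d\alpha$, and the latter is exactly the right-hand side of \eqref{eq:1formbracket}.

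The one point that genuinely needs verification — and which I expect to be the main (minor) obstacle — is the consistency check that the vector-field part $[\pi^\sharp(\alpha),\pi^\sharp(\beta)]$ of the Courant bracket really does equal $\pi^\sharp$ applied to the $1$-form part $\L_{\pi^\sharp(\alpha)}\beta-\iota_{\pi^\sharp(\beta)}\d\alpha$, so that the resulting section again lies in $\on{Gr}(\pi)$ and the formula for $[\alpha,\beta]$ is well-defined. But this is precisely the statement that $\on{Gr}(\pi)$ is closed under the Courant bracket, i.e. that $\on{Gr}(\pi)$ is a Dirac structure, which is guaranteed by the preceding proposition since $\pi$ is Poisson. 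Concretely, it suffices to check the identity on exact $1$-forms $\alpha=\d f$, $\beta=\d g$, where it reduces to $[X_f,X_g]=X_{\{f,g\}}$ (from the Proposition on basic properties of Poisson structures), together with the computation $\L_{X_f}\d g-\iota_{X_g}\d\d f=\d\L_{X_f}g=\d\{f,g\}$; since exact $1$-forms span $T^*M$ and the expressions in question are $C^\infty(M)$-linear modulo lower-order terms controlled by the Leibnitz rule \eqref{eq:leibnitzc}, this is enough. Assembling these observations gives the corollary.
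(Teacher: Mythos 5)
Your proposal is correct and follows exactly the route the paper takes: the paper deduces the corollary in one line by combining the proposition that $\on{Gr}(\pi)$ is a Dirac structure with Proposition \ref{prop:diraclie}, and then transporting the Lie algebroid structure along the isomorphism $\on{Gr}(\pi)\cong T^*M$ that forgets the vector field part. Your additional consistency check (that the vector-field part of the Courant bracket equals $\pi^\sharp$ of the one-form part) is a useful elaboration of why the transported bracket is well-defined, but it is the same argument, just spelled out.
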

\begin{remark}
Note that on exact 1-forms, the bracket \eqref{eq:1formbracket} simplifies to $ [\d f,\d g]=\d\{f,g\}$. 
That is, $\d\colon C^\infty(M)\to \Omega^1(M)$ is a Lie algebra morphism. 
\end{remark}
\begin{remark}
The condition that \eqref{eq:1formbracket}
be a Lie bracket is another equivalent characterization of Poisson structures. 
\end{remark}
\begin{remark} Recall that a Lie algebroid structure on a vector bundle is equivalent to a 
linear Poisson structure on the total space of the dual bundle (Theorem \ref{th:la}). 
Applying this to the cotangent  Lie algebroid
of a Poisson manifold $(M,\pi)$, it follows that the total space of the tangent bundle 
$TM=(T^*M)^*$ inherits a Poisson structure. This Poisson structure admits an alternative description as the \emph{tangent lift} of the Poisson structure $\pi$. 
\end{remark}

\subsection{Gauge transformations of Poisson and Dirac structures}
Any diffeomorphism $\Phi\colon M\to M$ defines an automorphism 
\[ \T\Phi\colon \T M\to \T M\]
by taking the sum of the tangent and cotangent maps. This  automorphism preserves the Courant bracket,  and in particular takes Dirac structures  $E\subset \T M$ to Dirac structures $(\T \Phi)(E)$. For Poisson structures $\pi$, 
\[ (\T\Phi)(\on{Gr}(\pi))=\on{Gr}(\Phi_* \pi),\]
the graph of the Poisson structure $\Phi_*\pi=(T\Phi)(\pi)$.  
More interestingly, every closed 2-form $\omega\in \Om^2_{cl}(M)$ also defines an automorphism 
\[ \ca{R}_\omega\colon \T M\to \T M,\ 
x\mapsto x+\iota_{\a(x)}\omega.\]
%
It is an exercise to check that $\ca{R}_\omega$ preserves the metric and the Courant bracket. Again, $\ca{R}_\omega$ acts on the set of Dirac structures. If $\pi$ is a Poisson structure, and if $\ca{R}_\omega(\on{Gr}(\pi))$ is transverse to $TM$ (e.g., if $\omega$ is `sufficiently small' relative to $\pi$), then it defines a new Poisson structure $\pi^\omega$ by 
\[ \ca{R}_\omega (\on{Gr}(\pi))=\on{Gr}(\pi^\omega).\]
\begin{lemma}\cite{sev:poi1}
The Poisson structure $\pi^\omega$ satisfies $\on{ran}((\pi^\omega)^\sharp)=\on{ran}(\pi^\sharp)$. For $m\in M$,  the symplectic 2-forms on $\mathsf{S}_m=\on{ran}(\pi_m^\sharp)$ corresponding to $\pi^\omega$ and $\pi$ differ by the restriction of $-\omega$ to $\mathsf{S}_m$.
\end{lemma}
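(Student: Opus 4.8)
The plan is to work out explicitly how the gauge transformation $\ca{R}_\omega$ acts on graphs of bivector fields, and then read off both claims. Recall that $\on{Gr}(\pi)$ consists of all elements $\pi^\sharp(\mu)+\mu$ with $\mu\in T^*M$, and that $\ca{R}_\omega$ sends $x\mapsto x+\iota_{\a(x)}\omega$. Applied to $x=\pi^\sharp(\mu)+\mu$, whose anchor is $\a(x)=\pi^\sharp(\mu)$, we get the element $\pi^\sharp(\mu)+\big(\mu+\iota_{\pi^\sharp(\mu)}\omega\big)$. The assumption is that this collection is again a graph over $T^*M$, i.e. that the map $\mu\mapsto \mu+\omega^\flat(\pi^\sharp(\mu)) = (\operatorname{id}+\omega^\flat\circ\pi^\sharp)(\mu)$ is a bundle isomorphism of $T^*M$; call its inverse $B\colon T^*M\to T^*M$. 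Then $\ca{R}_\omega(\on{Gr}(\pi))=\on{Gr}(\pi^\omega)$ where $(\pi^\omega)^\sharp = \pi^\sharp\circ B$.

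First I would prove the range statement. Since $B$ is an isomorphism, $\on{ran}\big((\pi^\omega)^\sharp\big)=\on{ran}(\pi^\sharp\circ B)=\on{ran}(\pi^\sharp)$ immediately; so the singular distribution, hence the symplectic leaves, are unchanged. Fix $m\in M$ and write $\mathsf{S}=\mathsf{S}_m=\on{ran}(\pi_m^\sharp)\subseteq T_mM$. Next I would identify the symplectic forms. By the discussion in Section~1.5, the symplectic form $\omega_{\mathsf{S}}$ associated to $\pi$ on $\mathsf{S}$ is characterized by $\omega_{\mathsf{S}}^\flat = -(\pi_{\mathsf{S}}^\sharp)^{-1}$, where $\pi_{\mathsf{S}}^\sharp\colon \mathsf{S}^*\to \mathsf{S}$ is the nondegenerate tensor induced by $\pi_m^\sharp$ (which factors through $T_m^*M/\ker(\pi_m^\sharp)\cong \mathsf{S}^*$). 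The key computation is to track how $B$ descends to $\mathsf{S}^*$: restricting the identity $(\operatorname{id}+\omega^\flat\circ\pi^\sharp)\circ B=\operatorname{id}$ and using that $\pi^\sharp$ lands in $\mathsf{S}$, one finds that on the quotient $\mathsf{S}^*$ the induced map $\bar B$ satisfies $\pi_{\mathsf{S}}^\sharp\circ\bar B=(\operatorname{id}_{\mathsf{S}} + \omega|_{\mathsf{S}}^\flat\circ\pi_{\mathsf{S}}^\sharp)^{-1}\circ \pi_{\mathsf{S}}^\sharp$, hence $(\pi^\omega_{\mathsf{S}})^\sharp = \pi_{\mathsf{S}}^\sharp\circ\bar B$ has inverse $(\pi_{\mathsf{S}}^\sharp)^{-1}+\omega|_{\mathsf{S}}^\flat$. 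Passing to the symplectic forms via $\omega^\flat = -(\pi^\sharp)^{-1}$ gives $(\omega^\omega_{\mathsf{S}})^\flat = \omega_{\mathsf{S}}^\flat - \omega|_{\mathsf{S}}^\flat$, i.e. $\omega^\omega_{\mathsf{S}} = \omega_{\mathsf{S}} - \omega|_{\mathsf{S}}$, which is the claim.

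The main obstacle is bookkeeping at the level of the quotient: $\pi^\sharp\colon T^*M\to TM$ is not invertible, so the manipulations above only make sense after passing to $\mathsf{S}$ and $\mathsf{S}^*\cong T^*M/\ker(\pi^\sharp)$, and one must check that $\operatorname{id}+\omega^\flat\circ\pi^\sharp$ and its inverse $B$ genuinely descend to well-defined maps there — equivalently, that $B$ preserves $\ker(\pi^\sharp)$ and that only $\omega|_{\mathsf{S}}$, not all of $\omega$, enters. This follows because $\omega^\flat\circ\pi^\sharp$ kills $\ker(\pi^\sharp)$ and has image in $\ker(\pi^\sharp)^{\mathrm{ann-dual}}$-compatible position, but it deserves to be spelled out carefully; once that is done, the rest is linear algebra on the single nondegenerate space $\mathsf{S}_m$. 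An alternative, perhaps cleaner, route is purely pointwise: the restriction of $\ca{R}_\omega$ to the symplectic leaf through $m$ intertwines $\on{Gr}(\pi)$-restricted-to-$\mathsf{S}$ with $\on{Gr}(\pi^\omega)$-restricted-to-$\mathsf{S}$, and for an honest symplectic form the gauge transformation by $\omega|_{\mathsf{S}}$ is classically known to subtract $\omega|_{\mathsf{S}}$; I would use whichever of the two presentations is shorter.
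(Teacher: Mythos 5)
Your argument is correct and is essentially the paper's computation: both proofs rest on the observation that $\ca{R}_\omega$ fixes the tangent part of a graph element $\pi^\sharp(\mu)+\mu$ and shifts only the covector part by $\iota_{\pi^\sharp(\mu)}\omega$, which gives the equality of ranges at once and shows the induced leafwise bilinear form changes by $\pm\omega|_{\mathsf{S}_m}$. The paper's version merely sidesteps the inversion and quotient bookkeeping you flag by evaluating directly: for $v_i=\pi^\sharp(\mu_i)$ the form on $\mathsf{S}_m$ is $\l \mu_1,v_2\r$, and replacing $\mu_1$ by $\mu_1+\iota_{v_1}\omega$ adds exactly $\omega_m(v_1,v_2)$, so no inverse of $\on{id}+\omega^\flat\circ\pi^\sharp$ is ever needed.
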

\begin{proof}
The first claim $\on{ran}((\pi^\omega)^\sharp)=\on{ran}(\pi^\sharp)$ follows from 
\[ \a\big(\on{Gr}(\pi^\omega)\big)=
\a\big(\ca{R}_\omega(\on{Gr}(\pi)\big)=\a\big(\on{Gr}(\pi)\big).\]
For the second claim, given $m\in M$, let $\sig_m$ be the 2-form on $\mathsf{S}_m=\on{ran}(\pi_m^\sharp)$ defined by $\pi_m$. If $v_1,v_2\in \mathsf{S}_m$
with $v_i=\pi_m^\sharp(\mu_i)$, we have 
\[ \sig_m(v_1,v_2)=-\pi_m(\mu_1,\mu_2)=\l\mu_1,v_2\r.\]
The gauge transformation takes $v_i+\mu_i$ to $v_i+\mu_i+\iota_{v_i}\omega$, 
hence the right hand side changes to 
\[\l \mu_1+\iota_{v_1}\omega,v_2\r=\sigma_m(v_1,v_2)+\omega_m(v_1,v_2).\qedhere\]  
\end{proof}
One calls $\pi^\omega$ the \emph{gauge transformation} of $\pi$ by the 
closed 2-form $\omega$. More generally, for a Dirac structure $E\subset \T M$ one calls $\ca{R}_\omega(E)$ the gauge transformation of the Dirac structure.
\begin{proposition}[Automorphisms of the Courant bracket]\label{prop:couraut}
\cite{gua:ge1}
\begin{enumerate}
\item 
The group of Courant automorphisms of $\T M$ (vector bundle automorphisms preserving the metric and the bracket, and compatible with the anchor) is a semi-direct product 
\[ \on{Aut}_{CA}(\T M)=\Om^2_{cl}(M)\rtimes \on{Diff}(M),\]
where $(\omega,\Phi)$ acts as $ \ca{R}_{-\omega}\circ \T \Phi$. 
\item The Lie algebra  of infinitesimal Courant automorphisms is a semi-direct product 
\[\mf{aut}_{CA}(\T M)= \Om^2_{cl}(M)\rtimes \mf{X}(M),\]
where the action of $(\gamma,X)$ on a section $\tau=Y+\beta \in \Gamma(\T M)$
is given by 
\begin{equation}\label{eq:infaction}
 (\gamma,X).\tau=[X,Y] +\L_X\beta - \iota_Y \gamma.\end{equation}
\item For any section $\sigma=X+\alpha\in \Gamma(\T M)$, 
the Courant bracket $\Cour{\sigma,\cdot}$ is the infinitesimal automorphism 
$(\d\alpha,\,X)$. 
\end{enumerate}
\end{proposition}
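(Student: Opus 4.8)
The plan is to verify each of the three claims essentially by direct computation, using the explicit formula \eqref{eq:courantbracket} for the Courant bracket and the fact (already established in the excerpt) that $\ca{R}_\omega$ for $\omega\in\Om^2_{cl}(M)$ and $\T\Phi$ for $\Phi\in\on{Diff}(M)$ are Courant automorphisms. For part (a), I would argue in two directions. First, the map $(\omega,\Phi)\mapsto \ca{R}_{-\omega}\circ\T\Phi$ lands in $\on{Aut}_{CA}(\T M)$ because it is a composition of Courant automorphisms; checking that it is a group homomorphism for the semidirect product structure amounts to the identity $\T\Phi\circ\ca{R}_\omega = \ca{R}_{\Phi^*\omega}\circ\T\Phi$, which is a short check on the definitions (the action of $\on{Diff}(M)$ on $\Om^2_{cl}(M)$ being pullback). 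For surjectivity, given an arbitrary Courant automorphism $A$: compatibility with the anchor $\a$ forces $A$ to cover a diffeomorphism $\Phi$ of $M$ and to preserve the subbundle $\ker\a = T^*M$; replacing $A$ by $(\T\Phi)^{-1}\circ A$ we may assume $\Phi=\on{id}$, so $A$ fixes $T^*M$ pointwise (metric-preservation pins down the action on $T^*M$ once it fixes $TM$-projections), hence $A$ has the form $v+\mu\mapsto v+\mu+\iota_v\gamma$ for some bundle map $\gamma\colon TM\to T^*M$; metric-preservation forces $\gamma$ skew (a $2$-form), and preservation of the Courant bracket forces $\d\gamma=0$. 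So $A=\ca{R}_{-\omega}$ with $\omega=-\gamma$, which is in the image. Injectivity of the parametrization is immediate from looking at the action on $T^*M$ and on $TM$-projections.

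For part (b), I would differentiate: an infinitesimal Courant automorphism is the derivative at $t=0$ of a one-parameter family $A_t = \ca{R}_{-\omega_t}\circ\T\Phi_t$ with $A_0=\on{id}$, giving $\omega_0=0$, $\Phi_0=\on{id}$; writing $X=\tf{d}{dt}\big|_0\Phi_t\in\mf{X}(M)$ and $\gamma=\tf{d}{dt}\big|_0\omega_t\in\Om^2_{cl}(M)$ (closed, as a limit of closed forms), one computes $\tf{d}{dt}\big|_0 A_t(\tau)$ for $\tau=Y+\beta$: the $\T\Phi_t$ part contributes the Lie derivative $\L_X Y + \L_X\beta = [X,Y]+\L_X\beta$, and the $\ca{R}_{-\omega_t}$ part contributes $-\iota_Y\gamma$, yielding exactly \eqref{eq:infaction}. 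Conversely every such $(\gamma,X)$ integrates (flow of $X$, plus the primitive family $t\gamma$), so these exhaust $\mf{aut}_{CA}$; the semidirect-product bracket on $\Om^2_{cl}(M)\rtimes\mf{X}(M)$ is then read off by differentiating the group law found in part (a), or checked directly on \eqref{eq:infaction}.

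For part (c), I would simply compare: by definition, for $\sigma=X+\alpha$ the operator $\Cour{\sigma,\cdot}$ sends $\tau=Y+\beta$ to $[X,Y]+\L_X\beta-\iota_Y\d\alpha$ by \eqref{eq:courantbracket}; this is precisely formula \eqref{eq:infaction} with $\gamma=\d\alpha$ (which is closed) and the given $X$. That $\Cour{\sigma,\cdot}$ is a derivation of the Courant bracket is exactly property \eqref{eq:ii}, and that it preserves the metric is exactly \eqref{eq:i}, so it genuinely lies in $\mf{aut}_{CA}(\T M)$, confirming the identification $\Cour{\sigma,\cdot}=(\d\alpha,X)$.

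The main obstacle is the surjectivity step in part (a): one has to extract, from the three abstract axioms (metric-preservation, bracket-preservation, anchor-compatibility), the concrete normal form $v+\mu\mapsto v+\mu+\iota_v\omega$ with $\omega$ closed. Anchor-compatibility giving the underlying diffeomorphism and the invariance of $T^*M=\ker\a$ is the conceptual heart; after reducing to $\Phi=\on{id}$, pinning down that $A$ acts trivially on $TM$-components and hence is a shear by a $2$-form uses metric-preservation, and the closedness of that $2$-form is where bracket-preservation is really consumed --- a short but not entirely mechanical computation applying the Courant bracket to two sections of the form $v+\iota_v\omega$ and isolating the $\d\omega$ term. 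Everything else is bookkeeping with Lie derivatives and Cartan calculus.
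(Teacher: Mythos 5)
Your proposal is correct and follows essentially the same route as the paper: reduce an arbitrary Courant automorphism to the base-identity case by composing with $\T\Phi^{\pm 1}$, use anchor-compatibility and metric-preservation to identify the residual map as a shear $v+\mu\mapsto v+\mu+\iota_v\omega$ with $\omega$ skew, and invoke bracket-preservation for closedness; parts (b) and (c) are likewise handled by differentiation and by direct comparison with the Courant bracket formula, exactly as in the paper.
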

\begin{proof}
a) Given $A\in \on{Aut}_{CA}(\T M)$, let $\Phi\in \on{Diff}(M)$ be the base map. 
Then $A'=A\circ \T \Phi^{-1}$ has base map the identity map. In particular, 
$\a\circ A'=\a$, i.e. for all $v\in TM\subset \T M$, $A'v-v\in T^*M$. Let 
$\omega(v,w)=\l A'(v),w\r$. Since $A'$ preserves the metric, 
\[ 0=\l A'(v),A'(w)\r=\l A'(v),w\r+\l v,A'(w)\r
=\omega(v,w)+\omega(w,v),\]
thus $\omega$ is skew-symmetric. It follows that $A'=\ca{R}_\omega$. 
But $\ca{R}_\omega$ preserves the Courant bracket if and only if $\omega$ is closed. 
This proves $A=\ca{R}_\omega\circ \T \Phi$. The proof of b) is similar, while c) 
is immediate from \eqref{eq:infaction} and the formula for the Courant bracket. 
\end{proof}

We are interested in the integration of infinitesimal Courant automorphism, especially those generated by sections of $\T M$. In the discussion below, we will be vague about issues of completeness of vector fields; in the general case one has to work with \emph{local} flows.
The following result is an infinite-dimensional instance 
of a formula for time dependent flows on semi-direct products $V\rtimes G$, where $G$ is a Lie group and $V$ a $G$-representation.

\begin{proposition}\label{prop:integrate} \cite{gua:ge1,hu:ham}
Let $(\om_t,\Phi_t)\in \Aut_{CA}(\T M)$ be the family of automorphisms integrating the time-dependent infinitesimal automorphisms 
$(\gamma_t,X_t)\in\mf{aut}_{CA}(\T M)$. Then $\Phi_t$ is the flow of $X_t$, while 
\[ \omega_t=\int_0^t \big((\Phi_s)_* \gamma_s\big) \d s.\]
\end{proposition}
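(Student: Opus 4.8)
The plan is to reduce the statement to a first-order ODE verification in the semi-direct product $\Om^2_{cl}(M)\rtimes\on{Diff}(M)$, exactly as one does for flows on $V\rtimes G$. Since the composition law there is $(\om,\Phi)\cdot(\om',\Phi')=(\om+\Phi_*\om',\,\Phi\circ\Phi')$, the one-parameter family $(\om_t,\Phi_t)$ generated by the time-dependent generator $(\gamma_t,X_t)$ is characterized by the evolution equation obtained by differentiating $(\om_{t+s},\Phi_{t+s})=(\om_t,\Phi_t)\cdot(\text{small})$ at $s=0$. Concretely I would first observe that the $\on{Diff}(M)$-component decouples: comparing the base maps on both sides of the infinitesimal relation shows $\tfrac{d}{dt}\Phi_t=X_t\circ\Phi_t$ with $\Phi_0=\id$, i.e.\ $\Phi_t$ is precisely the flow of the time-dependent vector field $X_t$. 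This uses only part (b) of Proposition~\ref{prop:couraut}, which identifies the Lie-algebra structure and hence the infinitesimal action, together with part (a) giving the group structure.

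Next I would set up the differential equation for $\om_t$. Writing $A_t=\ca{R}_{-\om_t}\circ\T\Phi_t$ for the integrating family (using the convention of Proposition~\ref{prop:couraut}(a)), the condition that $A_t$ integrates the generator $(\gamma_t,X_t)$ means $\tfrac{d}{dt}A_t = (\gamma_t,X_t)\cdot A_t$ in the appropriate sense; translating this through the semi-direct product law and stripping off the already-understood $\T\Phi_t$ factor yields an equation of the schematic form $\tfrac{d}{dt}\om_t = (\Phi_t)_*\gamma_t$ with $\om_0=0$. Integrating this elementary linear ODE immediately gives $\om_t=\int_0^t(\Phi_s)_*\gamma_s\,\d s$, which is the claimed formula. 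The key computation is the Leibniz-type identity $\tfrac{d}{dt}\big((\Phi_t)_*\gamma_t\big)=(\Phi_t)_*\big(\tfrac{d}{dt}\gamma_t - \L_{X_t}\gamma_t\big)$ combined with the fact that conjugating $\ca{R}_\om$ by $\T\Phi$ produces $\ca{R}_{\Phi_*\om}$, which one checks directly from the definitions of $\ca{R}_\om$ and $\T\Phi$.

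The main obstacle, and the point that deserves care, is bookkeeping the signs and the order of composition in the semi-direct product so that the final formula comes out with $(\Phi_s)_*$ rather than $(\Phi_s)^*$ or $(\Phi_{t}\circ\Phi_s^{-1})_*$, etc. The cleanest way to avoid errors is to verify the answer rather than derive it: take $\om_t:=\int_0^t(\Phi_s)_*\gamma_s\,\d s$ and $\Phi_t$ the flow of $X_t$, form $A_t=\ca{R}_{-\om_t}\circ\T\Phi_t$, and check that $A_0=\id$ and that $t\mapsto A_t$ satisfies the correct evolution equation $\tfrac{d}{dt}A_t\circ A_t^{-1}=$ the infinitesimal automorphism associated to $(\gamma_t,X_t)$ via \eqref{eq:infaction}. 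By uniqueness of solutions to this ODE on the group $\Aut_{CA}(\T M)$ (granting the completeness caveat already flagged before the proposition, so that one may work with local flows if needed), this identifies $(\om_t,\Phi_t)$ as the integrating family. The differentiation step uses $\tfrac{d}{dt}\om_t=(\Phi_t)_*\gamma_t$ together with the transport identity for $(\Phi_t)_*$ on forms, and the rest is a short manipulation with \eqref{eq:infaction} and the chain rule.
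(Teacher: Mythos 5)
Your overall strategy coincides with the paper's: both arguments come down to extracting the scalar equation $\f{d}{dt}\omega_t=(\Phi_t)_*\gamma_t$ from the defining evolution equation of the integrating family and then integrating. The semi-direct product law $(\om,\Phi)\cdot(\om',\Phi')=(\om+\Phi_*\om',\,\Phi\circ\Phi')$ and the conjugation identity $\T\Phi\circ\ca{R}_{\om}\circ\T\Phi^{-1}=\ca{R}_{\Phi_*\om}$ that you invoke are both correct, and the decoupling of the $\on{Diff}(M)$-component is fine. The paper carries out the same verification concretely, by writing out the action of $(\om_t,\Phi_t)$ on a section $\tau=Y+\beta$ and comparing with \eqref{eq:infaction}.

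There is, however, one concrete problem, and it sits exactly at the step you flag as ``the point that deserves care'': you propose to verify the \emph{right}-trivialized evolution equation $\f{d}{dt}A_t\circ A_t^{-1}=(\gamma_t,X_t)$, whereas the convention in force here (Appendix \ref{app:signs}, ``consistent with left trivialization'', and forced by the companion convention $\f{d}{dt}(\Phi_t)_*=(\Phi_t)_*\circ\L_{X_t}$ for flows) is the \emph{left}-trivialized one,
\[ A_t^{-1}\circ \f{d}{dt}A_t=(\gamma_t,X_t), \qquad\text{equivalently}\qquad \f{d}{dt}\big((\om_t,\Phi_t).\tau\big)=(\om_t,\Phi_t).\big((\gamma_t,X_t).\tau\big). \]
The distinction is not cosmetic for a time-dependent generator. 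Using the group law, the $\Om^2_{cl}(M)$-component of $A_t^{-1}\cdot A_{t+\eps}$ is $(\Phi_t^{-1})_*(\om_{t+\eps}-\om_t)$; differentiating and setting it equal to $\gamma_t$ gives $\f{d}{dt}\om_t=(\Phi_t)_*\gamma_t$ and hence the stated integral. The $\Om^2_{cl}(M)$-component of $A_{t+\eps}\cdot A_t^{-1}$ is instead $\om_{t+\eps}-\big(\Phi_{t+\eps}\circ\Phi_t^{-1}\big)_*\om_t$, and equating its derivative to $\gamma_t$ yields an inhomogeneous transport equation $\f{d}{dt}\om_t=\gamma_t+\L\,\om_t$ whose solution is built from a two-parameter propagator applied to $\gamma_s$, not from $(\Phi_s)_*\gamma_s$; the two answers agree only when $(\gamma_t,X_t)$ is time-independent. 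So, as literally written, the candidate $\om_t=\int_0^t(\Phi_s)_*\gamma_s\,\d s$ does not solve the equation you propose to verify, and the check would fail. The fix is one line — state the evolution equation in left-trivialized form — after which your verification goes through and is essentially the computation in the paper.
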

\begin{proof}  Recall (cf. Appendix \ref{app:signs}) that 
the flow $\Phi_t$ of a time dependent vector field $X_t$ is defined in terms of the action on functions by $\f{d}{ d t}(\Phi_t)_*=(\Phi_t)_*\circ \L_{X_t}$. Similarly, the 1-parameter family of Courant automorphisms $(\om_t,\Phi_t)$ integrating $(\gamma_t,X_t)$ is defined in terms of the action on sections $\tau\in \Gamma(\T M)$ by 
\[ \f{d}{ d t}\Big( (\om_t,\Phi_t).\tau\Big)=
(\om_t,\Phi_t).(\gamma_t,X_t).\tau.\] 
Write $\tau=Y+\beta\in \Gamma(\T M)$. Then 
\begin{align*}
 \f{d}{ d t}\Big( (\om_t,\Phi_t).\tau\Big)&= \f{d}{ d t}\Big( 
 (\Phi_t)_*\tau-\iota\big((\Phi_t)_*Y\big)\omega_t
 \Big)\\
 &= (\Phi_t)_* \L_{X_t} \tau-
\iota\big((\Phi_t)_*\L_{X_t} Y\big)\omega_t
-\iota\big((\Phi_t)_*Y\big)\f{ d \omega_t}{d t}.
\end{align*}
On the other hand, 
\begin{align*}
(\om_t,\Phi_t).(\gamma_t,X_t).\tau
&=(\om_t,\Phi_t).\Big(\L_{X_t}\tau-\iota(Y)\gamma_t\Big)
\\
&=(\Phi_t)_*\L_{X_t}\tau-\iota\big((\Phi_t)_* Y\big)(\Phi_t)_*\gamma_t
-\iota\big((\Phi_t)_* \L_{X_t} Y\big)\omega_t.
\end{align*}
Comparing, we see $(\Phi_t)_*\gamma_t=\f{ d }{d t}\omega_t$. 
\end{proof}

This calculation applies in particular to the infinitesimal automorphisms $(\gamma_t,X_t)$ defined by $\sigma_t=X_t+\alpha_t\in\Gamma(\T M)$; here $\gamma_t=\d\alpha_t$. Note that in this case,
\[ \omega_t=\d\,\int_0^t \big((\Phi_s)_* \alpha_s\big) \d s\]
is a family of \emph{exact} 2-forms.

%
\subsection{Moser method for Poisson manifolds}
The standard \emph{Moser argument} for symplectic manifolds 
shows that for a compact symplectic manifold, any 1-parameter family of deformations of the symplectic forms in a prescribed cohomology class is obtained by the action of a 1-parameter family of diffeomorphisms. The following version for Poisson manifolds can be proved from the symplectic case, arguing `leaf-wise', or more directly using the Dirac geometric methods described above. 
\begin{theorem}\label{th:moser}\cite{al:gw,al:lin}
Suppose $\pi_t\in\mf{X}^2(M)$ is a 1-parameter family of Poisson structures related by gauge transformations, 
\[ \pi_t=(\pi_0)^{\omega_t},\]
where $\omega_t\in\Om^2(M)$ is a family of closed 2-forms. Suppose that 
\[ \f{ d \omega_t}{d t}=-\d a_t,\]
with a smooth family of 1-forms $a_t\in \Om^1(M)$, defining a time dependent vector field $X_t=\pi_t^\sharp(a_t)$. Let $\Phi_t$ be the flow of $X_t$. Then
\[ (\Phi_t)_*\pi_t=\pi_0.\]
\end{theorem}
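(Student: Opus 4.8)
The plan is to run a Dirac-geometric Moser argument, building on the integration formula for Courant automorphisms in Proposition~\ref{prop:integrate}. The key observation is that a gauge transformation by a closed $2$-form is precisely the action of a Courant automorphism of the form $\ca{R}_{\omega}$, so the whole deformation $\pi_t = (\pi_0)^{\omega_t}$ can be encoded as $\on{Gr}(\pi_t) = \ca{R}_{\omega_t}(\on{Gr}(\pi_0))$. We would like to produce a family of \emph{bona fide} Courant automorphisms $(\om_t,\Phi_t)$ — ones coming from diffeomorphisms with a simultaneous gauge twist — that carries $\on{Gr}(\pi_0)$ to itself in such a way that, after forgetting the twist, $\Phi_t$ carries $\pi_t$ back to $\pi_0$.

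Concretely, first I would take the sections $\sigma_t = X_t + a_t \in \Gamma(\T M)$, where $X_t = \pi_t^\sharp(a_t)$ is the time-dependent vector field in the hypothesis. By Proposition~\ref{prop:couraut}(c), the infinitesimal Courant automorphism generated by $\sigma_t$ is $(\d a_t, X_t) = (-\tfrac{d\omega_t}{dt}, X_t)$. Next I would integrate this time-dependent infinitesimal automorphism to a family $(\mu_t,\Phi_t) \in \Aut_{CA}(\T M)$; by Proposition~\ref{prop:integrate}, $\Phi_t$ is the flow of $X_t$ and $\mu_t = \int_0^t (\Phi_s)_*(-\tfrac{d\omega_s}{ds})\,\d s$. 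The heart of the argument is then a bookkeeping computation showing that the composite automorphism $\ca{R}_{\omega_t}^{-1} \circ (\mu_t,\Phi_t) \circ \ca{R}_{\omega_0} \cdots$ — more precisely, that $(\mu_t,\Phi_t)$ intertwines $\ca{R}_{\omega_0}$ and $\ca{R}_{\omega_t}$ up to the flow $\Phi_t$ — so that it sends $\on{Gr}(\pi_0)$ to $\on{Gr}(\pi_0)$. Since $\ca{R}_{\omega_t}$ sends $\on{Gr}(\pi_0)$ to $\on{Gr}(\pi_t)$, and since a $\T\Phi_t$ sends $\on{Gr}(\pi_t)$ to $\on{Gr}((\Phi_t)_*\pi_t)$, tracking how $\mu_t$ relates to $\omega_t$ and $(\Phi_t)_*\omega_0$ will pin down $(\Phi_t)_*\pi_t = \pi_0$.

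The cleanest way to organize the bookkeeping: one checks by differentiating in $t$ that the family of Dirac structures $E_t := (\T\Phi_t^{-1})\big(\ca{R}_{\omega_0}^{-1}(\on{Gr}(\pi_t))\big)$, or some such conjugate, is constant — its $t$-derivative is governed by the infinitesimal automorphism $(\gamma_t,X_t)$ together with $\tfrac{d}{dt}\ca{R}_{\omega_t}$, and the hypothesis $\tfrac{d\omega_t}{dt} = -\d a_t$ with $X_t = \pi_t^\sharp(a_t)$ is exactly what makes these cancel. Because $\on{Gr}(\pi_0)$ is transverse to $TM$, the constancy of this family of Lagrangian subbundles forces the corresponding bivector fields to agree, giving $(\Phi_t)_*\pi_t = \pi_0$. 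The main obstacle I anticipate is purely the sign/variance accounting in Proposition~\ref{prop:integrate} — correctly matching $(\Phi_s)_*$ versus $(\Phi_s)^*$, the placement of the minus sign in $\ca{R}_{-\omega}$ versus $\ca{R}_\omega$, and the fact that $X_t$ itself depends on $\pi_t$ hence on $\omega_t$ — rather than any conceptual difficulty; as usual with Moser arguments, once the derivative identity is set up correctly the conclusion is immediate, and one should also remark (as the statement's preamble does) that completeness of $X_t$ is being assumed so that the flow $\Phi_t$ exists for all relevant $t$, otherwise one works with local flows.
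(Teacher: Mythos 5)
Your overall strategy --- encode the deformation as $\on{Gr}(\pi_t)=\ca{R}_{\omega_t}(\on{Gr}(\pi_0))$, generate a Courant automorphism from the section $X_t+a_t$, and integrate it via Proposition \ref{prop:integrate} --- is the right one, and it is the paper's. But the step you defer to ``bookkeeping'' is exactly where the content of the proof lies, and as sketched it has a genuine gap. The infinitesimal automorphism $(\d a_t,X_t)$ is generated by a section of the \emph{moving} Dirac structure $\on{Gr}(\pi_t)$, so its integrated flow $(\mu_t,\Phi_t)$ does not automatically preserve anything; and the $2$-form it produces, $\mu_t=\int_0^t(\Phi_s)_*\d a_s\ \d s$, differs from $-(\Phi_t)_*\omega_t$ by $\int_0^t(\Phi_s)_*\L_{X_s}\omega_s\ \d s$, so it cannot be matched against $\omega_t$ in the way your intertwining step requires. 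Your candidate conserved family $E_t=(\T\Phi_t^{-1})\big(\ca{R}_{\omega_0}^{-1}(\on{Gr}(\pi_t))\big)$ is also not the right one: with $\omega_0=0$ its constancy would yield $\pi_t=(\Phi_t)_*\pi_0$ rather than $(\Phi_t)_*\pi_t=\pi_0$; the family whose constancy is equivalent to the claim is $\T\Phi_t(\on{Gr}(\pi_t))$.

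The paper closes this gap with one extra move that your sketch is missing: apply $\ca{R}_{-\omega_t}$ to the section $X_t+a_t\in\Gamma(\on{Gr}(\pi_t))$ to obtain $X_t+b_t\in\Gamma(\on{Gr}(\pi_0))$ with $b_t=a_t-\iota_{X_t}\omega_t$. The generator $(\d b_t,X_t)$ now lives on the \emph{fixed} Dirac structure $\on{Gr}(\pi_0)$, so its flow preserves $\on{Gr}(\pi_0)$ automatically; and since $\d b_t=\d a_t-\L_{X_t}\omega_t$, the identity $\f{d}{d t}\big((\Phi_t)_*\omega_t\big)=-\d\big((\Phi_t)_*b_t\big)$ identifies that flow as $\big(-(\Phi_t)_*\omega_t,\Phi_t\big)=\ca{R}_{(\Phi_t)_*\omega_t}\circ\T\Phi_t=\T\Phi_t\circ\ca{R}_{\omega_t}$, which sends $\on{Gr}(\pi_0)$ to $\on{Gr}\big((\Phi_t)_*\pi_t\big)$ and finishes the proof. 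Alternatively, your direct route can be salvaged without this trick by arguing instantaneously: writing $\psi_\epsilon$ for the time-$\epsilon$ flow of the frozen field $X_t$, one has, to first order in $\epsilon$, $\T\Phi_{t+\epsilon}\big(\on{Gr}(\pi_{t+\epsilon})\big)=\T\Phi_t\circ\big(\T\psi_\epsilon\circ\ca{R}_{-\epsilon\,\d a_t}\big)\big(\on{Gr}(\pi_t)\big)$, and the middle factor is the time-$\epsilon$ flow of $(\d a_t,X_t)$, which fixes $\on{Gr}(\pi_t)$ because $X_t+a_t$ is a section of it; hence $\T\Phi_t(\on{Gr}(\pi_t))$ is constant. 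Either way, the cancellation has to be exhibited rather than asserted.
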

\begin{proof}
By definition, 
\[ X_t+a_t\in \Gamma\big(\on{Gr}(\pi_t)\big)=\Gamma\Big(\ca{R}_{\omega_t}\big(\on{Gr}(\pi_0)\big)\Big).\]
Applying the transformation $\ca{R}_{-\omega_t}$, we obtain 
\[  X_t+b_t \in 
\ca{R}_{-\omega_t}\big(\on{Gr}(\pi_t)\big)=
\Gamma\big(\on{Gr}(\pi_0)\big)\]
with  $b_t=a_t-\iota(X_t)\omega_t$. Since this is a section of the Dirac structure 
$\on{Gr}(\pi_0)$, the  flow generated by $(\d  b_t,\ X_t)$ preserves $\on{Gr}(\pi_0)$. 
Using Proposition \ref{prop:integrate} and the calculation 
\begin{equation}\label{eq:needthis}
 \f{ d }{d t}\Big((\Phi_t)_* \omega_t\Big)=(\Phi_t)_* \Big(
\f{ d \omega_t}{d t}+\L_{X_t}\omega_t\Big)=-\d \big((\Phi_t)_* b_t\big),
\end{equation}
we see that this flow is 
given by $\big(\!-(\Phi_t)_* \omega_t,\Phi_t\big)$. Hence 
\begin{align*}
\on{Gr}(\pi_0)&=(-(\Phi_t)_*\omega_t,\Phi_t).\on{Gr}(\pi_0)\\
&= \ca{R}_{(\Phi_t)_*\omega_t} \circ \T \Phi_t (\on{Gr}(\pi_0))\\
&=\T \Phi_t \circ \ca{R}_{\omega_t} (\on{Gr}(\pi_0))\\
&=\T \Phi_t (\on{Gr}(\pi_t))\\ 
&=\on{Gr}\big((\Phi_t)_*\pi_t\big). &&\qedhere
\end{align*}
\end{proof}

\subsection{Pull-backs}\label{subsec:pullbackdirac}
In general, there is no natural way of pulling back a Poisson structure under a 
smooth map $\varphi\colon N\to M$. However, under a transversality assumption it can always be pulled back \emph{as a Dirac structure}. 
\begin{definition}
Let $\varphi\colon N\to M$ be  a smooth map.
\begin{enumerate}
\item For $x=v+\mu\in \T_m M$, $y=w+\nu\in \T_n N$, we write 
\[ y\sim_\varphi x\]
if $m=\varphi(n)$ and $v=(T_n\varphi)\,w,\ \nu=(T_n\varphi)^*\mu$. 
\item
For sections $\sigma=X+\alpha\in \Gamma(\T M)$ and $\tau=Y+\beta\in \Gamma(\T N)$  we write 
\[ \tau\sim_\varphi \sigma\] 
if $Y\sim_\varphi X$ (related vector fields\footnote{Recall that vector fields $Y\in\mf{X}(N)$ and $X\in\mf{X}(M)$ are \emph{$\varphi$-related} (written $Y\sim_\varphi X$) if 
$(T_n\varphi)(Y_n)=X_{\varphi(n)}$ for all $n\in N$. If $Y_1\sim_\varphi X_1$ and $Y_2\sim_\varphi X_2$ then $[Y_1,Y_2]\sim_\varphi [X_1,X_2]$.}) and $\beta=\varphi^*\alpha$. 
\item
For a subbundle $E\subset \T M$, we write 
\[ \varphi^!E=\{y\in \T N|\ \exists x\in E\colon y\sim_\varphi x\}\]
\end{enumerate}
\end{definition}
Observe that the relation preserves metrics, in the sense that 
\begin{equation}\label{eq:innerproducts}
 y_1 \sim_\varphi x_1,\ \ y_2 \sim_\varphi x_2\ \ \ \Rightarrow\ \ \ 
 \l y_1,y_2\r =\l x_1,x_2\r.
\end{equation}
Note also that if $\varphi$ is a diffeomorphism, then $y\sim_\varphi x$ holds if and only if 
$x=(\T \varphi) (y)$. 
In general, one needs transversality assumptions to ensure that 
$\varphi^!E$ is again subbundle. We will consider the following situation: 
\begin{proposition}
Suppose $E\subset \T M$ is a Dirac structure, and $\varphi\colon N\to M$ is transverse to the anchor of $E$. Then $\varphi^!E$ is again a Dirac structure. 
\end{proposition}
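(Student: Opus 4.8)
The plan is to verify the two defining properties of a Dirac structure for $\varphi^!E$ separately: first that it is a smooth Lagrangian subbundle of $\T N$, and second that its sections are closed under the Courant bracket. For the subbundle property, the transversality hypothesis is exactly what is needed. Concretely, $y = w+\nu \in \varphi^!E$ over $n\in N$ iff there exists $v+\mu \in E_{\varphi(n)}$ with $w = (T_n\varphi)w$... more precisely with $v = (T_n\varphi)w$ and $\nu = (T_n\varphi)^*\mu$. The condition "$\varphi$ transverse to the anchor $\a\colon E\to TM$" means $(T_n\varphi)(T_nN) + \a(E_{\varphi(n)}) = T_{\varphi(n)}M$ for all $n$; I would use this to show that the assignment $n\mapsto (\varphi^!E)_n$ has locally constant rank — indeed rank equal to $\dim N$, since $\varphi^!E$ will turn out to be Lagrangian. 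The cleanest way is to describe $\varphi^!E$ as the image (or kernel) of a bundle map built from $T\varphi$ and the inclusion $E\hookrightarrow \T M$, pulled back to $N$; transversality guarantees this map has constant rank, hence its image is a genuine subbundle. The Lagrangian property then follows from \eqref{eq:innerproducts}: if $y_1\sim_\varphi x_1$ and $y_2\sim_\varphi x_2$ with $x_1,x_2\in E = E^\perp$, then $\l y_1,y_2\r = \l x_1,x_2\r = 0$, so $\varphi^!E \subseteq (\varphi^!E)^\perp$; a dimension count (using that the rank is exactly $\dim N = \hh\dim\T N$) upgrades this to equality.

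For the integrability, I would use the characterization via the tensor $\Upsilon_E \in \Gamma(\wedge^3 E^*)$ of \eqref{eq:couranttensor}: it suffices to show $\Upsilon_{\varphi^!E}$ vanishes on sections spanning $\varphi^!E$ everywhere. The key observation is that every section $\tau$ of $\varphi^!E$ can be written, at least locally and as a $C^\infty(N)$-linear combination, in the form $\tau \sim_\varphi \sigma$ for some section $\sigma$ of $E$ (this is where one uses transversality again, to lift; the vector-field part of the lift exists because $\varphi$-relatedness of vector fields can be arranged under transversality, and the $1$-form part is just pullback). Then one invokes the compatibility of the Courant bracket with the relation $\sim_\varphi$: if $\tau_i\sim_\varphi\sigma_i$ for $i=1,2$, then $\Cour{\tau_1,\tau_2}\sim_\varphi\Cour{\sigma_1,\sigma_2}$. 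This reduces to the analogous fact for vector fields (stated in the footnote, $[Y_1,Y_2]\sim_\varphi[X_1,X_2]$) together with the behaviour of Lie derivatives and $\d$ under pullback: $\varphi^*\L_{X}\alpha = \L_{Y}\varphi^*\alpha$ and $\varphi^*\iota_X\d\alpha = \iota_Y\d\varphi^*\alpha$ when $Y\sim_\varphi X$. Granting this, $\Upsilon_{\varphi^!E}(\tau_1,\tau_2,\tau_3) = \l\tau_1,\Cour{\tau_2,\tau_3}\r = \l\sigma_1,\Cour{\sigma_2,\sigma_3}\r\circ\varphi = (\Upsilon_E(\sigma_1,\sigma_2,\sigma_3))\circ\varphi = 0$, using \eqref{eq:innerproducts} in the middle step and integrability of $E$ at the end. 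By tensoriality of $\Upsilon_{\varphi^!E}$ this extends to all sections, so $\varphi^!E$ is integrable.

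The main obstacle is the lifting step in the integrability argument: showing that $\varphi^!E$ is spanned (over $C^\infty(N)$) by sections of the form $\tau\sim_\varphi\sigma$. Pointwise every element of $(\varphi^!E)_n$ is by definition $\sim_\varphi$-related to some element of $E_{\varphi(n)}$, but to get honest \emph{sections} one must choose these lifts smoothly and globally, which is where transversality does real work — it ensures the "lifting" bundle map is a surjective submersion onto $\varphi^!E$ so that local smooth sections (hence, via a partition of unity, enough global ones) exist. One must also be slightly careful that the vector-field parts of the lifts are genuinely $\varphi$-related vector fields, not merely related pointwise; locally, after choosing coordinates in which $\varphi$ looks like a linear surjection composed with an inclusion, this is elementary. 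Once the lifting is in place, everything else is the routine naturality of the Courant bracket under $\sim_\varphi$, which I would relegate to a short computation or cite as an exercise in the style of the surrounding text.
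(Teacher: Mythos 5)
Your treatment of the subbundle and Lagrangian properties is correct and matches the paper's, and your plan of reducing integrability to the vanishing of $\Upsilon_{\varphi^!E}$ on a spanning family of liftable sections is reasonable. But the step you yourself flag as the ``main obstacle'' is a genuine gap, not a technicality: for a general smooth map $\varphi$ transverse to the anchor, it is \emph{false} that $\varphi^!E$ is spanned, even over $C^\infty(N)$, by sections $\tau$ admitting a section $\sigma\in\Gamma(E)$ with $\tau\sim_\varphi\sigma$. Take $M=\R^2$, $E=TM$ (so every map is transverse to the anchor, and $\varphi^!E=TN$), $N=\R$ and $\varphi(t)=(t^2,t^3)$. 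If $Y=f(t)\,\p/\p t$ is $\varphi$-related to a smooth vector field $X=a\,\p/\p x+b\,\p/\p y$ on $\R^2$, then $a(t^2,t^3)=2t\,f(t)$; differentiating at $t=0$ gives $0=2f(0)$, so every liftable vector field vanishes at the origin. Hence the liftable sections do not span $(\varphi^!E)_0$, and your argument cannot get started there, even though the conclusion ($TN$ is a Dirac structure) is of course true. Relatedly, the auxiliary claim that one may choose coordinates in which $\varphi$ is ``a linear surjection composed with an inclusion'' is the constant-rank normal form, and transversality to the anchor does not give constant rank: in the example the rank of $T\varphi$ jumps at $t=0$.

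The paper avoids this entirely by first proving the statement when $\varphi$ is an embedding --- there transversality identifies $\varphi^!E$ with the subbundle $\a^{-1}(TN)\cap E$ of $E|_N$, so every section of $\varphi^!E$ honestly extends to a $\varphi$-related section of $E$, and closure under the Courant bracket follows exactly as you describe --- and then reducing the general case to this one via the graph trick: with $j\colon N\to M\times N$, $n\mapsto(\varphi(n),n)$, one has $\varphi^!E=j^!(E\times TN)$, and $j$ is an embedding transverse to the anchor of $E\times TN$. The extra $TN$ factor is precisely the room needed to make the vector-field part of a lift tangent to the graph. If you insert this reduction, the rest of your argument goes through.
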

\begin{proof}
Consider first the case that $\varphi$ is the embedding of a submanifold, $\varphi\colon N\hra M$. 
The transversality condition ensures that $\varphi^!E$ is a subbundle of $\T N$ of the right dimension; the property \eqref{eq:innerproducts} shows that it is Lagrangian. It also 
follows from the transversality that for any $y\in \varphi^!E$, the element $x\in E$ such that $y\sim_\varphi x$ is \emph{unique}; this defines an inclusion $\varphi^!E\hra E|_N$
with image $\a^{-1}(TN)\cap E$.  
Hence, any section $\tau\in\Gamma(\varphi^!E)$  extends (non-uniquely) to a section 
$\sigma\in \Gamma(E)$ such that $\tau\sim_\varphi \sigma$;  conversely, given $\sigma\in \Gamma(E)$ such that $\a(\sigma)$ is tangent to $N$ 
we have $\tau\sim_\varphi \sigma$ for a (unique) section $\tau$. Suppose $\tau_1,\tau_2$ are sections of $\varphi^!E$, and choose $\sigma_i\in \Gamma(E)$ such that $\tau_i\sim_\varphi \sigma_i$. By definition of the Courant bracket, $\Cour{\tau_1,\tau_2}\sim_\varphi \Cour{\sigma_1,\sigma_2}$. Hence $\Cour{\tau_1,\tau_2}$ is a section of 
$\varphi^!E$.  This proves the proposition for the case of an embedding. In the general case, 
given $\varphi$ consider the embedding of $N$ as the graph of $\varphi$, 
\[ j\colon N\to M\times N,\ 
n\mapsto (\varphi(n),n).\] 
It is easy to see that 
\[ \varphi^!E=j^!(E\times TN)\]
as subsets of $\T N$. Since $j^!(E\times TN)$ is a Dirac structure by the above, we are done. 
\end{proof}

In particular, if $\pi$ is a Poisson structure on $M$, and $\varphi\colon N\to M$ is transverse to the map $\pi^\sharp$, we can define the pull-back $\varphi^!\on{Gr}(\pi)\subset \T N$ \emph{as a Dirac structure}. In general, this is not a Poisson structure. 
We have the following necessary and sufficient condition. 
\begin{proposition}\label{prop:cosymplectic}
Suppose $(M,\pi)$ is a Poisson manifold, and $\varphi\colon N\to M$ is transverse to $\pi^\sharp$. Then $\varphi^!\on{Gr}(\pi)\subset \T N$ defines a Poisson structure 
$\pi_N$ if and only if $\varphi$ is an immersion, with 
\begin{equation}\label{eq:cosymplectic}
 TM|_N=TN\oplus \pi^\sharp(\on{ann}(TN)).
\end{equation}
\end{proposition}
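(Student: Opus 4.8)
The strategy is to analyze the fibrewise structure of $\varphi^! \on{Gr}(\pi)$ pointwise, and recognize when a Lagrangian subbundle of $\T N$ transverse to $T^*N$ is the graph of a bivector, then match that condition against the geometry of $\varphi$. First I would recall that a Dirac structure $E_N \subset \T N$ is the graph of a (necessarily Poisson, by the proposition relating graphs of Poisson bivectors to Dirac structures) bivector field if and only if $E_N \cap TN = 0$, equivalently the anchor $\a|_{E_N}\colon E_N \to TN$ is surjective at every point. So the whole problem reduces to: determine when $\a(\varphi^! \on{Gr}(\pi)) = TN$.

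\textbf{Key steps.} Fix $n \in N$ with $m = \varphi(n)$, and write $d\varphi = T_n\varphi$. By definition, $y = w + \nu \in \varphi^! \on{Gr}(\pi)$ iff there is $\mu \in T_m^*M$ with $w = d\varphi(\pi^\sharp(\mu))$ and $\nu = d\varphi^*(\mu)$. Hence $\a(\varphi^!\on{Gr}(\pi)) = d\varphi\big(\pi^\sharp(T_m^*M)\big) = d\varphi(\on{ran}(\pi_m^\sharp))$, which in general is a proper subspace of $T_nN$. So surjectivity of the anchor forces $d\varphi(\on{ran}(\pi_m^\sharp)) = T_nN$; in particular $d\varphi$ is surjective onto... no — more carefully, we need $\on{ran}(\pi^\sharp_m) + \ker(d\varphi) \supseteq$ a complement, so the right bookkeeping is: $\a$ is onto iff $d\varphi^{-1}(T_nN) \cap \on{ran}(\pi_m^\sharp)$ maps onto $T_nN$, i.e. $T_nN = d\varphi(\on{ran}(\pi^\sharp_m))$. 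I would then show this is equivalent to the stated splitting $TM|_n = T_nN \oplus \pi^\sharp_m(\on{ann}(T_nN))$ together with $d\varphi$ injective. For the equivalence: the transversality hypothesis says $T_mM = \on{ran}(\pi^\sharp_m) + d\varphi(T_nN)$. Dualize and use that $\on{ann}(\on{ran}\pi^\sharp_m) = \ker \pi^\sharp_m$ (for the skew form $\pi_m$) — actually the cleanest route is a dimension count: transversality gives $\dim\big(\on{ran}(\pi^\sharp_m)\cap \ker(d\varphi)\big) = \dim \on{ran}(\pi_m^\sharp) + \dim\ker(d\varphi) - \dim\big(\on{ran}(\pi^\sharp_m) + \ker(d\varphi)\big)$; combine with $d\varphi(\on{ran}\pi^\sharp_m) = T_nN$ to conclude $\ker(d\varphi) \subseteq \on{ran}(\pi^\sharp_m)$ and then a rank computation forces $\dim\ker d\varphi = 0$ or else read off \eqref{eq:cosymplectic}. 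The identity $\pi^\sharp(\on{ann}(TN))$ appears because, once $d\varphi$ is injective, $\ker(d\varphi) = 0$ and one checks $d\varphi(\on{ran}\pi^\sharp_m) = T_nN$ is equivalent — via the skew-symmetry of $\pi_m$ and $\on{ann}(d\varphi(T_nN)) = (d\varphi^*)^{-1}(0)$... — to $\pi^\sharp_m(\on{ann}(T_nN))$ being a complement to $T_nN$ in $TM|_n$; this last step is essentially linear algebra for a skew form $B$ on $V^*$ with $T \subseteq V$ a subspace: $B^\sharp(\on{ann} T) \cap T = \ker(B^\sharp|_{\on{ann} T})$-type manipulation together with transversality $V = \on{ran} B^\sharp + T$.

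\textbf{Main obstacle.} The conceptual content is easy; the delicate part is the pointwise linear algebra bookkeeping that converts ``$\a(\varphi^!\on{Gr}\pi)$ surjective'' into the two clauses ``$\varphi$ immersion'' and ``$TM|_N = TN \oplus \pi^\sharp(\on{ann}(TN))$'' — in particular showing that injectivity of $d\varphi$ is \emph{forced} (not just needed) and cannot be traded against extra room in $\on{ran}(\pi^\sharp)$. I expect this to come down to noting that if $\ker(d\varphi) \neq 0$ then $\varphi^!\on{Gr}(\pi)$ contains the nonzero subspace $\{0\} \oplus d\varphi^*(\ker \pi^\sharp_m{}^{\mathrm{ann-dual}})$... more precisely that $\varphi^!\on{Gr}(\pi) \cap T^*N \neq 0$, so $\varphi^!\on{Gr}(\pi)$ fails to be transverse to $T^*N$ and hence cannot be a graph of a bivector — but being a Dirac structure that is \emph{not} a graph over $T^*N$ does not immediately preclude being a graph over $TN$; the point is that a Lagrangian $E_N$ is $\on{Gr}(\pi_N)$ for a bivector $\pi_N$ precisely when $E_N \cap TN = 0$, and $\ker(d\varphi) \neq 0$ produces exactly such an intersection. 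Once I verify that $E_N \cap TN$ is, fibrewise, isomorphic to (a space built from) $\ker(d\varphi)$, both directions fall out, and the clause \eqref{eq:cosymplectic} is just the explicit form of ``$\a|_{E_N}$ onto'' written out using transversality. I would write the two implications separately: ($\Leftarrow$) assume immersion and \eqref{eq:cosymplectic}, verify $\a(\varphi^!\on{Gr}\pi) = TN$ directly; ($\Rightarrow$) assume $\varphi^!\on{Gr}(\pi) = \on{Gr}(\pi_N)$, extract injectivity of $d\varphi$ from $\varphi^!\on{Gr}(\pi)\cap TN = 0$, then deduce the splitting.
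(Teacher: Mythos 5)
The core of your ``Key steps'' is built on a false equivalence. For a Lagrangian subbundle $E_N\subset \T N$ one has $\a(E_N)=\on{ann}(E_N\cap T^*N)$ and, dually, $\pr_{T^*N}(E_N)=\on{ann}(E_N\cap TN)$. Hence $E_N\cap TN=0$ is equivalent to surjectivity of the projection of $E_N$ onto $T^*N$, \emph{not} to surjectivity of the anchor; anchor surjectivity is instead the criterion for $E_N$ to be the graph of a $2$-form. So your reduction ``the whole problem is to determine when $\a(\varphi^!\on{Gr}(\pi))=TN$'' characterizes the wrong class of Dirac structures. A concrete counterexample inside the hypotheses of the proposition: take $M=N'\times S$ with $N'$ carrying the zero Poisson structure and $S$ symplectic, and $\varphi$ the inclusion of $N=N'\times\{pt\}$. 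This is a cosymplectic submanifold, $\varphi^!\on{Gr}(\pi)=T^*N=\on{Gr}(0)$ is the graph of a (zero) Poisson structure, yet the anchor image is $0\neq TN$. (Your criterion, combined with transversality, would force $\on{ran}(\pi^\sharp)=TM$ along $N$, i.e.\ $\pi$ symplectic there.) Relatedly, the identity $\a(\varphi^!\on{Gr}(\pi))=d\varphi(\on{ran}(\pi^\sharp_m))$ is ill-typed, since $d\varphi$ maps $T_nN\to T_mM$ and $\on{ran}(\pi^\sharp_m)\subset T_mM$; the actual anchor image is $(d\varphi)^{-1}(\on{ran}(\pi^\sharp_m))$, which always contains $\ker(d\varphi)$ and is therefore not the right thing to ask to be all of $T_nN$.

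Your final paragraph does land on the correct criterion, and that is essentially the paper's proof: work directly with $\varphi^!\on{Gr}(\pi)\cap TN$. An element $w\in T_nN$ lies in this intersection iff there is $\mu\in T^*_mM$ with $d\varphi(w)=\pi^\sharp(\mu)$ and $d\varphi^*\mu=0$. Taking $\mu=0$ shows $\ker(d\varphi)\subset \varphi^!\on{Gr}(\pi)\cap TN$, which forces $\varphi$ to be an immersion; assuming that, the intersection is identified with $TN\cap \pi^\sharp(\on{ann}(TN))$, and its vanishing is equivalent to the splitting \eqref{eq:cosymplectic} because transversality gives $\ker(\pi^\sharp)\cap\on{ann}(TN)=\on{ann}(\on{ran}(\pi^\sharp)+TN)=0$, hence $\dim \pi^\sharp(\on{ann}(TN))=\on{codim} N$. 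If you rewrite the middle of your argument around the projection to $T^*N$ (or, equivalently, around $E_N\cap TN$ as above) rather than around the anchor, the proof goes through and coincides with the one in the text.
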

\begin{proof}
$\varphi^!\on{Gr}(\pi)$ defines a Poisson structure if and only if it is transverse to 
$TN\subset \T N$. But $\varphi^!\on{Gr}(\pi)\cap TN$ contains in particular 
elements $y\in \T N$ with $y\sim_\varphi 0$. Writing $y=w+\nu$ with a tangent vector $w$ and covector $\nu$, this means that $\nu=0$ and 
$w\in \ker(T\varphi)$. Hence, it is necessary that $\ker(T\varphi)=0$. Let us therefore assume that  $\varphi$ is an immersion. For $w\in TN\subset \T N$, we have  
\[ w\sim_\varphi \pi^\sharp(\mu) +\mu\in \on{Gr}(\pi)\ \ \ \Leftrightarrow \ \ \ 
(T\varphi)(w)=\pi^\sharp(\mu),\ \ \mu\in \ker(\varphi^*)=\on{ann}(TN).\]
Hence, the condition $\varphi^!E\cap TN=0$ is equivalent to $\pi^\sharp(\on{ann}(TN))\cap TN=0$. 
\end{proof}
\begin{definition}
A submanifold $N\subset M$ of a Poisson manifold $(M,\pi)$ with the property \eqref{eq:cosymplectic} is called a \emph{cosymplectic submanifold}. 
\end{definition}
This type of submanifold was first considered in \cite[Proposition 1.4]{wei:loc}. One consequence 
of \eqref{eq:cosymplectic} is that the bivector field $\pi$ restricts to a non-degenerate 
bilinear form on $\on{ann}(TN)$. That is, 
\begin{equation}\label{eq:sympvb} \mathsf{V}=\pi^\sharp(\on{ann}(TN))\subset TM|_N\end{equation}
is a symplectic vector bundle $p\colon \mathsf{V}\to N$, with a fiberwise symplectic 2-form.

\begin{remark}
Note that cosymplectic submanifolds $N\subset M$ are necessarily transverse to the symplectic foliation. For this reason, they are also known as \emph{Poisson transversals} \cite{fre:nor}. In fact, the cosymplectic submanifolds are exactly those transverse submanifolds whose intersection with every symplectic leaf is a symplectic submanifold; the collection of such intersections is the symplectic foliation of $N$. 
\end{remark}

\subsection{Notes}
The idea of unifying Poisson geometry with pre-symplectic geometry, by viewing Poisson tensors and 2-forms through their graphs,  was introduced by Courant and Weinstein \cite{couwein:beyond}, and developed in Courant's thesis \cite{cou:di}. The name \emph{Dirac geometry} has to do with its relation to Dirac's theory of constrained mechanical systems. The non-skew-symmetric version of  the Courant bracket is due to Dorfman \cite{dor:dir}. (The original skew-symmetric version is rarely used nowadays.) 
Many of the later developments in Dirac geometry were inspired by unpublished notes of 
\v{S}evera \cite{sev:let}. Gauge transformations of Poisson structures were introduced by \v{S}evera-Weinstein \cite{sev:poi1} and further developed by Bursztyn \cite{bu:ga}. 
For pullbacks of Dirac structures, see e.g. \cite{bur:gauge}. The Courant bracket on $TM\oplus T^*M$ has been generalized to \emph{Courant algebroids} in the work of Liu-Xu-Weinstein \cite{liu:ma}; see e.g. \cite{lib:cou} for a more recent discussion. The Lie bracket on closed 1-forms on a Poisson manifold was first discovered by Fuchssteiner \cite{fuc:lie}, see also Koszul \cite{kos:cro}. 
Cosymplectic submanifolds are already discussed in Weinstein's paper \cite{wei:loc}, and later by Xu \cite{xu:dir},  Cattaneo-Zambon \cite{cat:coi}, and Frejlich-M\u{a}rcu\c{t} \cite{fre:nor}, among others. In his paper \cite{xu:dir}, Ping Xu discusses a
 more general (and very natural) class of submanifolds of Poisson manifolds, called \emph{Dirac submanifolds}, which also inherit Poisson structures. An even larger class of \emph{Poisson-Dirac submanifolds} is considered by Crainic-Fernandes in 
\cite{cra:intpoi}.

\section{Weinstein splitting theorem}\label{sec:weinsteinsplitting}
\subsection{The splitting theorem}
We begin with a statement of the theorem and some of its consequences. 
Let $(M,\pi)$ be a Poisson manifold, and $m\in M$. Then  
\[ \mathsf{S}:=\pi^\sharp(T^*_mM)\] 
is a symplectic vector space. Let $N\subset M$ be a submanifold through $m$, with the property that  
\[ T_mM= T_mN\oplus\mathsf{S}.\]
Then 
$\pi^\sharp(\on{ann}(TN))$ coincides with $\mathsf{S}$ at $m$, hence 
it is a complement to $TN$ at $m$. By continuity this also holds on some neighborhood of 
$m$. Thus, taking $N$ smaller if necessary, it is a cosymplectic submanifold, and hence inherits a Poisson structure $\pi_N$ (see Proposition \ref{prop:cosymplectic}). It is referred to as the \emph{transverse Poisson structure}. 

\begin{theorem}[Weinstein splitting theorem \cite{wei:loc}] The Poisson manifold 
$(M,\pi)$ is Poisson diffeomorphic near $m\in M$ to 
the product of Poisson manifolds,  
\[ N\times \mathsf{S}\]
where $\mathsf{S}$ is the symplectic vector space $\on{ran}(\pi_m^\sharp)$, and 
$N$ is a transverse submanifold as above, equipped with the transverse Poisson structure. 
More precisely, there exists a Poisson diffeomorphism 
between open neighborhoods of $m$ in $M$ and of $(m,0)$ in  $N\times \mathsf{S}$, 
taking $m$ to $(m,0)$, and with differential at $m$ equal to the given decomposition $T_mM\to T_m N\oplus \mathsf{S}$. 
\end{theorem}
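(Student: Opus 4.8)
The plan is to reduce to the case where the symplectic factor $\mathsf{S}$ is split off first, using a Darboux-type argument to find functions realizing canonical coordinates on $\mathsf{S}$, and then use the Dirac-geometric machinery developed above — in particular the Moser theorem (Theorem~\ref{th:moser}) and the pullback constructions — to identify the residual Poisson structure with that of the cosymplectic submanifold $N$. Concretely, pick a basis $e_1,\dots,e_{2k}$ of $\mathsf{S}=\on{ran}(\pi_m^\sharp)$ adapted to the symplectic form, and choose functions $p_1,\dots,p_k,q^1,\dots,q^k$ near $m$ whose differentials at $m$ restrict to the dual basis on $\mathsf{S}$; the claim will be that these can be corrected, one at a time, so that $\{q^i,q^j\}=0$, $\{p_i,p_j\}=0$, $\{q^i,p_j\}=\delta^i_j$ in a neighborhood of $m$. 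This is the heart of the matter, and is proved by an inductive flow-box argument exactly as in the classical proof: having arranged the bracket relations among $q^1,\dots,q^r$ and $p_1,\dots,p_r$, one uses the fact that the Hamiltonian vector fields $X_{q^1},\dots,X_{q^r},X_{p_1},\dots,X_{p_r}$ commute and are linearly independent near $m$ (here one uses $[X_f,X_g]=X_{\{f,g\}}$ from the Proposition in Section on basic properties) to straighten them simultaneously, then corrects the next pair by subtracting suitable functions constant along their common flow.

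Once such functions exist, the joint map $(q^1,\dots,q^k,p_1,\dots,p_k)\colon U\to \mathsf{S}$ is a submersion near $m$; its fibers are the candidates for the transverse slices, and one sees directly from the bracket relations that $\on{ran}(\pi^\sharp)$ splits as a direct sum of the span of the $X_{q^i},X_{p_j}$ (the ``$\mathsf{S}$-directions'') and something tangent to the fibers. Choosing $N$ to be the fiber through $m$ — which, as noted in the statement, is a cosymplectic submanifold after shrinking — one gets a local diffeomorphism $U\to N\times \mathsf{S}$. It remains to check this is a Poisson diffeomorphism, i.e.\ that $\pi$ pushes forward to $\pi_N\oplus\pi_{\mathsf S}$. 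By construction the map intertwines the Hamiltonian vector fields of the coordinate functions correctly, so the mixed and $\mathsf{S}$-components of the image bivector are as desired; the residual $N$-component restricts on $N$ to $\pi_N$ by the defining property of the transverse Poisson structure (Proposition~\ref{prop:cosymplectic}), and a degree/homogeneity count in the $\mathsf{S}$-variables, together with the Jacobi identity, forces the $N$-component to be independent of the $\mathsf{S}$-coordinates. Alternatively, and perhaps more cleanly in the Dirac framework, one identifies $\on{Gr}(\pi)|_U$ as the pullback $\varphi^!$ of the Dirac structure on $N$ along the projection $U\to N$, twisted by the symplectic form pulled back from $\mathsf{S}$, and invokes the pullback proposition.

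The main obstacle is the inductive construction of the canonical functions: keeping track of which bracket relations have been achieved, verifying that the relevant Hamiltonian vector fields remain independent and commuting on a common neighborhood at each stage, and ensuring the corrections do not destroy relations established earlier. Everything after that — the splitting of the tangent space, cosymplecticity of the slice, and the final verification that the product structure is Poisson — is essentially bookkeeping with the identities already assembled in the preceding sections, though the homogeneity argument isolating the $\mathsf{S}$-independence of the transverse factor should be stated carefully. A mild secondary point is the assertion about the differential at $m$ matching the prescribed decomposition, which one arranges simply by the initial choice of the functions $q^i,p_j$ and, if needed, a linear change of coordinates on $N$ at the end.
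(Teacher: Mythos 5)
Your proposal is essentially Weinstein's original 1983 proof: an inductive Darboux-type construction of functions $q^i,p_j$ with canonical brackets, straightening the commuting Hamiltonian vector fields $X_{q^i},X_{p_j}$ via Frobenius, and then using the Jacobi identity to show the residual brackets are independent of the symplectic coordinates. That argument is correct in outline (one small quibble: there is no genuine ``homogeneity count'' needed at the end --- the independence of the transverse component from the $\mathsf{S}$-coordinates follows directly from $X_{q^i}\{y_a,y_b\}=\{q^i,\{y_a,y_b\}\}=0$ and likewise for $p_j$, by Jacobi; also, to land on the \emph{prescribed} transversal $N$ rather than merely some fiber of $(q,p)$, you should choose the canonical functions to vanish along $N$ at each inductive stage). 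However, the paper deliberately avoids this route. It instead deduces the splitting theorem as the special case of the Frejlich--M\u{a}rcu\c{t} normal form theorem (Theorem \ref{th:fre}) around an arbitrary cosymplectic submanifold, in the case where the symplectic normal bundle $\mathsf{V}$ is trivial. The proof there proceeds by producing a 1-form $\alpha$ vanishing on $N$ such that $X=\pi^\sharp(\alpha)$ is Euler-like, using $X$ to fix a tubular neighborhood embedding, and then flowing the Dirac structure $\on{Gr}(\pi)$ by the Courant automorphisms $(-\omega_t,\Phi_t)$ generated by the section $X+\alpha$; the limit $t\to-\infty$ identifies $\on{Gr}(\pi)$ with $\ca{R}_\omega\big(p^!\on{Gr}(\pi_N)\big)$, which for trivial $\mathsf{V}=N\times\mathsf{S}$ is exactly the product Dirac structure $\on{Gr}(\pi_N)\times\on{Gr}(\omega_{\mathsf{S}})$. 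What each approach buys: yours is more elementary and self-contained, needing only the basic identities $[X_f,X_g]=X_{\{f,g\}}$ and Frobenius; the paper's is a single global flow argument rather than an induction, it proves the stronger normal form around cosymplectic submanifolds of any dimension, and (as the paper notes) it admits averaging and hence yields the $G$-equivariant version, which the inductive construction does not readily provide.
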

\begin{remark}
Put differently, one can introduce local coordinates near $m$, in which $\pi$ decomposes as a sum of the standard  Poisson structure on $\R^{2k}$ with 
another Poisson structure on $\R^l$ having a zero at the origin. The remarkable fact is that one can eliminate  any `cross-terms'. 
\end{remark}
An immediate consequence of Weinstein's splitting theorem is the existence of a (local) \emph{symplectic foliation}: In the model $N\times \mathsf{S}$, it is immediate that $\{m\}\times \mathsf{S}$ is a symplectic leaf. But it also gives information on how the symplectic leaves fit together: The symplectic foliation of $M$ is locally a product of the foliation of the
lower-dimensional transversal $N$ with a symplectic vector space. Of course, the transverse Poisson structure on $N$ can still be quite complicated.

Weinstein's theorem has been generalized by Frejlich-M\u{a}rcu\c{t} \cite{fre:nor} to a normal form theorem 
around  arbitrary cosymplectic submanifolds $N\subset M$. Consider the symplectic vector bundle 
\[ p\colon \mathsf{V}=\pi^\sharp(\on{ann}(TN))\to N\] from \eqref{eq:sympvb}. 
As for any symplectic vector bundle, it is possible to find a closed 2-form $\omega$ on the total space of $\mathsf{V}$, such that $\omega(v,\cdot)=0$ for $v\in TN$, and such that $\omega$ pulls back to the given symplectic form on the fibers. (A particularly nice way of obtaining such a 2-form 
is the `minimal coupling' construction of Sternberg \cite{ste:min} and 
Weinstein \cite{wei:uni}.)  Consider the pull-back Dirac structure 
$ p^!\on{Gr}(\pi_N)$, where $p\colon \mathsf{V}\to N$ is the projection. Its gauge transformation by $\omega$, 
\begin{equation}\label{eq:VDirac}
\ca{R}_\omega\big(p^!\on{Gr}(\pi_N)\big)\subset \T \mathsf{V}\end{equation}
is transverse to $T\mathsf{V}$ near $N\subseteq \mathsf{V}$, hence it defines a Poisson structure $\pi_{\mathsf{V}}$ on a neighborhood of $N$, which furthermore agrees with $\pi$ 
along $N\subset \mathsf{V}$. The Poisson structures for  different choices of $\omega$ are related by the Moser method (Theorem \ref{th:moser}). 
\begin{theorem}[Frejlich-M\u{a}rcu\c{t} \cite{fre:nor}] \label{th:fre} Let $N\subset M$ be a cosymplectic submanifold of a Poisson manifold, with normal bundle $\mathsf{V}$. Define a Poisson structure $\pi_{\mathsf{V}}$ on a neighborhood of $N$ in $\mathsf{V}$, as explained above. Then there exists a tubular neighborhood embedding 
$\mathsf{V}\hra M$ which is a Poisson map on a possibly smaller neighborhood of $N\subset \mathsf{V}$. 
\end{theorem}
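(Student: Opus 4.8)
The plan is to reduce the statement, using the tubular neighborhood theorem together with the pull-back of Dirac structures from Section~\ref{subsec:pullbackdirac}, to a comparison of two Poisson structures near the zero section of $\mathsf{V}$, and then to interpolate them by a Moser flow (Theorem~\ref{th:moser}). Concretely, first choose a tubular neighborhood embedding $\psi_0\colon\mathsf{V}\hra M$ restricting to the identity on $N$ and with differential along $N$ equal to the canonical identification $T\mathsf{V}|_N=TN\oplus\mathsf{V}=TM|_N$. Since $\psi_0$ is a diffeomorphism onto an open neighborhood of $N$, the pull-back $\psi_0^!\on{Gr}(\pi)$ is the graph of a Poisson structure $\pi_1:=(\psi_0^{-1})_*\pi$ defined near $N\subset\mathsf{V}$. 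By construction $\pi_1$ and the model $\pi_{\mathsf{V}}$ both restrict to $\pi$ along $N$ (under the above identification), hence agree along $N$; and $N$ is a cosymplectic submanifold for each, inducing the same transverse Poisson structure $\pi_N$ and the same symplectic normal bundle $\mathsf{V}$. It therefore suffices to produce a diffeomorphism between neighborhoods of $N$ in $\mathsf{V}$ that fixes $N$ pointwise and carries $\pi_1$ to $\pi_{\mathsf{V}}$; composing $\psi_0$ with the inverse of such a diffeomorphism yields the desired embedding.

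Second — and this is the heart of the matter — I would arrange, after replacing $\psi_0$ by its composition with a diffeomorphism fixing $N$ if necessary, that $\pi_1$ and $\pi_{\mathsf{V}}$ have the \emph{same} symplectic foliation on a neighborhood of $N$. The cosymplectic hypothesis is exactly what makes this possible: a leaf $L$ of $\pi$ meeting $N$ contains the normal fiber $\pi^\sharp(\on{ann}_n(TN))$ at each $n\in L\cap N$ and meets $N$ in the leaf through $n$ of the transverse structure $\pi_N$, so near $N$ the leaves of $\pi$ are precisely the ``thickenings'' of the leaves of $\pi_N$ by the fibers of $\mathsf{V}$, which are in turn the leaves of $\pi_{\mathsf{V}}$. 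Concretely, over each symplectic leaf $L$ of $\pi$ one picks a tubular neighborhood of the symplectic submanifold $L\cap N$ inside $L$, with normal bundle $\mathsf{V}|_{L\cap N}$, and organizes these choices smoothly as $L$ varies. Once the foliations coincide, $\pi_1$ and $\pi_{\mathsf{V}}$ determine leaf-wise symplectic forms agreeing along $N$; their difference is a leaf-wise closed $2$-form vanishing along $N$, and on a tubular neighborhood retracting onto $N$ (for instance by the fiberwise scalings $\kappa_t$, which preserve the common foliation) the relative Poincar\'e lemma lets one extend it to an \emph{exact} ambient $2$-form $\omega=\d a$ with primitive $a$ vanishing along $N$. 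By the lemma on gauge transformations recalled above (the operator $\ca{R}_\omega$ preserves the symplectic foliation and changes each leaf-wise symplectic form by $-\omega$), a choice of signs gives $\pi_1=(\pi_{\mathsf{V}})^{\omega}$.

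Third, apply Theorem~\ref{th:moser} to the straight-line path $\pi_t=(\pi_{\mathsf{V}})^{t\omega}$, $t\in[0,1]$. After shrinking the neighborhood of $N$ so that $\ca{R}_{t\omega}(\on{Gr}(\pi_{\mathsf{V}}))$ stays transverse to $T\mathsf{V}$, this is a path of Poisson structures from $\pi_{\mathsf{V}}$ to $\pi_1$, with $\tfrac{d}{dt}(t\omega)=\omega=-\d(-a)$. The theorem then produces the flow $\Phi_t$ of $X_t=\pi_t^\sharp(-a)$ with $(\Phi_1)_*\pi_1=\pi_{\mathsf{V}}$; since $a$ vanishes along $N$, so does each $X_t$, and $\Phi_t$ fixes $N$ pointwise. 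Then $\psi:=\psi_0\circ\Phi_1^{-1}$ is a tubular neighborhood embedding $\mathsf{V}\hra M$ with $(\psi^{-1})_*\pi=(\Phi_1)_*\pi_1=\pi_{\mathsf{V}}$ on a neighborhood of $N$, which is the assertion.

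The main obstacle is the second step: an arbitrary tubular neighborhood embedding (such as an exponential one) will \emph{not} match the symplectic foliations, so one must genuinely construct it out of leaf-wise tubular neighborhoods, and doing so smoothly is delicate where the symplectic foliation of $M$ is singular near $N$ — the leaf dimensions jump precisely where the transverse Poisson structure $\pi_N$ is singular. The remaining ingredients — the pull-back reduction, the cohomological extension of the leaf-wise $2$-form, and the Moser argument — are then routine adaptations of the machinery developed in Sections~\ref{sec:diracmanifolds} and \ref{sec:weinsteinsplitting}.
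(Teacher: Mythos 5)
There is a genuine gap, and it sits exactly where you flag it: your second step. The assertion that one can choose a tubular neighborhood embedding so that $\pi_1$ and $\pi_{\mathsf{V}}$ have the \emph{same} symplectic foliation near $N$ is not an auxiliary normalization — it is essentially the content of the theorem. Your justification ("near $N$ the leaves of $\pi$ are precisely the thickenings of the leaves of $\pi_N$ by the fibers of $\mathsf{V}$") is itself a consequence of the normal form you are trying to prove; a priori one only knows the tangent space to the leaf \emph{at} points of $N$ splits as $\pi^\sharp(\on{ann}(TN))\oplus T(\text{leaf of }\pi_N)$, not how the leaf sits away from $N$. And "organizing these choices smoothly as $L$ varies" is precisely the problem: the foliation is singular, the leaf dimension jumps wherever $\pi_N$ is singular, and no mechanism is offered for making leafwise tubular neighborhoods depend smoothly on the leaf across such jumps. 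A secondary gap of the same nature occurs in your third step: for a \emph{singular} foliation, a leafwise closed $2$-form need not extend to a smooth ambient closed form, so the relative Poincar\'e lemma does not apply as stated; one needs the $2$-form to be handed to you as a smooth object on $\mathsf{V}$ from the start. Your final Moser step is fine, but by then all the work has already been assumed.

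The paper's proof resolves exactly this difficulty by \emph{choosing the tubular neighborhood out of the Poisson structure itself}: one picks a $1$-form $\alpha$ with $\alpha|_N=0$ such that $X=\pi^\sharp(\alpha)$ is Euler-like along $N$, and uses the flow of $X$ to build the embedding. Because $X+\alpha$ is a section of $\on{Gr}(\pi)$, its flow acts on $\on{Gr}(\pi)$ by the explicit exact gauge transformations $\ca{R}_{\omega_t}\circ\T\Phi_t$, so the embedding automatically respects the (singular) foliation — no leafwise construction is needed — and letting the flow run to the limit $\kappa_0=i\circ p$ produces directly the identity $\ca{R}_{\omega}\big(p^!\on{Gr}(\pi_N)\big)=\on{Gr}(\pi)$ with the exact $2$-form
\[
\omega=\d\int_0^1 \tfrac{1}{v}\,\kappa_v^*\alpha\ \d v ,
\]
which is smooth precisely because $\alpha|_N=0$. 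In other words, the Euler-like vector field is the device that simultaneously supplies your foliation-compatible tubular neighborhood and your ambient exact $2$-form; without it (or some substitute), the proposal does not close.
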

If the bundle $\mathsf{V}$ admits a trivialization $\mathsf{V}=N\times\mathsf{S}$, 
then the 2-form $\omega$ can simply be taken as pull-back of $\omega_{\mathsf{S}}$ under projection to the second factor. Furthermore, 
$p^!\on{Gr}(\pi_N)=\on{Gr}(\pi_N)\times T\mathsf{S}$ in this case, with 
\[  \ca{R}_\omega(p^!\on{Gr}(\pi_N))=\on{Gr}(\pi_N)\times \on{Gr}(\omega_{\mathsf{S}}).\]
In particular, we recover the Weinstein splitting theorem. Section \ref{subsec:33} is devoted to a proof of Theorem \ref{th:fre}, after 
some preliminary results about normal bundles. These two sections are based on \cite{bur:spl}, to which we refer for more detailed discussions.

\subsection{Normal bundles}\label{subsec:nor}
Consider the category whose objects are the pairs $(M,N)$ of a manifold $M$ with a submanifold $N$, and whose morphisms 
$(M_1,N_1)\to (M_2,N_2)$
are the smooth maps $\Phi\colon M_1\to M_2$ taking $N_1$ to $N_2$. The normal bundle functor $\nu$ assigns to $(M,N)$ the 
vector bundle 
\[ \nu(M,N)=TM|_N/TN,\]
over $N$, and to a morphism $\Phi\colon (M_1,N_1)\to (M_2,N_2)$ the vector bundle morphism 
\[ \nu(\Phi)\colon \nu(M_1,N_1)\to \nu(M_2,N_2)\]
induced by the tangent map $T\Phi\colon TM_1\to TM_2$. The normal functor is compatible with the tangent functor; in particular $\nu(TM,TN)\cong T\nu(M,N)$ as double vector bundles, and $\nu(T\Phi)=T\nu(\Phi)$. 

For a vector bundle $E\to M$, there is a canonical identification $\nu(E,M)\cong E$. 
In particular, given a pair $(M,N)$ we have the identification $\nu(\nu(M,N),N)=\nu(M,N)$. 
A \emph{tubular neighborhood embedding} is a map of pairs 
\[ \varphi\colon (\nu(M,N),N)\to (M,N)\]
such that $\varphi\colon \nu(M,N)\to M$ is an embedding as an open subset, and 
the map $\nu(\varphi)$ is the identity. 

Suppose $X$ is a vector field on $M$ such that $X|_N$ is tangent to $N$. 
Viewed as a section of the tangent bundle, it defines a morphism $X\colon (M,N)\to (TM,TN)$, inducing a section $\nu(X)\colon \nu(M,N)\to T\big(\nu(M,N)\big)$. This vector field 
\[ \nu(X)\in \mf{X}\big(\nu(M,N)\big)\]
is called the \emph{linear approximation} to $X$ along $N$. In local bundle trivializations, the 
linear approximation is the first order Taylor approximation in the normal directions. 
By a \emph{linearization} of the vector field $X$, we mean a tubular neighborhood embedding $\varphi$ 
taking $\nu(X)$ to $X$ on a possibly smaller neighborhood of $N$.  The problem of $C^\infty$-linearizability of vector fields is quite subtle; the main result (for $N=\pt$) is the \emph{Sternberg linearization theorem} \cite{ste:loc} which proves existence of linearizations under non-resonance conditions. 
\begin{definition}\cite{bur:spl}
A vector field $X\in\mf{X}(M)$ is called \emph{Euler-like along $N$} if it is complete, with $X|_N=0$, and with linear approximation $\nu(X)$ the Euler vector field $\E$ on $\nu(M,N)$. 
\end{definition}
An Euler-like vector field determines a tubular neighborhood embedding: 
\begin{lemma} 
If $X\in\mf{X}(M)$ is Euler-like along $N$, then $X$ determines a unique tubular neighborhood embedding $\varphi\colon \nu(M,N)\to M$ such that 
\[ \E\sim_{\varphi} X.\]
\end{lemma}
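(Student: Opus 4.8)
The strategy is to produce the tubular neighborhood embedding by flowing \emph{backward} along $X$ from time $-\infty$, exploiting that the Euler-like condition makes $X$ look like the Euler vector field near $N$. First I would work in the model space $\nu(M,N)$ itself, where the Euler vector field $\E$ generates the flow $\kappa_{e^{-t}}$ (scalar multiplication by $e^{-t}$); as $t\to+\infty$ this flow contracts everything to the zero section $N$, and as $t\to-\infty$ it is defined for all time only because we are on the vector bundle. The point is that a tubular neighborhood embedding $\varphi$ with $\E\sim_\varphi X$ must intertwine the flow of $\E$ with the flow $\Phi_t$ of $X$, i.e. $\varphi\circ\kappa_{e^{-t}}=\Phi_t\circ\varphi$. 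Since $\kappa_0$ collapses $\nu(M,N)$ onto $N$ and $\varphi$ restricts to the identity on $N$, this forces, upon letting $t\to+\infty$, a formula for $\varphi$ in terms of the asymptotics of $\Phi_t$.

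The key construction: pick any auxiliary tubular neighborhood embedding $\psi_0\colon \nu(M,N)\to M$ (these exist by the standard tubular neighborhood theorem), and define
\[
\varphi=\lim_{t\to+\infty}\Phi_t\circ\psi_0\circ\kappa_{e^{t}}.
\]
Heuristically, $\psi_0\circ\kappa_{e^t}$ blows a point up toward the zero section at rate $e^t$, and then $\Phi_t$ (the flow of $X$, which near $N$ behaves like the $\E$-flow contracting at rate $e^{-t}$) pulls it back; the two rates cancel, and completeness of $X$ together with $X|_N=0$ makes the limit exist. To make this rigorous I would work in local bundle coordinates $(x,y)$ on $\nu(M,N)$, with $x$ along $N$ and $y$ normal, in which $\E=-\sum y_i\partial/\partial y_i$ (up to the sign convention of Appendix \ref{app:signs}) and $X=\E+(\text{higher order in }y)$; then $\Phi_t\circ\psi_0\circ\kappa_{e^t}$ is a sequence of maps whose derivatives in the normal directions at $y=0$ are all equal (to $\nu(\psi_0)=\mathrm{id}$), and whose higher-order behavior is damped by the factor coming from the Euler-like hypothesis. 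A Gronwall-type estimate on the difference $\Phi_t\circ\psi_0\circ\kappa_{e^t}-\Phi_{t'}\circ\psi_0\circ\kappa_{e^{t'}}$ for $t'>t$ shows the family is uniformly Cauchy on compact subsets of $\nu(M,N)$ together with all derivatives, so the limit $\varphi$ exists and is smooth. By construction $\nu(\varphi)=\nu(\psi_0)=\mathrm{id}$ and $\varphi|_N=\mathrm{id}_N$, and the intertwining $\varphi\circ\kappa_{e^{-s}}=\Phi_s\circ\varphi$ follows by reindexing the limit ($t\mapsto t+s$) and using the flow property $\Phi_{t+s}=\Phi_s\circ\Phi_t$; differentiating this at $s=0$ gives $\E\sim_\varphi X$. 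That $\varphi$ is a diffeomorphism onto an open neighborhood follows because its derivative along $N$ is the identity (so it is a local diffeomorphism near $N$) and it is injective near $N$ by the same intertwining argument, after which one shrinks to the largest open set on which it is an embedding.

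For uniqueness: if $\varphi_1,\varphi_2$ are two such embeddings, then $\varphi_2^{-1}\circ\varphi_1$ (defined near $N$) is a diffeomorphism fixing $N$ pointwise, inducing the identity on $\nu(M,N)$, and commuting with the flow $\kappa_{e^{-t}}$ of $\E$; letting $t\to+\infty$ and using that $\kappa_{e^{-t}}$ contracts to $N$, one gets $\varphi_2^{-1}\circ\varphi_1=\lim_{t}\kappa_{e^t}\circ(\varphi_2^{-1}\circ\varphi_1)\circ\kappa_{e^{-t}}=\mathrm{id}$, since conjugating a map that is tangent to the identity along $N$ by the contraction forces it to its linearization.

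\textbf{Main obstacle.} The delicate point is the convergence of the limit defining $\varphi$ together with all its derivatives: one must show that the Euler-like condition (linear approximation exactly $\E$, plus completeness) provides enough contraction to beat the blow-up factor $\kappa_{e^t}$ uniformly on compacta and in all jets. This is where the precise meaning of "linear approximation $=\E$" — namely that in normal coordinates $X+\E$ vanishes to second order along $N$ — gets used, via Taylor's theorem with remainder and a Gronwall estimate for the time-dependent vector field governing the difference of the two flows. The rest (intertwining, $\nu(\varphi)=\mathrm{id}$, uniqueness) is then essentially formal.
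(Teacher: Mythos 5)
Your route is genuinely different from the paper's. The paper linearizes $X$ by a Moser-type homotopy in \emph{finite} time: after an auxiliary identification $M=\nu(M,N)$ it sets $Z=X-\E$ (vanishing to second order along $N$), forms the rescaled family $Z_t=\tfrac1t\kappa_t^*Z$ (smooth through $t=0$ precisely because of the second-order vanishing), lets $\phi_t$ be its flow, and checks by a one-line derivative computation that $(\phi_t)_*(\E+tZ_t)$ is constant, whence $(\phi_1)_*X=\E$. No limits, no estimates. You instead use the classical scattering/conjugation construction $\varphi=\lim_{t\to+\infty}\Phi_t\circ\psi_0\circ\kappa_{e^{\pm t}}$, which has the advantage of producing the intertwining $\E\sim_\varphi X$ and the uniqueness statement directly from one formula (your uniqueness argument, conjugating $\varphi_2^{-1}\circ\varphi_1$ by the contraction and passing to the limit, is correct and is a nice addition, since the paper's proof does not spell uniqueness out). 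Minor point: with the paper's conventions the flow of $\E$ is $\kappa_{e^{-t}}$, so the inner map in your limit must be the \emph{contraction} $\kappa_{e^{-t}}$, not $\kappa_{e^{t}}$; your own gloss (``blows a point up toward the zero section'') shows the sign got garbled.

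The substantive issue is that the step you defer --- convergence of $\Phi_t\circ\psi_0\circ\kappa_{e^{-t}}$ \emph{together with all derivatives}, uniformly on compacta --- is not a routine Gronwall estimate; it is essentially the content of a (non-resonant, fibered) Sternberg linearization theorem. A single Gronwall bound gives $C^0$ (and with a little more care $C^1$) convergence, because the second-order vanishing of $X-\E$ produces one extra factor of $e^{-t}$ that beats the expansion rate $e^{t}$ of $\Phi_t$ exactly once. For the $k$-jets with $k\ge 2$ one needs an induction on the order of derivatives, using that the Euler field has all ``eigenvalues'' equal to $1$ so no resonances occur; this works, but it is real analytic labor and is precisely what the paper's finite-time trick is designed to avoid (the paper even remarks that $C^\infty$-linearizability is ``quite subtle'' in general). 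As written, your plan asserts the hardest part of the lemma rather than proving it; either carry out the jet-by-jet estimate, or replace the limit by the paper's rescaling homotopy, which reaches the same $\varphi$ with only a computation of $\tfrac{d}{dt}(\phi_t)_*(\E+tZ_t)$.
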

\begin{proof} 
The main point is to show that $X$ is linearizable along $N$. Pick any 
tubular neighborhood embedding to assume $M=\nu(M,N)$. Since $\nu(X)=\E$, 
it follows that  the difference $Z=X-\E$ vanishes to second order along $N$. Let $\kappa_t$ denote scalar multiplication by $t$ on $\nu(M,N)$, and consider the family of vector fields, defined for $t\neq 0$,  
\[ Z_t=\f{1}{t} \kappa_t^* Z\]
Since $Z$ vanishes to second order along $N$, this is well-defined even at $t=0$. Let 
$\phi_t$ be its (local) flow; thus $\f{d}{d t}(\phi_t)_*=(\phi_t)_*\circ \L_{Z_t}$ as operators on tensor fields (e.g., functions, vector fields, and so on). 
On a sufficiently small open neighborhood of $N$ in $\nu(M,N)$, it is defined for all $|t|\le 1$. The flow of the Euler vector field $\E$ is  $s\mapsto \kappa_{\exp(-s)}$; by substitution $t=\exp(-s)$ this shows that 
$\f{d}{d t}\kappa_t^*=t^{-1}\kappa_t^*\circ \L_\E$.  Consequently, 
\[ \f{d}{d t}(t Z_t)=
\f{d}{d t}(\kappa_t^* Z)=
\ca{L}_\E Z_t=[\E,Z_t].\]
Therefore, 
\[ \f{d}{d t}(\phi_t)_*(\E+t Z_t)=(\phi_t)_*(\L_{Z_t}(\E+t Z_t)-[\E,Z_t])=0,\]
which shows that $(\phi_t)_*(\E+t Z_t)$ does not depend on $t$. Comparing the values at $t=0$ and $t=1$ we obtain $(\phi_1)_* X=\E$, so that $(\phi_1)^{-1}$ giving the desired linearization on a neighborhood of $N$. In summary, this shows that there exists a map from a neighborhood of the zero section of $\nu(M,N)$ to a neighborhood of $N$ in $M$,  intertwining the two vector fields $\E$ and $X$, and hence also their flows. Since $X$ is complete, we may use the flows to extend globally to a tubular neighborhood embedding of the full normal bundle. 
\end{proof}

\subsection{Cosymplectic submanifolds}\label{subsec:33}
 Let $(M,\pi)$ be a Poisson manifold, and $i\colon N\hra  M$ a cosymplectic submanifold. If $\alpha\in \Omega^1(M)$ vanishes along $N$, then $X=\pi^\sharp(\alpha)$ 
vanishes along $N$, and so we can consider its linear approximation $\nu(X)$. 
It is an \emph{infinitesimal gauge transformation} of $\nu(M,N)$: a fiberwise linear vector field 
whose restriction to $N$ is zero. (Equivalently, it is tangent to the fibers.) By transversality of $\pi^\sharp\colon T^*M\to TM$ to $N\subset M$, \emph{any}
such vector field on $\nu(M,N)$ can be realized as the linear approximation of $\pi^\sharp(\alpha)$ for some $\alpha$ with $\alpha|_N=0$. (For details, see \cite{bur:spl}, proof of Lemma 3.9.)
In particular, this applies to the Euler vector field $\E$ on $\nu(M,N)$. By multiplying the 1-form $\alpha$ by a bump function supported near $N$, we can arrange that 
$X=\pi^\sharp(\alpha)$ is complete. This shows: 
\begin{lemma}
There exists a 1-form $\alpha\in \Om^1(M)$, with $\alpha|_N=0$, such that the vector field $X=\pi^\sharp(\alpha)$ is Euler-like. 
\end{lemma}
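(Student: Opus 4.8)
The plan is to exploit the transversality fact just recalled: the map $\pi^\sharp\colon T^*M\to TM$ is transverse to $N$, and consequently every infinitesimal gauge transformation of $\nu(M,N)$ — that is, every fiberwise-linear vector field on $\nu(M,N)$ tangent to the fibers — is realized as the linear approximation $\nu(\pi^\sharp(\alpha))$ of $\pi^\sharp(\alpha)$ for some $\alpha\in\Omega^1(M)$ vanishing along $N$. First I would apply this to the Euler vector field $\E$ on $\nu(M,N)$, which is visibly fiberwise-linear, tangent to the fibers, and vanishes on the zero section $N$. This produces some $\alpha_0\in\Omega^1(M)$ with $\alpha_0|_N=0$ and $\nu\big(\pi^\sharp(\alpha_0)\big)=\E$. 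Writing $X_0=\pi^\sharp(\alpha_0)$, we then have $X_0|_N=0$ (since $\alpha_0|_N=0$ and $\pi^\sharp$ is a bundle map) and $\nu(X_0)=\E$.

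The only property still missing is completeness of the vector field, and this is handled by a standard cutoff. Choose a function $f\in C^\infty(M)$ that is identically $1$ on a neighborhood of $N$ and supported in a slightly larger neighborhood, and set $\alpha=f\,\alpha_0$. Then $\alpha|_N=0$ still holds, and near $N$ we have $\alpha=\alpha_0$, so $X:=\pi^\sharp(\alpha)$ agrees with $X_0$ on a neighborhood of $N$; in particular $X|_N=0$ and $\nu(X)=\nu(X_0)=\E$, since the linear approximation along $N$ depends only on the germ of the vector field near $N$. Finally, $X=\pi^\sharp(f\alpha_0)=f\,\pi^\sharp(\alpha_0)$ is a vector field with support contained in a set which — after shrinking the ambient neighborhood if necessary, or multiplying by a further globally supported bump function — we may take to be relatively compact, so $X$ has compact support; a compactly supported vector field on $M$ is complete. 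Hence $X$ is Euler-like along $N$ in the sense of the definition.

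I do not expect a genuine obstacle here: the substantive input — the surjectivity of $\alpha\mapsto\nu(\pi^\sharp(\alpha))$ onto infinitesimal gauge transformations, which rests on the transversality built into the cosymplectic condition \eqref{eq:cosymplectic} — has already been asserted (with a reference to \cite{bur:spl}, proof of Lemma 3.9), so the lemma is essentially a matter of assembling that fact with the Euler vector field and a routine completeness cutoff. If anything requires a word of care, it is checking that multiplying $\alpha_0$ by the bump function $f$ does not disturb the linear approximation; but this is clear because $f\equiv 1$ near $N$, so $f\alpha_0$ and $\alpha_0$ have the same $1$-jet along $N$ and therefore $\pi^\sharp(f\alpha_0)$ and $\pi^\sharp(\alpha_0)$ have the same first-order Taylor approximation in the normal directions.
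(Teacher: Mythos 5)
Your argument is essentially identical to the paper's: both invoke the transversality of $\pi^\sharp$ to $N$ to realize the Euler vector field $\E$ as $\nu(\pi^\sharp(\alpha_0))$ for some $\alpha_0$ with $\alpha_0|_N=0$, and then cut off by a bump function equal to $1$ near $N$ to arrange completeness. The only caveat is that your justification of completeness via \emph{compact} support only works verbatim when $N$ is compact (a function equal to $1$ on a neighborhood of a noncompact closed $N$ cannot have compact support), but the paper itself asserts the completeness step with no more detail, so this is a shared technicality rather than a gap specific to your proof.
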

We are now in position to prove Frejlich-M\u{a}rcu\c{t}'s normal form theorem. 

\begin{proof}[Proof of the Frejlich-M\u{a}rcu\c{t} theorem \ref{th:fre}, after \cite{bur:spl}]
Choose a 1-form $\alpha$ as in the Lemma. 
The Euler-like vector field $X=\pi^\sharp(\alpha)$ gives a tubular neighborhood embedding 
$\psi\colon \nu(M,N)\to M$, taking $\E$ to $X$. 
Using this embedding, we may assume $M=\nu(M,N)$ is a vector bundle, with $X=\E$ the Euler vector field. The flow of $X$ is thus  given as $\Phi_t=\kappa_{\exp(-t)}$. 
Consider the infinitesimal automorphism 
$(\d\alpha,X)$ of $\T M$ defined by $\sigma =X+\alpha\in \Gamma(\on{Gr}(\pi))$. 
By Proposition \ref{prop:integrate}, the corresponding 1-parameter group of automorphisms is $(-\omega_t,\Phi_t)$, where 
\[ \omega_t=-\d \int_0^t (\Phi_s)_*\alpha\ \d s=
-\d \int_0^t (\Phi_{-s})^*\alpha\ \d s
=\d\int_{\exp(t)}^1\ \f{1}{v} \kappa_v^* \alpha\ \d v
.\]
Since $\sigma$ is a section of $\on{Gr}(\pi)$, the action $\ca{R}_{\omega_t}\circ \T \Phi_t$ of this 1-parameter group preserves $\on{Gr}(\pi)$. That is, 
\[ \ca{R}_{\omega_t} \big((\Phi_{-t})^! \on{Gr}(\pi)\big)=\on{Gr}(\pi)\]
for all $t\ge 0$. Consider the limit $t\to -\infty$ in this equality.  Since $\alpha$ vanishes along $N$, the family of forms $\f{1}{v} \kappa_v^* \alpha$  extends smoothly to $v=0$. Hence $\omega:=\omega_{-\infty}$ is well-defined:  
\[ \omega=\d \int_0^1 \f{1}{v}\,\, \kappa_v^* \alpha\ \d v.\]
On the other hand, 
\[ \Phi_{\infty}=\kappa_0=i\circ p\] 
where $p\colon \nu(M,N)\to N$ is the projection, and $i\colon N\to M$ is the inclusion. Thus 
\[  \ca{R}_\omega(p^! i^! \on{Gr}(\pi))=\on{Gr}(\pi).\]
Since $i^!\on{Gr}(\pi)=\on{Gr}(\pi_N)$, the left hand side is $\ca{R}_\omega(p^!\on{Gr}(\pi_N))$, which coincides with the model Poisson structure $\pi_{\mathsf{V}}$ near $N$.
\end{proof}
\vskip.06in

\subsection{Notes} 
The Weinstein splitting theorem was proved in Weinstein's paper \cite{wei:loc}, using an inductive argument to put a given Poisson structure into a partial normal form. There is also a $G$-equivariant version, for an action of a compact Lie group preserving the Poisson structure. However, this version does not readily follow from the original proof, since the inductive argument does not lend itself to `averaging'. In \cite{mir:no}, the $G$-equivariant case was proved under some assumptions, and in \cite{fre:nor} in 
generality. The argument presented here, using `Euler-like' vector fields, is taken from 
\cite{bur:spl}, where a similar idea was used to prove or re-prove splitting theorems in various other settings.

\section{The Karasev-Weinstein symplectic realization theorem}\label{sec:karasevweinstein}

\subsection{Symplectic realization}
Our starting point is the following definition, due to Weinstein. 
\begin{definition}\cite{wei:loc} A \emph{symplectic realization} of a Poisson manifold 
$(M,\pi_M)$ is a symplectic manifold $(P,\omega_P)$ together with a Poisson map 
$\varphi\colon P\to M$. It is called a \emph{full} symplectic realization if $\varphi$ is a surjective submersion.  
\end{definition}
An example of a symplectic realization is the inclusion of a symplectic leaf. However, we will mainly be interested in full symplectic realizations. 
\begin{examples}\label{ex:I}
\begin{enumerate}
\item 
(See \cite{ca:ge}.) Let $M=\R^2$ with the Poisson structure $\pi=x\f{\p}{\p x}\wedge \f{\p}{\p y}$. A full symplectic realization is given by $P=T^*\R^2$ with the standard symplectic form $\omega=\sum_{i=1}^2\d q_i\wedge \d p_i$, and 
\[ \varphi(q_1,q_2,p_1,p_2)=(q_1,q_2+p_1 q_1).\]
\item
Let $G$ be a Lie group, with Lie algebra $\g$. The space $M=\g^*$, with the Lie-Poisson structure, has a full symplectic realization $P=T^*G$, with the standard symplectic structure, and with the map $\varphi$ given by left trivialization. (Example (a) may be seen as a special case, using that $x\f{\p}{\p x}\wedge \f{\p}{\p y}$ is a linear Poisson structure, corresponding to a 2-dimensional Lie algebra.) 
\item
Let $M$ be a manifold with the zero Poisson structure. Then the cotangent bundle, with its standard symplectic structure, 
and with $\varphi$ the cotangent projection $\tau\colon T^*M\to M$, is a full symplectic realization. 
\item 
Let $M$ be a manifold with a symplectic structure. Then $P=M$, with $\varphi$ the identity map, is a full symplectic realization. 
\item 
Let $P$ be a symplectic manifold with a proper, free action of a Lie group $G$, and 
$M=P/G$  the quotient manifold with the induced Poisson structure. Then $P$ is a 
symplectic realization of $M$, with $\varphi$ the quotient map. 
\end{enumerate}
\end{examples}
Does every Poisson manifold admit a full symplectic realization? Before addressing this question, let us first consider the opposite 
problem: when does a symplectic structure descend under a surjective submersion?

\begin{proposition}
Let $(P,\pi_P)$ be a Poisson manifold, and $\varphi\colon P\to M$ a surjective submersion with 
connected fibers. 
\begin{enumerate}
\item
Then $\pi_P$ descend to a Poisson structure $\pi$ on $M$ if and only if the conormal bundle to the fibers of $\varphi$, 
\[ \on{ann}(\ker T \varphi)\subset T^*P,\]
is a subalgebroid of the cotangent Lie algebroid. 
\item (Libermann's theorem \cite{lib:pro}.)
If $\pi_P$ is the Poisson structure for a symplectic form (i.e., $\pi_P^\sharp=-(\omega^\flat)^{-1}$), then 
$\pi_P$ descends if and only if the $\omega_P$-orthogonal 
distribution to $\ker(T \varphi)$ is involutive. 
\end{enumerate}
\end{proposition}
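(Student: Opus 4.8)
The plan is to characterize descent of $\pi_P$ in terms of the basic functions for $\varphi$, and then to translate that characterization into the language of the cotangent Lie algebroid of $(P,\pi_P)$ (the Corollary in \S2).

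Set $V=\ker T\varphi\subset TP$, a subbundle of rank $\dim P-\dim M$. Since $\varphi$ is a surjective submersion with \emph{connected} fibers, a function $F\in C^\infty(P)$ is \emph{basic}, i.e.\ of the form $\varphi^*h$, precisely when $\d F$ annihilates $V$ everywhere; and $\on{ann}(\ker T\varphi)$ is spanned pointwise by the covectors $\varphi^*\mu$, with $\varphi^*\d x^1,\dots,\varphi^*\d x^n$ furnishing a local frame over $\varphi^{-1}(U)$ for local coordinates $x^i$ on $U\subset M$. The first step is to observe that $\pi_P$ descends to a bivector field $\pi$ on $M$ (with $\pi_P\sim_\varphi\pi$) if and only if $\{\varphi^*f,\varphi^*g\}_P$ is basic for all $f,g\in C^\infty(M)$: the transported bracket $\{f,g\}_M$ defined by $\varphi^*\{f,g\}_M=\{\varphi^*f,\varphi^*g\}_P$ then automatically obeys the Leibnitz rule, and the Jacobi identity because $\varphi^*$ is injective and $\{\cdot,\cdot\}_P$ satisfies it; conversely $\pi_P\sim_\varphi\pi$ forces $\{\varphi^*f,\varphi^*g\}_P=\varphi^*\{f,g\}_M$. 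By the derivation property one has locally $\{\varphi^*f,\varphi^*g\}_P=\sum_{i,j}\varphi^*(\partial_if)\,\varphi^*(\partial_jg)\,\{\varphi^*x^i,\varphi^*x^j\}_P$, so it suffices to test basicness on the pairs $x^i,x^j$.

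For part (a), I then invoke the Remark following \eqref{eq:1formbracket}, namely $[\d F,\d G]=\d\{F,G\}_P$ in the cotangent Lie algebroid, which gives $[\varphi^*\d x^i,\varphi^*\d x^j]=\d\{\varphi^*x^i,\varphi^*x^j\}_P$. Hence $\on{ann}(\ker T\varphi)$ is closed under the cotangent bracket $\iff$ each $\d\{\varphi^*x^i,\varphi^*x^j\}_P$ lies in $\on{ann}(\ker T\varphi)$ $\iff$ each $\{\varphi^*x^i,\varphi^*x^j\}_P$ is basic $\iff$ $\pi_P$ descends; here ``closed on a frame implies closed on all sections'' uses the Leibnitz identity of the cotangent Lie algebroid, and the anchor of $\on{ann}(\ker T\varphi)$ is just the restriction of $\pi_P^\sharp$, so being a subalgebroid requires nothing further. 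For part (b), $\pi_P^\sharp=-(\omega_P^\flat)^{-1}$ is a bundle isomorphism $T^*P\to TP$ with $\pi_P^\sharp(\mu)=u\iff\omega_P(\,\cdot\,,u)=\mu$; thus $\mu\in\on{ann}(\ker T\varphi)$ iff $u$ is $\omega_P$-orthogonal to $\ker T\varphi$, i.e.\ $\pi_P^\sharp$ restricts to an isomorphism $\on{ann}(\ker T\varphi)\to(\ker T\varphi)^{\omega_P}$. Moreover $\pi_P^\sharp$ is an \emph{isomorphism of Lie algebroids} from the cotangent Lie algebroid onto $TP$ with its standard bracket and anchor --- it intertwines the anchors tautologically, and it is a bracket morphism because $\pi_P^\sharp\d\{F,G\}_P=[\pi_P^\sharp\d F,\pi_P^\sharp\d G]$ (equivalently $[X_F,X_G]=X_{\{F,G\}}$) extends from exact $1$-forms to all $1$-forms by the Leibnitz rules on both sides. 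Therefore $\on{ann}(\ker T\varphi)$ is a subalgebroid of $T^*P$ $\iff$ its image $(\ker T\varphi)^{\omega_P}$ is a subalgebroid of $TP$, i.e.\ an involutive distribution; combined with part (a), this is Libermann's theorem.

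The points requiring care are purely bookkeeping: the passage from the coordinate-wise basicness condition to a genuine global Poisson bivector on $M$ (handled, as above, by the derivation property, the Leibnitz rule for the cotangent algebroid, and injectivity of $\varphi^*$), and the verification that $\pi_P^\sharp$ is a Lie-algebroid morphism for the bracket \eqref{eq:1formbracket}. I expect the latter --- bracket-preservation of $\pi_P^\sharp$ on \emph{non-exact} $1$-forms, which is not spelled out in the excerpt --- to be the main technical obstacle, although it follows routinely from the equivalence ``$[X_f,X_g]=X_{\{f,g\}}$'' established earlier together with the two Leibnitz identities.
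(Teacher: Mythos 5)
Your proof is correct and follows essentially the same route as the paper: span $\on{ann}(\ker T\varphi)$ by differentials of basic functions, use $[\d F,\d G]=\d\{F,G\}_P$ together with connectedness of the fibers to identify closure under the bracket with basicness of $\{\varphi^*f,\varphi^*g\}_P$, and for (b) transport the statement through the Lie algebroid isomorphism $\pi_P^\sharp$ onto the $\omega_P$-orthogonal distribution, where being a subalgebroid of $TP$ is Frobenius involutivity. The only difference is that you spell out two points the paper leaves implicit (reduction to a coordinate frame via the Leibnitz rule, and bracket-preservation of $\pi_P^\sharp$ on non-exact $1$-forms), both of which you handle correctly.
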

\begin{proof}
(a) The bundle $\on{ann}(\ker T \varphi)$ is spanned by all $\d\  \varphi^* f$ with $f\in C^\infty(M)$. 
If  $\pi_P$ descends to $\pi_M$, then $\varphi$ is a Poisson map, with respect to $\pi_P,\ \pi_M$. 
Hence 
\[ [\d\ \varphi^* f,\ \d\ \varphi^* g]=\d\ \{ \varphi^* f,\ \varphi^* g\}_P =\d\ \varphi^*\{ f,\ g\}_M,\]
proving that $ \on{ann}(\ker T \varphi)$ is a sub-Lie algebroid. Conversely, if 
$\on{ann}(\ker T \varphi)$ is a sub-Lie algebroid, it follows that for all $f,g\in C^\infty(M)$, the differential 
$\d \{ \varphi^* f,\ \varphi^* g\}_P$ vanishes on $\ker(T \varphi)$. Since the fibers of $\varphi$ are connected, this means that the function 
$\{ \varphi^* f,\ \varphi^* g\}_P$ is fiberwise constant, and hence is the pull-back of a function 
on $M$. Take this function to be the definition of $\{f,g\}_M$. It is straightforward to see that 
$\{\cdot,\cdot\}_M$ is a bi-derivation, hence it corresponds to a bivector field $\pi_M$ such that $\pi_P\sim_\varphi \pi_M$. 
(b)  The map $\pi_P^\sharp \colon T^*P\to TP$ is a Lie algebroid isomorphism, taking
$\on{ann}(\ker(T \varphi))$ to  the $\omega_P$-orthogonal bundle 
of $\ker(T \varphi)$. The latter being a Lie subalgebroid is equivalent to Frobenius integrability.
\end{proof}

Libermann's theorem shows that if $\varphi\colon P\to M$ is a full symplectic realization, then the foliation given by the $\varphi$-fibers is symplectically orthogonal to another foliation. If the leaf space for this second foliation is smooth, then the latter inherits a Poisson structure, again by application of Liberman's theorem. This suggests the following definition: 
\begin{definition}\cite{wei:loc} \label{def:dualpair}
A \emph{dual pair} of Poisson manifolds $(M_1,\pi_1)$ and $(M_2,\pi_2)$ is a symplectic manifold $(P,\omega)$ together with two surjective submersions $\tz\colon P\to M_1,\ \sz\colon P\to M_2$ such that 
\begin{itemize}
\item $\tz$ is a Poisson map, 
\item $\sz$ is an anti-Poisson map,
\item $\ker(T\tz)$ is the $\omega$-orthogonal bundle to 
$\ker(T\sz)$. 
\end{itemize}
\end{definition}
Note that, in particular, $\dim P=\dim M_1+\dim M_2$. One thinks of the \emph{correspondence manifold} $P$ as a generalized morphism  from $(M_2,\pi_2) \to (M_1,\pi_1)$.  (For more on this viewpoint, see \cite{xu:mor}.)

\begin{examples} Let us illustrate the concept of dual pairs for the Examples \ref{ex:I}. We will write 
$M_1\leftarrow P \rightarrow M_2$ to indicate a dual pair. In most cases, we take $M_1=M_2=M$ (which is the setting for the Karasev-Weinstein theorem \ref{th:kw}). 
\begin{enumerate}
\item 
For $M=\R^2$ with $\pi=x\f{\p}{\p x}\wedge \f{\p}{\p y}$, we obtain a dual pair 
$M\leftarrow P \rightarrow M$ by letting  $P=T^*(R^2)$ with the standard symplectic form, 
\[ \tz(q_1,q_2,p_1,p_2)=(q_1,q_2+p_1 q_1),\ \ \ \ \sz(q_1,q_2,p_1,p_2)=(q_1\exp(p_1),q_2).\]
%
\item
For $M=\g^*$ with the standard Poisson structure, we have a dual pair 
$\g^* \leftarrow T^*G \rightarrow \g^*$, with $\tz$ the left trivialization, and $\sz$ the right trivialization. 
\item 
For $M$ with the zero Poisson structure, we have a dual pair $M\leftarrow T^*M \rightarrow M$ with $\tz=\sz=\tau$ the cotangent projection. 
\item 
For a symplectic manifold $M$, we obtain a  dual pair $M\leftarrow M\times M^-\rightarrow M$, with $\tz,\sz$ the projections from $M\times M^-$ to the two factors. 
Another dual pair is $M\leftarrow M \rightarrow \pt$. 
\item 
Let $P$ be a symplectic manifold with a free, proper action of a Lie group $G$, admitting a moment map $\Phi\colon P\to \g^*$ in the sense of symplectic geometry. Then we obtain a dual pair $P/G \leftarrow P\rightarrow \Phi(P)^-$ , where $\tz$ is the quotient map and $\sz=\Phi$. Here we are using the well-know fact from symplectic geometry that a moment map for a free action is a submersion. 
\end{enumerate}
\end{examples}

We finally state the main result concerning the existence of symplectic realizations. 
\begin{theorem}[Karasev \cite{kar:ana}, Weinstein \cite{wei:loc}] \label{th:kw}
Let $(M,\pi)$ be a Poisson manifold. Then there exists a 
symplectic manifold $(P,\omega)$, with an inclusion $i\colon M\hra P$ as a Lagrangian submanifold, and with two surjective submersions 
$\tz,\sz\colon P\to M$ such that $\tz\circ i=\sz\circ i=\on{id}_M$, and
such that 
\begin{itemize}
\item $\tz$ is a Poisson map, 
\item $\sz$ is an anti-Poisson map,
\item The $\tz$-fibers and $\sz$-fibers are $\omega$-orthogonal. 
\end{itemize}
\end{theorem}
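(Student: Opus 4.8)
The plan is to take $P$ to be a neighbourhood of the zero section in $T^*M$, equip it with a symplectic form manufactured from a \emph{Poisson spray}, and read off $\tz$ and $\sz$ from the bundle projection and the time‑one flow of the spray. Write $\tz_0\colon T^*M\to M$ for the projection, $i\colon M\hra T^*M$ for the zero section, $\theta\in\Omega^1(T^*M)$ for the Liouville $1$-form, and $\omega_{\mathrm{can}}=-\d\theta$; the model symplectic form will not be $\omega_{\mathrm{can}}$ but a gauge‑modification of it.

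First I would fix a Poisson spray: a vector field $\ca{V}\in\mf{X}(T^*M)$ with $(T_\xi\tz_0)(\ca{V}_\xi)=\pi^\sharp(\xi)$ for all $\xi$, homogeneous of the appropriate degree for the fibrewise scaling $\kappa_t$ (so in particular $\ca{V}|_M=0$). Such $\ca{V}$ exists -- locally it is written down from a linear connection, the defining conditions are affine so one globalizes by a partition of unity, and a bump function makes $\ca{V}$ complete near $M$ (alternatively one works with local flows, as elsewhere in these notes). The observation that ties this to Dirac geometry is that $\sigma:=\ca{V}+\theta$ is a section of the pulled‑back Dirac structure $L:=\tz_0^{!}\on{Gr}(\pi)$ on $T^*M$: indeed $\ca{V}_\xi+(T\tz_0)^*\xi\sim_{\tz_0}\pi^\sharp(\xi)+\xi\in\on{Gr}(\pi)$, which is exactly the defining condition, and moreover $\iota_{\ca{V}}\theta=\l\xi,\pi^\sharp\xi\r=0$ by skew‑symmetry of $\pi$. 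Now by Proposition \ref{prop:couraut}(c) the bracket $\Cour{\sigma,\cdot}$ is the infinitesimal Courant automorphism $(\d\theta,\ca{V})$; integrating it with Proposition \ref{prop:integrate} gives automorphisms $A_t=\ca{R}_{-\omega_t}\circ\T\phi_t$, where $\phi_t$ is the flow of $\ca{V}$ and $\omega_t=\int_0^t(\phi_s)_*\d\theta\,\d s$ is exact. Since $\sigma\in\Gamma(L)$ and $L$ is Courant‑involutive, every $A_t$ preserves $L$. Evaluating at $t=1$ and putting $\omega:=-\omega_1=\int_0^1(\phi_s)_*\omega_{\mathrm{can}}\,\d s$ (the Crainic--M\u{a}rcu\c{t} form), we get $A_1(L)=L$ while $A_1\big(T(T^*M)\big)=\ca{R}_{-\omega_1}\big(T(T^*M)\big)=\on{Gr}(\omega)$. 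This $\omega$ is closed; it vanishes along $M$ because $\phi_s$ fixes $M$ pointwise and $\theta$ vanishes on the zero section; and -- this is where the \emph{homogeneity} of the spray is used, via a local computation rather than the Dirac machinery -- it is nondegenerate on a neighbourhood of $M$ (for $\pi=0$ one has $\ca{V}=\E$ and $\omega$ is a positive multiple of $\omega_{\mathrm{can}}$; the general case is a controlled perturbation). Hence $(P,\omega):=(T^*M,\omega)$ is symplectic near $M$, with $i$ Lagrangian and $\tz_0\circ i=\on{id}_M$.

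Next I would set $\tz:=\tz_0$ and $\sz:=\tz_0\circ\phi_1$; both restrict to $\on{id}_M$ on the zero section since $\phi_1|_M=\on{id}$, and both are surjective submersions. The crucial claim is that $\tz$ is a \emph{Poisson} map $(P,\omega)\to(M,\pi)$: because $\tz_0$ is a submersion, $\tz_0$ is Poisson if and only if $X^\omega_{\tz_0^*f}+\d(\tz_0^*f)\in\Gamma\big(\tz_0^{!}\on{Gr}(\pi)\big)=\Gamma(L)$ for every $f\in C^\infty(M)$ (the left side lies in $\Gamma(\on{Gr}(\pm\omega))$, and its membership in $L$ says precisely $T\tz_0(X^\omega_{\tz_0^*f})=\pi^\sharp(\d f)$). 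The fact that $A_1$ fixes $L$ and maps $T(T^*M)$ to $\on{Gr}(\omega)$ is what makes this tractable; at the computational level it comes down to the Cartan identity $\iota_{\ca{V}}\omega=\theta-\phi_1^*\theta$ (from $\iota_{\ca{V}}\omega_{\mathrm{can}}=-\L_{\ca{V}}\theta$) run with $\tz_0^*f$ in place of the tautological data, after possibly adjusting the overall sign of $\omega$ so that $\tz$ (rather than $\sz$) is the Poisson one. Granting this, the Poisson structure of $(P,\omega)$ descends along $\tz$, so by the Libermann criterion \cite{lib:pro} the $\omega$-orthogonal distribution to $\ker T\tz$ is involutive; near $M$ its leaf space is a copy of $M$ via a submersion which, by construction, is $\sz=\tz_0\circ\phi_1$ (with $\ker T\sz=(\ker T\tz)^{\perp_\omega}$ automatic, and the dimension count $\dim P=\dim M+\dim M$ closing the dual‑pair requirement), and a second application of Libermann shows $\sz$ is Poisson up to sign -- it is anti‑Poisson, the time‑one spray flow being (a shift of) the inversion map of the associated symplectic groupoid, which is anti‑symplectic. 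This establishes all three bullets of the theorem, together with the Lagrangian section.

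The main obstacle is contained in the second and third paragraphs: producing from the spray a $2$-form $\omega$ that is genuinely nondegenerate near $M$, and verifying that $\tz$ is a Poisson map. Neither step is formal. Nondegeneracy is exactly where the spray's homogeneity is essential and requires a direct computation; and although the Dirac picture makes the Poisson property conceptually clear (``$A_1$ preserves $\tz_0^{!}\on{Gr}(\pi)$''), at bottom it still reduces to a Cartan‑calculus identity on $T^*M$, with the sign bookkeeping needed for the anti‑Poisson statement for $\sz$ being the fiddly part. Everything else -- existence of the spray, Lagrangianity and sectioning by the zero section, and $\omega$-orthogonality of the $\tz$- and $\sz$-fibres -- is routine given the results already developed.
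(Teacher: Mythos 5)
Your construction of $(P,\omega,\sz,\tz)$ is the one the paper uses (Poisson spray, $\omega=\int_0^1(\Phi_s)_*\omega_{\on{can}}\,\d s$, the two legs being the bundle projection and its composite with the time-one flow), and your derivation of the invariance identity ``$A_1$ preserves $L=\tau^!\on{Gr}(\pi)$'' is the correct Dirac-geometric input. The gap is in converting that identity into the three bullets. The paper does this via Proposition \ref{prop:fm}: the single relation $\ca{R}_\omega\big(\tz^!\on{Gr}(\pi_1)\big)=\sz^!\on{Gr}(\pi_2)$, together with the dimension count, is \emph{equivalent} to the dual-pair conditions; unwinding $A_1(L)=L$ gives exactly $\ca{R}_\omega\big((\tau\circ\Phi_{-1})^!\on{Gr}(\pi)\big)=\tau^!\on{Gr}(\pi)$, so all three bullets (and which leg is Poisson versus anti-Poisson) drop out at once. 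You instead verify the bullets separately, and neither substitute argument closes. The assertion that the projection leg is Poisson is reduced to ``a Cartan identity \dots after possibly adjusting the overall sign'' --- but that is precisely the nontrivial step: one must, for each $\mu\in T^*M$, produce $v$ with $\iota_v\omega=\pm\tz^*\mu$, show $v\in\ker(T\sz)$, and identify $T\tz(v)$ with $\pi^\sharp(\mu)$, which is the content of the proof of Proposition \ref{prop:fm} and is nowhere carried out in your text. Worse, your argument that the second leg is \emph{anti}-Poisson is circular: Libermann's theorem only says that \emph{some} Poisson structure descends along $\sz$, and you identify it with $-\pi$ by appealing to the inversion map of ``the associated symplectic groupoid'' --- the very object that is constructed only afterwards (and only locally) from the dual pair you are trying to establish.

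A second, smaller gap concerns nondegeneracy of $\omega$ near $M$. Your sanity check ``for $\pi=0$ one has $\ca{V}=\E$'' is false: the Euler vector field satisfies $\kappa_t^*\E=\E$ (degree $0$), whereas a Poisson spray must satisfy $\kappa_t^*\ca{V}=t\,\ca{V}$; for $\pi=0$ the natural spray is $\ca{V}=0$ and $\omega=\omega_{\on{can}}$. The actual argument (the Lemma preceding Theorem \ref{th:cm}) is that homogeneity forces the linear approximation of the spray along the zero section to vanish, so $T_m\Phi_s$ is the identity on $T_mM$ and the identity modulo $T_mM$ on $T_m^*M$; hence $\omega(v,\cdot)=\omega_{\on{can}}(v,\cdot)$ for $v\in T_mM$, and nondegeneracy along $M$ follows because $TM$ is $\omega_{\on{can}}$-Lagrangian. ``Controlled perturbation'' does not substitute for this computation.
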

In fact, much more is true: There exists a structure of a local symplectic groupoid on $P$, having $\sz,\tz$ as the source and target maps, and $i$ as the inclusion of units. See Section \ref{subsec:sympgr} below.

\subsection{The Crainic-M\u{a}rcu\c{t} formula}\label{subsec:cramar}

Our proof of Theorem \ref{th:kw} will use an explicit construction of the realization due to Crainic-M\u{a}rcu\c{t} \cite{cra:exi}, with later simplifications due to Frejlich-M\u{a}rcu\c{t} \cite{fre:rea}.  As the total space $P$ for the full symplectic realization, we will take a suitable open neighborhood of 
$M$ inside the cotangent bundle  
\[ \tau \colon T^*M\to M.\]
\begin{definition}[Crainic-M\u{a}rcu\c{t} \cite{cra:exi}] Let $(M,\pi)$ be a Poisson manifold. 
A vector field $X\in \mf{X}(T^*M)$ is called a \emph{Poisson spray} if it is homogeneous of degree $1$ in fiber directions, and for all $\mu\in T^*M$, 
\[ (T_\mu \tau)(X_\mu)=\pi^\sharp(\mu).\]
\end{definition}
The homogeneity requirement means that $\kappa_t^* X= t X$, where $\kappa_t$ is fiberwise multiplication by $t\neq 0$. 
In local coordinates, given a Poisson structure 
\begin{equation}\label{eq:piloc}
\pi=\hh \sum_{ij}\pi^{ij}(q) \f{\p}{\p q^i}\wedge \f{\p}{\p  q^j},
\end{equation}
the Poisson sprays are vector fields of the form 
\begin{equation}\label{eq:spray}
 X=\sum_{ij} \pi^{ij}(q)\, p_i\, \f{\p}{\p  q^j}+\hh \sum_{ijk}
\Gamma^{ij}_k(q)\, p_i\, p_j  \, \f{\p}{\p  p^k}\end{equation}
where $p_i$ are the cotangent coordinates, and  
$\Gamma^{ij}_k=\Gamma^{ji}_k$ are functions. 
\begin{lemma}
Every Poisson manifold $(M,\pi)$ admits a Poisson spray. 
\end{lemma}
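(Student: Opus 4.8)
The plan is to construct a Poisson spray by first building one locally and then patching with a partition of unity. Locally, given a chart $U \subset M$ with coordinates $q^i$ in which the Poisson bivector is written as in \eqref{eq:piloc}, the explicit formula \eqref{eq:spray} — for instance with the choice $\Gamma^{ij}_k = 0$, i.e. $X_U = \sum_{ij}\pi^{ij}(q)\,p_i\,\frac{\partial}{\partial q^j}$ — visibly satisfies both defining conditions: it is homogeneous of degree $1$ in the fiber coordinates $p_i$ (each term is linear in the $p_i$), and $(T_\mu\tau)(X_U)|_\mu = \sum_{ij}\pi^{ij}(q)\,p_i\,\frac{\partial}{\partial q^j}$, which is exactly $\pi^\sharp(\mu)$ under the identification $\mu = \sum_i p_i\,\d q^i$. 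So over each chart a Poisson spray exists.

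Next I would globalize. Choose a locally finite cover $\{U_\alpha\}$ of $M$ by coordinate charts, with Poisson sprays $X_\alpha \in \mf{X}(T^*U_\alpha)$ as above, and a subordinate partition of unity $\{\rho_\alpha\}$ with $\sum_\alpha \rho_\alpha = 1$, $\operatorname{supp}\rho_\alpha \subset U_\alpha$. Then set
\[
X = \sum_\alpha (\tau^*\rho_\alpha)\, X_\alpha \in \mf{X}(T^*M).
\]
Each summand extends by zero to all of $T^*M$ since $\rho_\alpha$ is supported in $U_\alpha$, and the sum is locally finite, so $X$ is a well-defined smooth vector field. Homogeneity is preserved under this convex combination: each $X_\alpha$ satisfies $\kappa_t^* X_\alpha = t X_\alpha$, and the coefficients $\tau^*\rho_\alpha$ are pulled back from the base, hence $\kappa_t$-invariant, so $\kappa_t^* X = \sum_\alpha (\tau^*\rho_\alpha)\,\kappa_t^* X_\alpha = t X$. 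For the anchor condition, at a point $\mu \in T^*_m M$ we compute $(T_\mu\tau)(X_\mu) = \sum_\alpha \rho_\alpha(m)\,(T_\mu\tau)((X_\alpha)_\mu) = \sum_\alpha \rho_\alpha(m)\,\pi^\sharp(\mu) = \pi^\sharp(\mu)$, using that $(T_\mu\tau)$ is linear and that only $\alpha$ with $m \in U_\alpha$ contribute.

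There is no serious obstacle here: the only point requiring a little care is that the two defining properties of a Poisson spray — fiberwise homogeneity of degree $1$ and the anchor condition $(T_\mu\tau)(X_\mu) = \pi^\sharp(\mu)$ — are both \emph{affine} (in fact convex) conditions on the vector field, so they survive averaging against a partition of unity pulled back from the base. One should also note that the convention about extending $(\tau^*\rho_\alpha) X_\alpha$ by zero outside $\tau^{-1}(U_\alpha)$ is legitimate because $\rho_\alpha$ and all its derivatives vanish near the boundary of $U_\alpha$. I would present the local model \eqref{eq:spray} (with, say, the Christoffel terms set to zero for concreteness), remark on the homogeneity and anchor conditions being convex, and conclude by the partition of unity argument above.
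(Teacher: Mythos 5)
Your proposal is correct and follows exactly the paper's argument: the paper also takes the local formula \eqref{eq:spray} with $\Gamma^{ij}_k=0$ on coordinate charts and patches with a partition of unity on $M$. Your added observation that both defining conditions are affine in the vector field (so they survive convex combination against base functions) is precisely the point that makes the patching work, and the paper leaves it implicit.
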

\begin{proof}
In local coordinates, Poisson sprays can be defined by the formula above (e.g., with $\Gamma^{ij}_k=0$). To obtain a global Poisson spray, one patches these local definitions together, using a partition of unity on $M$.
\end{proof}
Let $X$ be a Poisson spray, and $\Phi_t$ its local flow. Since $X$ vanishes along $M\subset T^*M$, there exists an open neighborhood of 
$M$ on which the flow is defined for all $|t|\le 1$. On such a neighborhood, put
\[ \omega=\int_0^1 (\Phi_s)_*\omega_{\on{can}}\ \d s,\]
where $\omega_{\on{can}}$ is the standard symplectic form of the cotangent bundle. 
\begin{lemma}
The 2-form $\omega$ is symplectic along $M$. 
\end{lemma}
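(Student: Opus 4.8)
The claim is purely pointwise along the zero section $M\subset T^*M$, so the plan is to evaluate $\omega=\int_0^1(\Phi_s)_*\omega_{\on{can}}\,\d s$ at a point $m\in M$ and show it is a nondegenerate bilinear form on $T_m(T^*M)$. First I would record the structure of $T_m(T^*M)$: since $m$ lies in the zero section, there is a canonical splitting $T_m(T^*M)=T_mM\oplus T^*_mM$, the first summand being the tangent to the zero section and the second the tangent to the fiber. Under this splitting, $\omega_{\on{can}}$ at $m$ is the standard pairing: it vanishes on $T_mM\times T_mM$ and on $T^*_mM\times T^*_mM$, and pairs $v\in T_mM$ with $\nu\in T^*_mM$ via $\langle\nu,v\rangle$ (up to the sign dictated by the paper's conventions). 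So the only thing that can go wrong with nondegeneracy of $\omega|_m$ is that the $(\Phi_s)_*$ might rotate things so that the integrated form degenerates.

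Next I would analyze the linearization of the spray flow along $M$. Since $X$ vanishes on $M$, each $\Phi_s$ fixes $M$ pointwise, so $T_m\Phi_s$ is a well-defined endomorphism of $T_m(T^*M)$ preserving the splitting data appropriately; the key computation is its block form with respect to $T_mM\oplus T^*_mM$. Using the local coordinate form \eqref{eq:spray} of the spray — where the $\p/\p q^j$ component is $\sum_i\pi^{ij}(q)p_i$ and the $\p/\p p^k$ component is quadratic in $p$ — one computes the linearization of $X$ at a point of the zero section (where all $p_i=0$): the quadratic term contributes nothing to first order, and the linearization is the linear vector field on $T^*_mM\oplus T_mM$ whose only nonzero block sends the fiber direction $\nu=(p_i)$ to the base direction $\pi_m^\sharp(\nu)\in T_mM$. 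In other words the linearized flow is \emph{unipotent}: $T_m\Phi_s$ is the identity on $T_mM$, is the identity on $T^*_mM$ modulo $T_mM$, and has off-diagonal block $s\,\pi_m^\sharp\colon T^*_mM\to T_mM$.

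With this in hand the computation is short. Pulling back $\omega_{\on{can}}$ by the unipotent map $T_m\Phi_s$ and pairing two vectors, the base–fiber pairing term $\langle\nu_1,v_2\rangle$ is unchanged (because the base block is the identity and the fiber-mod-base block is the identity), while a new base–base term of the form $s\langle\pi_m^\sharp\nu_1,\nu_2\rangle$ — equivalently $\mp s\,\pi_m(\nu_1,\nu_2)$ — appears, and this extra term is skew in $(\nu_1,\nu_2)$. Integrating over $s\in[0,1]$ multiplies that correction by $\tfrac12$. Thus in block form $\omega|_m$ is lower-triangular with respect to the decomposition $T_mM\oplus T^*_mM$, with diagonal blocks $0$ on $T_mM$ and the (degenerate-allowed) skew term $\mp\tfrac12\pi_m$ on $T^*_mM$, and \emph{off}-diagonal block the standard perfect pairing between $T_mM$ and $T^*_mM$. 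A block matrix of that shape — zero upper-left block, invertible (anti)diagonal block pairing the two summands — is automatically nondegenerate regardless of the lower-right block, so $\omega|_m$ is a symplectic form on $T_m(T^*M)$. Since $M$ is where we need it, $\omega$ is symplectic along $M$, and by openness of nondegeneracy it remains symplectic on a neighborhood.

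\textbf{Expected main obstacle.} The only delicate point is computing the linearization $T_m\Phi_s$ correctly, i.e.\ checking that the quadratic $\Gamma$-term in \eqref{eq:spray} genuinely drops out to first order along the zero section and that the base block of the linearized flow is exactly the identity (not merely unipotent with some other normalization), so that the base–fiber pairing in $(\Phi_s)_*\omega_{\on{can}}$ survives the integration untouched. Getting the homogeneity bookkeeping and the sign of $\pi_m^\sharp$ versus $\omega_{\on{can}}$ consistent with the paper's conventions (Appendix A) is the part that needs care; everything else is the linear-algebra observation that a $2$-form whose only job is to pair the base against the fiber is automatically nondegenerate.
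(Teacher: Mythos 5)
Your proof is correct and follows essentially the same route as the paper: both rest on the observation that $T_m\Phi_s$ is unipotent with respect to the splitting $T_m(T^*M)=T_mM\oplus T^*_mM$ (identity on $T_mM$, identity on $T^*_mM$ modulo $T_mM$, the latter because the quadratic $\Gamma$-term kills the normal linearization), so the pairing of $T_mM$ against everything in $(\Phi_s)_*\omega_{\on{can}}$ is untouched and nondegeneracy follows from $T_mM$ being $\omega_{\on{can}}$-Lagrangian. The differences are cosmetic: you compute the off-diagonal block explicitly as $s\,\pi_m^\sharp$ together with the resulting correction $\mp\tfrac12\pi_m$ (which, note, sits in the fiber--fiber block, not the ``base--base'' one as you label it at one point), whereas the paper gets by with the qualitative statement $\nu(\Phi_t)=\on{id}_{T^*M}$.
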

\begin{proof}
For $m\in M\subset T^*M$, consider the decomposition
\[ T_m(T^*M)=T_mM\oplus T^*_mM.\]
Since the vector field $X$ is homogeneous of degree $1$, it vanishes along $M$. In particular, its flow 
$\Phi_t$ fixes $M\subset P$, hence $T\Phi_t$ is a linear transformation of $T_m(T^*M)$.  Consequently, 
$(T_m\Phi_t)(v)=v$ for all $m\in M$ and $v\in T_mM$. Again by homogeneity, the linear approximation along $M\subset T^*M$ vanishes: $\nu(X)=0$ as a vector field on $\nu(T^*M,M)\cong T^*M$. 
(This is not to be confused with the linear approximation of $X$ at $\{m\}$, which may be non-zero.) 
Consequently,  $\nu(\Phi_t)=\on{id}_{T^*M}$, which shows that 
\[ (T_m\Phi_t)(w)=w\mod T_mM\]
for all $w\in T_m^*M$. Hence $((\Phi_s)_*\omega_{\on{can}})(v,\cdot)=\omega_{\on{can}}(v,\cdot)$ for all $v\in T_mM$, and therefore
\[ \omega(v,\cdot)=\om_{\on{can}}(v,\cdot).\]
Since $TM$ is a Lagrangian subbundle (in the symplectic sense!) of $T(T^*M)|_M$ with respect to $\omega_{\on{can}}$, this implies that the 2-form $\omega$ is 
symplectic along $M$.
\end{proof}

\begin{theorem}[Crainic-M\u{a}rcu\c{t} \cite{cra:exi}, Frejlich-M\u{a}rcu\c{t} \cite{fre:rea}] \label{th:cm}
Let $P\subset T^*M$
be an open neighborhood of the zero section, with the property that 
$\Phi_t(m)$ is defined for all $m\in P$ and $|t|\le 1$, and such that $\omega$ is symplectic on $P$. Let $i\colon M\hra P$ be the inclusion as the zero section, and put 
\[ \sz=\tau,\ \ \ \ \ \ \tz=\tau\circ \Phi_{-1}.\] 
Then the symplectic manifold $(P,\omega)$ together with the maps $\tz,\sz, i$ has the properties from the Karasev-Weinstein theorem \ref{th:kw}.
\end{theorem}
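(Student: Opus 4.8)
The plan is to verify the three properties one at a time, working on the open neighborhood $P\subset T^*M$ where the spray flow $\Phi_t$ is defined for $|t|\le 1$ and $\omega=\int_0^1(\Phi_s)_*\omega_{\on{can}}\,\d s$ is symplectic.

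\medskip

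\noindent\textbf{Step 1: The identity $\tz\circ i=\sz\circ i=\on{id}_M$ and the Lagrangian property.} Since $X$ vanishes along $M\subset T^*M$, the flow $\Phi_t$ fixes every point of $M$; hence $\Phi_{-1}|_M=\on{id}$, and so $\tz\circ i=\tau\circ\Phi_{-1}\circ i=\tau\circ i=\on{id}_M$ and $\sz\circ i=\tau\circ i=\on{id}_M$. For the Lagrangian property: $M$ is the zero section, which is Lagrangian for $\omega_{\on{can}}$; in the proof of the preceding lemma we saw that $\omega(v,\cdot)=\omega_{\on{can}}(v,\cdot)$ for $v\in T_mM$, so $\omega$ vanishes on $T_mM\times T_mM$, and since $\dim M=\tfrac12\dim P$, $M$ is Lagrangian for $\omega$.

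\medskip

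\noindent\textbf{Step 2: The key computation --- a primitive identity for $\omega$.} The heart of the argument is to show that $\Phi_1^*\omega-\omega=$ (something that forces $\tz^*\pi_M=\pi_M$ with the right sign) and, more precisely, to relate $\tz^*\!\om_{\on{can}}$-type data to $\pi$. The cleanest route is to use the spray's homogeneity. Let $\E$ be the Euler vector field on $T^*M$ (generator of $\kappa_t$), and let $\vartheta$ be the canonical $1$-form, so $\omega_{\on{can}}=-\d\vartheta$ and $\iota_\E\omega_{\on{can}}=-\vartheta$. Homogeneity of $X$ ($\kappa_t^*X=tX$) translates into $\L_\E X=X$, equivalently $[\E,X]=-X$ (sign depending on conventions); combined with Cartan calculus this yields a formula for $\L_X\omega_{\on{can}}$ and hence controls $\tfrac{d}{ds}(\Phi_s)_*\omega_{\on{can}}=(\Phi_s)_*\L_X\omega_{\on{can}}$. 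Integrating, one identifies $(\Phi_1)_*\omega_{\on{can}}-\omega_{\on{can}}=\d\beta$ for an explicit $\beta$, and — this is the crucial point — shows that the map $\tz=\tau\circ\Phi_{-1}$ pulls back functions in a way compatible with the Poisson bracket. Concretely, for $f\in C^\infty(M)$, one computes the Hamiltonian vector field of $\tz^*f$ with respect to $\omega$ and shows its $\tau$-pushforward along the $\sz$-fibers reproduces $\pi^\sharp(\d f)$; the defining property $(T_\mu\tau)(X_\mu)=\pi^\sharp(\mu)$ of the spray is exactly what makes this work at first order, and the homogeneity propagates it.

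\medskip

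\noindent\textbf{Step 3: $\sz$ is anti-Poisson and the orthogonality of fibers.} That $\sz=\tau$ is anti-Poisson: the cotangent projection $\tau\colon(T^*M,\omega_{\on{can}})\to M$ with the \emph{zero} Poisson structure would be Poisson, but here $M$ carries $\pi$, and one checks directly from the construction of $\omega$ (using $\omega(v,\cdot)=\omega_{\on{can}}(v,\cdot)$ along $M$ and the spray property) that $\{\sz^*f,\sz^*g\}_\omega=-\sz^*\{f,g\}_\pi$; the sign is forced by $\pi^\sharp=-(\omega^\flat)^{-1}$ for $\omega_{\on{can}}$ versus the opposite orientation on the $\tz$ side. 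For the orthogonality: $\ker T\sz=\ker T\tau$ consists of the fiber (vertical) directions, and $\ker T\tz=\Phi_{-1}^*(\ker T\tau)$; since $\Phi_{-1}$ is a diffeomorphism intertwining $\omega$ with $\Phi_{-1}^*\omega$, and since one shows $\omega=\Phi_{-1}^*\omega$ up to the Lagrangian correction that dies on vertical vectors, the $\omega$-orthogonal complement of one vertical distribution is the other. Alternatively, and more robustly, one invokes Libermann's theorem together with the dual-pair structure once $\tz$ is known to be Poisson and $\sz$ anti-Poisson and $\dim P=2\dim M$.

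\medskip

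\noindent\textbf{Main obstacle.} The real work is Step 2: extracting from the bare spray condition $(T_\mu\tau)(X_\mu)=\pi^\sharp(\mu)$ and homogeneity the fact that $\tz=\tau\circ\Phi_{-1}$ is \emph{globally} (not just to first order along $M$) a Poisson map for the averaged form $\omega$. This requires carefully exploiting the interplay between the spray flow, the Euler vector field, and the canonical symplectic data — essentially reproving the Crainic--M\u{a}rcu\c{t}/Frejlich--M\u{a}rcu\c{t} identities relating $\Phi_1$, $\omega$, and $\pi$ — and keeping the sign conventions of Appendix~\ref{app:signs} consistent throughout. Once that identity is in hand, the anti-Poisson property of $\sz$ and the fiber-orthogonality follow by comparison and a dimension count.
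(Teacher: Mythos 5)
Your Step 1 is fine, but the proposal does not actually prove the theorem: all of the content is concentrated in your Step 2, and there you only describe what would have to be shown (``one identifies\dots'', ``one computes\dots and shows\dots'') and then concede in your closing paragraph that this amounts to ``essentially reproving the Crainic--M\u{a}rcu\c{t}/Frejlich--M\u{a}rcu\c{t} identities''. That is the theorem, not a proof of it. Moreover, the mechanism you sketch is off in two places. First, the spray condition $(T_\mu \tau)(X_\mu)=\pi^\sharp(\mu)$ is a pointwise identity at \emph{every} $\mu\in T^*M$, not a first-order condition along $M$ that homogeneity must ``propagate''; homogeneity is only used to guarantee $X|_M=0$, so that the time-$1$ flow exists near $M$ and $\omega$ is symplectic along $M$. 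Second, your fallback for the fiber-orthogonality is false: $\tz$ Poisson, $\sz$ anti-Poisson and $\dim P=2\dim M$ do \emph{not} imply that $\ker(T\tz)$ and $\ker(T\sz)$ are $\omega$-orthogonal (take $P=T^*\R$ with the standard symplectic form, $M_1=M_2=\R$ with the zero Poisson structure, $\tz=q$, $\sz=p$: both maps are (anti-)Poisson and the dimensions match, but the vertical and horizontal directions pair nontrivially). What is actually needed is that $(\tz,\sz)\colon P\to M\times M^-$ is Poisson as a map to the \emph{product}, which includes the extra condition $\{\tz^*f,\sz^*g\}=0$.

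The missing idea --- and the paper's route --- is Dirac-geometric and bypasses your Step 2 entirely. The tautological $1$-form $\alpha$ satisfies $\alpha_\mu=(T_\mu\tau)^*\mu$, so the spray condition says precisely that $X_\mu+\alpha_\mu\sim_\tau \pi^\sharp(\mu)+\mu$ for all $\mu$, i.e.\ $X+\alpha\in\Gamma\big(\tau^!\on{Gr}(\pi)\big)$. Hence the infinitesimal Courant automorphism $(\d\alpha,X)$ preserves the Dirac structure $\tau^!\on{Gr}(\pi)$; by Proposition \ref{prop:integrate} its time-$t$ flow is $(-\omega_t,\Phi_t)$ with $\omega_t=\int_0^t(\Phi_s)_*\omega_{\on{can}}\,\d s$, and setting $t=1$ yields the single identity $\ca{R}_\omega\big(\tz^!\on{Gr}(\pi)\big)=\sz^!\on{Gr}(\pi)$. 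Proposition \ref{prop:fm} then converts this identity, together with the dimension count, into all three Karasev--Weinstein properties at once. To repair your outline you must either establish this gauge identity or genuinely carry out the computation you postponed; as written, the proof is not there.
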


\subsection{Proof of Theorem \ref{th:cm}}
Following \cite{fre:rea}, we will give a proof of this result using Dirac geometry. We begin with the following Dirac-geometric reformulation of the dual pair condition, Definition 
\ref{def:dualpair}. 

\begin{proposition}[Frejlich-M\u{a}rcu\c{t} \cite{fre:rea}] 
\label{prop:fm}
Let $(M_1,\pi_1)$ and $(M_2,\pi_2)$ be two Poisson manifolds, and $(P,\omega)$ 
a symplectic manifold with surjective submersions $\tz\colon P\to M_1,\ 
\sz\colon P\to M_2$. Then 
\[ M_1\leftarrow P \rightarrow M_2\] 
is a dual pair if and only if $\dim P=\dim M_1+\dim M_2$ and 
\begin{equation}\label{eq:thecondition}
 \ca{R}_\omega(\tz^! \on{Gr}(\pi_1))=\sz^!\on{Gr}(\pi_2).\end{equation}
\end{proposition}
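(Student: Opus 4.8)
The plan is to show that each of the three bullet-point conditions in Definition \ref{def:dualpair} translates into a piece of the single Dirac-geometric identity \eqref{eq:thecondition}, and conversely. Throughout I will use that, since $\tz$ and $\sz$ are submersions, they are automatically transverse to the anchors of $\on{Gr}(\pi_1)$ and $\on{Gr}(\pi_2)$, so the pullback Dirac structures $\tz^!\on{Gr}(\pi_1)$ and $\sz^!\on{Gr}(\pi_2)$ are well-defined Dirac structures on $\T P$ by the pullback proposition of Section \ref{subsec:pullbackdirac}; and $\ca{R}_\omega$ is an automorphism of the Courant bracket, so $\ca{R}_\omega(\tz^!\on{Gr}(\pi_1))$ is again a Dirac structure. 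Thus both sides of \eqref{eq:thecondition} are Lagrangian (hence of rank $\dim P$), so to prove equality it suffices to prove one inclusion, or to compare ranks of intersections with $TP$.

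\textbf{Key steps.} First I would make the dimension count explicit: a full symplectic realization forces $\dim P = \dim M_1 + \dim M_2$ exactly when $\ker(T\tz)$ and $\ker(T\sz)$ are $\omega$-orthogonal complements, so the dimension hypothesis is the natural normalization and I will keep it as a standing assumption on both sides. Second, I would unwind $\ca{R}_\omega(\tz^!\on{Gr}(\pi_1))$ pointwise: an element of $\T P$ lies in it iff it has the form $w + \iota_w\omega + \beta$ where $w + \beta \sim_\tz \pi_1^\sharp(\mu) + \mu$ for some $\mu$, i.e. $(T\tz)w = \pi_1^\sharp(\mu)$ and $\beta = (T\tz)^*\mu$ restricted appropriately — more precisely $w+\xi$ lies in the gauge-transformed pullback iff $(T\tz)w = \pi_1^\sharp(\mu)$ and $\xi - \iota_w\omega \in (T\tz)^*T^*M_1$ with $(T\tz)^*\mu = \xi - \iota_w\omega$. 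Third, I would do the same for $\sz^!\on{Gr}(\pi_2)$: $w+\xi$ lies in it iff $(T\sz)w = \pi_2^\sharp(\nu)$ and $\xi = (T\sz)^*\nu$ for some $\nu$, i.e. $\xi$ annihilates $\ker(T\sz)$. Fourth, I would intersect with $TP$ (set $\xi = 0$): on the left this says $\iota_w\omega \in (T\tz)^*T^*M_1 = \on{ann}(\ker T\tz)^\circ$... more usefully $\iota_w\omega$ vanishes on $\ker(T\tz)$, i.e. $w \perp_\omega \ker(T\tz)$, together with $(T\tz)w \in \on{ran}(\pi_1^\sharp)$; on the right it says $w \in \ker(T\sz)$. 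The $\omega$-orthogonality of the fibers makes these two conditions match up, modulo the Poisson-map conditions which handle the $(T\tz)w \in \on{ran}\pi_1^\sharp$ versus $(T\sz)w \in \on{ran}\pi_2^\sharp$ bookkeeping. Finally I would check that the anchor condition (that $\tz$ is Poisson and $\sz$ is anti-Poisson) is exactly what forces the two Lagrangian subbundles, already known to have the same intersection with $TP$ and the same rank, to coincide everywhere — equivalently, I would verify the identity on a convenient spanning set of sections, namely $\sz$-related lifts of Hamiltonian sections $X_f + \d f$ downstairs, using the computation in the proof of Proposition \ref{prop:cosymplectic} style arguments and the formula $\ca{R}_\omega(x) = x + \iota_{\a(x)}\omega$.

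\textbf{Main obstacle.} I expect the bookkeeping in the pointwise description of $\ca{R}_\omega(\tz^!\on{Gr}(\pi_1))$ to be the delicate part: the pullback $\tz^!$ produces a Dirac structure whose elements are not simply graphs, so one must carefully track which covectors are pullbacks along $\tz$ versus which tangent vectors project where, and then apply the gauge transformation, which mixes the tangent and cotangent parts via $\omega$. Getting the equivalence to come out as an honest equality of subbundles — rather than just matching ranks and one inclusion — will require either a clean intrinsic argument that $\ca{R}_\omega(\tz^!\on{Gr}(\pi_1))$ and $\sz^!\on{Gr}(\pi_2)$ both equal the same explicitly described Lagrangian subbundle of $\T P$, or a careful section-by-section verification; I would lean on the fact (already used repeatedly in the paper) that a Lagrangian subbundle is determined by its values on any spanning family of sections, so checking on $\sz$-related Hamiltonian sections suffices once the dimension count is in place. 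The rest — that the three bullets of Definition \ref{def:dualpair} are jointly equivalent to \eqref{eq:thecondition} under the dimension hypothesis — should then be a matter of organizing the three pointwise conditions derived above.
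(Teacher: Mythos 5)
Your overall strategy---unwinding both Lagrangian subbundles pointwise, splitting off the parts lying in $TP$, and closing the argument by maximality (one inclusion between Lagrangian subbundles forces equality)---is essentially the route the paper takes. For the direction ``dual pair $\Rightarrow$ \eqref{eq:thecondition}'' the paper makes your sketch precise by writing $\tz^!\on{Gr}(\pi_1)=\ker(T\tz)\oplus\big(\tz^!\on{Gr}(\pi_1)\cap\on{Gr}(\pi_P)\big)$, where the second summand consists of the elements $\pi_P^\sharp(\tz^*\mu)+\tz^*\mu$; for such an element the covector part is exactly $-\iota_v\omega$, so $\ca{R}_\omega$ sends it to a pure tangent vector, which the Lagrangian property of $\tz^!\on{Gr}(\pi_1)$ together with the $\omega$-orthogonality of the fibers places in $\ker(T\sz)\subset\sz^!\on{Gr}(\pi_2)$; the summand $\ker(T\tz)$ is handled symmetrically by $\ca{R}_{-\omega}$ applied to $\sz^!\on{Gr}(\pi_2)\cap\on{Gr}(\pi_P)$. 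This is the ``clean intrinsic argument'' you were hoping for, and it avoids choosing lifts of Hamiltonian sections altogether.

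Two points need repair. First, the suggestion that one could conclude by ``comparing ranks of intersections with $TP$'' is a dead end: two Lagrangian subbundles of $\T P$ of the same rank can have the same intersection with $TP$ without coinciding (the graphs of two distinct Poisson bivectors both meet $TP$ in zero). You must prove an honest inclusion, as in your fallback plan. Second, and more substantively, your proposal says almost nothing about the converse direction, where the three bullets of Definition \ref{def:dualpair} must be extracted from \eqref{eq:thecondition}. The orthogonality of the fibers comes from the computation $\omega(v_1,v_2)=-\l \ca{R}_\omega(v_1),v_2\r=0$ for $v_1\in\ker(T\tz)\subset\tz^!\on{Gr}(\pi_1)$ and $v_2\in\ker(T\sz)$, using that both $\ca{R}_\omega(v_1)$ and $v_2$ lie in the Lagrangian subbundle $\sz^!\on{Gr}(\pi_2)$, together with the dimension count. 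The Poisson property of $\tz$ then requires a specific move that is absent from your sketch: given $\mu_1\in T^*M_1$, let $v$ be the $\omega$-dual of $\tz^*\mu_1$; since $\tz^*\mu_1$ annihilates $\ker(T\tz)$, the orthogonality just established puts $v$ in $\ker(T\sz)\subset\sz^!\on{Gr}(\pi_2)$, and applying $\ca{R}_\omega^{-1}=\ca{R}_{-\omega}$ produces an element of $\tz^!\on{Gr}(\pi_1)$ with tangent part $v$ and covector part a multiple of $\tz^*\mu_1$; by definition of the pullback this element must be $\tz$-related to $\pi_1^\sharp(\mu_1)+\mu_1$ (up to sign), which pins down $T\tz(v)$ and yields that $\tz$ is Poisson, with the anti-Poisson property of $\sz$ obtained symmetrically. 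Your framework accommodates all of this, but as written the proposal does not contain it.
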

\begin{proof}
Suppose first that the condition holds. Let $v_1\in \ker(T\tz),\ v_2\in\ker(T\sz)$.  Using 
$\ker(T\tz)\subset \tz^! \on{Gr}(\pi_1)$, and similarly for $\sz$, 
we obtain 
\[ \omega(v_1,v_2)=-\l \ca{R}_\omega(v_1),\ v_2\r=0,\]
since both $\ca{R}_\omega(v_1)$ and $v_2$ are in $\sz^!\on{Gr}(\pi_2)$. By dimension count, this shows that $\ker(T\tz)$ is the $\omega$-orthogonal bundle to 
$\ker(T\sz)$. To show that $\tz$ is a Poisson map, let $\mu_1\in T^*_{\tz(p)}M_1$ be given. Define $v\in T_pP$ by $\iota_v\omega=\tz^*\mu_1$. Since $\tz^*\mu_1$ annihilates all elements of $\ker(T\tz)$, we see 
that $v$ lies in the $\omega$-orthogonal bundle:  $v\in \ker(T\sz)$. But then 
\[ v+\iota_v\omega=\ca{R}_{-\omega}(v)\in \ca{R}_{-\omega}(\sz^! \on{Gr}(\pi_2))=\tz^!\on{Gr}(\pi_1).\]
That is, $v+\tz^*\mu_1=
v+\iota_v\omega$ is $\tz$-related to an element of $\on{Gr}(\pi_1)$. Necessarily, this element is $\pi_1^\sharp(\mu_1)+\mu_1$. This shows $T_p\tz (v)=
\pi_1^\sharp(\mu_1)$, proving that $\tz$ is Poisson. Similarly, $\sz$ is anti-Poisson. 

Conversely, suppose that $(\tz,\sz)\colon P\to M_1\times M_2^-$ is a Poisson map. We have a direct sum decomposition
\begin{equation}\label{eq:decompose}
\tz^!\on{Gr}(\pi_1)=\ker(T\tz)\oplus ( \tz^!\on{Gr}(\pi_1)\cap \on{Gr}(\pi_P));\end{equation}
where the elements of the second summand are those of the form 
$\pi_P^\sharp(\tz^*\mu)+\tz^*\mu$, and similarly for 
$\sz^!\on{Gr}(\pi_2)$.  Let 
\[ v+\tz^*\mu\in  \tz^!\on{Gr}(\pi_1)\cap \on{Gr}(\pi_P).\] 
Then $\tz^*\mu=\pi_P^\sharp \mu=-\iota_v\omega$, hence 
$\ca{R}_\omega(v+\tz^*\mu)=v$. 
On the other hand, for all $w\in \ker(T\tz)$ we have that 
\[ \omega(v,w)=-\l v+\tz^*\mu,\ w\r=0\]
(since  $\tz^!\on{Gr}(\pi_1)$ is Lagrangian), which shows that $v\in \ker(T\sz)$. We conclude that $\ca{R}_\omega$ restricts to an isomorphism
\[ \ca{R}_\omega\colon \tz^!\on{Gr}(\pi_1)\cap \on{Gr}(\pi_P)\to \ker(T\sz);\]
similarly $\ca{R}_{-\omega}$ restricts to an isomorphism 
from $\sz^!\on{Gr}(\pi_2)\cap \on{Gr}(\pi_P)$ to $\ker(T\tz)$. In summary, 
$\ca{R}_\omega$ restricts to an isomorphism from $\tz^!\on{Gr}(\pi_1)$ to 
$\sz^!\on{Gr}(\pi_2)$. 
\end{proof}

With this result in place, we can prove the main result: 

\begin{proof}[Proof of Theorem \ref{th:cm}]  Let 
$\alpha\in \Om^1(T^*M)$ be the canonical 1-form of the cotangent bundle. That is,  for all $\mu\in T^*M$, 
$ \alpha_\mu=(T_\mu\tau)^*\mu$. 
Recall that $\omega_{\on{can}}=-\d\alpha$. \footnote{In local cotangent coordinates, $\alpha=\sum_i p_i\d q^i$ and $\omega_{\on{can}}=\sum_i \d q^i\wedge \d p_i$.} Given a Poisson spray $X$, observe that the section $X+\alpha$ of $\T (T^*M)$ takes values in $\tau^!\on{Gr}(\pi)$:
\[ X+\alpha\in \Gamma(\tau^!\on{Gr}(\pi))\subset \Gamma(\T (T^*M)).\] Indeed, the definition of a spray (and of the canonical 1-form $\alpha$) means precisely that for all $\mu\in T^*M$, 
\[ X_\mu+\alpha_\mu\ \sim_\tau\  \pi^\sharp(\mu)+\mu,\]
as required. Since $\tau^!\on{Gr}(\pi)$ is a Dirac structure, the infinitesimal automorphism $(\d\alpha,X)\in\mf{aut}\big(\T(T^*M)\big)$ defined by the section $X+\alpha$ 
preserves $\tau^!\on{Gr}(\pi)$. By Proposition \ref{prop:couraut}, the (local) 1-parameter group of automorphisms exponentiating
$(\d\alpha,X)\in\mf{aut}\big(\T(T^*M)\big)$ is given by $(-\omega_t,\Phi_t)$, where 
$\Phi_t$ is the (local) flow of $X$, and 
\[ \omega_t=-\d \int_0^t (\Phi_s)_*\alpha=\int_0^t (\Phi_s)_*\omega_{\on{can}}.\]
We conclude 
\[ \ca{R}_{\omega_t}\circ \T \Phi_t (\tau^! \on{Gr}(\pi))=\tau^!\on{Gr}(\pi).\]
Putting $t=1$ in this identity, and using the definition of $\tz,\sz,\omega$, together with the fact that 
$T\Phi_1(E)=(\Phi_{-1})^!E$ for any Dirac structure $E\subset \T(T^*M)$, we obtain 
$\ca{R}_\omega\circ \tz^!\on{Gr}(\pi)=\sz^!\on{Gr}(\pi)$. By Proposition \ref{prop:fm}, we are done. 
\end{proof}

\subsection{The local symplectic groupoid}\label{subsec:sympgr} 
Recall that a  \emph{Lie groupoid} $\G\rra M$ is given by a manifold $\G$ of \emph{arrows}, a submanifold $\G^{(0)}=M$ of \emph{units}, two surjective 
submersions  $\sz,\tz\colon \G\to M$ called the \emph{source} and \emph{target} maps, 
satisfying $\sz\circ i=\tz\circ i=\on{id}_M$, 
and a partially defined multiplication map 
\[ \Mult_\G\colon \G^{(2)}=\G\ _{\sz}\times_{\tz} \G \to \G,\ \ \ (g_1,g_2)\mapsto g_1\circ g_2\]
satisfying certain axioms of associativity, neutral element, and existence of inverses.  See Appendix \ref{app:groupoids} for details. The entire groupoid structure is encoded in the 
graph of the groupoid multiplication 
\[ \on{Gr}(\Mult_\G)=\{(g_1\circ g_2,g_1,g_2)\in\G^3|\ (g_1,g_2)\in \G^{(2)}\}\]
\begin{definition}[Weinstein \cite{wei:sym}]
A \emph{symplectic groupoid} is a Lie groupoid 
$\G\rra M$,  equipped with a symplectic structure $\om_\G$ such that that the graph of the groupoid multiplication $ \on{Gr}(\Mult_\G)$ is a Lagrangian submanifold 
of $\G\times \ol{\G\times \G}$. 
\end{definition}
Here the line indicates the opposite symplectic structure; thus $\G\times \ol{\G\times \G}$ has the symplectic structure $ \pr_1^*\omega-\pr_2^*\omega-\pr_3^*\omega$, 
with $\pr_i$ the projection to the $i$-th factor. Once again, `Lagrangian' is meant in the sense of symplectic geometry: $\on{Gr}(\Mult_\G)$ has the middle dimension $\f{3}{2}\dim \G$, and the pullback of the symplectic form vanishes. We leave it as an exercise to show that the 
space of units of a symplectic groupoid is Lagrangian. 

As proved in Coste-Dazord-Weinstein \cite{cos:gro}, for any symplectic groupoid 
$(\G,\omega)$ the space $M$ of objects acquires a Poisson structure, in such a way 
that $\tz,\sz,i$ satisfy the Karasev-Weinstein conditions from Theorem \ref{th:kw}. That is, the map 
\[ (\tz,\sz)\colon \G\to M\times M^-\] 
is Poisson, where $M^-$ stands for $M$ with the opposite Poisson structure $-\pi$.
One calls $(\G,\omega)$ a \emph{symplectic groupoid integrating} $(M,\pi)$. 

\begin{example}
If $(M,\omega)$ is a symplectic manifold (regarded as a Poisson manifold), then the pair groupoid $\ol{M}\times M\rra M$ (cf. Appendix \ref{app:groupoids})
is a symplectic groupoid integrating $M$. Here $\ol{M}$ indicates $M$ with the opposite symplectic structure $-\omega$.
\end{example}

\begin{example}\cite{cos:gro} \label{ex:cotangentgroupoid}
Suppose $G$ is a Lie group, with Lie algebra $\g$. Then the cotangent  bundle $T^*G$, with its standard symplectic structure, has the structure of a symplectic groupoid 
\[ T^*G\rra \g^*\]
where for $\mu_i\in T^*_{g_i}G,\ \ i=1,2$ and $\mu\in T^*_gG$, 
\[ \mu=\mu_1\circ \mu_2\ \ \ \Leftrightarrow\ \ \ g=g_1g_2,\ \ (\mu_1,\mu_2)=(T_{g_1,g_2}\Mult_G)^*\mu.\] 
Here $\Mult_G\colon G\times G\to G$ is the group multiplication map of the group $G$. The space of 
units is $\g^*$ embedded as the fiber $T^*_eG$, while  source and target map are given by left trivialization, respectively right trivialization. The symplectic groupoid $T^*G$ integrates $\g^*$ equipped with the 
Lie-Poisson structure. Indeed, the map $\tz$ is the symplectic moment map for the 
cotangent lift of the left $G$-action on itself, and moment maps are always Poisson maps. 
\end{example}

Not every Poisson manifold admits an integration to a symplectic groupoid. But one always has a \emph{local} symplectic groupoid integrating the Poisson structure. Local Lie groupoids are a generalization of Lie groupoids, where the groupoid multiplication is only defined 
on some open neighborhood of the diagonally embedded $M\subset \G^{(2)}$. 
(For details, see \cite[Definition III.1.2]{cos:gro}.)  

\begin{theorem}[Coste-Dazord-Weinstein \cite{cos:gro}, Karasev \cite{kar:ana}] Every Poisson manifold $(M,\pi)$ 
admits a local symplectic groupoid $\G\rra M$ integrating it. 
\end{theorem}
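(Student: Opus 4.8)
The plan is to upgrade the symplectic realization produced by Theorem~\ref{th:cm} into a local symplectic groupoid, using the Dirac-geometric characterization of dual pairs (Proposition~\ref{prop:fm}) as the key bookkeeping device. Start from a Poisson spray $X$ on $T^*M$, its local flow $\Phi_t$, and the 2-form $\omega=\int_0^1(\Phi_s)_*\omega_{\on{can}}\,\d s$, which is symplectic on a neighborhood $P$ of the zero section $M$; set $\sz=\tau$ and $\tz=\tau\circ\Phi_{-1}$ as in Theorem~\ref{th:cm}. The task is to construct a partially defined multiplication $\Mult\colon P^{(2)}\to P$, defined near the diagonally embedded $M\subset P\ _{\sz}\times_{\tz}P$, whose graph is Lagrangian in $P\times\ol{P\times P}$, and to verify the local-groupoid axioms (units given by $i\colon M\hra P$, associativity and inverses holding where defined). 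The natural source of this multiplication is the fact that $(\tz,\sz)\colon P\to M\times M^-$ is a full dual pair: one uses the Lagrangian structure of the fibers to define composable arrows by a "groupoid of the dual pair" construction, or, more concretely, one exploits homogeneity of the spray $X$ to produce multiplication from the geodesic-like flow, following Coste--Dazord--Weinstein and Karasev.

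Concretely, I would proceed as follows. First, record the infinitesimal picture: the cotangent Lie algebroid $T^*M$ of $(M,\pi)$ (Corollary after Proposition~\ref{prop:diraclie}) is the Lie algebroid of the sought groupoid, and any local integration of a Lie algebroid exists and is essentially unique near the units (this is the local Lie theory for algebroids, e.g.\ \cite{cra:intpoi} or Appendix~\ref{app:groupoids} — I would cite it rather than reprove it). So there is a local Lie groupoid $\G\rra M$ with Lie algebroid $T^*M$; the content is to show it carries a compatible symplectic form, i.e.\ that $\on{Gr}(\Mult_\G)$ can be made Lagrangian. Second, transport the symplectic form $\omega$ from $P$ to $\G$ via the identification of both with a neighborhood of the zero section of $T^*M$ (the exponential map of the spray), and check $\sz,\tz$ become the source and target. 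Third — the main step — verify the multiplicativity $\pr_1^*\omega=\pr_2^*\omega+\pr_3^*\omega$ on $\on{Gr}(\Mult_\G)$. Here I would invoke the dual pair condition \eqref{eq:thecondition}, $\ca{R}_\omega(\tz^!\on{Gr}(\pi))=\sz^!\on{Gr}(\pi)$, established in the proof of Theorem~\ref{th:cm}, together with the observation that multiplicativity of a 2-form on a groupoid is equivalent to a Dirac-geometric statement about the source and target fibers being $\omega$-orthogonal and the leafwise forms matching; this is exactly the data packaged by Proposition~\ref{prop:fm}. An alternative, more hands-on route: use the homogeneity $\kappa_t^*X=tX$ to define $\mu_1\circ\mu_2$ via concatenation of spray trajectories (the "exponential" $\exp(X)$ playing the role of the exponential map of a connection), and verify associativity up to the neighborhood on which flows are defined.

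The step I expect to be the main obstacle is proving that the multiplication graph is genuinely Lagrangian — equivalently, that the chosen 2-form $\omega$ (which was produced by the Moser-type averaging in Theorem~\ref{th:cm} purely to get a dual pair) is actually \emph{multiplicative} for the groupoid structure, not merely symplectic with the right source/target maps. A dual pair structure is weaker than a symplectic groupoid: it remembers that $\sz$-fibers and $\tz$-fibers are symplectically orthogonal, but not the associative composition. Bridging this gap requires either (i) showing the spray construction can be arranged so that $\Phi_{-1}$ literally implements a groupoid multiplication — which forces one to be careful about which neighborhood of $M$ the flow is defined on, and to shrink $P$ compatibly with composability — or (ii) appealing to the uniqueness of local integrations of the cotangent algebroid to identify $P$ (with its dual pair structure) with the canonical local groupoid, and then checking the symplectic form is the multiplicative one by a cohomological/infinitesimal argument at the level of $T^*M$. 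I would follow the approach of \cite{cos:gro} and \cite{kar:ana}: construct $\Mult$ directly from the spray flow, prove local associativity by a substitution-of-flows argument analogous to the proof of the Moser theorem~\ref{th:moser}, and deduce multiplicativity of $\omega$ from the identity $\ca{R}_\omega(\tz^!\on{Gr}(\pi))=\sz^!\on{Gr}(\pi)$ applied fiberwise along the three projections. The remaining axioms — the zero section as units, and existence of local inverses via $\mu\mapsto -\mu$ composed with a flip — are then routine.
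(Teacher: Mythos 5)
You have correctly reduced the problem to the real difficulty --- given the Karasev--Weinstein data $(P,\omega,\sz,\tz,i)$ produced by the spray construction, one must still manufacture a partially defined multiplication whose graph is Lagrangian in $P\times\ol{P\times P}$ --- but your proposal does not actually overcome it. The mechanism you offer for multiplicativity is the dual pair identity $\ca{R}_\omega(\tz^!\on{Gr}(\pi))=\sz^!\on{Gr}(\pi)$ ``applied fiberwise along the three projections,'' after asserting that multiplicativity of a 2-form is ``exactly the data packaged by Proposition \ref{prop:fm}.'' That assertion is false, and you concede as much two sentences later when you note that a dual pair is strictly weaker than a symplectic groupoid: Proposition \ref{prop:fm} is a statement about $P$ alone and knows nothing about any composition law, so no amount of fiberwise application of it can certify that a particular graph inside $P\times\ol{P\times P}$ is Lagrangian. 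Likewise, ``concatenation of spray trajectories'' does not by itself define a binary operation on composable pairs --- making that precise is essentially the $A$-path/Weinstein-groupoid picture, or the explicit spray formulas of Cabrera--M\u{a}rcu\c{t}--Salazar --- and your route (ii), borrowing an abstract local integration of $T^*M$ and then arguing ``cohomologically'' that the transported form is multiplicative, is left entirely as a gesture. As written, there is no construction of $\Mult$ and no proof that its graph is Lagrangian.

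The paper closes this gap with a specific device absent from your outline. For $\alpha\in\Om^1(M)$ set $\alpha^L=-\pi_P^\sharp(\sz^*\alpha)$ and $\alpha^R=-\pi_P^\sharp(\tz^*\alpha)$. The single hypothesis that $(\tz,\sz)\colon P\to M\times M^-$ is Poisson yields (Proposition \ref{prop:lr}) the bracket relations $[\alpha^L,\beta^L]=[\alpha,\beta]^L$, $[\alpha^R,\beta^R]=-[\alpha,\beta]^R$, $[\alpha^L,\beta^R]=0$, together with the pairings $\omega_P(\alpha^L,\beta^R)=0$, $\omega_P(\alpha^L,\beta^L)=-\sz^*\pi_M(\alpha,\beta)$, $\omega_P(\alpha^R,\beta^R)=\tz^*\pi_M(\alpha,\beta)$. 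By the general flow-out construction of Appendix \ref{app:groupoids}, these bracket relations alone produce a local groupoid structure: $\on{Gr}(\Mult_P)$ is obtained as the flow-out of the triagonal $M_\Delta^{[2]}$ under the vector fields $(\alpha^L,0,\alpha^L)$ and $(-\alpha^R,-\alpha^R,0)$. The Lagrangian property is then nearly free: it suffices to generate with $\alpha=\d f$, in which case these vector fields are Hamiltonian on $P\times\ol{P\times P}$ and preserve its symplectic form, so the claim reduces to checking that the tangent space of the graph is Lagrangian along the triagonal --- which is exactly the content of the pairing identities above. If you want to salvage your outline, this is the step to supply: replace ``apply the dual pair identity fiberwise'' by the left/right invariant vector fields and the Hamiltonian flow-out argument.
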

 In fact,  it is shown in \cite{cos:gro} that any full symplectic realization $\tau\colon P\to M$ for which $\tau$ is a retraction onto a Lagrangian submanifold $i\colon M\hra P$, can be given the structure of a local symplectic groupoid over $M$. The following discussion is similar to their approach. It shows that given the data from the Karasev-Weinstein 
 theorem, one automatically gets the local groupoid structure. 
\begin{proposition}
Let $(M,\pi_M)$ be a Poisson manifold, and $(P,\omega_P)$ a symplectic manifold with an inclusion 
$i\colon M\to P$ as a Lagrangian submanifold, and with  two surjective submersions 
\[ \tz, \sz\colon P\to M,\]
satisfying the properties of the Karasev-Weinstein 
 theorem \ref{th:kw}. Replacing $P$ with a smaller neighborhood of $M$ if necessary, it  has a unique structure of a local symplectic groupoid, with $\sz$ and $\tz$ as the source and target map, and $i$ the inclusion of units. 
\end{proposition}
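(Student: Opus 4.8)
The plan is to build the local multiplication by symplectic reduction of the space of composable pairs, following the strategy of Coste--Dazord--Weinstein \cite{cos:gro}. Write $P^{(2)}=\{(p_1,p_2)\in P\times P:\ \sz(p_1)=\tz(p_2)\}$. First I would show that $P^{(2)}$ is \emph{coisotropic} in $\ol P\times\ol P$: since $\tz$ is Poisson and $\sz$ anti-Poisson, the map $(p_1,p_2)\mapsto(\sz(p_1),\tz(p_2))$ is a Poisson submersion $\ol P\times\ol P\to M\times M^-$, and $P^{(2)}$ is the preimage of the diagonal $\Delta_M\subset M\times M^-$; the diagonal is coisotropic (its conormal at $(x,x)$ is spanned by the $(\mu,-\mu)$, and $\pi^\sharp_{M\times M^-}(\mu,-\mu)=(\pi^\sharp\mu,\pi^\sharp\mu)$ is tangent to it), and preimages of coisotropic submanifolds under Poisson submersions are coisotropic. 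Thus $P^{(2)}$ carries its characteristic (null) foliation $\F$, of rank $\dim M$. Using the hypothesis that $\ker(T\tz)$ and $\ker(T\sz)$ are $\omega$-orthogonal -- equivalently $\ker(T\tz)=\pi_P^\sharp(\sz^*T^*M)$ and symmetrically for $\sz$ -- one sees that along the leaves of $\F$ the functions $\tz\circ\pr_1$ and $\sz\circ\pr_2$ are constant (their differentials annihilate, respectively, the $\ker(T\tz)$- and $\ker(T\sz)$-directions that generate $\F$).

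Next I would produce a transversal to $\F$ in order to define the multiplication. Consider $\iota_L\colon P\to P^{(2)},\ p\mapsto\big(i(\tz(p)),p\big)$, a section of $\pr_2$ with $\iota_L(m)=(m,m)$ for units $m\in M$. At $(m,m)$ a direct computation -- using $T_mM\cap\ker(T_m\tz)=0$, injectivity of $\sz^*$, and the explicit form of $\F$ above -- gives $T\iota_L(P)\cap T\F=0$; since $\dim P+\dim M=\dim P^{(2)}$ the sum is direct, so $\iota_L$ is transverse to $\F$ along the units, hence on a neighborhood. Shrinking $P$, each leaf of $\F$ near the units meets $\iota_L(P)$ in exactly one point, and I would \emph{define} $\Mult(p_1,p_2)$ to be the unique $p_0\in P$ with $\iota_L(p_0)$ on the $\F$-leaf through $(p_1,p_2)$. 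Then $\Mult$ is a smooth submersion, $\Mult\circ\iota_L=\on{id}_P$ (the left unit law), and by the previous paragraph $\tz(\Mult(p_1,p_2))=\tz(p_1)$ and $\sz(\Mult(p_1,p_2))=\sz(p_2)$. For the symplectic compatibility, note that $\Mult$ is $\F$-invariant by construction, so the identity $\Mult^*\omega=(\pr_1^*\omega+\pr_2^*\omega)|_{P^{(2)}}$ -- which makes $\on{Gr}(\Mult)\subset P\times\ol P\times\ol P$ isotropic, hence Lagrangian by dimension -- need only be checked after pulling back along the transversal $\iota_L$, where it reduces to $i^*\omega=0$; and this holds because $M$ is Lagrangian.

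It remains to verify the groupoid axioms. The right unit law and the existence of the inversion $\iota\colon P\to P$ near $M$ follow by the same transversality mechanism: the section $p\mapsto(p,i(\sz(p)))$ is also transverse to $\F$, and it lies on the same $\F$-leaf as $\iota_L$ (they coincide along the units and both agree there with the reduction projection), so $\Mult(p,i(\sz p))=p$; inversion is then obtained by solving $\Mult(p,q)=i(\tz p)$ for $q$ near a unit. Associativity is the part I expect to be the main obstacle. I would handle it by introducing the triple fiber product $P^{(3)}=\{(p_1,p_2,p_3):\sz(p_1)=\tz(p_2),\ \sz(p_2)=\tz(p_3)\}$, showing as in the first paragraph that it is coisotropic in $\ol P\times\ol P\times\ol P$ with null foliation $\F^{(3)}$ whose reduced space again has dimension $\dim P$, and then checking that both $\Mult\circ(\Mult\times\on{id})$ and $\Mult\circ(\on{id}\times\Mult)$ are $\F^{(3)}$-invariant and induce the \emph{same} map on the reduction -- equivalently, that the two relational compositions of the Lagrangian relation $\on{Gr}(\Mult)$ with itself are clean near the units, hence equal. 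Since both composites are graphs of maps $P^{(3)}\to P$, this forces the associativity identity.

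Finally, for uniqueness: if $\Mult'$ is any local symplectic groupoid multiplication with the prescribed $\sz,\tz,i$, then Lagrangianity of $\on{Gr}(\Mult')$ forces $(\Mult')^*\omega=(\pr_1^*\omega+\pr_2^*\omega)|_{P^{(2)}}$ and hence $\F$-invariance of $\Mult'$, while the unit law $\Mult'\circ\iota_L=\on{id}$ pins $\Mult'$ down on the transversal $\iota_L(P)$ and therefore everywhere near $M$; so $\Mult'=\Mult$, and the inversion is then determined by $\Mult$ and the units. All three Karasev--Weinstein hypotheses enter: the $\omega$-orthogonality of the fibers is exactly what makes $\iota_L$ transverse to $\F$, the submersion property gives the coisotropy of $P^{(2)}$, and $i(M)$ being Lagrangian is what makes $\on{Gr}(\Mult)$ isotropic.
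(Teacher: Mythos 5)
Your route --- realizing $\Mult$ by coisotropic reduction of the space of composable pairs $P^{(2)}$, in the spirit of Coste--Dazord--Weinstein --- is genuinely different from the paper's, which instead generates $\on{Gr}(\Mult)$ as a flow-out of the triagonal under the vector fields $\alpha^L=-\pi_P^\sharp(\sz^*\alpha)$, $\alpha^R=-\pi_P^\sharp(\tz^*\alpha)$ and appeals to the general local-integration statement of Appendix \ref{app:groupoids}. Several of your steps are correct and cleanly argued: the coisotropy of $P^{(2)}$, the transversality of $\iota_L(p)=(i(\tz(p)),p)$ to the characteristic foliation $\F$, the identity $\Mult^*\omega=(\pr_1^*\omega+\pr_2^*\omega)|_{P^{(2)}}$ via basic forms, and the uniqueness argument. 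But the two axioms you treat most briefly --- the right unit law and associativity --- are exactly where the content of the theorem sits, and your justifications there do not go through.

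For the right unit law you assert that $\iota_R(p)=(p,i(\sz(p)))$ and $\iota_L(p)$ lie on the same $\F$-leaf ``because they coincide along the units''; this is not an argument: two transversals of a foliation that agree on a submanifold need not meet the same leaves away from it. The distribution $\F$ is spanned by the pairs $(\alpha^L,-\alpha^R)$, and the statement you need is a genuine identity between the flows of $\alpha^L$ and $\alpha^R$. It rests on two facts that appear nowhere in your write-up: (i) $[\alpha^L,\beta^R]=0$, which is part of Proposition \ref{prop:lr} and encodes that $(\tz,\sz)\colon P\to M\times M^-$ is Poisson; and (ii) $\alpha^L-\alpha^R=\pi_P^\sharp(\tz^*\alpha-\sz^*\alpha)$ is tangent to $i(M)$, because $\tz^*\alpha-\sz^*\alpha$ annihilates $Ti(M)$ and $\pi_P^\sharp$ carries the conormal of a Lagrangian to its tangent --- this is where the Lagrangian hypothesis enters the multiplication itself, not only the isotropy of its graph. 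Granting (i) and (ii), one writes $p$ as the time-one flow of a time-dependent $-\beta_t^R$ applied to $i(\sz(p))$; by (i) the curve $t\mapsto \Phi_t(\Psi_t(i(\sz(p))))$, with $\Phi,\Psi$ the flows of $\beta_t^L$ and $-\beta_t^R$, is an integral curve of $\beta_t^L-\beta_t^R$ through $i(\sz(p))$, hence by (ii) stays in $i(M)$; this exhibits $\iota_L(p)$ on the leaf through $\iota_R(p)$. Associativity has the same problem: ``both composites are graphs of maps, so this forces the identity'' is not a valid inference (two distinct Lagrangian sections are both graphs of maps). What is actually needed is that both composites annihilate $\F^{(3)}$ --- which requires $T\Mult(0,\beta^L)=\beta^L$ and $T\Mult(\alpha^R,0)=\alpha^R$, again consequences of the bracket relations of Proposition \ref{prop:lr} --- and that they agree on a cross-section of $\F^{(3)}$, e.g.\ $\{(i(\tz(p)),i(\tz(p)),p)\}$, where both equal $p$ by the unit laws. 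So your skeleton can be completed, but only after establishing and systematically using the left/right bracket relations; as written, the proofs of the two hardest groupoid axioms are missing.
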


The idea of proof is to generate the graph of the groupoid multiplication by flows of vector fields, playing the 
role of left-invariant and right-invariant vector fields. For all $\alpha\in  \Omega^1(M)$, define vector fields $\alpha^L,\alpha^R\in \mf{X}(P)$ by 
\[ \alpha^R=-\pi_P^\sharp(\tz^*\alpha),\ \ \alpha^L=-\pi_P^\sharp(\sz^*\alpha).\]
(See \cite[Section II.2]{cos:gro}.) 

\begin{proposition}[Left- and right-invariant vector fields]\label{prop:lr} The vector fields 
$\alpha^L$ (resp. $\alpha^R$) span the tangent spaces to the $\tz$-fibers 
(resp. $\sz$-fibers) everywhere. One has  
\[ 
\alpha^L\sim_{\sz} \pi_M^\sharp(\alpha),\ \ \ 
\alpha^R\sim_{\tz} -\pi_M^\sharp(\alpha)\ \ \ \ 
\]
and the Lie bracket relations 
\[ [\alpha^L,\beta^L]=[\alpha,\beta]^L,\ \ \ 
[\alpha^R,\beta^R]=-[\alpha,\beta]^R,\ \ \  [\alpha^L,\beta^R]=0\]
for all $\alpha,\beta\in \Omega^1(M)$. Furthermore,
\[ \omega_P(\alpha^L,\beta^L)=-\sz^* \pi_M(\alpha,\beta),\ \ \ \ 
\omega_P(\alpha^R,\beta^R)=\tz^* \pi_M(\alpha,\beta),\ \ \ \ \ 
\omega_P(\alpha^L,\beta^R)=0.\]
\end{proposition}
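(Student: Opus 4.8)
The plan is to reduce every assertion to two inputs: the identity $\pi_P^\sharp=-(\om_P^\flat)^{-1}$, and the three defining properties of the Karasev--Weinstein data ($\tz$ Poisson, $\sz$ anti-Poisson, and $\ker(T\tz)$, $\ker(T\sz)$ mutually $\om_P$-orthogonal). From $\pi_P^\sharp=-(\om_P^\flat)^{-1}$ I first read off the contraction formulas $\iota_{\alpha^L}\om_P=\sz^*\alpha$ and $\iota_{\alpha^R}\om_P=\tz^*\alpha$ for all $\alpha\in\Om^1(M)$. For the \emph{spanning} claim: since $\sz$ is a submersion, the covectors $\sz^*\alpha$ fill out $\on{ann}(\ker T\sz)$ pointwise, a rank-$\dim M$ subbundle; and since $\pi_P^\sharp$ is a fibrewise isomorphism with $\pi_P^\sharp(\on{ann}(\ker T\sz))$ equal to the $\om_P$-orthogonal complement of $\ker T\sz$, the orthogonality hypothesis identifies this with $\ker T\tz$. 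Hence the $\alpha^L$ span the $\tz$-fibres, and symmetrically the $\alpha^R$ span the $\sz$-fibres.

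For the $\varphi$-relations I would use that a Poisson map $\varphi$ satisfies $T\varphi\circ\pi_P^\sharp\circ(T\varphi)^*=\pi^\sharp$, so $\pi_P^\sharp(\varphi^*\alpha)\sim_\varphi\pi^\sharp(\alpha)$; applied to $\tz$ this gives $\alpha^R=-\pi_P^\sharp(\tz^*\alpha)\sim_\tz-\pi_M^\sharp(\alpha)$, and applied to $\sz$ regarded as a Poisson map into $M^-$ (where $\pi_{M^-}^\sharp=-\pi_M^\sharp$) it gives $\alpha^L\sim_\sz\pi_M^\sharp(\alpha)$. The symplectic pairings then drop out by substitution: $\om_P(\alpha^L,\beta^L)=\langle\iota_{\alpha^L}\om_P,\beta^L\rangle=\langle\sz^*\alpha,\beta^L\rangle=\langle\alpha,(T\sz)\beta^L\rangle=\langle\alpha,\pi_M^\sharp(\beta)\rangle=-\sz^*\pi_M(\alpha,\beta)$; likewise $\om_P(\alpha^R,\beta^R)=\langle\tz^*\alpha,\beta^R\rangle=\langle\alpha,-\pi_M^\sharp(\beta)\rangle=\tz^*\pi_M(\alpha,\beta)$; and $\om_P(\alpha^L,\beta^R)=\langle\sz^*\alpha,\beta^R\rangle=0$ since $\beta^R$ is tangent to the $\sz$-fibres by the spanning step.

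For the bracket relations I would invoke that the anchor $\pi_P^\sharp$ of the cotangent Lie algebroid of $P$ is a morphism of brackets, and that for a Poisson map $\varphi$ the pullback $\varphi^*$ of $1$-forms intertwines the cotangent-algebroid brackets --- this is checked on exact forms, where it is $d\{f,g\}=[df,dg]$, and extended by tensoriality together with the Leibnitz rule \eqref{eq:leibnitz}. Writing $[\cdot,\cdot]$ for the cotangent bracket of $M$ and using $[\cdot,\cdot]_{M^-}=-[\cdot,\cdot]_M$, this yields $[\alpha^R,\beta^R]=\pi_P^\sharp[\tz^*\alpha,\tz^*\beta]_P=\pi_P^\sharp\tz^*[\alpha,\beta]=-[\alpha,\beta]^R$ and $[\alpha^L,\beta^L]=\pi_P^\sharp[\sz^*\alpha,\sz^*\beta]_P=-\pi_P^\sharp\sz^*[\alpha,\beta]=[\alpha,\beta]^L$. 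For the mixed bracket I would expand $[\sz^*\alpha,\tz^*\beta]_P=\L_{\pi_P^\sharp(\sz^*\alpha)}(\tz^*\beta)-\iota_{\pi_P^\sharp(\tz^*\beta)}d(\sz^*\alpha)$ and observe both terms vanish because $\pi_P^\sharp(\sz^*\alpha)=-\alpha^L$ is tangent to the $\tz$-fibres while $\pi_P^\sharp(\tz^*\beta)=-\beta^R$ is tangent to the $\sz$-fibres (the spanning step), whence $[\alpha^L,\beta^R]=\pi_P^\sharp[\sz^*\alpha,\tz^*\beta]_P=0$. The one point that is more than bookkeeping is the compatibility of $\varphi^*$ with the cotangent brackets under a Poisson map; the main thing to watch throughout is that the sign flip $\pi_{M^-}=-\pi_M$ enters consistently on every $\sz$-side identity.
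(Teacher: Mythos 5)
Your proof is correct, and it rests on the same two pillars as the paper's: that Poisson maps intertwine the sharp maps and the cotangent-algebroid brackets, and that $\pi_P^\sharp=-(\omega_P^\flat)^{-1}$. The only real difference is in packaging: the paper first combines the three hypotheses into the single statement that $(\tz,\sz)\colon P\to M\times M^-$ is a Poisson map, and then reads off \emph{all} six identities from functoriality applied to this one map --- in particular the mixed identities $[\alpha^L,\beta^R]=0$ and $\omega_P(\alpha^L,\beta^R)=0$ come for free from $[(0,\alpha),(\beta,0)]_{M\times M^-}=0$ and $\pi_{M\times M^-}\big((0,\alpha),(\beta,0)\big)=0$. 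You instead treat $\tz$ and $\sz$ separately and dispose of the mixed terms by a direct Cartan-calculus computation ($\iota_X\tz^*\gamma=0$ when $T\tz\,X=0$), which works but costs an extra step; on the other hand you spell out the spanning claim and the contraction formulas $\iota_{\alpha^L}\omega_P=\sz^*\alpha$, which the paper leaves implicit. One cosmetic quibble: the bracket on $1$-forms is not tensorial, so ``extended by tensoriality together with the Leibnitz rule'' should just read ``extended by bilinearity and the Leibnitz rule, using that $\pi_P^\sharp(\varphi^*f\cdot\mu)$-type terms are controlled by the $\varphi$-relatedness of the anchors''; the argument you intend is the right one.
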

\begin{proof}
Observe that the three conditions from Theorem \ref{th:kw} can be combined into the 
single condition that $(\tz,\sz)\colon P\to M\times M^-$ is a Poisson map. 
Write the definition of $\alpha^L,\alpha^R$ as 
\begin{equation}\label{eq:canthink}
 \alpha^L=-\pi_P^\sharp \big((\tz,\sz)^*(0,\alpha)\big),\ \ \ \ 
\alpha^R=\pi_P^\sharp\big((\tz,\sz)^*(\alpha,0)\big).
\end{equation}
Since $(\tz,\sz)$ is Poisson with respect to $(\pi_M,-\pi_M)$, it follows that 
$\alpha^L\sim_{\tz}0$ and $\alpha^L\sim_{\sz}\pi_M^\sharp(\alpha)$, and similarly for $\alpha^R$. It also implies that pullback under $(\tz,\sz)$ preserves Lie brackets on 1-forms, thus 
\[ 
[\tz^*\alpha,\tz^*\beta]=\tz^*[\alpha,\beta],\ \ \ \ 
[\sz^*\alpha,\sz^*\beta]=-\sz^*[\alpha,\beta],\ \ \ \ 
[\sz^*\alpha,\tz^*\beta]=0.\]
Applying $\pi_P^\sharp$ to both sides, one obtains the Lie bracket relations between the vector fields $\alpha^L,\ \alpha^R$. Next, 
\begin{align*}
\omega_P(\alpha^L,\beta^L)=\pi_P(\sz^*\alpha,\,\sz^*\beta)=-\sz^* \pi_M(\alpha,\beta)
\end{align*}
and similarly for $\omega_P(\alpha^R,\beta^R)$. 
To see $\omega_P(\alpha^L,\beta^R)=0$, use \eqref{eq:canthink}, and the fact that 
$\pi_{M\times M^-}((0,\alpha),(\alpha,0))=0$.  

\end{proof}

As explained in Appendix \ref{app:groupoids}, the bracket relations among the vector fields $\alpha^L,\alpha^R$ 
give $P$ the structure of a local groupoid, where $\Lambda=\on{Gr}(\Mult_P)$ is obtained as the flow-out of  the triagonal 
$M_\Delta^{[2]}\subset {P\times P\times P}$ under 
vector fields of the form 
\begin{equation}\label{eq:2vf}
(\alpha^L,0,\alpha^L),\ \ \ (-\alpha^R,-\alpha^R,0),\ \ \alpha\in\Omega^1(M)
\end{equation}
\begin{proposition}
The submanifold $\Lambda$ is a Lagrangian submanifold (in the sense of symplectic geometry) of $P\times\ol{P\times P}$. 
\end{proposition}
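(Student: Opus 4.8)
The plan is to show that $\Lambda=\on{Gr}(\Mult_P)$, obtained as the flow-out of the triagonal $M_\Delta^{[2]}\subset P\times P\times P$ under the vector fields \eqref{eq:2vf}, has the correct dimension and that the pullback of $\pr_1^*\omega_P-\pr_2^*\omega_P-\pr_3^*\omega_P$ to $\Lambda$ vanishes. First I would verify that the vector fields in \eqref{eq:2vf} generate a foliation of the right rank near $M_\Delta^{[2]}$: by Proposition \ref{prop:lr}, the $\alpha^L$ span the tangent spaces to the $\tz$-fibers and the $\alpha^R$ span the tangent spaces to the $\sz$-fibers, so the two families $(\alpha^L,0,\alpha^L)$ and $(-\alpha^R,-\alpha^R,0)$ together span a distribution of rank $2\dim M$ (using that $\sz,\tz$ are submersions, the $\tz$-fibers and $\sz$-fibers each have dimension $\dim M$, and the bracket relations $[\alpha^L,\beta^R]=0$ show the two families are compatible and jointly integrable). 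Since $M_\Delta^{[2]}$ has dimension $\dim M$, the flow-out $\Lambda$ has dimension $\dim M + 2\dim M = \tf{3}{2}\dim P$, the required middle dimension in $P\times\ol{P\times P}$ (recall $\dim P=2\dim M$).

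Next I would check isotropy. Denote the ambient $2$-form by $\varpi=\pr_1^*\omega_P-\pr_2^*\omega_P-\pr_3^*\omega_P$ on $P\times\ol{P\times P}$. The argument has two parts. First, along $M_\Delta^{[2]}$ the submanifold $\Lambda$ is isotropic: $M_\Delta^{[2]}$ is the diagonal image of $M\hra P$, the tangent space at a unit is spanned by $\{(v,v,v)\}$ together with the flow-out directions \eqref{eq:2vf} evaluated there, and one computes $\varpi$ on all pairs of such vectors using that $i\colon M\hra P$ is Lagrangian (so $\omega_P$ vanishes on $TM$-directions) together with the three formulas at the end of Proposition \ref{prop:lr}: $\omega_P(\alpha^L,\beta^L)=-\sz^*\pi_M(\alpha,\beta)$, $\omega_P(\alpha^R,\beta^R)=\tz^*\pi_M(\alpha,\beta)$, $\omega_P(\alpha^L,\beta^R)=0$. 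On vectors $(\alpha^L,0,\alpha^L)$ and $(\beta^L,0,\beta^L)$ the value is $-\sz^*\pi_M(\alpha,\beta)-\sz^*\pi_M(\alpha,\beta)\cdot 0$, wait — more carefully, $\varpi\big((\alpha^L,0,\alpha^L),(\beta^L,0,\beta^L)\big)=\omega_P(\alpha^L,\beta^L)-0-\omega_P(\alpha^L,\beta^L)=0$; on $(-\alpha^R,-\alpha^R,0)$ and $(-\beta^R,-\beta^R,0)$ it is $\omega_P(\alpha^R,\beta^R)-\omega_P(\alpha^R,\beta^R)-0=0$; on $(\alpha^L,0,\alpha^L)$ against $(-\beta^R,-\beta^R,0)$ it is $-\omega_P(\alpha^L,\beta^R)-0-0=0$; and the $(v,v,v)$ directions pair trivially with everything since $i$ is Lagrangian and $\alpha^L,\alpha^R$ are tangent to $\sz$- and $\tz$-fibers while $(v,v,v)$ lies in the diagonal. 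So $\varpi|_{\Lambda}=0$ along $M_\Delta^{[2]}$.

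Second, I would propagate isotropy off the unit locus by a flow argument: the vector fields \eqref{eq:2vf} are themselves Hamiltonian-type (each $\alpha^L,\alpha^R$ is $-\pi_P^\sharp$ of a pulled-back $1$-form, hence its flow preserves $\omega_P$ up to the appropriate exactness — indeed $\L_{\alpha^L}\omega_P=\d\iota_{\alpha^L}\omega_P=\d\big({-}\sz^*\alpha\big)=-\sz^*\d\alpha$, similarly for $\alpha^R$ with $\tz$), so the combined flow-generating vector fields $(\alpha^L,0,\alpha^L)$ and $(-\alpha^R,-\alpha^R,0)$ on $P\times\ol{P\times P}$ satisfy $\L_{(\alpha^L,0,\alpha^L)}\varpi = -\pr_1^*\sz^*\d\alpha+\pr_3^*\sz^*\d\alpha$ and $\L_{(-\alpha^R,-\alpha^R,0)}\varpi = \pr_1^*\tz^*\d\alpha-\pr_2^*\tz^*\d\alpha$. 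The key point is that along $\Lambda$ these Lie derivatives vanish when restricted to $T\Lambda$: because on $\Lambda=\on{Gr}(\Mult_P)$ one has $\sz\circ\pr_1=\sz\circ\pr_3$ (the source of a product $g_1g_2$ equals the source of $g_2$) and $\tz\circ\pr_1=\tz\circ\pr_2$ (the target of $g_1g_2$ equals the target of $g_1$), the pulled-back forms $\pr_1^*\sz^*\alpha$ and $\pr_3^*\sz^*\alpha$ agree on $\Lambda$, and likewise $\pr_1^*\tz^*\alpha$ and $\pr_2^*\tz^*\alpha$. Hence the flows generated by \eqref{eq:2vf} carry the isotropy condition from $M_\Delta^{[2]}$ out to all of $\Lambda$. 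Combined with the dimension count, $\Lambda$ is Lagrangian.

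\textbf{Main obstacle.} The delicate point is the second part — making rigorous that the relations $\sz\circ\pr_1=\sz\circ\pr_3$ and $\tz\circ\pr_1=\tz\circ\pr_2$ hold on $\Lambda$ \emph{as it is being constructed by the flow-out}, rather than assuming the groupoid structure in advance. Concretely one must check that these identities are preserved along the flows of \eqref{eq:2vf}: the flow of $(\alpha^L,0,\alpha^L)$ moves the first and third components by the same $\tz$-fiber-preserving vector field $\alpha^L$ (so $\sz\circ\pr_1$ and $\sz\circ\pr_3$ change in the same way, and $\tz\circ\pr_1$ is preserved because $\alpha^L\sim_\tz 0$), and symmetrically for $(-\alpha^R,-\alpha^R,0)$ (which preserves $\sz\circ\pr_1=\sz\circ\pr_3$ since $\alpha^R\sim_\sz 0$ up to sign, wait — one uses $\alpha^R\sim_\sz -\pi_M^\sharp(\alpha)$ acting equally on components $1$ and $2$). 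This bookkeeping, which is exactly the content of Appendix \ref{app:groupoids} referenced in the text, is where care is needed; once it is in place the isotropy propagation and the dimension count finish the proof.
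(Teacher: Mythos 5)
Your proof is correct, and its skeleton (flow out from the units, check isotropy along $M_\Delta^{[2]}$ using Proposition \ref{prop:lr} and the Lagrangian condition on $i(M)$, then propagate by the flows) is the same as the paper's. Where you diverge is in the propagation step, and the paper's route is considerably lighter: it observes that $\Lambda$ is already generated from $M_\Delta^{[2]}$ by the flow-outs of the vector fields \eqref{eq:2vf} with $\alpha=\d f$ \emph{exact} (since the differentials $\d f$ span $T^*M$ pointwise, these suffice to sweep out the $\sz$- and $\tz$-fibers). For exact $\alpha$ the fields $\alpha^L=-\pi_P^\sharp(\d\,\sz^*f)$ and $\alpha^R=-\pi_P^\sharp(\d\,\tz^*f)$ are honest Hamiltonian vector fields on $P$, so the fields \eqref{eq:2vf} generate symplectomorphisms of $P\times\ol{P\times P}$ and isotropy propagates with no further input. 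You instead work with general $\alpha$, compute $\L_{(\alpha^L,0,\alpha^L)}\varpi$ and $\L_{(-\alpha^R,-\alpha^R,0)}\varpi$ as differences of pulled-back exact $2$-forms, and then must verify that the relations $\sz\circ\pr_1=\sz\circ\pr_3$ and $\tz\circ\pr_1=\tz\circ\pr_2$ hold on $\Lambda$ and are preserved by the flows (via $\alpha^L\sim_\sz\pi_M^\sharp(\alpha)$, $\alpha^L\sim_\tz 0$, etc.) so that these Lie derivatives vanish upon restriction to $T\Lambda$. This works, and it is exactly the bookkeeping you flag as the main obstacle, but it is precisely the extra labor the paper avoids by the exact-form trick; your argument does have the minor virtue of not needing to re-argue that the exact forms alone generate $\Lambda$. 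One small slip that does not affect the conclusion: with the paper's convention $\pi_P^\sharp\circ\omega_P^\flat=-\on{id}$ one gets $\iota_{\alpha^L}\omega_P=+\sz^*\alpha$ rather than $-\sz^*\alpha$; the sign cancels in your vanishing argument either way.
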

\begin{proof}
Note that $\Lambda$ is generated from $M_\Delta^{[2]}$ already by the flow-outs of vector fields \eqref{eq:2vf} such that $\alpha=\d f$. Since these are Hamiltonian vector fields, they preserve the symplectic form. It therefore suffices to check that the tangent space 
to $\Lambda$ is Lagrangian along $M_\Delta$. But this is an easy consequence of 
Proposition \ref{prop:lr}. 
\end{proof}

\subsection{Notes}
As remarked in Weinstein's article \cite{wei:loc}, the problem of finding `symplectic realizations' is already present in Lie's work. The idea of a symplectic groupoid appeared in \cite{wei:sym}, and was developed in detail in \cite{cos:gro}. Independently, symplectic groupoids were discovered by Karasev \cite{kar:ana}, as the analogues of Lie groups for 
`non-linear Poisson brackets'. The original proofs of Karasev and Weinstein for the existence of a full symplectic realization start out by first constructing the realization locally, and then use uniqueness properties to patch these local definitions.  Assuming that one already has a (local) Lie groupoid $\G\rra M$ integrating the cotangent Lie algebroid $A=T^*M$, the question arises how to construct the multiplicative symplectic 2-form on the groupoid. In Mackenzie-Xu \cite{mac:int}, this is achieved using the general theory of integration of Lie bialgebroids. 
A more direct argument may be found in Bursztyn-Cabrera-Ortiz \cite{bur:lin}. 

Cattaneo-Felder \cite{cat:poi} gave a conceptual construction of canonical global `topological' symplectic groupoid, as the  phase space of the Poisson sigma model associated to the Poisson manifold. It may be regarded as a space of `Lie algebroid paths' in $T^*M$ modulo Lie algebroid homotopies. In general, the Cattaneo-Felder groupoid is smooth only on some neighborhood of the unit section; the precise criteria for global smoothness were determined in the work of Crainic-Fernandes \cite{cra:intpoi}. 

The finite-dimensional approach towards construction of a local symplectic groupoid, as discussed in these notes, is based on the papers of M\u{a}rcu\c{t}-Crainic \cite{mar:red} and Frejlich-M\u{a}rcu\c{t} \cite{fre:rea}. (In fact, the latter authors prove a more general Dirac-geometric version.) The M\u{a}rcu\c{t}-Crainic formula in Section \ref{subsec:cramar} has a simple interpretation in terms of the 
 Cattaneo-Felder picture: The Poisson spray determines an `exponential map' from an open neighborhood of the zero section of the cotangent Lie algebroid $A=T^* M$ to a neighborhood of the unit section of the Cattaneo-Felder groupoid, and the 2-form in 
 \ref{subsec:cramar} is simply the pullback of the symplectic form under this map. Details may be found in Broka-Xu \cite{bro:sym}, 
 where the construction is further generalized to holomorphic symplectic groupoids of holomorphic Poisson manifolds. 
 
Our construction of the local groupoid structure of $\G$ via left-and right-invariant vector fields seems to be new. Using the interpretation of Lie algebroids as linear Poisson manifolds, it also implies the local integrability of arbitrary Lie algebroids to local groupoids. In a recent article, Cabrera-M\u{a}rcu\c{t}-Salazar \cite{cab:con} 
give an explicit formula for the groupoid multiplication in terms of sprays, as well as 
formulas for the local integration of infinitesimally multiplicative structures to local Lie groupoids.

\section{Poisson Lie groups}\label{sec:poissonliegroups}

\subsection{Poisson Lie groups, Poisson actions}
\begin{definition}
A \emph{Poisson Lie group} is a Lie group $G$, together with a Poisson structure 
$\pi$, such that the multiplication map 
\[ \on{Mult}_G\colon G\times G\to G,\ (a,b)\mapsto ab\]
is a Poisson map. 
\end{definition}
\begin{example}
\begin{enumerate}
\item Any Lie group is a Poisson Lie group for the zero Poisson structure. 
\item Any vector space $V$, equipped with a linear Poisson structure, is a Poisson Lie group. Here the group multiplication is the addition map $V\times V\to V$. (Recall that this means that $V=\g^*$, for some Lie algebra $\g$, with Poisson bracket the Lie-Poisson structure.)  
\end{enumerate}
\end{example}
We will obtain more interesting examples once we have the classification theory. 
\begin{definition}\cite{se:dr}
An action of a Poisson Lie group $(G,\pi_G)$ on a Poisson manifold $(M,\pi_M)$ is called a \emph{Poisson action} if the action map $\A\colon G\times M\to M$ is Poisson.  
\end{definition}
Note that a Poisson action does not preserve the Poisson structure on $M$, in general.

The multiplicative property of Poisson Lie group structures $\pi$ has a simple interpretation in terms of Dirac geometry. Let $E=\on{Gr}(\pi)\subset \T G$
be its graph. Recall that the tangent bundle $TG$ of $G$ has the structure of a Lie group, 
while the cotangent bundle  is a Lie groupoid $T^*G\rra \g^*$. (See example \ref{ex:cotangentgroupoid}.) Taking a direct product, 
we obtain a Lie groupoid structure on $\T G$, 
\[ \T G\rra \g^*.\] 
By definition, if $x_i=v_i+\mu_i\in \T_{g_i}G$ and $x=v+\mu\in\T_g G$,
then $x=x_1\circ x_2$ if and only if $g=g_1g_2$ and 
\begin{equation}\label{eq:multrel1}
v=(\Mult_G)_*(v_1,v_2),\ \ \ (\mu_1,\mu_2)=(\Mult_G)^*\mu.
\end{equation} 
We see that $\Mult_G$ is Poisson if and only if for all $x\in E_g$, and given $g_1,g_2\in G$ with $g=g_1g_2$, there exist  $x_i\in E_{g_i},\ i=1,2$ with $x=x_1\circ x_2$. 
\begin{theorem}\label{th:poissonliegroupdirac}
A Poisson structure $\pi$ on a Lie group $G$ is a Poisson Lie group structure if and only if $\on{Gr}(\pi)=E$ is a Lie subgroupoid  of $\T G\rra \g^*$. 
\end{theorem}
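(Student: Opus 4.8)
The plan is to unwind both the notion of ``Poisson Lie group'' and the notion of ``Lie subgroupoid of $\T G\rra \g^*$'' into statements about the graph $E=\on{Gr}(\pi)$, and show they coincide. First I would recall the setup established just before the statement: $\T G = TG\oplus T^*G$ carries the product Lie groupoid structure $\T G\rra \g^*$ coming from the Lie group $TG$ and the cotangent groupoid $T^*G\rra\g^*$, with the multiplication relation $x=x_1\circ x_2$ on $\T G$ described by \eqref{eq:multrel1}. The key translation is the one already stated in the text: $\Mult_G$ is a Poisson map if and only if for every $g=g_1g_2$ and every $x\in E_g$, there exist $x_i\in E_{g_i}$ with $x=x_1\circ x_2$. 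So the content of the theorem is to match this ``lifting'' property with the axioms defining a Lie subgroupoid.

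The forward direction (Poisson Lie group $\Rightarrow$ $E$ a subgroupoid) is where the main work lies, and the main obstacle is verifying that $E$ is closed under the groupoid structure maps of $\T G$ in all the required ways, not just under multiplication. Concretely I would check: (i) $E$ contains the units, i.e. $E\cap T^*_eG = \g^*$ — this holds because $\pi_e=0$ for any Poisson Lie group (a standard fact following from multiplicativity evaluated at $(e,e)$, which I would include as a short lemma: $\pi(ab)=(L_a)_*\pi(b)+(R_b)_*\pi(a)$, hence $\pi(e)=2\pi(e)$); (ii) the source and target maps $\sz,\tz\colon\T G\to\g^*$ restrict to submersions $E\to\g^*$ — here one uses that $\a\colon E\to TG$ has rank matching $\pi^\sharp$ plus the cotangent-part being all of $T^*_gG$ modulo the kernel, so that $E_g$ surjects appropriately; (iii) closure under multiplication, which is \emph{exactly} the lifting property above and so follows directly from the text's translation; (iv) closure under inversion — since inversion in $\T G\rra\g^*$ combines inversion in $TG$ with $\mu\mapsto -\mu$ composed with the appropriate map, and since for a Poisson Lie group $\on{inv}_G$ is an anti-Poisson map (another standard consequence of multiplicativity), $E$ is preserved. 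The point that $E$ is an \emph{embedded} submanifold is automatic since it is a subbundle of $\T G$.

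For the converse (\,$E$ a Lie subgroupoid $\Rightarrow$ $\pi$ multiplicative\,), the argument is essentially a reading-off: if $E$ is closed under the groupoid multiplication of $\T G$, then in particular the lifting property holds — given $x\in E_g$ and a factorization $g=g_1g_2$, one must produce $x_i\in E_{g_i}$ with $x=x_1\circ x_2$; this is exactly the statement that the source/target-matched pair $(g_1,g_2)$ lies in the base of an arrow of $E$ over it, which the subgroupoid axioms (in particular that $\sz,\tz\colon E\to\g^*$ are surjective submersions, so the relevant fibered products are nonempty and the multiplication is defined on them) guarantee. Invoking the text's ``if and only if'' translation of the Poisson-map condition then finishes it. I would phrase the whole proof around that single translation sentence, so that the theorem reduces to: ``the lifting property $\iff$ subgroupoid'', with (i)--(iv) supplying the elementary verification that the other groupoid axioms are no extra constraint. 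The one genuinely non-formal input is the pair of facts $\pi(e)=0$ and $\on{inv}_G$ anti-Poisson, which I would dispatch with the multiplicativity identity $\pi(ab)=(L_a)_*\pi(b)+(R_b)_*\pi(a)$ at the start.
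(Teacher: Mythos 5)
There is a genuine gap in your forward direction, at your step (iii). The translation stated in the text --- ``$\Mult_G$ is Poisson if and only if for all $x\in E_g$ and every factorization $g=g_1g_2$ there exist $x_i\in E_{g_i}$ with $x=x_1\circ x_2$'' --- is a \emph{surjectivity} (lifting) statement: every element of $E_g$ factors through $E_{g_1}\times E_{g_2}$. Being a subgroupoid requires a different containment: for \emph{every} composable pair $y_1\in E_{g_1}$, $y_2\in E_{g_2}$, the product $y_1\circ y_2$ must land in $E_{g_1g_2}$. These two statements are logically distinct, and bridging them is precisely the content of the forward direction; it does not ``follow directly from the text's translation.'' The paper closes this gap with an orthogonality argument: given composable $y_i\in E_{g_i}$ and any $x\in E_g$, use the lifting property to write $x=x_1\circ x_2$ with $x_i\in E_{g_i}$, and then the multiplicativity of the metric on the groupoid $\T G\rra\g^*$ gives
\[ \l x,\ y_1\circ y_2\r=\l x_1\circ x_2,\ y_1\circ y_2\r=\l x_1,y_1\r+\l x_2,y_2\r=0, \]
so that $y_1\circ y_2\in E_g^\perp=E_g$ since $E$ is Lagrangian. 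Without some such argument your step (iii) is unproved, and your items (i), (ii), (iv) --- which are correct observations, with correct justifications via $\pi_e=0$ and the anti-Poisson property of inversion --- do not substitute for it.

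Your converse is also under-specified. Knowing that $E$ is closed under multiplication and that the relevant fibered products are nonempty does not by itself produce, for a \emph{given} $x\in E_g$ and factorization $g=g_1g_2$, elements $x_i\in E_{g_i}$ with $x=x_1\circ x_2$; nonemptiness only yields \emph{some} composable pair over $(g_1,g_2)$, not one whose product is $x$. The paper constructs the pair explicitly: since $E$ is a Lagrangian subgroupoid, $\sz\colon E_{g_2}\to\g^*$ is a fiberwise isomorphism, so one takes $x_2\in E_{g_2}$ to be the unique element with $\sz(x_2)=\sz(x)$ and sets $x_1=x\circ x_2^{-1}$, which lies in $E_{g_1}$ by closure of $E$ under inversion and multiplication. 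This is a short fix, but it is a necessary one.
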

\begin{proof}
Suppose $\Mult_G$ is a Poisson map. Given $y_i\in E_{g_i}$ with $\sz(y_1)=\tz(y_2)$,  we have to show that $y=y_1\circ y_2$ lies in $E_g$.  Let $x\in E_g$, 
and  $x_i\in E_{g_i}$ with $x=x_1\circ x_2$. Then
\[ \l x,\,y\r=\l x_1\circ x_2,\ y_1\circ y_2\r=\l x_1,y_1\r+\l x_2,y_2\r=0,\]
and since this holds for all $x$ it follows that $y\in E_g^\perp=E_g$. 

Conversely, suppose $E$ is a subgroupoid. Then $\sz\colon E\to \g^*$ is a fiberwise isomorphism.  Let $x\in E_g$, and $g_1,g_2\in G$  with $g=g_1g_2$. Let 
$x_2\in E_{g_2}$ be the unique element with $\sz(x_2)=\sz(x)$, and 
let $x_1=x\circ x_2^{-1}\in E_{g_1}$. Then $x=x_1\circ x_2$. 
\end{proof}

Poisson actions can be described similarly. Any $G$-action 
$\A\colon G\times M\to M$ 
on a manifold determines an equivariant map 
\[ \Phi\colon T^*M\to \g^*,\ \ \ \l \Phi(\mu),\xi\r=\l\mu,\xi_M\r.\]
In other words, $\Phi$ is the moment map for the cotangent lift of the $G$-action. 
The groupoid $T^*G\rra \g^*$ acts on $E=T^* M\to \g^*$, with 
\[ \mu\circ \nu=\nu'\ \ \ \ \Leftrightarrow\ \ \ \ 
(\mu,\nu)=(T_{(g,m)}\A)^*\, \nu' \]
for $\mu\in T_g^*G,\ \nu\in T_m^*M,\ \ \nu'\in T^*_{g.m}M$. (We refer to Appendix \ref{app:groupoids} for background on groupoid actions.)  In particular, the action of $\mu$ on $\nu$ is defined if and only of 
$\sz(\mu)=\Phi(\nu)$, and in this case $\Phi(\mu')=\tz(\mu)$. Together with the 
tangent map $T\A\colon TG\times TM\to TM$, one obtains a groupoid action 
of $\T G\rra \g^*$ on $\T M\to \g^*$.

\begin{proposition}\label{prop:poissonaction1}
Suppose $(G,\pi_G)$ is a Poisson Lie group, and $(M,\pi_M)$ a Poisson manifold. An action $G\times M\to M$ is Poisson if and only if the groupoid action of $\T G\rra g^*$ on $\T M\ra \g^*$ restricts to a groupoid action of $E=\on{Gr}(\pi_G)\rra \g^*$ on 
$\on{Gr}(\pi_M)\ra \g^*$. 
\end{proposition}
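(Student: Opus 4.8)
The plan is to run the argument of Theorem~\ref{th:poissonliegroupdirac}, with $\Mult_G$ replaced by the groupoid action of $\T G$ on $\T M$. Write elements of $\T G$ and $\T M$ as $x=v+\mu$, $y=w+\nu$, and abbreviate $E=\on{Gr}(\pi_G)$, $F=\on{Gr}(\pi_M)$. Recall from the discussion preceding the proposition that, for $x\in\T_gG$, $y\in\T_mM$, $y'\in\T_{g\cdot m}M$, one has $x\circ y=y'$ precisely when $w'=(T_{(g,m)}\A)(v,w)$ and $(\mu,\nu)=(T_{(g,m)}\A)^*\nu'$; the composite is defined iff $\sz(x)=\Phi(\nu)$; and $\A$ is a Poisson map iff for every $(g,m)$ and every $\nu'\in T^*_{g\cdot m}M$, writing $(\mu,\nu)=(T_{(g,m)}\A)^*\nu'$, one has $(T_{(g,m)}\A)\bigl(\pi_G^\sharp\mu,\pi_M^\sharp\nu\bigr)=\pi_M^\sharp\nu'$. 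Since $\A$ is an action, $\A(g,\cdot)$ is a diffeomorphism, so $\A$ is a submersion and each $(T_{(g,m)}\A)^*$ is injective.

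The first step is to record a pairing identity: whenever $y'=x\circ y$ and $z'=x_0\circ y_0$ are defined with $x,x_0\in\T_gG$ and $y,y_0\in\T_mM$, then
\[
 \l y',z'\r_{\T M}=\l x,x_0\r_{\T G}+\l y,y_0\r_{\T M}.
\]
This is proved exactly like the identity $\l x_1\circ x_2,\,y_1\circ y_2\r=\l x_1,y_1\r+\l x_2,y_2\r$ in the proof of Theorem~\ref{th:poissonliegroupdirac}: expand both sides using $w'=(T\A)(v,w)$ and $(\mu,\nu)=(T\A)^*\nu'$ (and likewise for $z'$), then apply the adjunction $\l(T\A)^*\eta,u\r=\l\eta,(T\A)u\r$. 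I would not spell out this three-line manipulation.

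For the forward implication, suppose $\A$ is Poisson. Using injectivity of $(T_{(g,m)}\A)^*$ together with the Poisson condition, one checks that every $z'\in F_{g\cdot m}$ can be written as $z'=x_0\circ y_0$ with $x_0=\pi_G^\sharp\mu_0+\mu_0\in E_g$ and $y_0=\pi_M^\sharp\nu_0+\nu_0\in F_m$, where $(\mu_0,\nu_0)=(T_{(g,m)}\A)^*\nu'$. Now take any $x\in E_g$, $y\in F_m$ with $x\circ y$ defined, and set $y'=x\circ y\in\T_{g\cdot m}M$. For an arbitrary $z'\in F_{g\cdot m}$, write $z'=x_0\circ y_0$ as above; the pairing identity gives $\l y',z'\r=\l x,x_0\r_{\T G}+\l y,y_0\r_{\T M}=0$, because $E$ and $F$ are Lagrangian and $x,x_0$ (resp.\ $y,y_0$) lie in the same fiber. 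Hence $y'\in F_{g\cdot m}^{\,\perp}=F_{g\cdot m}$, so the $\T G$-action restricts to an action of $E$ on $F\to\g^*$; the groupoid-action axioms and the source constraint are inherited, since $E\rra\g^*$ is a subgroupoid of $\T G\rra\g^*$ by Theorem~\ref{th:poissonliegroupdirac}.

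For the converse, assume $E\rra\g^*$ acts on $F\to\g^*$ by restriction. Fix $(g,m)$ and $\nu'\in T^*_{g\cdot m}M$, and set $z'=\pi_M^\sharp\nu'+\nu'\in F_{g\cdot m}$. Since $(G,\pi_G)$ is Poisson Lie, Theorem~\ref{th:poissonliegroupdirac} shows $E$ is a subgroupoid with $\sz|_{E_g}$ an isomorphism onto $\g^*$, and applying the groupoid inversion the same holds for $\tz|_{E_g}$; so there is a unique $x\in E_g$ with $\tz(x)=\Phi(\nu')$. Then $\sz(x^{-1})=\tz(x)=\Phi(\nu')$ equals the moment-map value of $z'$, so $y:=x^{-1}\circ z'$ is defined, lies over $m$, and lies in $F$ by hypothesis (as $x^{-1}\in E$); therefore $z'=x\circ(x^{-1}\circ z')=x\circ y$ with $x\in E_g$, $y\in F_m$. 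Comparing the vector and covector parts on the two sides of $z'=x\circ y$, using $v=\pi_G^\sharp\mu$ and $w=\pi_M^\sharp\nu$, yields precisely $(T_{(g,m)}\A)\bigl(\pi_G^\sharp\mu,\pi_M^\sharp\nu\bigr)=\pi_M^\sharp\nu'$ with $(\mu,\nu)=(T_{(g,m)}\A)^*\nu'$, so $\A$ is Poisson. The only real obstacle I anticipate is bookkeeping: verifying the ``source $=$ moment map'' compatibilities that make $x_0\circ y_0$ and $x^{-1}\circ z'$ genuinely defined, and that the groupoid inverse of $x\in\T_gG$ again lies over $g^{-1}$ (hence in $E_{g^{-1}}$); once these are pinned down the argument mirrors Theorem~\ref{th:poissonliegroupdirac} line for line.
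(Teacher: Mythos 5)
Your proof is correct and is exactly the argument the paper intends: the paper leaves this proposition as an exercise, stating that the proof is parallel to that of Theorem~\ref{th:poissonliegroupdirac}, and your pairing identity $\l x\circ y,\,x_0\circ y_0\r=\l x,x_0\r+\l y,y_0\r$ together with the Lagrangian/perpendicularity argument (forward direction) and the factorization $z'=x\circ(x^{-1}\circ z')$ using the fiberwise bijectivity of $\sz,\tz$ on $E$ (converse) is precisely that parallel. The ``source $=$ moment map'' compatibilities you flag are built into the paper's definition of the cotangent-lift groupoid action, so they pose no real obstacle.
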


The proof is parallel to that of Theorem \ref{th:poissonliegroupdirac}, and is left as an exercise.

\subsection{Invariant functions and 1-forms for Poisson actions}\label{subsec:bullet}
Let $G$ be a Poisson Lie group. While a Poisson action of $G$ on a Poisson manifold $M$ need not preserve the Poisson structure on $M$, we have: 
\begin{proposition}\label{prop:poissoninvt}
 For any Poisson action, the space $C^\infty(M)^G$ of $G$-invariant functions is closed under the Poisson bracket. 
\end{proposition}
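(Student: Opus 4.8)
The plan is to exploit the Dirac-geometric characterization of Poisson actions from Proposition \ref{prop:poissonaction1}, together with the fact that the Courant bracket on sections of a Dirac structure is a Lie bracket and that the anchor is a Lie algebra morphism. First I would unwind what ``$f \in C^\infty(M)^G$'' means at the level of the cotangent lift. If $f$ is $G$-invariant then $\d f$ is an invariant 1-form, and more to the point the section $X_f + \d f$ of $\on{Gr}(\pi_M)$, where $X_f = \pi_M^\sharp(\d f)$, should be invariant under the groupoid action of $E = \on{Gr}(\pi_G) \rra \g^*$ on $\on{Gr}(\pi_M) \to \g^*$; equivalently, it lies in the kernel of the moment map $\Phi \colon T^*M \to \g^*$ (since $\l \d f, \xi_M\r = \xi_M(f) = 0$ for all $\xi \in \g$), and its flow commutes with the $G$-action. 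The key structural point is that the Courant bracket of two invariant sections of $\on{Gr}(\pi_M)$ is again invariant: this follows because the groupoid action preserves the Courant bracket (the action is by Courant automorphisms, being built from $T\A$ and the cotangent lift), just as in Proposition \ref{prop:couraut}.

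Next I would compute the bracket. For $f, g \in C^\infty(M)^G$, set $\sigma_f = X_f + \d f$ and $\sigma_g = X_g + \d g$, sections of $\on{Gr}(\pi_M)$. From the computation in the proof of ``$\pi$ Poisson $\iff$ $\on{Gr}(\pi)$ Dirac'' we have
\[ \Cour{\sigma_f, \sigma_g} = [X_f, X_g] + \d\, \L_{X_f}(g) = X_{\{f,g\}} + \d\{f,g\}, \]
using $[X_f,X_g] = X_{\{f,g\}}$ and $\L_{X_f}(g) = \{f,g\}$. So $\Cour{\sigma_f,\sigma_g} = \sigma_{\{f,g\}}$ under the identification $\on{Gr}(\pi_M) \cong T^*M$. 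Since $\sigma_f$ and $\sigma_g$ are invariant sections and the Courant bracket is preserved by the groupoid action, $\sigma_{\{f,g\}}$ is an invariant section; projecting to the $T^*M$ part, $\d\{f,g\}$ is an invariant 1-form, and moreover $\Phi \circ (\text{the } T^*M\text{-component}) = 0$, i.e. $\l \d\{f,g\}, \xi_M \r = \xi_M(\{f,g\}) = 0$ for all $\xi \in \g$. This says precisely that $\{f,g\}$ has vanishing derivative along every fundamental vector field $\xi_M$; if $G$ is connected this forces $\{f,g\} \in C^\infty(M)^G$. For general $G$ one argues that $\Mult_G$ being Poisson gives, for each fixed $a \in G$, that $x \mapsto ax$ intertwines a suitable pair of Dirac structures, so $\{a^* \cdot, a^* \cdot\}$ relates to $a^* \{\cdot,\cdot\}$ up to the correction coming from $\pi_G$, which drops out on $G$-invariant functions — equivalently, one checks directly that $a^*\{f,g\} = \{a^*f, a^*g\} = \{f,g\}$ using that $\A$ is Poisson and $f,g$ are invariant.

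The main obstacle I anticipate is the bookkeeping for the last point: proving that ``invariant under the infinitesimal/groupoid action'' upgrades to ``$G$-invariant as a function'' without a connectedness hypothesis, and making the Dirac-geometric version of the Poisson-map identity $\A^*\{\cdot,\cdot\}_M = \{\cdot,\cdot\}_{G\times M}$ do the work cleanly. The cleanest route may actually be to bypass Dirac geometry here and argue directly: write the Poisson condition on $\A \colon G \times M \to M$ as $\A^*\{f,g\}_M = \{\A^*f, \A^*g\}_{G\times M}$, and since $G \times M$ carries the product Poisson structure $\pi_G \oplus \pi_M$ while $\A^*f, \A^*g$ are pullbacks under the projection $\pr_M$ when $f,g$ are $G$-invariant (because $\A^*f(a,m) = f(am) = f(m)$), the bracket $\{\A^*f,\A^*g\}_{G\times M} = \pr_M^*\{f,g\}_M$ is again a pullback, forcing $\{f,g\}_M$ to be $G$-invariant. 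This is short and avoids the groupoid formalism entirely, though the Dirac picture is the ``morally correct'' explanation and fits the paper's style.
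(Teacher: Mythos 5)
Your closing ``direct'' argument is exactly the paper's proof: the paper observes that $f\in C^\infty(M)^G$ if and only if $\A^*f=\pr_2^*f$ on $G\times M$, and since both $\A$ and the projection $\pr_2$ are Poisson maps (the latter because $G\times M$ carries the product Poisson structure), this condition is preserved under brackets. So the proposal is correct, and you should lead with that two-line argument rather than bury it at the end.

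A caution about your primary Dirac-geometric route: the step ``the Courant bracket of two invariant sections of $\on{Gr}(\pi_M)$ is again invariant: this follows because the groupoid action preserves the Courant bracket (the action is by Courant automorphisms)'' is not right as justified. The action of $E=\on{Gr}(\pi_G)$ on $\T M$ (the $\bullet$-action) is \emph{not} by Courant automorphisms and does not preserve the Courant bracket in general --- the paper says this explicitly in Section \ref{subsec:bullet}, and the invariance of the bracket of two invariant sections is a genuinely nontrivial statement (Proposition \ref{prop:courinvt}) proved by a separate argument via $\A$-related sections. If you want to run the Dirac route you must invoke that proposition rather than a (false) ``automorphism'' claim, and you would still need to address the two translation issues you already flagged: that $G$-invariance of $f$ corresponds to $\bullet$-invariance of $X_f+\d f$, and that invariance of $\d\{f,g\}$ only gives $\xi_M(\{f,g\})=0$ directly, which recovers $G$-invariance of $\{f,g\}$ only after an extra argument (or a connectedness hypothesis). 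The elementary argument sidesteps all of this, which is presumably why the paper uses it.
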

\begin{proof}
Let $\A\colon G\times M\to M$ be the action, and $\pr_2\colon G\times M\to M$ the  projection to the second factor. A function $f$ is $G$-invariant if the two pull-backs 
to $G\times M$ coincide: $ \A^*f=\pr_2^*f$.  Since $\A,\pr_2$ are both Poisson maps, this condition is preserved under Poisson brackets. 
\end{proof}
We will need the following related fact. Let $G$ be a Poisson Lie group with an action $\A\colon G\times M\to M$.
Using the structure of $E=\on{Gr}(\pi_G)$ as a subgroupoid of $\T G$, we obtain an action $\bullet$ of $G$ on $\T M$, 
\[ g\bullet y:=x\circ y\]
for $g\in G$ and $y\in \T M$, where $x$ is the \emph{unique} element in 
$E|_g$ such that $\sz(x)=\Phi(y)\in\g^*$.  It is easy to see that 
the $\bullet$-action preserves the subbundle $TM\subset \T M$, on which it coincides with the usual 
tangent lift of the $G$-action on $M$. The action also preserves the metric of $\T M$.
Finally, if $M$ has a Poisson structure for which the 
given action is Poisson, then the subbundle $E_M=\on{Gr}(\pi_M)$ is  invariant under the $\bullet$-action.
In contrast to the tangent/cotangent lift of the action, this $\bullet$-action on $\T M$ does not preserve the Courant bracket, in general. 
Nevertheless, we have:  
\begin{proposition}\label{prop:courinvt}
If two sections of $\T M$ are invariant under the $\bullet$-action, then so is their Courant bracket.   
\end{proposition}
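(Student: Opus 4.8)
The plan is to reduce to an infinitesimal statement and then play it off against the Jacobi identity \eqref{eq:ii} for the Courant bracket. For $\xi\in\g$ let $\L^\bullet_\xi$ denote the first-order operator on $\Gamma(\T M)$ generating the $\bullet$-action. The substance is the infinitesimal assertion: if $\tau_1,\tau_2$ are $\bullet$-invariant, then $\L^\bullet_\xi\Cour{\tau_1,\tau_2}=0$ for every $\xi$ (this gives the proposition when $G$ is connected). Since the $\bullet$-action preserves the split metric, preserves the subbundle $TM$, and restricts on $TM$ to the usual tangent lift, a bookkeeping argument of the type used to describe $\mf{aut}_{CA}(\T M)$ (Proposition \ref{prop:couraut}) forces
\[ \L^\bullet_\xi(Y+\beta)=[\xi_M,Y]+\L_{\xi_M}\beta+\lambda_\xi^\sharp(\beta), \]
for a bivector field $\lambda_\xi\in\mf{X}^2(M)$ depending linearly on $\xi$, obtained from the linearization of $\pi_G$ at the unit --- the cobracket of the Lie bialgebra $\g$ --- transported to $M$ through the moment map $\Phi\colon T^*M\to\g^*$ of the cotangent lift. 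Writing $R_\xi(Y+\beta):=\lambda_\xi^\sharp(\beta)$, the first two terms are $\Cour{\xi_M,\cdot}$, which is an infinitesimal Courant automorphism by Proposition \ref{prop:couraut}(c), so $\L^\bullet_\xi=\Cour{\xi_M,\cdot}+R_\xi$.

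Next, fix $\xi$ and split the hypotheses $\L^\bullet_\xi\tau_i=0$ (with $\tau_i=Y_i+\beta_i$) into $TM$- and $T^*M$-components: this yields $\L_{\xi_M}\beta_i=0$, so each $\beta_i$ is a $G$-invariant $1$-form, and $[\xi_M,Y_i]=-\lambda_\xi^\sharp(\beta_i)$, equivalently $\Cour{\xi_M,\tau_i}=-R_\xi\tau_i$. By the Jacobi identity \eqref{eq:ii} with $\xi_M$ in the first slot,
\[ \Cour{\xi_M,\Cour{\tau_1,\tau_2}}=\Cour{\Cour{\xi_M,\tau_1},\tau_2}+\Cour{\tau_1,\Cour{\xi_M,\tau_2}}=-\Cour{R_\xi\tau_1,\tau_2}-\Cour{\tau_1,R_\xi\tau_2}, \]
hence $\L^\bullet_\xi\Cour{\tau_1,\tau_2}=R_\xi\Cour{\tau_1,\tau_2}-\Cour{R_\xi\tau_1,\tau_2}-\Cour{\tau_1,R_\xi\tau_2}$. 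So the proposition reduces to the identity ``$R_\xi$ is a derivation of the Courant bracket on the invariant pair $(\tau_1,\tau_2)$''. This fails for general sections --- since each $R_\xi\tau_i=\lambda_\xi^\sharp(\beta_i)$ is a vector field, the bracket terms on the right carry a nonzero $1$-form component in general, while the left side does not --- so the two consequences of invariance above must genuinely be used.

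To finish, I would expand the three brackets via \eqref{eq:courantbracket}; the required identity then splits into a $T^*M$-part, asserting that the $1$-form component $\L_{Y_1}\beta_2-\iota_{Y_2}\d\beta_1$ of $\Cour{\tau_1,\tau_2}$ is itself $G$-invariant, and a $TM$-part relating $\lambda_\xi$, the $\beta_i$ and the $[\xi_M,Y_i]$. Both are checked using that $\lambda_\xi^\sharp$ factors through $\Phi$ --- so that the Lie derivatives $\L_{\lambda_\xi^\sharp\beta_j}\beta_i$ are controlled by the $G$-invariance of the $\beta_i$ --- together with the hypothesis that $\pi_G$ is a genuine Poisson Lie group structure, i.e., by Theorem \ref{th:poissonliegroupdirac}, that $\on{Gr}(\pi_G)$ is a Dirac structure. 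That integrability is precisely what forces $\{\lambda_\xi\}_{\xi\in\g}$ to obey the cocycle and Jacobi conditions of a Lie bialgebra cobracket, which is what the two parts require. I expect this last step --- pinning down $\lambda_\xi$ from the subgroupoid structure of $\on{Gr}(\pi_G)\rra\g^*$ and inserting the Poisson Lie condition into the identity --- to be the main obstacle; the reduction leading up to it is purely formal.
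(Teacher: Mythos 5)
Your reduction is organized sensibly, but the argument has a genuine hole exactly where the content of the proposition lives. After writing $\L^\bullet_\xi=\Cour{\xi_M,\cdot}+R_\xi$ and invoking the Jacobi identity \eqref{eq:ii}, everything hinges on the identity
\[
R_\xi\Cour{\tau_1,\tau_2}=\Cour{R_\xi\tau_1,\tau_2}+\Cour{\tau_1,R_\xi\tau_2}
\]
for $\bullet$-invariant $\tau_i=Y_i+\beta_i$, and this is precisely the step you do not carry out. Unwinding it, the $T^*M$-component demands
$\d\,\lambda_\xi(\beta_1,\beta_2)+\iota_{\lambda_\xi^\sharp\beta_1}\d\beta_2-\iota_{\lambda_\xi^\sharp\beta_2}\d\beta_1=0$,
which is false for an arbitrary bivector field $\lambda_\xi$ and arbitrary $1$-forms; it must be extracted from the specific way $\lambda_\xi^\sharp$ factors through the moment map $\Phi$ together with the invariance of the $\beta_i$, and the $TM$-component further requires the cocycle/Jacobi conditions on the cobracket. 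You assert that these ``are checked'' and simultaneously call this step ``the main obstacle,'' which is an admission that the proof is not finished. Two smaller issues compound this: the formula $\L^\bullet_\xi(Y+\beta)=[\xi_M,Y]+\L_{\xi_M}\beta+\lambda_\xi^\sharp(\beta)$ for the generator is itself only asserted (the ``bookkeeping argument'' pinning down $\lambda_\xi$ from the subgroupoid structure of $\on{Gr}(\pi_G)$ is not supplied), and passing from the infinitesimal statement to the group-level one needs $G$ connected, an hypothesis the proposition does not make.

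For contrast, the paper's proof avoids the infinitesimal calculus entirely and never needs the cobracket. It works with the action map $\A\colon G\times M\to M$: a section $\sigma\in\Gamma(\T M)$ is $\bullet$-invariant precisely when its unique $\A$-related lift $\wt\sigma\in\Gamma\big(\T(G\times M)\big)$ has the form $\sigma+\tau$ with $\tau$ a section of $E\times M\subset \T G\times M$ (constant in the $G$-direction in its $\T M$-component). Since related sections have related Courant brackets, one expands $\Cour{\wt\sigma_1,\wt\sigma_2}$ into four terms and checks that the three correction terms land in $\T G\times M$ --- the only nontrivial point being the term $\Cour{\tau_1,\tau_2}$, which is handled by the Leibniz formula together with the isotropy $\l\phi_1,\phi_2\r=0$ of sections of $E$. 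That argument is global (no connectedness needed), uses only that $E$ is Lagrangian, and replaces your deferred computation by a one-line observation. If you want to keep your infinitesimal route, you must actually derive the formula for $\L^\bullet_\xi$ and then verify both components of the derivation identity; as written, the proposal establishes only the formal skeleton.
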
 
\begin{proof}  \cite{lib:cou}. 
Given $m'=g.m$ and $y'\in \T_{m'}M$, there are unique elements $x\in E|_g$ and $y\in  \T_m M$ such that 
$y'=x\circ y$; by definition, $y'=g\bullet y$.  Consequently, for any section $\sigma$ of $\T M$, there is a unique section $\wt{\sigma}$ of 
$E\subset \T M\subset \T(G\times M)$ such that $\wt{\sigma}\sim_\A \sigma$, 
and $\sigma$ is $\bullet$-invariant if and only if the $G\times \T M\subset \T(G\times M)$-component of 
$\wt{\sigma}$ is constant in the $G$-direction, and equal to 
$\sigma$. That is, 
\[ \wt{\sigma}=\sigma+\tau\]
where $\sigma\in \Gamma(\T M)$ is regarded as a section of $G\times \T M$ (constant in $G$-direction), and $\tau$ is a section of $E\times M\subset \T G\times M$ 
(usually non-constant in the $M$-direction). Given two sections $\sigma_i\in \Gamma(\T M)$, we have 
\begin{equation}\label{eq:thisone}\wt{\Cour{\sigma_1,\sigma_2}}=
 \Cour{\wt{\sigma}_1,\wt{\sigma}_2},\end{equation}
since Courant brackets of related sections are again related.  If $\sigma_i$ are $\bullet$-invariant, and writing $\wt{\sigma}_i=\sigma_i+\tau_i$, we thus have show that only the first term in 
\[  \Cour{\wt{\sigma}_1,\wt{\sigma}_2}=
\Cour{\sigma_1,\sigma_2}+
\Cour{\sigma_1,\tau_2}+\Cour{\tau_1,\sigma_2}
+\Cour{\tau_1,\tau_2}\]
contributes to the $G\times \T M$-component. Now, it is clear that the second and third take values in $\T G\times M$. As for the last term, note that any section of 
$E\times M$ can be written as a sum of sections of the form $f\phi$, 
where $\phi$ is a section of $E$ and $f$ a function on $G\times M$. Given two sections of this form, the definition of the Courant bracket gives 
(at first only using that $\phi_i\in \Gamma(\T G)$), 
\[ \Cour{f_1\,\phi_1,\ \ f_2\,\phi_2}=
f_1 \big(\a(\phi_1)f_2\big) \phi_2-f_2 \big(\a(\phi_2)f_1\big) \phi_1
+f_2\,\l\phi_1,\phi_2\r\ \d f_1. \]
The first two terms lie in $\Gamma(\T G\times M)$, as required. If $\phi_i\in \Gamma(E)$ we have $\l\phi_1,\phi_2\r=0$, hence the last is zero.
\end{proof}

In terms of the Lie bracket on 1-forms, this proposition has the following consequence. 
\begin{corollary}
Let $G\times M\to M$ be a Poisson Lie group action on a Poisson manifold $M$. Then the 
space of $G$-invariant 1-forms is a Lie subalgebra of the Lie algebra of 1-forms on $M$
(with bracket the Lie algebroid bracket of $T^*M$).  
\end{corollary}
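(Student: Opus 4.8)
The plan is to deduce the corollary directly from Proposition~\ref{prop:courinvt} by translating it through the bundle isomorphism $\on{Gr}(\pi_M)\cong T^*M$ (forgetting the vector-field component), under which the Courant bracket on sections of $\on{Gr}(\pi_M)$ becomes the Lie algebroid bracket \eqref{eq:1formbracket} on $\Omega^1(M)$. The key observation is that the relevant notion of $G$-invariance for a 1-form $\alpha\in\Omega^1(M)$ must be matched with $\bullet$-invariance of the corresponding section $\pi_M^\sharp(\alpha)+\alpha\in\Gamma(\on{Gr}(\pi_M))$. So the first step is to make precise what ``$G$-invariant 1-form'' means here: I would take it to mean that the section $\pi_M^\sharp(\alpha)+\alpha$ of $\T M$ is invariant under the $\bullet$-action, which (since the $\bullet$-action preserves both $E_M=\on{Gr}(\pi_M)$ and the splitting data) is the natural substitute for ordinary $G$-invariance in the Poisson-Lie setting; when $\pi_G=0$ the $\bullet$-action reduces to the ordinary tangent/cotangent lift and this recovers the usual notion.

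The second step is the actual argument, which is now short. Suppose $\alpha,\beta\in\Omega^1(M)$ are $G$-invariant in the above sense, i.e. the sections $\sigma_1=\pi_M^\sharp(\alpha)+\alpha$ and $\sigma_2=\pi_M^\sharp(\beta)+\beta$ of $\on{Gr}(\pi_M)$ are $\bullet$-invariant. By Proposition~\ref{prop:courinvt}, $\Cour{\sigma_1,\sigma_2}$ is again $\bullet$-invariant. Since $\on{Gr}(\pi_M)$ is a Dirac structure, $\Cour{\sigma_1,\sigma_2}$ is a section of $\on{Gr}(\pi_M)$, and under the identification $\on{Gr}(\pi_M)\cong T^*M$ it corresponds to $[\alpha,\beta]$ (the cotangent Lie algebroid bracket), with vector-field part $\pi_M^\sharp([\alpha,\beta])$. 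Thus $\pi_M^\sharp([\alpha,\beta])+[\alpha,\beta]$ is $\bullet$-invariant, which is precisely the statement that $[\alpha,\beta]$ is a $G$-invariant 1-form. Hence the $G$-invariant 1-forms form a Lie subalgebra.

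The main (really the only) obstacle is bookkeeping at the level of definitions: one must check that ``$\bullet$-invariance of $\pi_M^\sharp(\alpha)+\alpha$'' is the correct interpretation of $G$-invariance of $\alpha$, and in particular that this condition is equivalent, for the section viewed inside $\on{Gr}(\pi_M)$, to $\bullet$-invariance of $\alpha$ regarded via the isomorphism $\on{Gr}(\pi_M)\cong T^*M$. This is immediate from the fact that the $\bullet$-action preserves $TM\subset\T M$, preserves $E_M=\on{Gr}(\pi_M)$, and on $TM$ agrees with the tangent lift, so that the projection $\on{Gr}(\pi_M)\to T^*M$ along $TM$ is $\bullet$-equivariant for the induced action on $T^*M$. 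Once this equivalence is in hand, the corollary is a one-line consequence of Proposition~\ref{prop:courinvt} together with the Dirac-structure property of $\on{Gr}(\pi_M)$ and Remark following \eqref{eq:1formbracket}. I would therefore present the proof as: fix invariant sections $\sigma_i\in\Gamma(\on{Gr}(\pi_M))$; apply Proposition~\ref{prop:courinvt}; use closedness of $\Gamma(\on{Gr}(\pi_M))$ under $\Cour{\cdot,\cdot}$; and identify the result with $[\alpha,\beta]$ via \eqref{eq:1formbracket}.
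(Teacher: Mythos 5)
Your proof is correct and is essentially the argument the paper intends: the corollary is presented as an immediate consequence of Proposition \ref{prop:courinvt}, obtained exactly as you describe by applying that proposition to the sections $\pi_M^\sharp(\alpha)+\alpha$ of $\on{Gr}(\pi_M)$ and reading off the covector component under $\on{Gr}(\pi_M)\cong T^*M$, where the Courant bracket becomes \eqref{eq:1formbracket}. The only point you should upgrade from a definitional choice to a (one-line) lemma is that ordinary $G$-invariance of $\alpha$, i.e. $\A_g^*\alpha=\alpha$, is equivalent to $\bullet$-invariance of $\pi_M^\sharp(\alpha)+\alpha$: this holds because in the groupoid action of $T^*G\rra\g^*$ on $T^*M$ the relation $\mu\circ\nu=\nu'$ forces $\nu=(T_m\A_g)^*\nu'$ independently of $\mu$, so the induced $\bullet$-action on the quotient $\T M/TM\cong T^*M$ is the standard cotangent lift for \emph{every} $\pi_G$ (not just $\pi_G=0$), and a section of $\on{Gr}(\pi_M)$ is determined by its covector part.
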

\begin{remark}
Ping Xu has pointed out that (for connected $G$), this result is an immediate consequence of a Theorem of Jiang-Hua Lu \cite{lu:poi} that the action Lie algebroid $M\times \g$ and the  cotangent Lie algebroid $T^* M$ form a \emph{matched pair}. 
For $M=G$, the result is proved in \cite{wei:rem}.
\end{remark}

\subsection{Drinfeld's classification theorems
}
In what follows, a pseudo-Riemannian metric $\l\cdot,\cdot\r$ (i.e, non-degenerate symmetric bilinear form) on a vector space $V$ will be referred to as simply a \emph{metric}. A subspace $S\subset V$ is called \emph{Lagrangian} if $S=S^\perp$. 
 (Such subspaces exist if and only if the metric has split signature.) By a \emph{metrized Lie algebra}, 
we mean a Lie algebra $\dd$ together with an 
$\ad(\dd)$-invariant metric. 
\begin{definition}
A \emph{Manin triple} $(\dd,\g,\h)$ is a metrized Lie algebra $\dd$, together with two Lagrangian Lie subalgebras $\g,\ \h$ of $\dd$ such that 
\[ \dd=\g\oplus \h\]
as a vector space. Let $G$ be a Lie group integrating $\g$. Given an extension of the adjoint action $g\mapsto \Ad_g$ of $G$ on $\g$ 
to an action by Lie algebra automorphisms of $\dd$ preserving the metric, with infinitesimal action the given adjoint action $\ad$  
of $\g\subset \dd$, then $(\dd,\g,\h)$ is called a \emph{$G$-equivariant Manin triple}. 
\end{definition}

For a $G$-equivariant Manin triple, we will use the same notation $\Ad_g$ for the adjoint action of $g\in G$ on $\g$ and for its extension to $\dd$.  If $G$ is connected and simply connected, then the integration of a Manin triple to a $G$-equivariant Manin triple is automatic. 
\begin{remarks}
\begin{enumerate}
\item The bilinear form defines a duality between $\g$ and $\h$; hence we can identify $\h=\g^*$ or $\g=\h^*$ as vector spaces. 
By invariance of the metric,  the action $\Ad$ of $G$ on $\dd$ descends to the 
coadjoint action on $\dd/\g\cong \g^*$. Note however that the action $\Ad_g$ does not preserve 
$\h$ as a subspace of $\dd$, in general. 
\item 
For each Manin triple $(\dd,\g,\h)$ there is a \emph{dual} Manin triple $(\dd,\h,\g)$, obtained by switching the roles of $\g$ and $\h$. Hence, for any Poisson Lie group $G$ there will be a \emph{dual} (connected and simply connected) Poisson Lie group $H$ (often written as $G^*$). 
\end{enumerate}
\end{remarks}
Suppose $(G,\pi)$ is a Poisson Lie group. Since $\Mult_G$ is a Poisson map, its differential at the identity satisfies 
$(T_{(e,e)}\Mult_G)(\pi_e,\pi_e)=\pi_e$. Since $T_{(e,e)}\Mult_G$ is just addition $\g\times\g\to \g$, this implies that the Poisson bivector field vanishes at the group unit: 
\[ \pi_e=0.\] 
The linear approximation of $\pi$ at $e$ is a linear Poisson structure on $\g$, which under the correspondence \eqref{eq:correspondence1} is 
dual to a Lie bracket on $\h:=\g^*$. Let $\dd=\g\oplus \h$ (as a direct sum of vector spaces) with the metric  $\l\cdot,\cdot\r$ given by the pairing between $\g$ and $\g^*$. Drinfeld showed that there is a unique Lie bracket on $\dd$, in such a way 
that $(\dd,\g,\h)$ is a Manin triple, and he proved the following result (at least for $G$ simply connected):
\begin{theorem}[Drinfeld's classification of Poisson Lie groups] \label{th:drinfeld}
For any Lie group $G$, there is a 1-1 correspondence 
\begin{equation}\label{eq:correspondence3}
\Big\{\begin{tabular}{c} Poisson Lie group  \\structures on $G$ 
\end{tabular}\Big\}
\stackrel{1-1}{\longleftrightarrow} 
\Big\{\begin{tabular}{c} $G$-equivariant \\ Manin triples $(\dd,\g,\h)$ 
\end{tabular}\Big\}
\end{equation}
This correspondence takes a $G$-equivariant Manin triple $(\dd,\g,\h)$ to the bivector field $\pi\in\mf{X}^2(G)$ given by 
\begin{equation}\label{eq:bivectorG}
 \pi\big(\l\theta^L,\nu_1\r,\ \l\theta^L,\nu_2\r\big)\Big|_g=\Big\l \pr_\g(\Ad_g \nu_1),\ \pr_\h(\Ad_g\nu_2)\Big\r \end{equation}
 for $\nu_1,\nu_2\in \h$. 
\end{theorem}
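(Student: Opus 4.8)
The plan is to run everything through the Dirac-geometric characterization of Theorem~\ref{th:poissonliegroupdirac}: a Poisson structure $\pi$ on $G$ is a Poisson Lie structure exactly when $E=\on{Gr}(\pi)$ is, simultaneously, a Dirac structure (the Poisson condition) and a wide Lie subgroupoid of $\T G\rra\g^*$ (the multiplicativity condition). So I would first trivialize $\T G\cong G\times(\g\oplus\g^*)$ by left translation. Under this trivialization the split metric of $\T G$ becomes the constant pairing between $\g$ and $\g^*$ on $\dd:=\g\oplus\g^*$, the groupoid $\T G\rra\g^*$ is assembled from the semidirect product $TG\cong G\ltimes\g$ and the coadjoint action groupoid $T^*G\cong G\ltimes\g^*$, and a Lagrangian subbundle $E$ with $E\cap TG=0$ becomes a smooth family $g\mapsto\lie{l}_g$ of Lagrangian complements to $\g$ in $\dd$ — equivalently, the left-reduced bivector $\rho\colon G\to\wedge^2\g$, with $\lie{l}_g=\{\rho(g)^\sharp\mu+\mu\mid\mu\in\g^*\}$.

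Next I would unwind the subgroupoid axioms for $E$ in these terms. The unit axiom forces $\lie{l}_e=:\h$ to be a Lagrangian complement to $\g$ (so in particular $\pi_e=0$), and the multiplication and inversion axioms are equivalent to $\rho$ being a group $1$-cocycle, $\rho(g_1g_2)=\rho(g_1)+g_1\cdot\rho(g_2)$ for the natural adjoint-type action on $\wedge^2\g$ (differentiating at $e$ gives that the intrinsic derivative $\delta\colon\g\to\wedge^2\g$ of $\pi$ at $e$ is a Lie-algebra cocycle, hence, via correspondence~\eqref{eq:correspondence1}, a Lie bracket on $\h\cong\g^*$). A wide subgroupoid being determined by $\h$ and this cocycle, one checks directly that $\lie{l}_g$ must equal the subspace obtained by transporting $\h$ along the coadjoint action on $\dd/\g\cong\g^*$: this defines, for each $g$, a metric-preserving linear map $\Ad_g\colon\dd\to\dd$ equal to $\Ad_g$ on $\g$ and carrying $\h$ onto $\lie{l}_g$, and the cocycle identity makes $g\mapsto\Ad_g$ a genuine $G$-action on $\dd$.

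I would then put the double Lie bracket on $\dd=\g\oplus\h$ — the given bracket on $\g$, the bracket dual to $\delta$ on $\h$, and the unique cross-bracket making the pairing $\ad$-invariant — and argue that the remaining content, $\Upsilon_E=0$, is equivalent to $\h$ being closed under this bracket, i.e.\ to $\dd$ satisfying the Jacobi identity (the only piece not yet forced being co-Jacobi for $\delta$). Concretely, I would write sections of $E$ as $\dd$-valued functions on $G$ taking values in $\lie{l}_g$, compute the Courant bracket in the left trivialization (where it involves only the Maurer--Cartan form and the structure constants of $\dd_0=\g\ltimes\g^*$), and use the cocycle $\rho$ to absorb the $g$-dependence of $\lie{l}_g$. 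This shows $\pi$ is a Poisson Lie structure iff $(\dd,\g,\h)$ is a Manin triple. The $G$-action of the previous step is then automatically by Lie-algebra automorphisms: its infinitesimal generators are the inner derivations $\ad^\dd_\xi$, $\xi\in\g$, which I would verify by differentiating the explicit description of $\lie{l}_g$ and matching it against the cross-bracket; hence the triple is $G$-equivariant. For the converse, a $G$-equivariant Manin triple yields the family $\lie{l}_g:=\Ad_g\h$; that this is a cocycle's worth of Lagrangian complements, that the resulting $E$ is a Dirac subgroupoid, and that its bivector is precisely~\eqref{eq:bivectorG}, are the reverse computations, and the two assignments are visibly mutually inverse.

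The hard part, I expect, is the third step: faithfully converting the Courant integrability of $E\subset\T G$ — a condition on sections over all of $G$ — into the pointwise Jacobi identity of $\dd$, while keeping the roles of the cocycle $\rho$ and the infinitesimal cobracket $\delta$ cleanly separated so that the statement holds for non-simply-connected (and disconnected) $G$ with the $G$-equivariance clause doing exactly the needed work. Pinning down that the bivector built from a Manin triple is the explicit one in~\eqref{eq:bivectorG}, rather than some gauge- or convention-twisted variant, is a further fiddly but routine verification.
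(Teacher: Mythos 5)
Your proposal is correct in outline but follows a genuinely different route from the paper. You work in the \emph{standard} left trivialization $\T G\cong G\times(\g\oplus\g^*)$, encode $E=\on{Gr}(\pi)$ as a family of Lagrangian complements $\mf{l}_g$, i.e.\ as the left-reduced bivector $\rho\colon G\to\wedge^2\g$, translate the subgroupoid condition of Theorem \ref{th:poissonliegroupdirac} into the cocycle identity for $\rho$, and then must convert the Courant integrability of $E$ by hand into the Jacobi identity for the double bracket on $\dd$. This is essentially Drinfeld's and Lu--Weinstein's original argument in Dirac clothing; its virtue is that the cobracket $\delta=d_e\rho$ and the Lie bialgebra structure appear explicitly. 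The paper instead trivializes $\T G$ by the $\bullet$-action (the groupoid action of $E$ itself, lifting left multiplication), so that $TG$, $E$ and $\T G$ are trivialized \emph{simultaneously}, and defines $\dd,\g,\h$ as the spaces of $\bullet$-invariant sections. The bracket on $\dd$ is then just the Courant bracket restricted to invariant sections (Proposition \ref{prop:courinvt} is the key lemma), and the structural properties you must verify by computation come for free from the Courant axioms \eqref{eq:i}--\eqref{eq:iii}: skew-symmetry because $\l\sigma_1,\sigma_2\r$ is an invariant, hence constant, function; $\ad$-invariance of the metric; the subalgebra property of $\g$ and $\h$ because $TG$ and $E$ are Dirac structures; and $G$-equivariance because the commuting action $g\mapsto\T r_{g^{-1}}$ honestly preserves the Courant bracket. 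The only explicit computation left in the paper is Lemma \ref{lem:technical}, needed for the converse direction. Your ``hard part'' (the third step) is exactly what the paper's choice of trivialization is designed to bypass.

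One concrete weak spot: your justification that the extension $g\mapsto\Ad_g$ acts on $\dd$ by Lie algebra automorphisms --- that its infinitesimal generators are the inner derivations $\ad^\dd_\xi$, verified by differentiating --- only proves equivariance on the identity component of $G$. Since the theorem is stated for arbitrary (possibly disconnected, non-simply-connected) $G$, and the $G$-equivariance clause is precisely what carries the global information, you must show for \emph{every} $g$ that $\Ad_g$ preserves the bracket of $\dd$; in your setup this forces you to feed the integrability of $E$ at points of each connected component back through the cocycle identity, a step your sketch does not supply. (You flag this worry in your last paragraph, but it is not merely a matter of bookkeeping conventions.) The paper's argument is immune to it because $\T r_{g^{-1}}$ preserves the Courant bracket for every $g$, with no connectedness assumption.
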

On the left hand side of \eqref{eq:bivectorG}, $\theta^L\in \Omega^1(G,\g)$ are the left-invariant Maurer-Cartan forms on $G$; thus $\l\theta^L,\nu_i\r$ are the left-invariant 1-forms extending $\nu_i\in \h\cong\g^*$. On the right hand side of the formula, the 
 $\nu_i$ are regarded as elements of $\dd$, and $\pr_\g,\pr_\h$ are the projections from $\dd=\g\oplus \h$ to the two summands. Note that the  expression is skew-symmetric in $\nu_1,\nu_2$, as required, since for $\zeta_1,\zeta_2$ in a given Lagrangian subspace 
 $\mf{l}\subset \dd$ (here, $\mf{l}=\Ad_g(\h)$), 
 \[ \l\pr_\g(\zeta_1),\pr_\h(\zeta_2)\r+\l\pr_\h(\zeta_1),\pr_\h(\zeta_2)\r=\l\zeta_1,\zeta_2\r=0.\]
\begin{example}
\begin{enumerate}
\item For any Lie group $G$, with Lie algebra $\g$, one has the $G$-equivariant Manin triple 
\[ (\g\ltimes \g^*,\g,\g^*).\]
Here the semi-direct product is formed using the coadjoint representation of $\g$, and the bilinear form on $\g\ltimes\g^*$ is given by the pairing. The corresponding Poisson Lie group structure on $G$ is given by $\pi=0$. The dual Manin triple 
$(\g\ltimes \g^*,\g^*,\g)$ determines the Lie-Poisson structure on $\g^*$, regarded as a Lie group under addition. 
\item Let $K$ be a compact, connected Lie group, with complexification $G=K^\C$. Viewing $G$ as a real Lie group, it has an Iwasawa decomposition 
\[ G=KAN.\]
(If $K=\U(n)$, then $G=\GL(n,\C)$. The subgroup  $N$ consists of upper triangular matrices with $1$s on the diagonal, and $A$ consists of diagonal matrices with positive diagonal entries.) 
Let $(\cdot,\cdot)$ be an invariant inner product on $\k$, extend it to a $\C$-valued bilinear form on $\k^\C$, and let $\l\cdot,\cdot\r$ be its imaginary part. Then both
$\k$ and $\a\oplus \n$ are Lagrangian Lie subalgebras for this bilinear form, and 
\[ (\k^\C,\k,\a\oplus\n)\]
is a $K$-equivariant Manin triple. The resulting Poisson Lie group structure on $K$ is known as the \emph{Lu-Weinstein Poisson structure} or \emph{Bruhat Poisson structure} \cite{lu:poi,le:al}. 
The Poisson structure on $K^*=AN$, defined by the dual Manin triple, features in the \emph{Ginzburg-Weinstein isomorphism} \cite{gi:lp}; see \cite{al:po} for interesting aspects of this Poisson structure. 
\item 
The following example describes a \emph{complex} Poisson Lie group structure on a complex reductive Lie group $G$. Let  $\g$ be its Lie algebra, with some choice of a 
$G$-invariant non-degenerate symmetric bilinear form. Denote by $\ol{\g}$ the same Lie algebra with the opposite bilinear form, and embed $\g$ as the diagonal $\g_\Delta\subset \g\oplus \ol{\g}$. Fix 
a Cartan subalgebra $\h\subset \g$ and a system of positive roots, let $\n_\pm$ be the sum of positive/negative root spaces, and let  
\[ \mf{b}_+=\h\oplus \n_+,\ \ \mf{b}_-=\h\oplus \n_-\]
be the corresponding opposite Borel subalgebras. Both projections $\pr_\h\colon \mf{b}_\pm\to \h$ 
are Lie algebra homomorphisms. Let $\mathfrak{u}\subset \dd$ consist of pairs $(\xi_1,\xi_2)\in \mf{b}_+\oplus \mf{b}_-$ such that 
\[ \pr_\h(\xi_1)=-\pr_\h(\xi_2).\]
Then 
\[ (\g\oplus \ol{\g},\ \g_\Delta,\ \mathfrak{u})\] 
is a $G$-equivariant Manin triple, defining the so-called \emph{standard Poisson Lie group structure} on $G$.
\item
Using the same notation as in the previous example, let $\dd=\g\oplus\ol{\h}$ be the direct sum. Define inclusions 
\[ \mf{b}_\pm \hra  \dd=\g\oplus\ol{\h},\ \xi\mapsto (\xi,\pm \pr_\h(\xi)).\]
Then $(\dd,\mf{b}_+,\mf{b}_-)$ is a Manin triple. 
\item 
Let $(\dd,\g,\h)$ be a Manin triple. Then 
\[ (\dd\oplus \ol{\dd},\dd_\Delta,\g\oplus \h)\]
is again a Manin triple, called the \emph{Drinfeld double}. For instance, in the case of a compact Lie group $K$ with Lu-Weinstein Poisson structure, it follows that $G=K^\C$ also has a Poisson Lie group structure. 
\end{enumerate}
\end{example}
A \emph{Poisson homogeneous space} is a Poisson manifold $M$ together with a Poisson action of a Poisson Lie group $G$, such that $M$ is a homogeneous space for the underlying $G$-action. Given a closed subgroup $K\subset G$, we are interested in the Poisson structures $\pi_M$ on $M=G/K$ for which the $G$-action on $M$ is Poisson. 
\begin{theorem}[Drinfeld's classification of Poisson homogeneous spaces]
\label{th:drinhom}
Let $(G,\pi_G)$ be a Poisson Lie group, with equivariant Manin triple 
$(\dd,\g,\h)$. Let $K\subset G$ be a closed subgroup, and $M=G/K$. 
Then there is a 1-1 correspondence 
\begin{equation}\label{eq:correspondence4}
\Big\{\begin{tabular}{c} Poisson structures on $M$ such\\
 that the $G$-action  is Poisson
\end{tabular}\Big\}
\stackrel{1-1}{\longleftrightarrow} 
\Big\{\begin{tabular}{c} $K$-invariant Lagrangian Lie 
\\ subalgebras $\mf{l}\subset \dd$ with $\mf{l}\cap \g=\k$
\end{tabular}\Big\}
\end{equation}
\end{theorem}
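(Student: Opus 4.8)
The plan is to realize Drinfeld's bijection inside Dirac geometry, reducing it to linear algebra at the base point $o=eK\in M$ together with a single identification of brackets. Identify $T_oM\cong\g/\k$ via $\xi\mapsto\xi_M(o)$, so that $T_o^*M\cong\on{ann}(\k)\subset\g^*=\h$ and $\T_oM=(\g/\k)\oplus\on{ann}(\k)$. Since $\Ad_K$ preserves $\g$ and descends to the coadjoint action on $\dd/\g\cong\g^*$, which preserves $\on{ann}(\k)$, the subspace $\dd^{(o)}:=\g\oplus\on{ann}(\k)\subset\dd$ is $\Ad_K$-stable, and there is a $K$-equivariant surjection
\[ q\colon \dd^{(o)}\twoheadrightarrow\T_oM,\qquad (\xi,\mu)\mapsto(\xi+\k)+\mu, \]
with kernel $\k\subset\g$, intertwining the metric of $\dd$ with the pairing on $\T_oM$.

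First I would check the purely linear-algebraic correspondence at $o$: for a Lagrangian $\mf{l}\subset\dd$ with $\mf{l}\cap\g=\k$ one has $\k\subset\mf{l}$, whence isotropy forces $\mf{l}\subset\dd^{(o)}$ (pair elements of $\mf{l}$ against $\k\subset\mf{l}$), so $L:=q(\mf{l})$ is defined; a dimension count together with isotropy of $q$ shows $L$ is a Lagrangian subspace of $\T_oM$ with $L\cap T_oM=0$, and $K$-invariant, and $q^{-1}$ gives the inverse assignment. Next I would globalize: a $K$-invariant Lagrangian $L\subset\T_oM$ with $L\cap T_oM=0$ corresponds to a $\bullet$-invariant Lagrangian subbundle $E_M\subset\T M$ with $E_M\cap TM=0$, by $E_M|_{g\cdot o}:=g\bullet L$, well defined and smooth by homogeneity of $M=G/K$ and $K$-invariance of $L$ (using that $K$ fixes $o$ and acts on $\T_oM$ through $q$, i.e.\ through $\Ad_K$). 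Such an $E_M$ is exactly the graph $\on{Gr}(\pi_M)$ of a bivector field $\pi_M$, and once $\pi_M$ is Poisson its $\bullet$-invariance is, by Proposition \ref{prop:poissonaction1}, equivalent to the $G$-action on $(M,\pi_M)$ being Poisson. Thus the theorem reduces to matching the two remaining conditions: \emph{$\on{Gr}(\pi_M)$ is closed under the Courant bracket if and only if $\mf{l}=q^{-1}(\on{Gr}(\pi_M)|_o)$ is a Lie subalgebra of $\dd$.}

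For this last equivalence the engine is Proposition \ref{prop:courinvt}: Courant brackets of $\bullet$-invariant sections are again $\bullet$-invariant. Via Theorem \ref{th:poissonliegroupdirac} the $\bullet$-action is a groupoid action of $E=\on{Gr}(\pi_G)\rra\g^*$ on $\on{Gr}(\pi_M)\rra\g^*$, whose infinitesimal form is governed by $\dd=\g\oplus\h$; I would use this to establish that for $X_1,X_2\in\mf{l}$, taking sections $\sigma_1,\sigma_2$ of $\T M$ near $o$ (invariant in the relevant sense) with $\sigma_i(o)=q(X_i)$, one has $\Cour{\sigma_1,\sigma_2}(o)=q\big([X_1,X_2]_\dd\big)$. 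Granting this, if $\on{Gr}(\pi_M)$ is integrable then evaluating $\Upsilon_{\on{Gr}(\pi_M)}$ at $o$ on $\sigma_1,\sigma_2$ and an arbitrary $\sigma_3\in\Gamma(\on{Gr}(\pi_M))$ gives $\langle X_3,[X_1,X_2]_\dd\rangle=0$ for all $X_3\in\mf{l}$, hence $[X_1,X_2]_\dd\in\mf{l}^\perp=\mf{l}$, so $\mf{l}$ is a subalgebra; conversely, if $\mf{l}$ is a subalgebra the same computation shows $\Upsilon_{\on{Gr}(\pi_M)}$ vanishes at $o$, and its vanishing propagates over $G/K$ once one knows it transforms compatibly under the $\bullet$-action. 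Finally the two assignments $\pi_M\mapsto\mf{l}$ and $\mf{l}\mapsto\pi_M$ are mutually inverse, each being determined by the induced Lagrangian datum at $o$, and these data correspond under $q$ and $q^{-1}$.

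The hard part is exactly the identity $\Cour{\sigma_1,\sigma_2}(o)=q([X_1,X_2]_\dd)$: it requires unwinding the $\bullet$-action, which is defined through the groupoid structure on $E=\on{Gr}(\pi_G)\subset\T G\rra\g^*$, and matching it --- using the left-trivialization of $\T G$ and the explicit formula \eqref{eq:bivectorG} of Theorem \ref{th:drinfeld} for $\pi_G$ --- with the bracket of $\dd=\g\oplus\h$. A related subtlety, needing care in the write-up, is that globally $\bullet$-invariant sections of $\T M$ realize only $K$-invariant values at $o$ (since $K$ fixes $o$ and acts nontrivially on $\T_oM$ in general); because $\Cour{\cdot,\cdot}$ at $o$ depends only on germs, one circumvents this by working with $\g$-infinitesimally-invariant germs along a slice through $o$, and by verifying directly that $\Upsilon_{\on{Gr}(\pi_M)}$ behaves tensorially under the $\bullet$-action so that its vanishing at the single point $o$ suffices.
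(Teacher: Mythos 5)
Your outline is on the right track in spirit, but it has a genuine gap precisely at the step you yourself flag as ``the hard part'': the identity $\Cour{\sigma_1,\sigma_2}(o)=q([X_1,X_2]_\dd)$. The Courant bracket at $o$ depends on first derivatives of the sections, so this identity is meaningless until you specify \emph{which} extensions $\sigma_i$ of $q(X_i)$ you use, and your candidate class of extensions does not exist: since the $\bullet$-action of $G$ on $M=G/K$ has isotropy $K$ at $o$, a globally $\bullet$-invariant section of $\T M$ --- and likewise a germ invariant under the infinitesimal $\g$-action, whose isotropy part $\k$ already constrains the value at the fixed point --- can only take $K$-invariant (resp.\ $\k$-invariant) values in $\T_oM$. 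For a general $X\in\mf{l}$ the value $q(X)$ is not $K$-invariant, so there is no invariant germ through it, and the proposed computation cannot even be set up. Appealing to tensoriality of $\Upsilon_{\on{Gr}(\pi_M)}$ does not rescue this: tensoriality lets you evaluate $\Upsilon$ on arbitrary extensions, but you still must \emph{compute} the bracket of some concrete extensions in terms of $[\cdot,\cdot]_\dd$, and that is exactly what is missing.

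The paper circumvents this obstruction by never working on $M$ directly. Via Lemma \ref{lem:pushforward}, Dirac structures $L_M\subset\T M$ correspond to $K$-invariant Dirac structures $L_G\subset\T G$ with $\ker(Tp)\subset L_G$, and by Lemma \ref{lem:bulletequiv} the $\bullet$-invariance of $L_M$ is equivalent to that of $L_G$. On $G$ the $\bullet$-action (lifting left multiplication) is free, so \emph{every} element of $\dd=\T_eG$ extends to a unique $\bullet$-invariant section, yielding the trivialization $\T G\cong G\times\dd$ in which the Courant bracket of constant sections is literally the Lie bracket of $\dd$ (Proposition \ref{prop:courinvt} and Lemma \ref{lem:technical}). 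The $G\times K$-invariant Lagrangian subbundles are then exactly the $G\times\mf{l}$ with $\mf{l}$ a $K$-invariant Lagrangian subspace, integrability becomes the subalgebra condition on $\mf{l}$, and $L_G\cap TG=\ker(Tp)$ becomes $\mf{l}\cap\g=\k$. If you want to keep your ``reduce at the base point $o$'' formulation, you should route the bracket computation through this lift to $G$ (or some equivalent device that supplies canonical extensions of arbitrary elements of $\T_oM$); as written, the core equivalence between integrability of $\on{Gr}(\pi_M)$ and $\mf{l}$ being a subalgebra is asserted rather than proved. Your linear-algebra dictionary at $o$ and the use of Proposition \ref{prop:poissonaction1} for the Poisson-action condition are fine.
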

In this theorem, neither $G$ nor $K$ are assumed to be connected or simply connected. The statement in Drinfeld's original paper \cite{dri:poi} is slightly less precise; the version given here can be found in \cite{rob:cla} as well as \cite{me:dir}. 

\subsection{Notes} The notion of a Poisson Lie group was introduced by Drinfeld \cite{dr:ha}. Poisson actions of Poisson Lie groups were first considered by Semenov-Tian-Shansky \cite{se:wh}, who used them to interpret examples of hidden symmetries in integrable systems. A distinguished class of Poisson Lie group structures are the \emph{quasi-triangular Poisson Lie groups}, defined by a classical \emph{$r$-matrix} satisfying a modified classical Yang-Baxter equation; for semi-simple complex Lie algebras these were classified by Belavin-Drinfeld \cite{be:tr}. Poisson Lie group structures on compact Lie groups, and their homogeneous spaces, were studied by  Lu-Weinstein \cite{lu:poi}, and independently by Levendorskii-Soibelman \cite{le:al}. The theory of 
Poisson Lie groups, and their homogeneous spaces, 
has been generalized to (suitably defined) categories of Dirac manifolds; see \cite{jot:dir}, \cite{ort:mu},\ \cite{lib:dir},\ \cite{rob:cla},\ \cite{me:dir}.  

Since the Poisson tensor of any Poisson Lie group $G$ vanishes at the group unit $e$, one can ask about \emph{linearizability} of $\pi$ near $e$. There are counter-examples showing that Poisson Lie groups are not always linearizable. For the dual Poisson Lie group $G^*$ of a compact Lie group $G$, the linearizability follows from the Ginzburg-Weinstein theorem. In \cite{al:lin}, it was shown that the dual of any \emph{coboundary} Poisson Lie group is linearizable at $e$. The proof uses Dirac geometry.

\section{The isomorphism $TG\oplus T^*G\cong G\times \dd$}
\label{sec:drinfeldtheorem}
We will now explain how to obtain Drinfeld's classification results for Poisson Lie groups and Poisson homogeneous spaces, using Dirac-geometric 
ideas. 

\subsection{Poisson Lie group structures}
Let $\pi_G$ be a Poisson Lie group structure on $G$, and $E=\on{Gr}(\pi_G)\subset \T G$ its graph. As explained in Section \ref{subsec:bullet}, given a $G$-action $g\mapsto \A_g$ on a manifold $M$ we obtain a lift to an action on $\T M$, denoted by a symbol $\bullet$.
This action comes from the groupoid action of $E\subset \T G\rra \g^*$ on $\T M\to \g^*$, and although it restricts to the 
standard tangent lift on $TM\subset \T M$, it is different from the standard lift $g\mapsto \T\A_g$ of the $G$-action.

\subsubsection{Left trivialization of $\T G$}
We will apply this to $M=G$ with the action $g\mapsto l_g$ given by left multiplication.  We will obtain the Manin triple $(\dd,\g,\h)$ from the triple 
$(\T G,\ TG,\ E)$ by  `left trivialization':
\[ \dd:=\Gamma(\T G)^{\bullet-inv},\ \ 
\g=\Gamma(TG)^{\bullet-inv},\ \ 
\h=\Gamma(E)^{\bullet-inv}.
\] 
where the superscript indicates sections that are invariant under the $\bullet$-action. 
Since the $\bullet$-action preserves the metric, it follows that 
$\dd$ is a metrized vector space, with $\g$ and $\h$ as complementary Lagrangian subspaces. By 
Proposition \ref{prop:courinvt}, the Courant bracket on $\Gamma(\T G)$ restricts to a bracket on invariant 
sections, hence we obtain a bracket on $\dd$. We claim that this bracket is skew-symmetric, and hence is a Lie bracket. To see this, let $\sigma_1,\sigma_2$ be two 
invariant sections. By property \eqref{eq:iii} of the Courant bracket,  we have that  
\[ \Cour{\sigma_1,\sigma_2}+\Cour{\sigma_2,\sigma_1}=
\d \l \sigma_1,\sigma_2\r.\] 
Since the $\bullet$-action preserves the metric, the function $\l\sigma_1,\sigma_2\r\in C^\infty(G)$ is $G$-invariant, and hence is \emph{constant}. It follows that $\d \l \sigma_1,\sigma_2\r=0$, 
proving skew-symmetry of the bracket on $\dd$.
For this Lie bracket, the metric on $\dd$ is $\ad$-invariant (by the property \eqref{eq:ii} of the Courant bracket).
Furthermore, $\h,\g\subset\dd$ are Lagrangian Lie subalgebras since $E,\ TG\subset \T G$ are Dirac structures. (Of course, the Lie bracket on $\g$ is the given one.) Consider now the second $G$-action on $\T G$, 
obtained as the \emph{standard} lift $g\mapsto \T r_{g^{-1}}$ of the action by \emph{right multiplication}. 
This $G$-action commutes with the action $\bullet$, and (unlike the action $\bullet$) preserves not only the metric but also the Courant bracket. Hence it defines an action $g\mapsto \Ad_g$ on $\dd$ by metric-preserving Lie algebra automorphisms .
Note that this action reduces to the usual adjoint action on $\g$ (as left-invariant vector fields). We conclude that $(\dd,\g,\h)$ is a $G$-equivariant Manin triple. 

We have thus constructed a \emph{left trivialization}
\begin{equation}\label{eq:lefttrivial}
\T G=G\times \dd,\ \ TG=G\times\g,\ \ E=G\times \h\end{equation}
in such a way that the Courant bracket on $\Gamma(\T G)$ is the unique extension of the Lie bracket on $\dd$ (regarded as constant sections) to a bilinear form on $\Gamma(\T G)$, satisfying the properties \eqref{eq:iii} and \eqref{eq:leibnitzc} of the Courant bracket. 
The $G$-action $g\mapsto \T r_{g^{-1}}$ on $\T G$ translates into an action by Courant automorphisms, $g.(a,\zeta)=(a g^{-1},\ \Ad_g(\zeta))$. 

\subsubsection{The dressing action}
Since the anchor $\a\colon \T G\to TG$ is just the obvious projection along $T^*G$, it is bracket preserving. Its restriction to $\bullet$-invariant sections defines a Lie algebra action
\[ \varrho\colon \dd\to \mf{X}(G),\]
with $\a(g,\zeta)=\varrho(\zeta)_g$, 
extending the $\g$-action $\xi\mapsto \xi^L$. This action (or its restriction to $\h\subset \dd$) is  known as the 
\emph{dressing action}. The following proposition gives an explicit formula.
\begin{proposition}
The dressing action of $\dd$ on $G$ is given in terms of the contraction with left-invariant vector fields by 
\begin{equation}\label{eq:dressingaction}
 \iota_{\varrho(\zeta)}\theta^L=\Ad_{g^{-1}}\pr_\g(\Ad_g\zeta).\end{equation}
\end{proposition}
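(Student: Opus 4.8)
The plan is to unwind the definitions. Recall that $\varrho(\zeta)$ is defined as the anchor of the $\bullet$-invariant section of $\T G$ whose value at the identity $e\in G$ is $\zeta\in\dd$. So the first step is to understand what the $\bullet$-invariant section determined by $\zeta$ looks like at an arbitrary point $g\in G$. By construction the $\bullet$-action of $G$ on $\T G$ comes from the groupoid action of $E=\on{Gr}(\pi_G)\subset \T G\rra\g^*$ on $\T G\to\g^*$; concretely, for $g\in G$ and $y\in\T_hG$ one has $g\bullet y=x\circ y$, where $x$ is the unique element of $E|_g$ with $\sz(x)=\Phi(y)$, where $\Phi\colon\T G\to\g^*$ is the moment map for the (left) action (here the action being left multiplication, $\Phi$ is essentially right-trivialization of the cotangent part). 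So I would first compute, for a given $\zeta\in\dd\subset\T_eG$, the element $\sigma_\zeta(g)\in\T_gG$ obtained by left-translating via $\bullet$; then apply the anchor $\a$.

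Next, I would use the second trivialization. Under the right-translation action $g\mapsto \T r_{g^{-1}}$, which preserves the Courant bracket and acts on $\dd$ by $\Ad$, the graph $E=\on{Gr}(\pi_G)$ corresponds to $G\times\h$, and the identification $\T G=G\times\dd$ (by right trivialization of $TG$ and the cotangent part) is such that the $\bullet$-action becomes translation in the $\dd$-factor after an $\Ad$-twist. Concretely, I expect that in these coordinates the $\bullet$-invariant section through $\zeta$ at $g$ has $\dd$-component $\Ad_g\zeta$ — but since $E$ corresponds to $G\times\h$ under this trivialization, being in $E|_g$ means the $\h$-component must vanish after appropriate projection, forcing the correction by $\pr_\g(\Ad_g\zeta)$. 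Unwinding the groupoid multiplication $x\circ y$ and the moment-map matching condition should yield: the $\bullet$-invariant section $\sigma_\zeta$ satisfies, at $g$, that its right-trivialized $TG$-part equals $\pr_\g(\Ad_g\zeta)$ (as a left-invariant-vector-field value). Then $\a(\sigma_\zeta(g))$ is this vector, and $\iota_{\varrho(\zeta)}\theta^L|_g=\Ad_{g^{-1}}\pr_\g(\Ad_g\zeta)$ because $\theta^L$ reads off the left-invariant coordinate of a tangent vector, which is obtained from the right-invariant one by $\Ad_{g^{-1}}$.

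A cleaner route, which I would probably follow for the write-up: since $\a\colon\T G\to TG$ intertwines the $\bullet$-action on $\T G$ with the standard tangent-lift action of left multiplication on $TG$, and since $\varrho$ extends $\xi\mapsto\xi^L$ on $\g$, it suffices to compute on $\h$ (then extend by linearity, as $\dd=\g\oplus\h$ and the formula for $\zeta\in\g$ gives $\pr_\g(\Ad_g\zeta)=\Ad_g\zeta$, hence $\iota_{\xi^L}\theta^L=\xi$ as it should). For $\nu\in\h$, I would identify the $\bullet$-invariant section extending $\nu$ with a section $X_\nu+\alpha_\nu\in\Gamma(\on{Gr}(\pi_G))$, i.e. $X_\nu=\pi_G^\sharp(\alpha_\nu)$, and use the explicit Poisson bivector formula \eqref{eq:bivectorG} from Drinfeld's theorem together with the fact (from Section \ref{subsec:bullet}) that $\bullet$-invariance means the corresponding section, pulled back under the action map, is constant in the $G$-direction. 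Evaluating $\l\theta^L,\cdot\r$ against $X_\nu=\a(\sigma_\nu)$ and pairing with an arbitrary $\nu'\in\h$, formula \eqref{eq:bivectorG} gives $\l\theta^L,X_\nu\r$ paired with $\nu'$ equal to $\l\pr_\g(\Ad_g\nu),\pr_\h(\Ad_g\nu')\r=\l\pr_\g(\Ad_g\nu),\Ad_g\nu'\r$ (the last since $\g$ is isotropic), and since the $\nu'$ span $\h=\g^*$ this pins down $\l\theta^L,X_\nu\r=\Ad_{g^{-1}}\pr_\g(\Ad_g\nu)$, which is exactly \eqref{eq:dressingaction}.

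The main obstacle will be bookkeeping the two different $G$-actions and the identification $\Phi\colon\T G\to\g^*$ carefully enough that the groupoid multiplication $x\circ y$ in $E\subset\T G\rra\g^*$ translates correctly into the twist by $\pr_\g\circ\Ad_g$; getting the precise statement that a $\bullet$-invariant section of $E$ through $\nu\in\h$ has anchor whose left-trivialization is $\Ad_{g^{-1}}\pr_\g(\Ad_g\nu)$ requires matching the moment-map condition $\sz(x)=\Phi(y)$ with the Manin-triple splitting $\dd=\g\oplus\h$ at every point — essentially checking that the element of $E|_g$ selected by the groupoid action is the one whose right-trivialized value is the $\h$-component of $\Ad_g\nu$, so that the $\g$-component $\pr_\g(\Ad_g\nu)$ is precisely the anchor. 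Everything else is a direct substitution into \eqref{eq:bivectorG}.
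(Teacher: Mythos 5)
Your preferred ``cleaner route'' is circular in the context of this paper: formula \eqref{eq:bivectorG} is not available at this point --- it is \emph{derived} from the present proposition (together with the formula $\mathsf{e}(\zeta)=\varrho(\zeta)+\l\theta^L,\zeta\r$ for the inverse trivialization) a few lines further down, and the whole point of the section is to establish Theorem \ref{th:drinfeld}, of which \eqref{eq:bivectorG} is a part. Substituting \eqref{eq:bivectorG} into $X_\nu=\pi_G^\sharp(\l\theta^L,\nu\r)$ therefore assumes exactly what is to be proved. A second problem with that route: the anchor $\a$ does \emph{not} intertwine the $\bullet$-action with the tangent lift of left multiplication. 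For $y\in\T_hG$ with nonzero covector part, $g\bullet y=x\circ y$ where the selected $x\in E_g$ has anchor $\pi_g^\sharp(\mu)\ne 0$ in general, so $\a(g\bullet y)=(l_g)_*\a(y)+(r_h)_*\a(x)\ne (l_g)_*\a(y)$. (If the intertwining held, every $\varrho(\zeta)$ would be left-invariant and $\iota_{\varrho(\zeta)}\theta^L$ would be constant in $g$, contradicting \eqref{eq:dressingaction} itself.) The reduction to $\zeta\in\h$ that you wanted from this claim is harmless --- it follows from linearity and $\varrho(\xi)=\xi^L$ for $\xi\in\g$ --- but the claim itself is false.

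Your first approach is the right one and is essentially the paper's proof, but you stop precisely at the key step, which you yourself flag as ``the main obstacle''. The paper disposes of it with no groupoid or moment-map bookkeeping: the relevant equivariance is not for the $\bullet$-action but for the \emph{standard} lift $g\mapsto \T r_{g^{-1}}$ of right multiplication, for which the anchor is trivially equivariant ($\T r_{g^{-1}}$ is the direct sum of the tangent and cotangent lifts, and $\a$ is the projection onto the tangent summand). In the left trivialization $\T G\cong G\times\dd$ this action reads $(a,\zeta)\mapsto(ag^{-1},\Ad_g\zeta)$ by the very definition of $\Ad_g$ on $\dd$, so $(g,\zeta)=\T r_{g}(e,\Ad_g\zeta)$ and hence $\a(g,\zeta)=(r_g)_*\,\a(e,\Ad_g\zeta)$. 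Since $\pi_e=0$, at the unit one has $\T_eG=\g\oplus T^*_eG=\g\oplus\h=\dd$ with $\a=\pr_\g$, whence $\a(g,\zeta)=(r_g)_*\pr_\g(\Ad_g\zeta)=(l_g)_*\Ad_{g^{-1}}\pr_\g(\Ad_g\zeta)$, which is \eqref{eq:dressingaction}. If you carry out your version honestly you will end up reconstructing exactly this; as written, the identification of the element of $E_g$ selected by the moment-map condition, and of its anchor, is asserted rather than proved.
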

\begin{proof}
Note that the anchor map is equivariant for the standard lift $\T r_{g^{-1}}$ of the right action. Hence, in terms of the 
identification $\T G\cong G\times \dd$,  
\[ \a(g,\zeta)=(r_{g})_*\a(e,\Ad_g\zeta)=(l_g)_*\Ad_{g^{-1}} \a(e,\Ad_g\zeta).\]
At the group unit, the anchor is simply the projection $\pr_\g\colon\dd=\T_e G\to \g$.
Hence $\a(e,\Ad_g\zeta)=\pr_\g(\Ad_g\zeta)$. 
\end{proof}
We can use the dressing action $\varrho$ to describe the inverse map to the isomorphism $\T G\cong G\times \dd$.
\begin{proposition}
The inverse map to the left trivialization \eqref{eq:lefttrivial} is given on the level of sections by  
\[ \mathsf{e}\colon \dd\to \Gamma(\T G),\ \ \zeta\mapsto \mathsf{e}(\zeta)=\varrho(\zeta)+\l\theta^L,\zeta\r.\]
Here $\theta^L\in \Omega^1(G,\g)$ is the left-invariant Maurer-Cartan form on $G$, and $\varrho(\zeta)\in\mf{X}(G)$ 
are the vector fields given by the formula \eqref{eq:dressingaction}. 
\end{proposition}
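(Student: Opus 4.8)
The plan is to read off the inverse map directly from the definition of the left trivialization. By construction, the isomorphism $\T G\cong G\times\dd$ identifies $x\in\T_gG$ with the pair $(g,\sigma)$, where $\sigma\in\dd=\Gamma(\T G)^{\bullet\text{-inv}}$ is the unique $\bullet$-invariant section with $\sigma(g)=x$; equivalently, the $\bullet$-invariant sections are exactly the \emph{constant} sections $g\mapsto(g,\zeta)$ of $G\times\dd$. Hence the inverse of the trivialization sends $\zeta\in\dd$ to the $\bullet$-invariant section it already is, and the assertion reduces to the identity of sections
\[ \zeta=\varrho(\zeta)+\l\theta^L,\zeta\r \in\Gamma(\T G),\]
the left side being $\zeta\in\dd$ viewed through the trivialization. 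Here $\l\theta^L,\zeta\r$ denotes the $1$-form whose value on $v\in T_gG$ is $\l\theta^L(v),\zeta\r_\dd$, using the metric on $\dd$; since $\theta^L(v)\in\g$ and $\g$ is Lagrangian, only $\pr_\h(\zeta)$ contributes, so this is well defined.

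First I would pin down the $TG$-component. By the preceding proposition and the discussion preceding it, the anchor $\a\colon\T G\to TG$ carries the invariant section $\zeta$ to the dressing vector field $\varrho(\zeta)$, that is $\a(g,\zeta)=\varrho(\zeta)_g$. Writing the section $\zeta$ in the decomposition $\T G=TG\oplus T^*G$ as $\varrho(\zeta)+\mu_\zeta$ for a uniquely determined $1$-form $\mu_\zeta\in\Omega^1(G)$, it remains only to check that $\mu_\zeta=\l\theta^L,\zeta\r$.

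To identify $\mu_\zeta$ I would pair the section $\zeta$ with left-invariant vector fields. Fix $\xi\in\g$; under the trivialization $TG=G\times\g$ the field $\xi^L$ is the constant section $g\mapsto(g,\xi)$, and $\theta^L(\xi^L)=\xi$. Since the trivialization $\T G\cong G\times\dd$ carries the split metric \eqref{eq:bilinearform} to the constant metric $\l\cdot,\cdot\r_\dd$ (the $\bullet$-action preserves the metric, and $\l\cdot,\cdot\r_\dd$ is by definition its restriction to invariant sections), we get, at every $g\in G$,
\[ \l \mu_\zeta,\ \xi^L_g\r=\l \varrho(\zeta)_g+\mu_\zeta,\ \xi^L_g\r_{\T G}=\l\zeta,\xi\r_\dd=\l\l\theta^L,\zeta\r,\ \xi^L_g\r.\]
Because the vectors $\xi^L_g$, $\xi\in\g$, span $T_gG$ for every $g$, this forces $\mu_\zeta=\l\theta^L,\zeta\r$ and completes the argument.

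The whole proof is an unwinding of definitions, so there is no analytic obstacle; the one point requiring care is the bookkeeping between the two direct-sum decompositions $\T G=TG\oplus T^*G$ and (via the trivialization) $\T G=G\times(\g\oplus\h)$ — in particular the $TG$-summand of the former is \emph{not} the $G\times\g$ summand of the latter, the two being reconciled only after applying the anchor. A useful consistency check is to evaluate $\mathsf{e}(\zeta)$ at $g=e$: using \eqref{eq:dressingaction} one finds $\varrho(\zeta)_e=\pr_\g(\zeta)$ and $\l\theta^L,\zeta\r_e=\l\cdot,\zeta\r|_\g$, which under $\T_eG=\g\oplus\g^*=\dd$ recombine to $\zeta$, as they must.
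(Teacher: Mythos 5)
Your argument is correct and coincides with the paper's own proof: both identify the tangent component of $\mathsf{e}(\zeta)$ as $\varrho(\zeta)$ via the anchor, and then determine the $1$-form component by pairing against the left-invariant vector fields $\xi^L=\mathsf{e}(\xi)$, $\xi\in\g$, using that the trivialization preserves the metric and that the $\xi^L_g$ span $T_gG$. The consistency check at $g=e$ and the remark that only $\pr_\h(\zeta)$ contributes to $\l\theta^L,\zeta\r$ are pleasant additions but not needed.
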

\begin{proof}
We know that the left trivialization $\T G\cong G\times \dd$ restricts to the usual left trivialization $TG\cong G\times \g$.
Hence, the inverse isomorphism is given on $\g\subset \dd$ by $\xi\mapsto \xi^L$. For more general elements 
$\zeta\in\dd$, since the tangent part of the isomorphism $G\times\dd\cong \T G$ is given by the anchor, 
the image of $\zeta$ is of the form $\varrho(\zeta)+\alpha$ for some 1-form $\alpha$.
Since the isomorphism preserves metrics, we have for $\zeta\in\dd$ and $\xi\in\g$, 
\[ \iota(\xi^L)\l\theta^L,\zeta\r=\l\xi, \zeta\r=
\l \mathsf{e}(\xi),\ \mathsf{e}(\zeta)\r=
\l  \xi^L,\ \varrho(\zeta)+\alpha\r=\iota(\xi^L)\alpha.\]
This shows that $\alpha=\l\theta^L,\zeta\r$. \end{proof}

For $\nu\in \h=\g^*$, we have $\mathsf{e}(\nu)\in\Gamma(\on{Gr}(\pi))$, hence the formula shows that 
$\pi^\sharp\big(\l\theta^L,\nu\r\big)=\varrho(\nu)$. 
Consequently, if $\nu_1,\nu_2\in \g^*=\h$, then 
\[ \pi\big(\l\theta^L,\nu_1\r,\ \l\theta^L,\nu_2\r\big)=\big\l \varrho(\nu_1),\ \l\theta^L,\nu_2\r\big\r
=\l \pr_\g(\Ad_g \nu_1),\ \Ad_g\nu_2\r,\]
which is the formula \eqref{eq:bivectorG} for the Poisson tensor on $G$.

\subsubsection{The groupoid structure}
We next describe the groupoid structure of $\T G\rra \g^*$ in terms of the trivialization  \eqref{eq:lefttrivial}. 
\begin{proposition}\label{prop:groupoidstr}
The groupoid structure $G\times \dd\rra \h$, induced by the groupoid structure on 
$\T G\rra \g^*$ by the isomorphism \eqref{eq:lefttrivial}, has source and 
target maps 
\[ \sz(g,\zeta)=\pr_\h(\zeta),\ \ \tz(g,\zeta)=\pr_\h(\Ad_g \zeta),\]
and $(g,\zeta)=(g_1,\zeta_1)\circ (g_2,\zeta_2)$ if and only if 
\[ g=g_1g_2,\ \ \ \ \ \zeta=\zeta_2+\Ad_{g_2^{-1}} \big(\pr_\g(\zeta_1)\big)\]

\end{proposition}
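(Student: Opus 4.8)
The plan is to transport the groupoid structure of $\T G\rra\g^*$ through the left trivialization $\mathsf{e}\colon\dd\to\Gamma(\T G)$, $\mathsf{e}(\zeta)=\varrho(\zeta)+\l\theta^L,\zeta\r$, using the explicit description of $\T G=TG\oplus T^*G$ as the direct product of the Lie group $TG$ (a groupoid over a point) with the cotangent groupoid $T^*G\rra\g^*$ of Example \ref{ex:cotangentgroupoid}. Three structural facts about this product are all I need. First, the source and target of an element $x=v+\mu\in\T_gG$ depend only on its $T^*G$-component and are given by left and right trivialization: $\l\sz(x),\xi\r=\l\mu,\xi^L_g\r$ and $\l\tz(x),\xi\r=\l\mu,\xi^R_g\r$ for all $\xi\in\g$; in particular $\mu$ is determined by its base point $g$ together with $\sz(x)$, since $\mu\mapsto(T_el_g)^*\mu$ is a linear isomorphism $T_g^*G\to\g^*$. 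Second, in a product $x=x_1\circ x_2$ the vector part is the group product of $TG$, namely $v=(T\Mult_G)(v_1,v_2)$, and the covector part is governed by the cotangent-groupoid rule. Third, I will freely use that $(\dd,\g,\h)$ is a $G$-equivariant Manin triple: the metric $\l\cdot,\cdot\r_\dd$ on $\dd$ is $\Ad$-invariant and satisfies $\l\zeta,\xi\r_\dd=\l\pr_\h\zeta,\xi\r$ for $\xi\in\g$, and $\g\subset\dd$ is $\Ad$-stable; combined with the dressing-action formula \eqref{eq:dressingaction}, $\theta^L_g(\varrho(\zeta)_g)=\Ad_{g^{-1}}\pr_\g(\Ad_g\zeta)$, and the Maurer--Cartan identities $\theta^L_g(\xi^L_g)=\xi$, $\theta^L_g(\xi^R_g)=\Ad_{g^{-1}}\xi$.

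For the source map, let $x=\mathsf{e}(\zeta)|_g$, so that $\mu=\l\theta^L_g,\zeta\r$ and $\l\sz(x),\xi\r=\l\pr_\h\zeta,\theta^L_g(\xi^L_g)\r=\l\pr_\h\zeta,\xi\r=\l\zeta,\xi\r_\dd$; hence $\sz(g,\zeta)=\pr_\h\zeta$. For the target, $\l\tz(x),\xi\r=\l\pr_\h\zeta,\theta^L_g(\xi^R_g)\r=\l\pr_\h\zeta,\Ad_{g^{-1}}\xi\r=\l\zeta,\Ad_{g^{-1}}\xi\r_\dd=\l\Ad_g\zeta,\xi\r_\dd=\l\pr_\h(\Ad_g\zeta),\xi\r$, where the second-to-last step uses $\Ad$-invariance of the metric on $\dd$; hence $\tz(g,\zeta)=\pr_\h(\Ad_g\zeta)$.

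For the multiplication the base map is $(g_1,g_2)\mapsto g_1g_2$, so it remains to identify the $\dd$-component. Put $x_i=\mathsf{e}(\zeta_i)|_{g_i}=v_i+\mu_i$, let $x=x_1\circ x_2=v+\mu$, and let $\zeta$ be defined by $x=\mathsf{e}(\zeta)|_{g_1g_2}$. Composability of $x_1,x_2$ means $\sz(x_1)=\tz(x_2)$, i.e.\ $\pr_\h\zeta_1=\pr_\h(\Ad_{g_2}\zeta_2)$ by the formulas just proved, so $\Ad_{g_2}\zeta_2-\zeta_1\in\g$. The covector part is immediate: $\sz(x)=\sz(x_2)$ is a groupoid identity, so by the first structural fact $\mu$ is the unique covector at $g_1g_2$ with source $\sz(x_2)=\pr_\h\zeta_2$; on the other hand $\Ad_{g_2^{-1}}\pr_\g\zeta_1\in\g$, so the claimed $\zeta=\zeta_2+\Ad_{g_2^{-1}}\pr_\g\zeta_1$ has $\pr_\h\zeta=\pr_\h\zeta_2$ and hence $\l\theta^L_{g_1g_2},\zeta\r$ has the same source, forcing $\mu=\l\theta^L_{g_1g_2},\zeta\r$. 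It remains to check that the vector parts agree, i.e.\ $\theta^L_{g_1g_2}(v)=\Ad_{(g_1g_2)^{-1}}\pr_\g(\Ad_{g_1g_2}\zeta)$. Applying $\theta^L_{g_1g_2}$ to $v=(T\Mult_G)(v_1,v_2)$ and using the product rule $\theta^L_{g_1g_2}\big((T\Mult_G)(v_1,v_2)\big)=\Ad_{g_2^{-1}}\theta^L_{g_1}(v_1)+\theta^L_{g_2}(v_2)$ together with \eqref{eq:dressingaction} gives
\[ \theta^L_{g_1g_2}(v)=\Ad_{g_2^{-1}}\Ad_{g_1^{-1}}\pr_\g(\Ad_{g_1}\zeta_1)+\Ad_{g_2^{-1}}\pr_\g(\Ad_{g_2}\zeta_2). \]
Expanding $\Ad_{(g_1g_2)^{-1}}\pr_\g(\Ad_{g_1g_2}\zeta)$ with $\zeta=\zeta_2+\Ad_{g_2^{-1}}\pr_\g\zeta_1$ and using that $\g$ is $\Ad$-stable (so $\pr_\g\circ\Ad_g$ restricts to $\Ad_g$ on $\g$), the required equality reduces to $\Psi_{g_1}(\Ad_{g_2}\zeta_2-\zeta_1)=0$, where $\Psi_{g_1}(\eta):=\Ad_{g_1^{-1}}\pr_\g(\Ad_{g_1}\eta)-\pr_\g\eta$. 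Now $\Psi_{g_1}$ vanishes on $\g$ (again by $\Ad$-stability), and $\Ad_{g_2}\zeta_2-\zeta_1\in\g$ by composability; this completes the verification.

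The part to watch is the bookkeeping with left and right trivializations and with the Maurer--Cartan form: fixing which of $\sz,\tz$ is the left- and which the right-trivialization in the conventions of Example \ref{ex:cotangentgroupoid}, deriving $\theta^L_{g_1g_2}\circ T\Mult_G=\Ad_{g_2^{-1}}\theta^L_{g_1}+\theta^L_{g_2}$ and $\theta^L_g(\xi^L_g)=\xi$, $\theta^L_g(\xi^R_g)=\Ad_{g^{-1}}\xi$, and carrying out the final algebraic reduction for the vector part, where a misplaced $\Ad$ or a reversed order of composition would propagate. Conceptually, however, nothing beyond the three structural facts above and the $G$-equivariant Manin triple axioms enters.
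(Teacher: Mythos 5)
Your proof is correct, but it takes a genuinely different route from the paper's. The paper works abstractly within the groupoid calculus: it writes $(g,\zeta)=x\circ\zeta$ for the unique $x\in E|_g$ with $\sz(x)=\tz(\zeta)$, observes that the fiber $\T_eG=\g\oplus\g^*$ over the unit carries the groupoid structure $\zeta_1\circ\zeta_2=\zeta_2+\pr_\g(\zeta_1)$, and reduces the general product to this via the conjugation action $g\ast\zeta=x\circ\zeta\circ x^{-1}$ together with the identity $\pr_\g(g\ast\zeta)=\Ad_g\pr_\g(\zeta)$; the only inputs are that $E$ is a subgroupoid and the $\T r_{g^{-1}}$-invariance of $\tz$. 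You instead unwind everything through the explicit formula $\mathsf{e}(\zeta)=\varrho(\zeta)+\l\theta^L,\zeta\r$ and the concrete description of $\T G$ as the product of the group $TG$ with the cotangent groupoid of Example \ref{ex:cotangentgroupoid}, verifying the covector part via the source condition and the vector part via the Maurer--Cartan product rule, the dressing-action formula \eqref{eq:dressingaction}, and the reduction to $\Psi_{g_1}$ vanishing on $\g$ (which I checked: with $\eta=\Ad_{g_2}\zeta_2-\zeta_1\in\g$ by composability, both sides collapse to $\eta$). The paper's argument is shorter and more structural -- it never needs the explicit form of $\mathsf{e}$ and would survive in any setting where $E$ is a Lagrangian subgroupoid complementary to $TG$ -- while yours serves as an independent consistency check of the explicit trivialization formulas and makes visible exactly where the cotangent-groupoid conventions (source = left trivialization, target = right trivialization) and the $\Ad$-stability of $\g$ enter. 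Both are complete proofs; yours carries more bookkeeping risk but you have handled the left/right trivializations and the $\Ad$'s correctly throughout.
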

\begin{proof}
At the group unit, the source and target map of $\T_e G=\g\oplus \g^*\rra \g^*$ are simply the projections to the second summand $\g^*\cong \h$. For general $(g,\zeta)
=x\circ \zeta$, with the unique $x\in E|_g$ for which $\sz(x)=\tz(\zeta)$, we 
obtain 
\[\sz(g,\zeta)=\sz(x\circ \zeta)=\sz(\zeta)=\pr_\h(\zeta).\] 
On the other hand, since the map $\tz\colon \T G\to \g^*$ is invariant under $\T r_{g^{-1}}$, 
\[ \tz(g,\zeta)=\tz( \T r_{g^{-1}}(g,\zeta))=\tz(e,\Ad_g\zeta)=\pr_\h(\Ad_g\zeta).\]
To describe the groupoid multiplication, consider the action $\ast$ of $G$ on $\dd=\g\oplus \h=\g\oplus \g^*$, 
given as the direct sum of the adjoint action on $\g$ and the coadjoint action on $\g^*$.  Under the identification $\dd=\T_e G$, we have  
\[ g\ast \zeta=x\circ \zeta\circ x^{-1}\]
where $x\in E_g$ is the unique element such that
$\sz(x)=\pr_\h(\zeta)$. In contrast to the action $\Ad_g$ on $\dd$, this action $\ast$ preserves the direct sum 
decomposition, but not the Lie bracket. We have that 
\[ \pr_\g(g\ast \zeta)=\Ad_g\pr_\g(\zeta)\]
for all $g\in G$ and all $\zeta\in\dd$. The groupoid multiplication of $\T G$ is given on the subgroupoid $\T_e G=\g\oplus \g^*\rra \g^*$ by addition in the first summand, and the trivial groupoid structure in the second summand. This may be written $\zeta_1\circ \zeta_2=\zeta_2+\pr_\g(\zeta_1)$. For general composable elements $(g_1,\zeta_1),\ (g_2,\zeta_2)$,  let $x_i\in E|_{g_i}$ be the unique elements such that $\sz(x_i)=\tz(\zeta_i)=\pr_\h(\zeta_i)$. 
Then $(g_i,\zeta_i)=x_i\circ \zeta_i$, and 
\[ (g_1,\zeta_1)\circ (g_2,\zeta_2)=x_1\circ \zeta_1\circ x_2\circ \zeta_2=
x_1\circ x_2\circ \big( (g_2^{-1}\ast \zeta_1)\circ \zeta_2\big)
=\big(g_1g_2,\  (g_2^{-1}\ast \zeta_1)\circ \zeta_2\big).
\]
But 
$
(g_2^{-1}\ast \zeta_1)\circ \zeta_2=
\zeta_2+\pr_\g (g_2^{-1}\ast \zeta_1)
=\zeta_2+\Ad_{g_2^{-1}}\pr_\g( \zeta_1)$. 
\end{proof}

\subsubsection{Poisson Lie group structures from Manin triples}
Let us now assume conversely that we are given a $G$-equivariant Manin triple $(\dd,\g,\h)$.
Define a map 
\[ \mathsf{e}\colon \dd\to \Gamma(\T G),\ \ \mathsf{e}(\zeta)=\varrho(\zeta)+\l\theta^L,\zeta\r\]
where the vector fields $\varrho(\zeta)$ are given in terms of their contractions with the Maurer-Cartan forms by 
$ \iota_{\varrho(\zeta)}\theta^L=\Ad_{g^{-1}}\pr_\g(\Ad_g \zeta)$. Note that $\varrho(\xi)=\xi^L$ for $\xi\in\g$. 
\begin{lemma}\label{lem:technical}
The map $\mathsf{e}$ preserves metrics and brackets: 
\[ \l \mathsf{e}(\zeta_1),\mathsf{e}(\zeta_2)\r
=\l \zeta_1,\zeta_2\r,\ \ \ \ 
\Cour{\mathsf{e}(\zeta_1),\mathsf{e}(\zeta_2)}
=\mathsf{e}([\zeta_1,\zeta_2]).\] In particular, 
$\varrho$ defines a Lie algebra action of $\dd$ on $G$.
\end{lemma}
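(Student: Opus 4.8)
The plan is to treat the two assertions separately: the metric identity is essentially formal, while the bracket identity rests on the Maurer--Cartan equation. Throughout I write $P_g=\Ad_{g^{-1}}\circ\pr_\g\circ\Ad_g\colon\dd\to\dd$, so that the dressing formula reads $\iota_{\varrho(\zeta)}\theta^L=P_g\zeta$, and I note that $\l\theta^L,\zeta\r$ is the left-invariant $1$-form extending $\pr_\h\zeta\in\g^*$. The structural fact I would record first is that $P_g$ is the projection of $\dd$ onto $\g$ along $\Ad_{g^{-1}}\h$ (clear, since $P_g$ restricts to the identity on $\g$ and to $0$ on $\Ad_{g^{-1}}\h$). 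Both $\g$ and $\Ad_{g^{-1}}\h$ are Lagrangian (the latter because $\Ad_g$ preserves the metric) \emph{and} both are Lie subalgebras of $\dd$ (the latter because $\Ad_g$ is a Lie algebra automorphism --- precisely the $G$-equivariance hypothesis). Writing $Q_g=\on{id}-P_g$, the complementary-Lagrangian picture gives $\l P_g\xi,\eta\r=\l\xi,Q_g\eta\r$ for all $\xi,\eta\in\dd$.

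\emph{Metric.} By definition $\l\mathsf{e}(\zeta_1),\mathsf{e}(\zeta_2)\r|_g=\l P_g\zeta_1,\pr_\h\zeta_2\r+\l P_g\zeta_2,\pr_\h\zeta_1\r$. Substituting $\zeta_i=P_g\zeta_i+Q_g\zeta_i$ and using $\l\g,\g\r=0$ together with $\l\Ad_{g^{-1}}\h,\Ad_{g^{-1}}\h\r=0$ collapses this to $\l P_g\zeta_1,Q_g\zeta_2\r+\l Q_g\zeta_1,P_g\zeta_2\r=\l\zeta_1,\zeta_2\r$; in particular $\l\mathsf{e}(\zeta_1),\mathsf{e}(\zeta_2)\r$ is a constant function on $G$.

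\emph{Bracket.} The Courant bracket of $\mathsf{e}(\zeta_1)=\varrho(\zeta_1)+\l\theta^L,\zeta_1\r$ and $\mathsf{e}(\zeta_2)$ has vector-field part $[\varrho(\zeta_1),\varrho(\zeta_2)]$ and $1$-form part $\L_{\varrho(\zeta_1)}\l\theta^L,\zeta_2\r-\iota_{\varrho(\zeta_2)}\d\l\theta^L,\zeta_1\r$, so I would prove separately that $[\varrho(\zeta_1),\varrho(\zeta_2)]=\varrho([\zeta_1,\zeta_2])$ and that the $1$-form part equals $\l\theta^L,[\zeta_1,\zeta_2]\r$; together these give $\Cour{\mathsf{e}(\zeta_1),\mathsf{e}(\zeta_2)}=\mathsf{e}([\zeta_1,\zeta_2])$, and the first half is exactly the assertion that $\varrho$ is a Lie algebra action. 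The inputs are: the Maurer--Cartan equation $\d\theta^L=-\tfrac12[\theta^L,\theta^L]$ (whence $\d\l\theta^L,\zeta\r=-\tfrac12\l[\theta^L,\theta^L],\zeta\r$ and $\iota_X\d\theta^L=-[\theta^L(X),\theta^L]$); the differentiation rule $\d(\Ad_g\zeta)=\Ad_g[\theta^L,\zeta]$, which yields $\d(P_g\zeta)=P_g[\theta^L,\zeta]-[\theta^L,P_g\zeta]$; $\ad(\dd)$-invariance of the metric in the form $\l[x,y],z\r=\l x,[y,z]\r$; and $\l P_g\xi,\eta\r=\l\xi,Q_g\eta\r$. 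Feeding $\iota_{\varrho(\zeta)}\theta^L=P_g\zeta$ into $\iota_{[\varrho(\zeta_1),\varrho(\zeta_2)]}\theta^L=\L_{\varrho(\zeta_1)}\iota_{\varrho(\zeta_2)}\theta^L-\iota_{\varrho(\zeta_2)}\L_{\varrho(\zeta_1)}\theta^L$ reduces the vector-field statement to the algebraic identity $P_g\big([P_g\zeta_1,\zeta_2]+[\zeta_1,P_g\zeta_2]\big)-[P_g\zeta_1,P_g\zeta_2]=P_g[\zeta_1,\zeta_2]$, which one checks by expanding $\zeta_i=P_g\zeta_i+Q_g\zeta_i$ and using that $\g$ and $\Ad_{g^{-1}}\h$ are subalgebras (so $P_g$ fixes $[P_g\zeta_1,P_g\zeta_2]$ and annihilates $[Q_g\zeta_1,Q_g\zeta_2]$). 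For the $1$-form part I would expand $\L_{\varrho(\zeta_1)}=\iota_{\varrho(\zeta_1)}\d+\d\iota_{\varrho(\zeta_1)}$, compute $\iota_{\varrho(\zeta')}\d\l\theta^L,\zeta\r=\l\theta^L,[P_g\zeta',\zeta]\r$ and $\d\iota_{\varrho(\zeta_1)}\l\theta^L,\zeta_2\r=-\l\theta^L,[P_g\zeta_1,\zeta_2]\r+\l\theta^L,[\zeta_1,Q_g\zeta_2]\r$ (the latter via the formula for $\d(P_g\zeta)$ and $\l P_g\xi,\eta\r=\l\xi,Q_g\eta\r$), and sum: the $[P_g\zeta_1,\zeta_2]$ contributions cancel, leaving $\l\theta^L,[\zeta_1,Q_g\zeta_2]+[\zeta_1,P_g\zeta_2]\r=\l\theta^L,[\zeta_1,\zeta_2]\r$.

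\emph{Main obstacle.} The metric identity and the algebraic identity behind the vector-field part are short once $P_g$ is recognized as a projection onto a Lagrangian subalgebra; the real bookkeeping lies in the $1$-form part, where several terms of the shape $\l\theta^L,[P_g\zeta_i,\zeta_j]\r$ must be produced and made to cancel. I would keep this under control by isolating the standalone formulas $\d(P_g\zeta)=P_g[\theta^L,\zeta]-[\theta^L,P_g\zeta]$ and $\iota_{\varrho(\zeta')}\d\l\theta^L,\zeta\r=\l\theta^L,[P_g\zeta',\zeta]\r$, and by using the already-proven constancy of $\l\mathsf{e}(\zeta_1),\mathsf{e}(\zeta_2)\r$ to trade $\d\iota_{\varrho(\zeta_1)}\l\theta^L,\zeta_2\r$ for $-\d\iota_{\varrho(\zeta_2)}\l\theta^L,\zeta_1\r$ when convenient. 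A more conceptual alternative is to show that $\mathsf{e}$ intertwines the right-translation Courant automorphisms $\T r_{h^{-1}}$ of $\T G$ with the $\Ad$-action on $\dd$; since the $\mathsf{e}(\zeta)$ span $\T G$ fibrewise and reduce to the identity $\dd\to\T_eG=\dd$ at the unit, this would reduce the lemma to the computation at $e$ --- but verifying the intertwining is itself essentially the same Maurer--Cartan calculation.
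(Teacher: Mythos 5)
Your proof is correct and follows essentially the same route as the paper's: the same Cartan-calculus computation driven by the Maurer--Cartan equation and the $\Ad$-invariance of the metric, with your operators $P_g=\Ad_{g^{-1}}\circ\pr_\g\circ\Ad_g$ and $Q_g=\on{id}-P_g$ merely repackaging the paper's expressions $\Ad_{g^{-1}}\pr_\g(\Ad_g\,\cdot\,)$. The only genuine additions are welcome ones: you verify the metric identity (left as an exercise in the paper), and you justify the key algebraic step $P_g\big([P_g\zeta_1,\zeta_2]+[\zeta_1,P_g\zeta_2]\big)-[P_g\zeta_1,P_g\zeta_2]=P_g[\zeta_1,\zeta_2]$ via the fact that $\g$ and $\Ad_{g^{-1}}\h$ are complementary Lagrangian subalgebras --- a step the paper's chain of equalities asserts without comment.
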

This is verified by a straightforward, but un-illuminating calculation, see Appendix \ref{app:tech}.
Using the bundle isomorphism $G\times \dd\cong \T G$ defined by $\mathsf{e}$, we conclude that the Courant bracket 
on $\Gamma(\T G)$ is identified with the unique bracket on $\Gamma(G\times\dd)$ which extends the Lie bracket 
on $\dd$, in such a way that the properties \eqref{eq:iii} and \eqref{eq:leibnitzc} of the Courant bracket hold.
The image of $G\times \g$ under this isomorphism is $TG$. The image of $G\times\h$ is a Dirac structure $E\subset \T G$ transverse 
to $TG$; it is thus the graph of a Poisson tensor $\pi_G$. 

One easily checks that the isomorphism $\T G\cong G\times\dd$ takes the groupoid structure on $\T G\rra \g^*$ to the groupoid 
structure $G\times\dd\rra \h$, as described in Proposition \ref{prop:groupoidstr}. Since $G\times\h\rra \h$ is a subgroupoid of 
$G\times\dd$, it follows that $E=\on{Gr}(\pi_G)$ is a subgroupoid of $\T G$. That is, $\pi_G$ is a Poisson Lie group structure.

\subsection{Poisson homogeneous spaces}
We will need the following addendum to our discussion of pull-backs of Dirac structures, in Section \ref{subsec:pullbackdirac}.
\begin{lemma}\label{lem:pushforward}
Let $K$ be a Lie group, and $p\colon Q\to M$ be a principal $K$-bundle. Then there is a 1-1 correspondence between 
$K$-invariant Dirac structures $L_Q\subset \T Q$ with $\ker(T p)\subset L_Q$, and 
Dirac structures $L_M\subset \T M$. Under this correspondence, 
$L_Q=p^! L_M$, while
\[ L_M=p_! L_Q=\{y\in \T M|\ \exists z\in L_Q|\ z\sim_p y\}.\]
The Dirac structure $L_M$ is the graph of a Poisson structure if and only if $L_Q\cap TQ=\ker(T p)$. 
\end{lemma}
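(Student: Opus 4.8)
The plan is to reduce everything to the pull-back formalism already developed in Section~\ref{subsec:pullbackdirac}, exploiting that a principal $K$-bundle is in particular a surjective submersion whose fibers are the $K$-orbits. First I would observe that for $p\colon Q\to M$, the anchor of any $E\subset\T M$ composed with $p$ is automatically transverse to $Tp$ (since $p$ is a submersion), so $p^!L_M$ is defined and, by the proposition in Section~\ref{subsec:pullbackdirac}, is a Dirac structure whenever $L_M$ is. It is $K$-invariant because $p$ is $K$-invariant and the relation $\sim_p$ is preserved by the $K$-action on $\T Q$ lifting the principal action on $Q$; and it contains $\ker(Tp)$ because $w\in\ker(Tp)$ satisfies $w\sim_p 0\in E$. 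So the map $L_M\mapsto p^!L_M$ lands in the right target. For the converse direction I would take a $K$-invariant Dirac structure $L_Q$ with $\ker(Tp)\subset L_Q$ and define $L_M=p_!L_Q$ pointwise: at $m=p(q)$, set $(L_M)_m=\{y\in\T_m M\mid \exists z\in (L_Q)_q,\ z\sim_p y\}$. The two things to check here are (i) this is well-defined (independent of the chosen $q$ in the fiber), which follows from $K$-invariance of $L_Q$ together with the fact that the $K$-action covers the identity on $M$; and (ii) $(L_M)_m$ is a Lagrangian subspace of $\T_m M$ of the correct rank --- here the hypothesis $\ker(Tp)\subset L_Q$ is exactly what is needed, since $Tp$ maps the fiber $(L_Q)_q$ onto a subspace of $T_mM$ with kernel $\ker(Tp_q)\subset (L_Q)_q$, and one uses \eqref{eq:innerproducts} (preservation of the metric under $\sim_p$) to see isotropy, then a dimension count ($\dim L_Q=\dim Q=\dim M+\dim K$, and we quotient out a $\dim K$-dimensional kernel, leaving $\dim M$) to get Lagrangianity.

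Next I would show the two constructions are mutually inverse and that $L_Q=p^!L_M$ for the $L_M$ just built. The inclusion $L_Q\subset p^!L_M$ is immediate from the definition of $L_M$. For the reverse inclusion $p^!L_M\subset L_Q$, I would argue fiberwise at $q$: given $z'\in\T_q Q$ with $z'\sim_p y$ for some $y\in(L_M)_m$, pick $z\in(L_Q)_q$ with $z\sim_p y$; then $z'-z$ lies in $\T_q Q$ and is $p$-related to $0$, so $z'-z=w\in\ker(Tp_q)\subset (L_Q)_q$, whence $z'=z+w\in(L_Q)_q$. (Here I am using that the fiber of $\sim_p$ over a fixed $y\in\T_m M$ through a point $q$ is a coset of $\ker(Tp_q)\subset T_qQ$ inside $\T_q Q$.) The integrability of $p^!L_M$ when $L_M$ is integrable is handed to us by the earlier proposition; conversely, if $L_Q$ is integrable then $L_M=p_!L_Q$ is integrable because sections of $L_M$ pull back (non-uniquely) to $K$-invariant sections of $L_Q$, Courant brackets of $p$-related sections are $p$-related, and a $K$-invariant section of $L_Q$ pushes forward to a well-defined section of $L_M$ --- this last point uses Proposition~\ref{prop:courinvt}-type reasoning, or more elementarily the fact that $K$-invariant $p$-relatable sections are closed under the Courant bracket since brackets of $p$-related sections are $p$-related and $K$-invariance is preserved.

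Finally, for the Poisson criterion: $L_M=\on{Gr}(\pi_M)$ for some bivector $\pi_M$ if and only if $L_M\cap TM=0$. I would translate this through the correspondence. An element of $L_M\cap TM$ at $m$ is a $y=w\in T_mM$ (no covector part) with $w\sim_p z$ for some $z\in (L_Q)_q$; writing $z=v+\mu\in\T_q Q$, the condition $z\sim_p w$ with $w$ having zero covector part forces $\mu\in\on{ann}(T_qQ)$... more precisely $\mu = (T_qp)^*\nu$ for the covector part $\nu=0$ of $y$, so $\mu=0$, i.e. $z=v\in T_qQ$, and moreover $v\in (L_Q)_q\cap T_qQ$ with $(T_qp)v=w$. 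Thus $L_M\cap TM=0$ iff every $v\in (L_Q)_q\cap T_qQ$ has $(T_qp)v=0$, i.e. iff $(L_Q)_q\cap T_qQ\subset\ker(T_qp)$. Since the reverse inclusion $\ker(Tp)\subset L_Q\cap TQ$ always holds by hypothesis, this says precisely $L_Q\cap TQ=\ker(Tp)$, which is the claimed criterion. I expect the main obstacle to be the bookkeeping in step (ii) above --- verifying that $p_!L_Q$ is genuinely a smooth Lagrangian \emph{subbundle} of $\T M$ of the right rank, rather than just a pointwise-defined family --- since this is where the hypotheses $\ker(Tp)\subset L_Q$ and $K$-invariance interact most delicately; everything else is a reasonably mechanical application of the $\sim_p$-calculus and the already-proven pull-back proposition.
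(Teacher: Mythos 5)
Your proof is correct and follows essentially the same route as the paper's (very terse) argument: $p^!L_M$ automatically contains $\ker(Tp)$ and is $K$-invariant, the pushforward is well-defined fiberwise by $K$-invariance and Lagrangian by metric preservation plus the dimension count using $\ker(Tp)\subset L_Q$, and integrability passes back and forth via $p$-related sections. You simply fill in the details the paper leaves implicit, including a careful verification of the Poisson criterion $L_Q\cap TQ=\ker(Tp)$, which the paper's proof does not spell out at all.
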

\begin{proof}
$p^!L_M$ consists of all $z\in \T Q$ such that $z\sim_p y$ for some $y\in L_M$, where $\sim_p$ is defines as in Section \ref{subsec:pullbackdirac}. Note that the set of elements $z\in \T Q$ satisfying $z\sim_p 0$ is exactly $\ker (T p)$. 
Hence, $p^!L_M$ is a $K$-invariant Dirac structure containing $\ker(T p)$. Conversely, if $L_Q$ is a Dirac structure with these properties, then its push-forward $L_M=p_! L_Q$ is a well-defined Dirac structure, with $p^!L_M=L_Q$. 
(The $K$-invariance guarantees that the fiberwise push-forward $(L_M)_m=p_! (L_Q)_q$ does not depend on the choice of $q\in p^{-1}(m)$. The condition $\ker(T p)\subset L_Q$ guarantees that $L_M$ is a smooth subbundle.) 
\end{proof}
\begin{remark}\label{rem:redu1}
One can also think of $L_M$ as a reduction $L_Q\qu K\subset \T Q\qu K=\T M$, 
\[ L_Q\qu K=\big(L_Q/\ker(Tp)\big)/K\subset \T Q\qu K=\big((\ker T p)^\perp/\ker Tp\big)/K.\]
\end{remark}

Suppose now that $G$ is a Poisson Lie group. Given a 
$G$-manifold $M$, consider once again the action of $G$ on $\T M$, defined by 
the action of the subgroupoid $E=\on{Gr}(\pi_G)$.
\begin{lemma}\label{lem:bulletequiv}
Let $\varphi\colon N\to M$ be a $G$-equivariant smooth map, and let $z\in \T N,\ y\in \T M$. 
Then 
\[ z\sim_\varphi y\ \ \ \Rightarrow\ \ \ g\bullet z\sim_\varphi g\bullet y\]
for all $g\in G$.
\end{lemma}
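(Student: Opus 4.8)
The plan is to unwind the definitions of the two operations involved — the relation $\sim_\varphi$ for pull-backs of sections of the big tangent bundle, and the $\bullet$-action of $G$ on $\T N$ and $\T M$ coming from the groupoid action of $E=\on{Gr}(\pi_G)\rra\g^*$ — and to exploit the fact that $\varphi$ intertwines not just the $G$-actions but their lifts. First I would recall that for $y\in\T_m M$, the element $g\bullet y$ is defined as $x\circ y$, where $x$ is the unique element of $E|_g$ with $\sz(x)=\Phi_M(y)\in\g^*$, and $\Phi_M\colon T^*M\to\g^*$ is the moment map for the cotangent lift of the $G$-action; similarly $g\bullet z = x'\circ z$ with $x'\in E|_g$ the unique element with $\sz(x')=\Phi_N(z)$. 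The key observation is that $G$-equivariance of $\varphi$ forces $\Phi_N = \Phi_M\circ(T\varphi)^*$ on the relevant elements: if $z\sim_\varphi y$, i.e. $y=\big((T\varphi)w_z,\ (T\varphi)^*\nu_y\big)$-type compatibility (writing $z=w+\nu$, $y=v+\mu$ with $v=(T\varphi)w$ and $\nu=\varphi^*\mu$), then $\langle\Phi_N(z),\xi\rangle = \langle\nu,\xi_N\rangle = \langle\varphi^*\mu,\xi_N\rangle = \langle\mu,(T\varphi)\xi_N\rangle = \langle\mu,\xi_M\rangle = \langle\Phi_M(y),\xi\rangle$, using that $\varphi$ is $G$-equivariant so $(T\varphi)\xi_N=\xi_M$. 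Hence $z\sim_\varphi y$ implies $\Phi_N(z)=\Phi_M(y)$, and therefore the \emph{same} element $x\in E|_g$ is used to form both $g\bullet z=x\circ z$ and $g\bullet y=x\circ y$.

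The second ingredient is that the groupoid multiplication on $\T G$, combined with the groupoid actions on $\T N\to\g^*$ and $\T M\to\g^*$, is compatible with $\sim_\varphi$. Concretely, the groupoid action of $\T G\rra\g^*$ on $\T M\to\g^*$ is built from the tangent lift $T\A_M\colon TG\times TM\to TM$ on tangent parts and from the cotangent action on cotangent parts, and likewise for $N$; since $\varphi$ is $G$-equivariant, the action maps $\A_N$ and $\A_M$ are intertwined by $\varphi$ (i.e. $\A_M\circ(\on{id}_G\times\varphi)=\varphi\circ\A_N$), so their tangent maps are $\varphi$-related, and the cotangent parts correspondingly match under pullback. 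Therefore, if $z\sim_\varphi y$ and $x\in E|_g\subset\T_g G$ is a fixed element, then $x\circ z\sim_\varphi x\circ y$ — the groupoid product of an element of $\T G$ with $\sim_\varphi$-related elements yields $\sim_\varphi$-related results. Combining this with the first step (same $x$ for both) gives $g\bullet z = x\circ z\sim_\varphi x\circ y = g\bullet y$, which is exactly the claim.

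I expect the main obstacle to be purely bookkeeping: spelling out precisely the groupoid action of $\T G\rra\g^*$ on $\T M\to\g^*$ (which mixes the Lie group structure on $TG$, the Lie groupoid structure on $T^*G\rra\g^*$, and the moment map $\Phi_M$), and checking carefully that each piece is $\varphi$-related when $\varphi$ is $G$-equivariant. The tangent part is the standard fact that $\varphi$-relatedness of maps implies $\varphi$-relatedness of their tangent maps, applied to $\A_N$ and $\A_M$; the cotangent part requires verifying that the cotangent-lift moment map condition $\sz(x)=\Phi(\cdot)$ and the cotangent groupoid multiplication are compatible with transposed tangent maps, which is a transpose-of-a-commutative-diagram argument. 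Once these compatibilities are recorded, the proof is a two-line composition. One can also streamline everything by passing to the graph $j\colon N\to M\times N$ of $\varphi$ and reducing to the embedded case, exactly as in the proof that $\varphi^!E$ is a Dirac structure, but since the statement is about the action rather than integrability I would likely just argue directly as above.
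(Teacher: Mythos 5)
Your proposal is correct and follows essentially the same route as the paper's proof: first use $G$-equivariance to show that $z\sim_\varphi y$ forces the moment-map values $\Phi_N(z)=\Phi_M(y)$, so the same element $x\in E|_g$ acts on both, and then observe that the groupoid action of $\T G$ is compatible with $\sim_\varphi$ (tangent parts via $\varphi$-relatedness of the action maps, cotangent parts via the transposed diagram). The paper states these two steps more tersely; you have simply supplied the bookkeeping it leaves implicit.
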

\begin{proof} The $G$-equivariance implies that the images of $z,\,y$ under the moment 
maps $\T N\to \g^*,\ \ \T M\to \g^*$ for the groupoid actions of $\T G\rra \g^*$ coincide. 
Hence, given $x\in \T G$, one has that $y'=x\circ y$ is defined if and only if 
$z'=x\circ z$ is defined. By definition of $\sim_\varphi$, one has $z'\sim_\varphi y'$ in this case. Taking $x\in E_g$, this shows that 
$g\bullet z\sim_\varphi g\bullet y$. 
\end{proof}
With these preparations we can give the classification of Poisson homogeneous spaces. 
\begin{proof}[Proof of Proposition \ref{th:drinhom}]
Let $M=G/K$, with $p\colon G\to G/K$ the 
quotient map. By Lemma \ref{lem:pushforward} the Dirac structures $L_M\subset \T M$ are in 1-1 correspondence 
with $K$-invariant Dirac structures $L_G\subset \T G$ containing $\ker(T p)$, and $L_M$ is the graph of a 
Poisson structure $\pi_M$ if and only if the intersection $L_G\cap TG$ is exactly $\ker(Tp)$. On the other hand, 
the $G$-action on $M$ will be a Poisson Lie group action if and only if $L_M$ is invariant under the $\bullet$-action; 
by Lemma \ref{lem:bulletequiv} this is the case if and only of $L_G$ is invariant under the $\bullet$-action. 

It follows that the Poisson structures $\pi_M$ making $M$ into a Poisson homogeneous space, are classified by 
the $G\times K$-invariant Dirac structures $L_G\subset \T G$ with the property $L_G\cap TG=\ker(T p)$. Here the 
$G$-action is the $\bullet$-action (lifting $g\mapsto l_g$), while the $K$-action is $k\mapsto \T r_{k^{-1}}$ (lifting 
$k\mapsto r_{k^{-1}}$. Now let $(\dd,\g,\h)$ be the $G$-equivariant Manin triple corresponding to the Poisson Lie 
group structure on $G$, and consider the left trivialization $\T G=G\times \dd$
defined by the $\bullet$-action. In terms of the trivialization, the $G\times K$-action is $(g,k).(a,\zeta)=
(gak^{-1},\Ad_k\zeta)$. Hence, the $G\times K$-invariant subbundles are of the form $L_G=G\times \mf{l}$ 
such that $\mf{l}\subset \dd$ is a $K$-invariant subspace. Such a subbundle is Lagrangian if and only if 
$\mf{l}$ is Lagrangian in $\dd$, and is a Dirac structure if and only if $\mf{l}$ is a Lagrangian Lie subalgebra. 
Since $\ker(Tp)\subset TG$ is $G\times\k\subset G\times\g$ in the trivialization, we see that $L_G\cap TG=\ker(Tp)$ if and only if 
$\mf{l}\cap \g=\k$. 
\end{proof}

\begin{remark}
By Remark \ref{rem:redu1}, we can think of $L_M=\on{Gr}(\pi)\subset \T M$ as a reduction $L_G\qu K$. 
In terms of the trivialization $\T G=G\times\dd$ we have 
$\ker(T p)^\perp=G\times \k^\perp$ and $\ker(T p)=G\times \k$
hence 
\[ L_M=G\times_K (\mf{l}/\k)\ \subset\  \T M=G\times_K (\k^\perp/\k).\] 
See \cite{rob:cla,me:dir} for further details. 
\end{remark}


\subsection{Notes} 
The Dirac-geometric approach to Drinfeld's classification theorems may be found in the paper of Liu-Weinstein-Xu \cite{liu:dist}, with a generalization to  Poisson homogeneous spaces for Poisson Lie group\emph{oid}s. The exposition given here incorporates ideas from \cite{lib:dir}, \cite{rob:cla} and \cite{me:dir}. The fact that the cotangent Lie algebroid $T^* M$ of a Poisson homogeneous space 
$M=G/K$ is a reduction $(G\times\mf{l})\qu K$
of the action Lie algebroid  (where $\mf{l}$ acts on $G$ by the dressing action)
was first observed by Jiang-Hua Lu \cite[Theorem 4.7]{lu:note}. 
\vskip.6in

\begin{appendix}
\vskip1.2in
\section{Sign conventions}\label{app:signs}
\subsection{Lie groups, Lie algebras}
For a Lie group $G$, with $\g=T_eG$, we denote by $X^L$ the left-invariant vector field 
and by $X^R$ the right-invariant vector field defined by $X\in\g$. That is, 
\[ X=X^L|_e=X^R|_e.\]
Similarly, we denote by $\theta^L\in \Om^1(G,\g)$  the left-invariant Maurer-Cartan form, and by $\theta^R\in \Omega^1(G,\g)$ the right-invariant Maurer-Cartan form. Thus \[ \iota(X^L)\theta^L=X=\iota(X^R)\theta^R.\]
For a matrix Lie group $G\subset \on{Mat}_\R(n)$, the Maurer-Cartan forms are given by 
\[ \theta^L=g^{-1} \d g,\ \ \ \theta^R=\d g\  g^{-1},\]
respectively, and they satisfy the equations 
\begin{equation}\label{eq:Maurer} 
\d \theta^L+\hh [\theta^L,\theta^L]=0,\ \ \ \d\theta^R-\hh [\theta^R,\theta^R]=0
\end{equation}
where $[\cdot,\cdot]$ is the commutator of matrices. Equation \eqref{eq:Maurer} implies, by a simple calculation using the Cartan calculus, that 
\begin{equation}\label{eq:leftright} [X^L,Y^L]=[X,Y]^L,\ \ [X^L,Y^R]=0,\ \ [X^R,Y^R]=-[X,Y]^R
\end{equation}
for all $X,Y$, where the brackets are commutators of matrices on the right hand sides and commutators of derivations of $C^\infty(M)$ on the left hand side. 
For a general Lie group, we use 
the equation $[X^L,Y^L]=[X,Y]^L$ to \emph{define} the Lie bracket; in this way the 
 Lie bracket for matrix Lie groups is the commutator. For Lie groupoids, we similarly define the corresponding Lie algebroid using left-invariant vector fields
(tangent to the $\tz$-fibers).

\subsection{Flows of vector fields}
We think of the Lie algebra of vector fields $\mf{X}(M)$ as the Lie algebra of the group of diffeomorphisms $\on{Diff}(M)$. The former act on $C^\infty(M)$ by Lie derivative, the latter by push-forward $\Phi_*=(\Phi^{-1})^*,\ f\mapsto f\circ \Phi^{-1}$. 
We define the flow $\Phi_t$ of a (complete) vector field $X$ 
 to be 
that 1-parameter group of diffeomorphisms such that 
\[ \ca{L}_X  =\f{\p}{\p t}|_{t=0} (\Phi_t)_*\]
as operators on functions.  For instance, the vector field $X=\f{\p}{\p x}$ in the real line has flow $\Phi_t(x)=x-t$, corresponding to a differential equation $\f{d x}{ d t}=-1$. 
More generally, the flow of  a time dependent vector field $X_t$ 
is defined by 
\[ \f{\p}{\p t}(\Phi_t)_*= (\Phi_t)_*\circ \L_{X_t},\]
consistent with `left trivialization'. 

\subsection{Poisson brackets for symplectic structures}
Consider $\R^2$ with coordinates $q$ (positions) and $p$ (momenta). The standard definition of the Poisson bracket in physics is such that $\{q,p\}=1$, while the standard 
convention for the symplectic form seems to be $\omega=\d q\wedge \d p$. For the Poisson bivector, we use the convention $\pi(\d f,\ \d g)=\{f,g\}$, which gives 
$\pi=\f{\p}{\p q}\wedge \f{\p}{\p p}$. With these conventions, the map 
$\pi^\sharp\colon T^*M\to TM$ inverts \emph{minus} the map $\omega^\flat\colon TM\to T^*M$, 
\[ \pi^\sharp\circ \omega^\flat=-\on{id}.\]
We use this in general for the definition of the Poisson structure associated to a symplectic structure.

\vskip.8in\section{Lie groupoids}\label{app:groupoids}
For a more detailed discussion of Lie groupoids, we refer to the books \cite{mac:gen} or 
\cite{moe:fol}.

\subsection{Groupoids and their actions}
A \emph{groupoid} $\G\rra M$ involves a set $\G$ of \emph{arrows}, a subset $i\colon M\hra \G$ of \emph{units} (or \emph{objects}), and two retractions  $\sz,\tz\colon \G\to M$ called \emph{source} and \emph{target}. (Retraction means $\tz\circ i=\sz\circ i=\on{id}_M$.)  
One thinks of $g$ as an arrow from its source $\sz(g)$ to its target $\tz(g)$. 
%
%
The groupoid structure is given by a  multiplication map, 
\[ \Mult_\G\colon \G^{(2)}\to \G,\ \ \ (g_1,g_2)\mapsto g_1\circ g_2,\]
where $\G^{(2)}=\{(g_1,g_2)\in \G^2|\ \sz(g_1)=\tz(g_2)\}$, and such that 
$\sz(g_1\circ g_2)=\sz(g_2),\ \ \tz(g_1\circ g_2)=\tz(g_1)$. It is thought of as a 
concatenation of arrows.
%
%
These are required to satisfy the following axioms: \vskip.2in

\begin{itemize}
\item[] {\bf 1. Associativity:} $(g_1\circ g_2)\circ g_3=g_1\circ (g_2\circ g_3)$ whenever $\sz(g_1)=\tz(g_2),\ \sz(g_2)=\tz(g_3)$.
\item[] {\bf 2. Units:} $\tz(g)\circ g=g=g\circ \sz(g)$ 
for all $g\in G$.
 \item[]{\bf 3. Inverses:} Every $g\in G$ is invertible: There exists 
 $h\in G$ such that $\sz(h)=\tz(g),\ \tz(h)=\sz(g)$, and such that 
 $g\circ h,\ h\circ g$  are units. 
\end{itemize}
\vskip.2in

The inverse of an element is necessarily unique, denoted $g^{-1}$, and  we have that 
$g\circ g^{-1}=\tz(g),\ \ g^{-1}\circ g=\sz(g)$. One denotes by 
\[ \G^{(k)}=\{(g_1,\ldots,g_k)\in \G^k|\ \sz(g_i)=\tz(g_{i+1})\}\]
the set of \emph{$k$-arrows}, with the convention $\G^{(0)}=M$. 

Note that the entire groupoid structure is encoded in the graph of the multiplication map, 
\[ \on{Gr}(\Mult_\G)=\{(g_1\circ g_2,g_1,g_2)\in \G^3|\ (g_1,g_2)\in \G^{(2)}\}\]
Thus, the set of all $(g,g_1,g_2)$ such that $g=g_1\circ g_2$ determines all the structure maps. For instance, the units are recovered as those elements $m$ such that $m=m\circ m$, while $\sz(g)$ is the unique unit $m$ such that $g\circ \sz(g)$ is defined. 

\begin{example}
A group $G$ is the same as a groupoid with a unique unit, $G\rra \pt$. At the opposite extreme, every set $M$ can be regarded as a trivial groupoid $M\rra M$ where all elements are units. 
\end{example}
\begin{example}
For any set $M$, one has the \emph{pair groupoid} $\on{Pair}(M)=M\times M\rra M$, with
\[ (m',m)= (m_1',m_1)\circ (m_2',m_2)\ \ \Leftrightarrow\ \ 
m_1'=m',\ m_1=m_2',\ m_2=m.\]
Here the units are given by the diagonal embedding $M\hra M\times M$, and 
the source and target of $(m',m)$ are $m$ and $m'$, respectively. 
\end{example}
\begin{example}
Given an action of a group $G$ on $M$, one has the \emph{action groupoid} 
$\G\rra M$. It may be defined as the subgroupoid of 
$G\times \on{Pair}(M)\rra M$, 
consisting of all $(g,m',m)\in G\times (M\times M)$ such that $m'=g.m$. Using the projection 
$(g,m',m)\mapsto (g,m)$ to identify $\G\cong G\times M$, the product reads as 
\[ (g,m)=(g_1,m_1)\circ (g_2,m_2) \ \ \Leftrightarrow\ \ 
g=g_1g_2,\ m=m_2,\ m_1=g_2. m_2.\]
\end{example}
Generalizing group actions, one can also consider \emph{groupoid actions} 
of a groupoid $\G\rra M$ on a set $P$ with a given map $\Phi\colon P\to M$. Such an action is given by 
a map 
\[ \G\ _{\sz}\times_\Phi P\to P,\ (g,p)\mapsto g\circ p\] 
such that $(g_1\circ g_2)\circ p=g_1\circ (g_2\circ p)$ 
whenever the compositions are defined, and $\Phi(p)\circ p=p$.
Sometimes we use subscripts for the action, to avoid confusion with the groupoid multiplication. For instance, $\G\rra M$ acts on itself by left multiplication 
$g\circ_L a=l_g(a)=g\circ a$ (here $\Phi=\tz$), and by right multiplication 
$g\circ_R a=r_{g^{-1}}(a)=a\circ g^{-1}$ (here $\Phi=\sz$), and it also acts 
on its units, by $g\circ_M m=\tz(g)$
(here $\Phi=\on{id}_M$). 

\subsection{Lie groupoids}
In the definition of a  \emph{Lie groupoid}, one requires in addition that the space of arrows $\G$ is a manifold, with $M$ an embedded submanifold, that $\sz,\tz$ are smooth submersions, and  $\on{Mult}_\G$ is smooth. This implies that the inversion map $\on{Inv}_G\colon g\mapsto g^{-1}$ is smooth, that $\G^{(k)}$ is an embedded submanifold of $\G^k$ for all $k$, and hence 
that $\on{Gr}(\Mult_\G)$ is a submanifold of $\G^3$. For instance, given a smooth action of a Lie group, the corresponding action groupoid is a Lie groupoid. 

Given a Lie groupoid $\G\rra M$, the left multiplication by a groupoid element 
$g\in \G$ with source $\sz(g)=m$ and target $\tz(g)=m'$ is a diffeomorphism 
\[ l_g\colon \tz^{-1}(m)\to \tz^{-1}(m'),\ \ \ \ a\mapsto g\circ a\]
A vector field $X\in \mf{X}(\G)$ is called \emph{left-invariant} if it is tangent to the $\tz$-fibers and for all  $g\in \G$, the restrictions of $X$ to the corresponding 
$\tz$-fibers of $m=\sz(g),\ m'=\tz(g)$ are related by $l_g$. The left-invariant vector fields form a Lie subalgebra $\mf{X}(\G)^L$ of the Lie algebra of vector fields on $\G$. 
Similarly a vector field is called \emph{right-invariant} if it is tangent to all $\sz$-fibers, and  
for all $g$ as above the restriction to the $\sz$-fibers of $m',m$ are related by 
$r_{g^{-1}}$. The right-invariant vector fields form a Lie subalgebra
$\mf{X}(\G)^R$. 

\subsection{The Lie algebroid of a Lie groupoid}
Let 
\[ A=\nu(\G,M)\] 
be the normal bundle of $M$ in $\G$. The difference $T\tz-T\sz\colon T\G\to TM$ vanishes on $TM\subset T\G$, hence descends to a bundle map 
$\a\colon A\to TM$ called the \emph{anchor map}. For $\sigma\in \Gamma(A)$, there are unique vector fields
\[ \sigma^L\in\mf{X}(\G)^L,\ \ \ \sigma^R\in\mf{X}(\G)^R\]
mapping to $\sigma$ under restriction to $T\G|_M$ followed by the quotient map. 
We have that 
\[ \a(\sigma)\sim_i\ \sigma^L-\sigma^R,\ \ \ \sigma^L\sim_{\sz}\a(\sigma),\ \ \ \sigma^R\sim_{\tz} -\a(\sigma).\] 

We will use the identification of $\Gamma(A)$ with \emph{left-invariant} vector fields on $\G$ to define a Lie bracket on $\Gamma(A)$. 
(This choice is dictated by the conventions for Lie groups, as discussed in Appendix \ref{app:signs}.) With these data, $(A,\a,[\cdot,\cdot])$ is a Lie algebroid, called the Lie algebroid of the Lie groupoid $\G$. Conversely, one refers to $\G$ as an \emph{integration of $A$}. Not every Lie algebroid admits an integration; the precise obstructions to integrability were determined by Crainic-Fernandes \cite{cra:intlie}.
On the other hand, one always has an integration to a local Lie groupoid. 

Essentially by construction, the left and right invariant vector fields satisfy the bracket relations, for $\sigma,\tau\in\Gamma(A)$: 
\begin{equation}\label{eq:bracketrelations}
 [\sigma^L,\tau^L]=[\sigma,\tau]^L,\ \ \ 
[\sigma^L,\tau^R]=0,\ \ \ [\sigma^R,\tau^R]=-[\sigma,\tau]^R.\end{equation}
For $i=0,1,2$ and $\sigma\in \Gamma(A)$, the vector fields on $\G\times \G\times \G$, 
\[ X_\sigma^0=(-\sigma^R,-\sigma^R,0),\ \ X_\sigma^1=(0,\sigma^L,-\sigma^R),\ \ 
X_\sigma^2=(\sigma^L,0,\sigma^L).\]
are all tangent to $\Lambda=\on{Gr}(\Mult_\G)$. 
For instance, the invariance of $\Lambda$ under the (local) flow of $X^1_\sigma$ follows from the fact that $g_1\circ g_2=(g_1\circ h^{-1})\circ (h\circ g_2)$ whenever $\sz(h)=\tz(g_2)$. The vector fields satisfy bracket relations 
\[ [X_\sigma^i,\,X_\tau^j]=X_{[\sigma,\tau]}^i\ \delta_{i,j}\]
for $\sigma,\tau\in \Gamma(A)$ and $i,j=0,1,2$. If $\G$ is $\tz$-connected, then the graph is generated from $M_\Delta^{[2]}\subset \Lambda$ (the units, embedded as $m\mapsto (m,m,m)$) by the flow of these vector fields. In fact, it is already obtained using the flows of the $X_\sigma^i$'s for $i=1,2$ (or $i=0,2$, or $i=0,1$). 
For reference in Section \ref{sec:karasevweinstein}, let us note the following partial converse. 
\begin{proposition}
Let $(A,\a,[\cdot,\cdot])$ be a Lie algebroid over $M$. Suppose $i\colon M\to P$ is an embedding, with normal bundle $\nu(P,M)\cong A$, and suppose $\sigma^L,\ \sigma^R\in \mf{X}(P)$ are vector fields on $P$, mapping to $\sigma\in \Gamma(A)$ under the quotient map $TP|_M\to A$, with $\a(\sigma)\sim_i \sigma^L-\sigma^R$, and 
satisfying the bracket relations \eqref{eq:bracketrelations}. Then  a neighborhood of $M$ in $P$ inherits a structure of a local Lie groupoid integrating $A$, in such a way that $\sigma^L,\sigma^R$ are the left, right invariant vector fields. 
\end{proposition}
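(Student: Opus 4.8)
The plan is to run the construction recorded in Appendix~\ref{app:groupoids} in reverse: from the families $\{\sigma^L\}$ and $\{\sigma^R\}$ I will first produce the source and target maps, then obtain the graph of the multiplication as a flow-out inside $P\times P\times P$, and finally check the groupoid axioms — all of which will be reduced to the bracket relations \eqref{eq:bracketrelations} together with $\a(\sigma)\sim_i\sigma^L-\sigma^R$. For the source and target, observe that the $\sigma^L$ span a smooth distribution $D^L\subset TP$ of rank $\on{rk}(A)$ near $M$ (the assignment $\sigma\mapsto\sigma^L$ being $C^\infty(M)$-linear along $M$); since $\sigma^L|_m$ maps to $\sigma_m$ under the quotient $T_mP\to A_m=\nu(P,M)_m$ while $\dim P=\dim M+\on{rk}(A)$, the distribution $D^L$ restricts along $M$ to a complement of $TM$. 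By $[\sigma^L,\tau^L]=[\sigma,\tau]^L$ it is involutive, hence (shrinking $P$ to a tubular neighbourhood of $M$) integrable; its leaves meet $M$ transversally in single points, so projection along leaves is a submersion $\tz\colon P\to M$ with $\tz\circ i=\on{id}_M$. Symmetrically $D^R=\langle\sigma^R\rangle$ gives $\sz\colon P\to M$, $\sz\circ i=\on{id}_M$. From $[\sigma^L,\tau^R]=0$ one gets that $\sigma^L$ is projectable along $\sz$ and $\sigma^R$ along $\tz$; evaluating on $M$ and using $\a(\sigma)\sim_i\sigma^L-\sigma^R$ yields $\sigma^L\sim_{\sz}\a(\sigma)$ and $\sigma^R\sim_{\tz}-\a(\sigma)$, just as for honest Lie groupoids.

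Next I build the multiplication. On $P\times P\times P$ near the diagonal $M_\Delta^{[2]}=\{(i(m),i(m),i(m))\}$ consider the distribution $\mathcal{D}$ spanned by $X^1_\sigma=(0,\sigma^L,-\sigma^R)$ and $X^2_\sigma=(\sigma^L,0,\sigma^L)$; the relations $[X^i_\sigma,X^j_\tau]=\delta_{ij}\,X^i_{[\sigma,\tau]}$ make $\mathcal{D}$ involutive, of constant rank $2\on{rk}(A)$ near $M_\Delta^{[2]}$ and transverse to it, so the union $\Lambda$ of its leaves through points of $M_\Delta^{[2]}$ is an embedded submanifold of dimension $\dim M+2\on{rk}(A)$. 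Using $\sigma^L\sim_{\sz}\a(\sigma)$ and $\sigma^R\sim_{\tz}-\a(\sigma)$ one checks that the condition $\sz(g_1)=\tz(g_2)$ is preserved along the flows of $X^1_\sigma,X^2_\sigma$, so $\Lambda\subset P^{(2)}:=P\ _{\sz}\times_{\tz}P$; and a tangent-space computation at points of $M_\Delta^{[2]}$ — using that $\{\sigma^L|_m\}$ is a complement of $T_mM$ — shows that $\pr_{23}\colon\Lambda\to P^{(2)}$ is an immersion there, hence, the dimensions agreeing, a diffeomorphism of a neighbourhood of $M_\Delta^{[2]}$ in $\Lambda$ onto a neighbourhood of the diagonal in $P^{(2)}$. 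This presents $\Lambda$ as the graph of a smooth, partially defined product $\Mult$ on $P^{(2)}$, with $\sz(\Mult(g_1,g_2))=\sz(g_2)$ and $\tz(\Mult(g_1,g_2))=\tz(g_1)$ read off from the components of $X^1,X^2$, and one verifies from this description that $\Lambda$ is also invariant under $X^0_\sigma=(-\sigma^R,-\sigma^R,0)$.

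It remains to verify the axioms and identify the algebroid. Invariance of $\Lambda$ under $X^0,X^1,X^2$ translates — via the graph description — into the statements that left translations $l_a\colon b\mapsto\Mult(a,b)$ intertwine the flows of the $\sigma^L$ and right translations intertwine the flows of the $\sigma^R$; since the $\sigma^L$ span the tangent spaces to the $\tz$-fibres (the $D^L$-leaves), which are connected near the units, each $l_a$ is determined by its value $l_a(i(\sz a))=\Mult(a,i(\sz a))$. The unit axiom $i(\tz g)\circ g=g=g\circ i(\sz g)$ follows from this together with $i(m)\circ i(m)=i(m)$ (which is $M_\Delta^{[2]}\subset\Lambda$); inverses then exist because each $l_a$ is a local diffeomorphism $\tz^{-1}(\sz a)\to\tz^{-1}(\tz a)$ near the units. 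Associativity $(g_1\circ g_2)\circ g_3=g_1\circ(g_2\circ g_3)$ — the substantive point — follows by writing $g_3$ as an iterated $\sigma^L$-flow applied to $i(\sz g_2)$ and using the intertwining properties and the unit axiom to bring both sides to the same form; the bracket relations \eqref{eq:bracketrelations} are precisely what make the relevant flow-invariances consistent (alternatively, once $\Lambda$ has been produced as the above flow-out, one may invoke the local-groupoid criterion of Coste--Dazord--Weinstein \cite[III.1]{cos:gro}). Finally, for the resulting local groupoid the left-invariant vector field attached to $\sigma\in\Gamma(A)$ is characterised as the vector field tangent to the $\tz$-fibres, restricting to $\sigma$ on $\nu(P,M)=A$, and invariant under left translations; by the above this is exactly $\sigma^L$, and likewise $\sigma^R$ is the right-invariant one. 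Hence the induced bracket on $\Gamma(A)$ is $[\cdot,\cdot]$ and the induced anchor is $\a$, so the local groupoid integrates $(A,\a,[\cdot,\cdot])$.

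The main obstacle is associativity — the only place where the Jacobi identity of the algebroid, packaged into \eqref{eq:bracketrelations}, is used in an essential way — together with the routine but fiddly bookkeeping needed to work with the local flows of the possibly incomplete vector fields $\sigma^L,\sigma^R$ and to shrink $P$ so that $\sz,\tz,\Mult$, inversion and the associativity identity are all simultaneously defined on a single neighbourhood of $M$.
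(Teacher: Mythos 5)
Your construction agrees with the paper's own (sketched) proof up to and including the key steps: the fibrations $\tz,\sz$ from the involutive distributions spanned by the $\sigma^L$ resp.\ $\sigma^R$, the flow-out $\Lambda$ of the triagonal $M_\Delta^{[2]}$ inside $P\times P\times P$, and its identification, via projection to the last two factors, with the graph of a partially defined multiplication on $P^{(2)}=P\ _{\sz}\times_{\tz}P$. The only cosmetic difference there is your choice of generators $X^1_\sigma,X^2_\sigma$ (forcing the small tangent-space computation for $\pr_{23}$, and requiring the separate verification that $\Lambda$ is $X^0_\sigma$-invariant, which follows by the same commutation argument the paper uses for its third family), where the paper generates by $X^0_\sigma,X^2_\sigma$, whose projections $(-\sigma^R,0)$ and $(0,\sigma^L)$ make the graph property immediate. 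Where you genuinely diverge is associativity: the paper settles it in one stroke by showing that both relations $\Lambda\circ(\Lambda\times\on{id}_P)$ and $\Lambda\circ(\on{id}_P\times\Lambda)$ coincide with the single flow-out $\Lambda^{[2]}\subset P\times(P\times P\times P)$ of the quadruple diagonal $M_\Delta^{[3]}$ under $(-\sigma^R,-\sigma^R,0,0),\ (0,\sigma^L,-\sigma^R,0),\ (0,0,\sigma^L,-\sigma^R)$, whereas you argue pointwise, writing the third factor as iterated $\sigma^L$-flows applied to a unit and propagating through the intertwining (i.e.\ the invariance of $\Lambda$ under the $X^i_\sigma$) together with the unit law; this is correct and is essentially the flow-theoretic shadow of the paper's relation identity, but it carries the domain bookkeeping you acknowledge, which the paper's formulation sidesteps. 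In compensation, you spell out the unit and inverse axioms and the identification of $\sigma^L,\sigma^R$ as the left/right invariant vector fields, which the paper's sketch leaves implicit. One caution: the parenthetical $C^\infty(M)$-linearity of $\sigma\mapsto\sigma^L,\sigma^R$ is genuinely used (both for the spans to be rank-$\on{rk}(A)$ distributions near $M$ and in your immersion computation), so it should be promoted to an explicit hypothesis or remark; the paper's sketch makes the same tacit assumption.
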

\begin{proof}[Sketch of proof] 
Since $\sigma^L$ maps to $\sigma$, the restrictions of the left-invariant vector fields 
to $M$ span a complement to $TM$ in $TP|_M$. In a particular, on a neighborhood of 
$M$ they determine a distribution of rank equal to that of $A$. By the bracket relations, 
this distribution is Frobenius integrable and invariant under that $\sigma^R$'s. A similar argument applies to the vector fields $-\sigma^R$. Taking $P$ smaller if necessary, we can assume that these define fibrations $\tz,\sz\colon P\to M$, with $\sigma^L,\sigma^R$ tangent to the respective fibers. 

Define vector fields $X_\sigma^i$ in $P\times P\times P$, as above, and let $M_\Delta^{[2]}\subset P\times P\times P$ be the triagonal embedding of $M$. Along the submanifold $M_\Delta^{[2]}$, hence also on some neighborhood the vector fields $X_\sigma^0,\,X_\tau^2$ span a subbundle of $TP$ of rank equal to twice the rank of $A$. The bracket relations guarantee that this distribution is integrable,
hence they define a foliation, and since the intersection of this subbundle with $TM_\Delta^{[2]}$ is trivial, we conclude that 
the flow-out of $M_\Delta$ under these vector fields defines a (germ of a) submanifold $\Lambda\subset P\times P\times P$. By construction, $\Lambda$ contains $M_\Delta^{[2]}$, and is invariant under $X_\sigma^0,\,X_\tau^2$. In fact, it is also invariant under $X_\sigma^1$, since these vector fields commute with $X_\sigma^0,X_\sigma^2$, and their restriction to $M_\Delta$  lies in subbundle spanned by $TM_\Delta^{[2]}$ together with the span of these vector fields. 

Under projection of $P\times P\times P$ to the last two factors, $X_\sigma^0,X_\sigma^2$ are related to the vector fields 
$(-\sigma^R,0)$ and $(0,\sigma^L)$, respectively. Hence, this projection restricts to a diffeomorphism from 
$\Lambda$ onto a neighborhood of the diagonal embedding of $M$ in 
$P^{(2)}=P\ _{\sz}\times_{\tz} P\subset P\times P$. It follows that $\Lambda$ is the graph of a 
multiplication map $\circ\colon P^{(2)}\to P$, defined on some neighborhood of $M$ in $P^{(2)}$. 
Letting  $\on{id}_P\colon P\to P$ be the identity relation (given by the diagonal 
in $P\times \ol{P}$, the associativity of the groupoid multiplication means that 
\[ \Lambda\circ (\Lambda\times \on{id}_P)=\Lambda\circ (\on{id}_P\times \Lambda)\]
as relations $P\times P\times P\da P$, where the circle means composition of relations. In fact, we can see that both sides 
are given by  
\[ \Lambda^{[2]}\subset P\times (P\times P\times P),\]
the submanifold generated from the diagonal $M_\Delta^{[3]}$ (consisting of elements $(m,m,m,m)$) by the action of vector fields of the form
\[ (-\sigma^R,-\sigma^R,0,0),\ (0,\sigma^L,-\sigma^R,0),\ (0,0,\sigma^L,-\sigma^R).\qedhere\]
\end{proof}

\section{Proof of Lemma \ref{lem:technical}}\label{app:tech}
The formulas from Lemma \ref{lem:technical} are proved by `straightforward' calculation. By definition, 
\[ \iota(\varrho(\zeta))\theta^L=\Ad_{g^{-1}}\pr_\g(\Ad_g\zeta).\] 
Hence, using the Maurer-Cartan equation 
$\d\theta^L=-\hh[\theta^L,\theta^L]$, 
\begin{align*}
\L(\varrho(\zeta))\theta^L&=\d \iota(\varrho(\zeta))\theta^L+\iota(\varrho(\zeta))\d\theta^L\\
&=\d\big(
\Ad_{g^{-1}}\pr_\g(\Ad_g \zeta)\big)-[\Ad_{g^{-1}}\pr_\g (\Ad_g \zeta),\theta^L]\\
&=-[\theta^L,\Ad_{g^{-1}}\pr_\g(\Ad_g \zeta)]+\Ad_{g^{-1}}\pr_\g(\Ad_g[\theta^L,\zeta])
 -[\Ad_{g^{-1}}\pr_\g (\Ad_g \zeta),\theta^L]\\
&=\Ad_{g^{-1}}\pr_\g([\Ad_g \theta^L,\Ad_g \zeta]).
\end{align*} 
Using this identity we calculate, for $\zeta_1,\zeta_2\in\dd$, 
\begin{align*}
\iota([\varrho(\zeta_1),\varrho(\zeta_2)])\theta^L&=
\L_{\varrho(\zeta_1)}\iota_{\varrho(\zeta_2)}\theta^L-\iota_{\varrho(\zeta_2)}\L_{\varrho(\zeta_1)}\theta^L\\
&=\iota_{\varrho(\zeta_1)}\L_{\varrho(\zeta_2)}\theta^L
-\iota_{\varrho(\zeta_2)}\L_{\varrho(\zeta_1)}\theta^L
-\iota_{\varrho(\zeta_1)}\iota _{\varrho(\zeta_2)}\d \theta^L
\\
&=\Ad_{g^{-1}}\pr_\g\Big([\pr_\g(\Ad_g \zeta_1),\,\Ad_g\zeta_2])
-[\pr_\g(\Ad_g \zeta_2),\,\Ad_g\zeta_1]\Big)\\
&\ \ \ -[\pr_\g(\Ad_g\zeta_1),\ \pr_\g(\Ad_\g\zeta_2)]\Big)\\
&=\Ad_{g^{-1}}\pr_\g[\Ad_g\zeta_1,\Ad_g\zeta_2])\\
&= \iota(\varrho([\zeta_1,\zeta_2])\theta^L,
\end{align*} 
proving that $\varrho$ is a Lie algebra morphism. The 1-form part of $\Cour{\mathsf{e}(\zeta_1),\mathsf{e}(\zeta_2)}$
is $\L_{\varrho(\zeta_1)}\l\theta^L,\zeta_2\r-\iota_{\varrho(\zeta_2)} \d \l\theta^L,\zeta_1\r$. We have
\begin{align*}
\L_{\varrho(\zeta_1)}\l\theta^L,\zeta_2\r&=
\l \pr_\g([\Ad_g \theta^L,\Ad_g \zeta_1]),\,\Ad_g\zeta_2\r\\
&=
\l [\Ad_g \theta^L,\Ad_g \zeta_1],\,\pr_\h(\Ad_g\zeta_2)\r\\
&=\l \Ad_g\theta^L,\,[\Ad_g \zeta_1,\,\pr_\h(\Ad_g\zeta_2)]\r,
\end{align*}
while 
\begin{align*}
-\iota_{\varrho(\zeta_2)}\d\l\theta^L,\zeta_1\r&=\l [\iota_{\varrho(\zeta_2)}\theta^L,\theta^L]\,\zeta_1\r\\
&= \l [\pr_\g(\Ad_g \zeta_2),\Ad_g \theta^L],\Ad_g\zeta_1\r\\
&=\l \Ad_g\theta^L,\,[\Ad_g \zeta_1,\,\pr_\g(\Ad_g\zeta_2)]\r.
\end{align*}
We conclude 
\[ \L_{\varrho(\zeta_1)}\l\theta^L,\zeta_2\r-\iota_{\varrho(\zeta_2)} \d \l\theta^L,\zeta_1\r=
\l \Ad_g\theta^L,\,[\Ad_g \zeta_1,\,\Ad_g\zeta_2)]\r=\l\theta^L,\,[\zeta_1,\zeta_2]\r,
\]
completing the proof that $\mathsf{e}\colon \dd\to \Gamma(\T G)$ preserves brackets. The fact that it also preserves metrics is left as an exercise.
\end{appendix}

\bibliographystyle{amsplain}

\def\cprime{$'$} \def\polhk#1{\setbox0=\hbox{#1}{\ooalign{\hidewidth
  \lower1.5ex\hbox{`}\hidewidth\crcr\unhbox0}}} \def\cprime{$'$}
  \def\cprime{$'$} \def\cprime{$'$} \def\cprime{$'$} \def\cprime{$'$}
  \def\polhk#1{\setbox0=\hbox{#1}{\ooalign{\hidewidth
  \lower1.5ex\hbox{`}\hidewidth\crcr\unhbox0}}} \def\cprime{$'$}
  \def\cprime{$'$} \def\cprime{$'$} \def\cprime{$'$} \def\cprime{$'$}
\providecommand{\bysame}{\leavevmode\hbox to3em{\hrulefill}\thinspace}
\providecommand{\MR}{\relax\ifhmode\unskip\space\fi MR }
\providecommand{\MRhref}[2]{%
  \href{http://www.ams.org/mathscinet-getitem?mr=#1}{#2}
}
\providecommand{\href}[2]{#2}

\end{document}